\newtheorem*{theoremA}{Theorem A}
\newtheorem*{corollaryB}{Theorem B}
\newcommand{\cal}[1]{\mathcal{#1}}
\numberwithin{theorem}{section}
\newcommand{\pst}{\oper{pst}}
\newcommand{\pr}{\oper{pr}}
\title{Paraboline variation over $p$-adic families of $(\varphi,\Gamma)$-modules}
\author{John Bergdall}
\date {\today}
\email{bergdall@math.bu.edu}
\urladdr{http://math.bu.edu/people/bergdall}
\address{John Bergdall\\ Department of Mathematics and Statistics \\ Boston University \\ 111 Cummington Mall \\ Boston, MA 02215\\USA}
\subjclass[2000]{11F80, 11F85 (11F33, 11F55)}
\begin{document}
\maketitle 

\begin{abstract}
We study the $p$-adic variation of triangulations over $p$-adic families of $(\varphi,\Gamma)$-modules. In particular, we study certain canonical sub-filtrations of the pointwise triangulations and show that they extend to affinoid neighborhoods of crystalline points. This generalizes  results of Kedlaya, Pottharst and Xiao and (independently) Liu in the case where one expects the entire triangulation to extend. We also study the ramification of weight parameters over natural $p$-adic families.
\end{abstract}

\tableofcontents

\section{Introduction}
Let $p$ be a prime number. Results, and questions, in the $p$-adic Langlands program may be naturally phrased, or asked, by studying $p$-adic families of automorphic forms \cite{Emerton-LocalglobalCompat, Breuil-LocallyAnalyticSocle2}. Regarding $p$-adic automorphic forms in their own right, the variation of the associated $(\varphi,\Gamma)$-modules is a central tool \cite{BellaicheChenevier-Book}. One recent result, see \cite{KedlayaPottharstXiao-Finiteness, Liu-Triangulations}, is that if a $p$-adic family of $(\varphi,\Gamma)$-modules is pointwise completely reducible, i.e. a successive extension of rank one objects, then it is also completely reducible in the family, at least generically. In language perhaps known to the reader, and used in this paper, pointwise triangulations in $p$-adic families extend to triangulations over open dense loci. This follows in the tradition of the famous reducibility of local Galois representations over Hida families \cite{MazurWiles-padicAnalyticFamilies}.

However, there are arithmetically interesting points which fail to lie in global triangulation loci. The most concrete example is a point on the Coleman--Mazur eigencurve \cite{ColemanMazur-Eigencurve} corresponding to the non-unit $p$-stabilization of a $p$-ordinary CM form. Such points lie in the image of one of Coleman's $\theta$-maps \cite{Bergdall-CompanionPoints} and the associated $p$-adic $L$-function exhibits curious behavior \cite{Bellaiche-CriticalpadicLfunctions}.

The aim of this article is to give a detailed account of the variation of $(\varphi,\Gamma)$-modules near classical points in $p$-adic families of automorphic forms, especially those which are known not to lie in a global triangulation locus. Note: we will be concerned with $(\varphi,\Gamma_K)$-modules for $K/\Q_p$ a finite extension, but for the purposes of the introduction we will assume $K = \Q_p$. There is no loss of content, only notation.

\subsection{Trianguline $(\varphi,\Gamma)$-modules}
We use $\cal R$ to denote the Robba ring over $\Q_p$. It is the ring consisting of analytic functions $f$ converging on a $p$-adic half-open  annulus $r(f) \leq \abs{T} < 1$, for some radius $r(f)$ depending on $f$. There are continuous commuting actions of a Frobenius operator $\varphi$ and the group $\Gamma \simeq \Z_p^\x$.

A $(\varphi,\Gamma)$-module $D$ is a finite free $\cal R$-module equipped with commuting, $\cal R$-semilinear actions of $\varphi$ and $\Gamma$, such that $\varphi(D)$ generates $D$ as a $\cal R$-module (see \cite{Berger-Representationp-adique}). The rank one $(\varphi,\Gamma)$-modules are parameterized by continuous characters $\delta$ of $\Q_p^\x$. If $\delta$ is such a character we denote the corresponding $(\varphi,\Gamma)$-module by $\cal R(\delta)$. 

A triangulation of a $(\varphi,\Gamma)$-module $D$ is a filtration $P_{\bullet}$ 
\begin{equation*}
0 = P_0 \sci P_1 \sci \dotsb \sci P_{d-1} \sci P_d = D
\end{equation*}
by $(\varphi,\Gamma)$-submodules, such that each successive quotient $P_i/P_{i-1}\simeq \cal R(\delta_i)$ is a rank one $(\varphi,\Gamma)$-module. The ordered tuple $(\delta_1,\dotsc,\delta_d)$ is called the parameter of $P_{\bullet}$. We say $D$ is trianguline if it may be equipped with a triangulation. Examples coming from Galois representations show there may be many ways to triangulate a given $D$.

By the work of Fontaine, Cherbonnier-Colmez,  Kedlaya and Berger, see \cite{Berger-Representationp-adique}, there is a fully faithful embedding $V \mapsto D_{\rig}(V)$ which associates a $(\varphi,\Gamma)$-module to each continuous, finite-dimensional representation $V$ of $G_{\Q_p}$. If $V$ is crystalline then $D_{\rig}(V)$ is trianguline and, generically, the triangulations of $D_{\rig}(V)$ are in bijection with the orderings of eigenvalues for the crystalline Frobenius acting on $D_{\cris}(V)$.

The global context of trianguline $(\varphi,\Gamma)$-modules, and thus this article, is spaces of $p$-adic automorphic forms. It is a folklore conjecture, generalizing a conjecture of Fontaine and Mazur \cite{FontaineMazur}, that an irreducible $\Q_p$-linear, finite-dimensional representation of the global Galois group $G_{\Q}$ which is
\begin{itemize}
\item unramified at all but a finite set of primes and 
\item trianguline at $p$
\end{itemize} 
should (essentially) appear in a space of finite slope $p$-adic automorphic forms. A precise statement was written down by Hansen recently, see \cite[Conjecture 1.2.3]{Hansen-Overconvergent}. The only known result is due to Emerton and settles the question in dimension two, via local-global compatibility in the $p$-adic Langlands program for ${\GL_2}_{/\Q}$ \cite[Theorem 1.2.4(1)]{Emerton-LocalglobalCompat}. The converse, that global representations attached to $p$-adic automorphic forms are trianguline at $p$ is known in many situations by \cite{KedlayaPottharstXiao-Finiteness, Liu-Triangulations}, including eigenvarieties attached to definite unitary groups. This plays a role in recent conjectures of Breuil \cite{Breuil-LocallyAnalyticSocle2} regarding aspects of the $p$-adic local Langlands program for $\GL_n(\Q_p)$ with $n > 2$.

\subsection{Critical triangulations}
The basic notions of $p$-adic Hodge theory (for example, Hodge--Tate--Sen weights and crystalline objects) extend to the category of $(\varphi,\Gamma)$-modules. If $D = \cal R(\delta)$ then the Hodge--Tate--Sen weight is
\begin{equation*}
\wt(\delta) = - \restrict{\mpder /\gamma}{\gamma = 1} \delta(\gamma).
\end{equation*}
If $D$ is triangulated with parameter $(\delta_1,\dotsc,\delta_d)$ then the Hodge--Tate--Sen weights of $D$ are $\set{\wt(\delta_i)}_{i=1,\dotsc,d}$.  The following definition is key for the statement of our theorem. A more general definition will be given in the text (see Section \ref{subsect:critical-noncritical}).

\begin{definition}
Let $D$ be a crystalline $(\varphi,\Gamma)$-module with distinct Hodge--Tate weights $k_1 < \dotsb < k_d$. If $P \ci D$ is a saturated $(\varphi,\Gamma)$-submodule of rank $i$ then $P$ is called non-critical if the lowest $i$ weights $\set{k_1,\dotsc,k_{i}}$ are the Hodge--Tate weights of $P$.
\end{definition}

Note that $D$ is always a non-critical $(\varphi,\Gamma)$-submodule of itself. We extend the notion of non-critical to a triangulation $P_{\bullet}$ by declaring a triangulation $P_{\bullet}$ to be non-critical if $P_i$ is non-critical for each $i$. This agrees with the original definition of Bella\"iche and Chenevier \cite{BellaicheChenevier-Book}.  More generally, we have the following construction. If $D$ is a crystalline $(\varphi,\Gamma)$-module and $P_{\bullet}$ is a triangulation then we define the non-critical indices 
\begin{equation*}
I^{\nc} = \set{i \st P_i \text{ is non-critical}} = \set{0 = i_0 < i_1 < i_2 < \dotsb < i_s = d}
\end{equation*}
and a filtration (called a parabolization, rather than a triangulation, following Chenevier \cite{Chenevier-InfiniteFern})
\begin{equation*}
P^{\nc}_{\bullet} : 0 = P_0^{\nc} \sci P_1^{\nc} \sci \dotsb \sci P_{s}^{\nc} = D
\end{equation*}
by declaring that $P_{j}^{\nc} = P_{i_j}$ for $j = 0,1,\dotsc,s$. In short, $P^{\nc}_{\bullet}$ only knows the non-critical steps in the triangulation $P_{\bullet}$; it is called the maximal non-critical parabolization of $P_{\bullet}$.

\subsection{Refined families and $p$-adic variation}

Now suppose that $X$ is a reduced rigid analytic space over $\Q_p$. In Section \ref{subsec:refined-families} we will define and consider so-called refined families $(\varphi,\Gamma)$-modules over $X$. For now it suffices to know that a refined family $D_X = \set{D_x}_{x}$ is a family of $(\varphi,\Gamma)$-modules over a relative Robba ring $\cal R_X$ with the following properties:
\begin{itemize}
\item there exists continuous characters $\delta_i: \Q_p^\x \goto \Gamma(X,\cal O)^\x$ and
\item  Zariski dense sets of points $X_{\cl}^{\nc} \ci X_{\cl} \ci X(\bar \Q_p)$,
\end{itemize}
such that
\begin{itemize}
\item if $x \in X_{\cl}$ then $D_x$ is crystalline with distinct Hodge--Tate weights and
\item if $x \in X_{\cl}^{\nc}$ then $D_x$ is triangulated by a non-critical triangulation $P_{\bullet,x}$ whose parameter is $(\delta_{1,x},\dotsc,\delta_{d,x})$.
\end{itemize}
Here, $\delta_{i,x}: \Q_p^\x \goto L(x)^\x$ is the character with values in the residue field $L(x)$ obtained by post-composing with the evaluation map at $x$. It follows from \cite[Theorem 6.3.13]{KedlayaPottharstXiao-Finiteness} or \cite[Theorem 5.45]{Liu-Triangulations} that in a neighborhood of a point $x_0 \in X_{\cl}$, each $(\varphi,\Gamma)$-module $D_x$ is triangulated by a triangulation $P_{\bullet,x}$, which is essentially canonical as long as $x_0$ is sufficiently generic (see Proposition \ref{prop:pointwise-existence}). But note that the parameter of $P_{\bullet,x}$ may (and will in critical cases) differ from the natural choice $(\delta_{1,x},\dotsc,\delta_{d,x})$.

Our main result shows that despite the possible non-variation of the pointwise parameters, the maximal non-critical parabolization of $P_{\bullet,x_0}$ varies analytically. Strictly speaking, we only defined $P_{\bullet,x}^{\nc}$ for certain points $x \in X_{\cl}$ but there is a way of extending the definition to every point (which requires reference to the family).

\begin{theoremA}[Theorem \ref{thm:main-theorem}]\label{thm:intro-main}
If $D_X$ is a refined family of $(\varphi,\Gamma)$-modules and $x_0\in X_{\cl}$ is very $\varphi$-regular\footnote{This is a technical condition, and is the same as ``sufficiently generic'' above. See Definition \ref{defi:very-phi-regular}.} then there exists an open affinoid neighborhood $x_0 \in U \ci X$ and a filtration
\begin{equation*}
0 = P_0^{\nc} \sci P_{1}^{\nc} \sci \dotsb \sci P_{s-1}^{\nc} \sci P_{s}^{\nc} =  \restrict{D}{U}
\end{equation*}
where each $P_i^{\nc}$ is a refined family of $(\varphi,\Gamma)$-modules over $U$, such that $P_{i,x}^{\nc} = P_{i,U}^{\nc} \tensor_{\cal O(U)} L(x)$ for all $x \in U$.
\end{theoremA}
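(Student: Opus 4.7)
The plan is to build the filtration inductively on its length $s$: first construct the bottom step $P_1^{\nc}$ as a refined subfamily of $\restrict{D}{U}$ over some affinoid neighborhood $U$ of $x_0$, then quotient and iterate on $D_U/P_1^{\nc}$.

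Starting from the essentially canonical pointwise triangulation $P_{\bullet,x_0}$ guaranteed by very $\varphi$-regularity (Proposition \ref{prop:pointwise-existence}), I would isolate the lowest non-critical step $P_{i_1,x_0}$: by definition, this is a saturated rank-$i_1$ sub-$(\varphi,\Gamma)$-module of $D_{x_0}$ whose Hodge--Tate weights are exactly $\set{k_1,\dotsc,k_{i_1}}$. The candidate for its deformation is a rank-$i_1$ triangulated refined family $Q_X$ built from the subtuple of $(\delta_1,\dotsc,\delta_d)$ matching the pointwise parameter of $P_{i_1,x_0}$ (which may be a permutation of $(\delta_{1,x_0},\dotsc,\delta_{i_1,x_0})$ when $x_0$ is critical at interior steps). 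I would then realize $P_1^{\nc}$ as the image of a $(\varphi,\Gamma)$-equivariant map $Q_U \to D_U$ produced by analyzing
\begin{equation*}
\operatorname{Hom}_{(\varphi,\Gamma)}(Q_U,D_U) = H^0_{\varphi,\Gamma}\bigl(\operatorname{Hom}_{\cal R_U}(Q_U,D_U)\bigr),
\end{equation*}
which, after possibly shrinking $U$, should be free of rank one over $\cal O(U)$ with a generator specializing at $x_0$ to a scalar multiple of the inclusion $P_{i_1,x_0} \hookrightarrow D_{x_0}$.

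Producing that free rank-one cohomology module is the main technical step. The point is that non-criticality of $P_{i_1,x_0}$ pins down the Hodge--Tate weights of $Q_{x_0}$ versus those of $D_{x_0}/P_{i_1,x_0}$, so the Sen weights and $\varphi$-eigenvalues never align to generate the obstructing $H^1$ or extra $H^0$ classes; the very $\varphi$-regular hypothesis supplies the analogous genericity globally on $U$, preventing jumps in cohomology. Standard semi-continuity and base-change arguments for $(\varphi,\Gamma)$-cohomology in families, of the type developed in \cite{KedlayaPottharstXiao-Finiteness}, should then upgrade the pointwise statement to a neighborhood statement. Injectivity and saturation of the resulting map $Q_U \to D_U$ are open conditions that hold at $x_0$ by construction, and so persist on $U$ after further shrinking.

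With $P_1^{\nc}$ in hand as a refined subfamily, the quotient $D_U/P_1^{\nc}$ inherits the structure of a refined family with characters the complementary subtuple of $(\delta_i)$, with the image of $x_0$ again very $\varphi$-regular but of non-critical length $s-1$. The inductive hypothesis then yields the chain $P_j^{\nc}/P_1^{\nc}$ for $j \geq 2$, which pulls back to the desired parabolization on $\restrict{D}{U}$. The main obstacle throughout is the cohomological construction of $P_1^{\nc}$: the full pointwise triangulation is known to degenerate over a nowhere-dense subset of $U$, and the work lies in showing that precisely the non-critical sub-steps survive this degeneration, via a careful Ext analysis exploiting both non-criticality and very $\varphi$-regularity to isolate the correct rank-one piece of $\operatorname{Hom}_{(\varphi,\Gamma)}(Q_U,D_U)$.
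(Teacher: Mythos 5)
Your proposal has a genuine gap at exactly the step the paper identifies as the central difficulty: the construction of the {\it a priori} candidate $Q_U$. You propose to first build a rank-$i_1$ refined family $Q_U$ ``from the subtuple of $(\delta_1,\dotsc,\delta_d)$ matching the pointwise parameter of $P_{i_1,x_0}$'' and then realize $P_1^{\nc}$ as the image of a generator of $\operatorname{Hom}_{(\varphi,\Gamma)}(Q_U,D_U)$. But the characters $(\delta_1,\dotsc,\delta_{i_1})$ only specify the graded pieces of a triangulated family; $Q_U$ itself would be an iterated extension, and nothing in the data tells you which extension classes to choose. Choosing them so that $Q_{x_0}\simeq P_{i_1,x_0}$ and $Q_u$ agrees with the $i_1$-th triangulation step at nearby non-critical $u$ is exactly the content of the theorem restricted to the bottom non-critical step --- so this is circular. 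The paper explicitly flags this obstruction and records an example (due to Hellman) of two rank-two families $D,D'$ over loci in the Coleman--Mazur eigencurve that are fiberwise isomorphic away from a single point $u_0$ but differ at $u_0$ (one fiber \'etale, the other not); this shows that even complete knowledge of what the family ``should be'' on the complement of a point does not determine it over the point. Your remark that ``the work lies in showing that precisely the non-critical sub-steps survive this degeneration, via a careful Ext analysis'' acknowledges the difficulty but does not resolve it: the Hom and Ext computations you describe presuppose the existence of the object $Q_U$ they are meant to produce.

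The paper's route is fundamentally different and sidesteps the need for any a priori candidate. Instead of trying to map a guessed $Q_U$ {\it into} $D_U$, one repeatedly quotients $D_U$ by rank-one maps $\prod_\tau t_\tau^{\kappa_{j,\tau}(x_0)-\kappa_{1,\tau}(x_0)}\cal R_U(\delta_j)\to D_U$ which are deliberately {\it non-saturated} at $x_0$ (Theorem \ref{thm:make-torsion}). The cokernels are generalized $(\varphi,\Gamma_K)$-modules whose fibers carry controlled torsion --- at $x_0$ as well as at nearby classical points --- tracked by the bookkeeping apparatus of Section \ref{sec:triangulated-families}. After $n$ such quotients the residual torsion is killed step by step (Proposition \ref{prop:killing-tors}, Corollary \ref{cor:tau-balanced}), producing a genuine $(\varphi,\Gamma_K)$-module quotient $C_n$ of $D_U$, and $P_1^{\nc}$ is then defined as $\ker(D_U\twoheadrightarrow C_n)$; the remaining steps follow by induction on $D_U/P_1^{\nc}$. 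Your inductive skeleton (build bottom step, quotient, iterate) matches the paper's outer structure and the cohomology-and-base-change ingredients you invoke are indeed the ones used, but the bottom-step construction itself is the crux, and the proposal as written does not supply it.
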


The result is optimal in the following sense. If $x \in X_{\cl}$ is non-critical then the Hodge--Tate weights of $P_{i,x}$ are $\set{\wt(\delta_{1,x}),\wt(\delta_{2,x}),\dotsc,\wt(\delta_{i,x})}$. Thus for general $x \in X_{\cl}$ Sen's theory of Hodge--Tate weights in families \cite{Sen-VariationHodgeStructure} implies one can only hope that $P_{i,x}$ extends to an affinoid neighborhood of $x$ provided the Hodge--Tate weights of $P_{i,x}$ are $\set{\wt(\delta_{1,x}),\dotsc,\wt(\delta_{i,x})}$, i.e. $P_{i,x}$ can only vary well in a family if it is a non-critical step in the triangulation at $x$.

The history of our result is relatively short. In the case where $x_0$ is non-critical, so an entire triangulation extends to affinoid neighborhoods, Theorem A has two independent proofs, given essentially at the same time. One proof was given by Liu \cite{Liu-Triangulations} using a generalization of Kisin's interpolation of crystalline periods \cite{Kisin-OverconvergentModularForms, BellaicheChenevier-Book} over general affinoid bases. The other proof was given by Kedlaya, Pottharst and Xiao \cite{KedlayaPottharstXiao-Finiteness} and relied explicitly\footnote{The author has been told that Liu's work established some of these results, implicitly.} on the finiteness of Galois cohomology for families of $(\varphi,\Gamma)$-modules (also proven in \cite{KedlayaPottharstXiao-Finiteness}). Neither work makes any general comment on what to expect at a general classical point, especially in the critically triangulated case.

Our technique is inspired by the latter proof \cite{KedlayaPottharstXiao-Finiteness} and separate work of Liu emphasizing the utility of torsion $(\varphi,\Gamma)$-modules \cite{Liu-CohomologyDuality}. To explain this, let's recall the Kedlaya--Pottharst--Xiao proof of Theorem A in the non-critical case.

Under mild regularity assumptions one can check, using the finiteness of Galois cohomology in families, that $\Hom_{(\varphi,\Gamma)}(\cal R_X(\delta_1),D_X)$ is locally free of rank one near $x_0$. Indeed, it may be checked point-by-point, and essentially just at the point $x_0$ and the points in $X_{\cl}^{\nc}$. Choose an everywhere non-vanishing morphism $\bf e: \cal R_X(\delta_1) \goto D_X$. The non-critical hypothesis on $x_0$ means that the specialized morphism $\bf e_{x_0}: \cal R(\delta_{1,x_0}) \goto D_{x_0}$ has saturated image and, thus, $\coker(\bf e_{x_0})$ is free. An easy argument shows that one can shrink $X$ so that $\coker(\bf e)$ is a $(\varphi,\Gamma)$-module over $X$. The rest of the proof of Theorem A is carried about by induction from this case. 

A generalization of this strategy, to critical points, requires us to define an {\em a priori} candidate for the first step $P_1^{\nc}$ of the sought after parabolization. In the non-critical case $P_1^{\nc}$ is handed to us on a platter as $\cal R_X(\delta_1)$. However, there seems to be no easy way to guess $P_1^{\nc}$ ahead of time\footnote{We thank Eugen Hellman for pointing out the following example which concretely illustrates the issue involved. It is possible to construct two families of rank two of $(\varphi,\Gamma)$-modules $D$ and $D'$, over certain reasonable loci on a Coleman-Mazur eigencurve such that 
\begin{itemize}
\item $D_u \simeq D'_u$, and both are \'etale, for all $u$ except one point $u_0$ and
\item $D_{u_0}$ is \'etale but $D_{u_0}'$ is not.
\end{itemize}
Thus, even knowing that $D$ is an extension of two characters on the complement of a point is not enough to determine an extension over the puncture.}.

The key feature of our proof is to embrace the non-saturatedness of the morphism $\bf e_{x_0}$ in the critical case. Thus we proceed by studying more general families of $(\varphi,\Gamma)$-modules whose fibers have $\cal R$-torsion in them. We call such modules generalized $(\varphi,\Gamma)$-modules following Liu \cite{Liu-CohomologyDuality}. In Section \ref{sec:triangulations}, we give a generalization of the notion of a triangulation of a generalized $(\varphi,\Gamma)$-modules which is well-adapted to attacking Theorem A. The novelty of our proof is the introduction of torsion into the fibers at {\em every point} in order to canonically describe a candidate for $\coker(P_1^{\nc} \goto D)$, thus producing $P_1^{\nc}$ as needed for Theorem A.

\subsection{Ramification of weights}
Let us finish by mentioning an auxiliary result we prove here on the ramification of weights in refined families. Let $D_X$ be a refined family and $x_0 \in X_{\cl}$. Consider the triangulation $P_{\bullet,x_0}$ and denote its parameter by $(\twid \delta_{1,x_0},\dotsc,\twid\delta_{d,x_0})$. As shown in the text, the lists of distinct weights $\set{\wt(\twid \delta_{i,x_0})}$ and $\set{\wt(\delta_{i,x_0})}$ are the same. Thus we may define a permutation $\pi_{x_0}$ by the formula $\wt(\twid \delta_{i,x_0}) = \wt(\delta_{\pi_{x_0}(i),x_0})$. We remark that $\pi_{x_0} = \id$ if and only if $x_0$ is non-critical.

We now let $T_{x_0}X$ be the Zariski tangent space to $X$ at $x_0$. If $f$ is the germ of a function at $x_0$ and $v$ is a tangent vector, we let $\nabla_v(f)$ denote the directional derivative of $f$ with respect to $v$. Our second theorem is that certain differences of weights are constant in every tangent direction.
\begin{corollaryB}[Theorem \ref{theorem:ramification}]
If $D_X$ is a refined family over $X$ and $x_0 \in X_{\cl}$ is very $\varphi$-regular then
\begin{equation*}
\nabla_{v}\left( \wt(\delta_{\pi_{x_0}(i), u}) - \wt(\delta_{i,u}) \right) = 0
\end{equation*}
for all $i=1,\dotsc,d$ and $v \in T_{x_0}X$. In particular, if $x_0$ is critical then the weight map ramifies.
\end{corollaryB}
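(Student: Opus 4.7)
The plan is to combine Theorem~\ref{thm:main-theorem} with Sen theory for families and a first-order analysis at $x_0$. I outline the approach in three steps.

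\textbf{Reduction to a single block.} By Theorem~\ref{thm:main-theorem}, the parabolization $P_{\bullet}^{\nc}$ extends to a neighborhood $U$ of $x_0$, and each graded piece $Q_j = P_j^{\nc}/P_{j-1}^{\nc}$ is a refined family of rank $n_j = i_j - i_{j-1}$ with characters $(\delta_i)_{i_{j-1} < i \leq i_j}$. The non-criticality of each $P_{i_j,x_0}$ means that its Hodge--Tate weights $\set{k_1,\dotsc,k_{i_j}}$ coincide in both the refined family and the triangulation parametrizations, so $\pi_{x_0}$ preserves each block $\set{i_{j-1}+1,\dotsc,i_j}$. Since the statement in Theorem~\ref{theorem:ramification} for an index $i$ only involves characters in the block containing $i$, it suffices to verify it on each $Q_j$ separately; equivalently, we may assume $s = 1$, i.e.\ the parabolization at $x_0$ is trivial and $\pi_{x_0}$ has no invariant proper prefix of $\set{1,\dotsc,d}$.

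\textbf{Sen analysis and first-order expansion.} In this reduced setting, Sen's theorem for families of $(\varphi,\Gamma)$-modules implies that the characteristic polynomial of the Sen operator on $D_U$ equals $\prod_i(X - \wt(\delta_i)(u))$; this follows from analyticity together with the identity on the Zariski dense subset $X_{\cl}^{\nc} \cap U$, where the Sen weights are the refined family weights. Fix $v \in T_{x_0}X$ and consider the restriction of $D_U$ along $v$ to $L(x_0)[\epsilon]/\epsilon^2$, whose Sen eigenvalues to first order are $\wt(\delta_i)(x_0) + \epsilon\,\nabla_v\wt(\delta_i)$. Assuming that the critical triangulation at $x_0$ extends to a first-order trianguline deformation with parameter $(\twid\delta_i^{(1)})$---plausibly via the generalized $(\varphi,\Gamma)$-module techniques central to this paper---matching the roots of the Sen characteristic polynomial at $\epsilon = 0$ and using distinctness yields $\wt(\twid\delta_i^{(1)}) = \wt(\delta_{\pi_{x_0}(i)})(x_0) + \epsilon\,\nabla_v\wt(\delta_{\pi_{x_0}(i)})$.

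\textbf{Rigidity of the Tate twist.} To conclude, one needs $\nabla_v\wt(\twid\delta_i^{(1)}) = \nabla_v\wt(\delta_i)$; combined with the Sen matching this yields Theorem~\ref{theorem:ramification}. At $\epsilon = 0$, both triangulations preserve the Frobenius ordering on graded pieces, so $\twid\delta_{i,x_0}(p) = \delta_{i,x_0}(p)$, and their ratio $\twid\delta_{i,x_0}/\delta_{i,x_0}$ is the algebraic Tate twist $z^{k_{\pi_{x_0}(i)} - k_i}$. The hard point is to show this twist remains rigid to first order, i.e.\ $\twid\delta_i^{(1)}/\delta_i^{(1)} \equiv z^{k_{\pi_{x_0}(i)} - k_i} \pmod{\epsilon^2}$. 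I anticipate proving this by a deformation-theoretic argument: the first-order trianguline deformations of $D_{x_0}$ preserving the refined family's Frobenius ordering on graded pieces form a torsor under the discrete group of algebraic Tate twists, and any such twist has weight an integer independent of $\epsilon$. Controlling this rigidity, together with constructing the first-order extension of the critical triangulation, is the main technical obstacle of the argument.
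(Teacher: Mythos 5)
Your reduction to a single block in Step 1 is reasonable, and the Sen-theoretic description of the first-order eigenvalues in Step 2 is correct (it is essentially axiom (RF1) read off along $v$). But the argument stalls exactly where you flag the obstacle: you need a first-order trianguline deformation of $D_{x_0}$ along $v$ extending the critical triangulation $P_{x_0,\bullet}$, and nothing in the setup supplies one. Indeed Theorem \ref{thm:main-theorem} says that only the maximal non-critical parabolization spreads out over $X$ --- the critical steps are precisely the ones that fail to vary in families --- so there is no reason the infinitesimal deformation $\twid D_v$ of $D_{x_0}$ along $v \in T_{x_0}X$ should be trianguline at all, let alone with the parameter pattern you postulate. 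Granting that hypothesis is roughly as strong as the statement to be proved, and the ``rigidity of the Tate twist'' torsor argument in Step 3 is likewise only heuristic; as written it presupposes the trianguline deformation whose existence is the real question.

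The paper's proof sidesteps the need for a full trianguline deformation. For each $i$ it twists the exterior power to form $D_i := (\wedge^i D)(\eta_i^{-1})$ with distinct lowest weight $0$ over $X_{\cl}$, then invokes Liu's interpolation of crystalline periods \cite[Theorem 4.13]{Liu-Triangulations} to conclude that $D_{\cris}(D_i)^{\varphi^{f_K} = \Phi}$ is locally free of rank one and satisfies base change. Pulling back along $v$ produces a single crystalline eigenvector in $\twid D_{i,v}$. Lemma \ref{lemma:ramification-lemma} upgrades this one period into crystallinity of the submodule $E = \ker\left(\twid D_{i,v} \goto D_{i,x_0}/\eta_{i,x_0}^{-1}\det P_{x_0,i}\right)$; crystallinity gives Hodge--Tate, hence semi-simplicity of the Sen operator on $E$, and the vanishing of the nilpotent part of the relevant $2\times 2$ Sen block is exactly $\sum_{j\leq i}\nabla_v(\kappa_{\pi_{x_0,\tau}(j),\tau}-\kappa_{j,\tau}) = 0$; induction on $i$ finishes. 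The ingredient you are missing is the crystalline-period interpolation: it delivers the one eigenvector needed, with no hypothesis that the entire triangulation deforms, and thereby converts your acknowledged obstacle into a known theorem.
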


Theorem B was noticed independently by the author and Breuil. We reproduce an argument similar to \cite[Th\'eor\`eme 9.7]{Breuil-LocallyAnalyticSocle2} in Section \ref{sec:ramification}. Our proof will make use Liu's results on crystalline periods \cite{Liu-Triangulations}. But we note that such a theorem could have been proven using only the (infinitesimal) study of crystalline periods in the weakly refined families  $\wedge^i D$ (as in \cite[Section 4.3]{BellaicheChenevier-Book}, for example) combined with deformation calculations similar to \cite[Proposition 2.4]{Bergdall-CompanionPoints}. 

\subsection{Organization}
Section 2 briefly recalls the theory of $(\varphi,\Gamma)$-modules and the important theorems. Section 3 introduces triangulations and parabolizations, including a definition for torsion $(\varphi,\Gamma)$-modules. Section 4 is a digression into the theorems of \cite{KedlayaPottharstXiao-Finiteness} and applications. Section 6 contains our result on the variation of parabolizations in $p$-adic families, and Section 5 plays a supporting role. Finally we study the ramification of the weight parameters in Section 7. A short appendix is included to deal with a ``relative'' version of Nakayama's lemma.

\subsection{Notations and conventions}\label{subsec:notations}
Throughout the text we will make the following conventions. They will follow \cite{KedlayaPottharstXiao-Finiteness} closely. 

We fix an algebraic closure $\bar \Q_p$ and a $p$-adic valuation on $\bar \Q_p$ so that $\abs{p} = p^{-1}$. 

$K$ will always denote a finite extension $\Q_p$. We let $F$ be the maximal subfield of $F$ unramified over $\Q_p$, $f_K = (F:\Q_p)$ the inertial degree of $K$ and $e_K$ the ramification index of $K$.

We will let $K_\infty = \dirlim_n K(\zeta_{p^n})$ be the extension obtained by adjoining to $K$ all the $p$-power roots of unity. The maximal absolutely unramified subextension of $K_\infty$ is denoted by $F'$. If $H_K = \Gal(\bar \Q_p/K_\infty)$ then we define $\Gamma_K = G_K/H_K$. The cyclotomic character $\Gamma_{\Q_p} \goto \Z_p^\x$ identifies $\Gamma_K$ with an open subgroup of $\Z_p^\x$.

Write $\Sigma_K$ for the set of all the embeddings $K\hookrightarrow \bar \Q_p$. Then $L$ will always denote a finite extension of $\Q_p$ contained in $\bar \Q_p$ such that $\tau(K) \ci L$ for each $\tau \in \Sigma_K$. We allow $L$ to change at will. Note that $L\tensor_{\Q_p} K \simeq \prod_\tau L$ and we denote by $e_\tau$ the idempotent in $L\tensor_{\Q_p} K$ which projects onto the $\tau$-component.

\subsection{Acknowledgements}
The author heartily thanks Rebecca Bellovin, Christophe Breuil, Kiran Kedlaya, Ruochuan Liu, Rob Pollack and Liang Xiao for helpful discussions and questions regarding this work. A special thanks goes to Jay Pottharst for being readily available to discuss his joint work with Kedlaya and Xiao prior to its written debut. We also thank an anonymous referee for many helpful suggestions and corrections. In particular, we owe the crucial flexibility in the definition of finite cohomology in Section \ref{sec:finiteness} to the referee's suggestion. Finally, the bulk of this work was completed as a component of the author's doctoral thesis and it is his pleasure to thank Jo\"el Bella\"iche  for five years of insightful discussions and support. The author was partially supported by NSF award DMS-1402005 during the writing of this article.

\section{Review of $(\varphi,\Gamma_K)$-modules}
We give a short review of (generalized) $(\varphi,\Gamma_K)$-modules, their relationship with Galois representations and the $p$-adic arithmetic theory (cohomology, $p$-adic Hodge theory, etc.). All the notations from Section \ref{subsec:notations} are enforced, including the choice of $L$ for a generic coefficient field containing the image of each embedding of $K$ into $\bar \Q_p$.
\label{sec:review}
\subsection{The Robba ring}\label{subsec:robba-ring}
We quickly remind the reader of the definition of the Robba ring $\cal R$, setting notation for the most part. We note that $K$ is fixed throughout, but that the definition of $\cal R$ depends on $K$ (we simply suppress it from the notation).

For each pair of rational numbers $0 < s \leq r \leq \infty$ we define a $p$-adic annulus
\begin{equation*}
\Af^1_{/F'}[s,r] = \set{T \st p^{-r/(p-1)} \leq \abs{T} \leq p^{-s/(p-1)}}
\end{equation*}
 over $F'$. When $r = \infty$ this is a $p$-adic disc. We also let
 \begin{equation*}
\Af^1_{/F'}(0,r] = \set{T \st p^{-r/(p-1)} \leq \abs{T} < 1}.
\end{equation*}
 be the half-open annulus. We denote by $\cal R^{[s,r]}$ the formal substitution of a certain indeterminate $\pi_K$, arising from the field of norms, for the variable $T$ in the ring of functions on $\Af^1_{/F'}[s,r]$. This is the ring denoted by $\cal R^{[s,r]}(\pi_K)$ in \cite{KedlayaPottharstXiao-Finiteness}.
 
 If $A$ is a $\Q_p$-affinoid algebra then we denote $\cal R_A^{[s,r]} = \cal R^{[s,r]} \hat\tensor_{\Q_p} A$. Let $X = \Sp(A)$ be the associated affinoid space to $A$. Then $\cal R_A^{[s,r]}$ is abstractly isomorphic to the ring of rigid analytic functions on $X^{[s,r]} := \Af^1[s,r]_{/F'} \x_{\Sp \Q_p} \Sp A$. Thus it is a noetherian Banach algebra when equipped with the usual Gauss norm.

If $0 < s < s' \leq r \leq \infty$ then there is an injective restriction morphism $\cal R_A^{[s,r]} \goto \cal R_A^{[s',r]}$ which is flat and has dense image. We then define $\cal R^r_A := \bigintersect_{0 < s \leq r} \cal R_A^{[s,r]}$ and the {\em relative Robba ring over $A$} is
\begin{equation*}
\cal R_A = \bigunion_{0 < r} \cal R_A^r.
\end{equation*}
The ring $\cal R_A^r$ is the global sections on the rigid space $X^r := \Af^1_{/F'}(0,r] \x_{\Sp \Q_p} X$, the relative half open annulus. We will also use the notations $\cal R_X^{r}$ and $\cal R_X$ with the obvious meaning.

Returning to the closed annuli, for any $0 < s \leq r$ there is a continuous action of the group $\Gamma_K\surject \Gal(F'/F)$ on the $F'$ coefficients in $\cal R^{[s,r]}$; we can extend this canonically, up to the choice of $\pi_K$, to the ring $\cal R^{[s,r]}$. When $r$ is sufficiently small there is also an operator $\varphi: \cal R^{[s,r]} \goto \cal R^{[s/p,r/p]}$ called Frobenius which acts on the coefficients in $F'$ via the usual Frobenius action and acts on $\pi_K$ by a choice\footnote{When $K = \Q_p$ then the choice of $\pi_K$ can be made so that $\varphi(\pi_K) = (1+\pi_K)^p - 1$ and $\gamma(\pi_K) = (1 + \pi_K)^{\chi_{\cycl}(\gamma)} - 1$. Moreover, the operator $\varphi$ is defined as soon as $r < {1}$.}, again, canonically up to $\pi_K$. The operator $\varphi$ turns $\cal R^{[s/p,r/p]}$ into a finite free $\cal R^{[s,r]}$-module of rank $p$. Furthermore, $\varphi$ extends to an operator $\varphi: \cal R^{r} \goto \cal R^{r/p}$ and thus also extends to an operator on $\cal R$. When $X$ is an affinoid space we extend the actions of $\Gamma_K$ and $\varphi$ to the relative Robba rings by acting trivially on the coefficients $A$.

There are two ways to view $\cal R_X^{r/p}$ as a module over $\cal R_X^r$, either by the restriction map or the operator $\varphi$. If $Q$ is a module over $\cal R_X^r$ then we denote by $\varphi^{\ast}Q$ the extension of scalars $\varphi^{\ast}Q := Q \tensor_{\cal R_X^{r},\varphi} \cal R_X^{r/p}$ and $\restrict{Q}{(0,r/p]}$ the $\cal R_X^{r/p}$-module obtained by using the restriction map.

\begin{definition}
A generalized $\varphi$-module over $X^{r}$ is a finitely presented $\cal R_X^{r}$-module $Q$ together with an isomorphism $\varphi^{\ast}Q \simeq \restrict{Q}{(0,r/p]}$ of $\cal R_X^{r/p}$-modules. We say that $Q$ is  a $\varphi$-module if $Q$ is also projective.
\end{definition}

We now fix $r_0 > 0$. Since $X$ is affinoid, so is $X^{[s,r]}$. Thus, by Kiehl's theorem \cite[Theorem 9.4.3/3]{BGR}, global sections give an equivalence of categories
\begin{equation*}
\set{\text{finite $\cal R_X^{[s,r]}$-modules}} \longleftrightarrow \set{\text{coherent sheaves on $X^{[s,r]}$}}.
\end{equation*}
Since $X^{r_0}$ is admissibly covered by affinoid opens $\set{X^{[s,r]}}_{0 < s \leq r \leq r_0}$, a coherent sheaf $\cal Q$ on $X^{r_0}$ is the same as a system $\cal Q = (Q^{[s,r]})_{0 < s \leq r \leq r_0}$ of finite $\cal R_X^{[s,r]}$ modules satisfying the obvious compatibilities. The global sections of a sheaf $\cal Q$ may be calculated by
\begin{equation*}
Q = \Gamma(X^{r_0},\cal Q) = \invlim_{0 < s} Q^{[s,r_0]}.
\end{equation*}
If $Q$ is a finitely presented module over $X^{r_0}$ then there is a coherent sheaf defined by the compatible family $Q^{[s,r]} := Q \tensor_{\cal R_X^{r_0}} \cal R_X^{[s,r]}$. We pause to include an auxiliary result on finitely presented modules over $X^{r}$. It applies, in particular, to all generalized $\varphi$-modules.

\begin{lemma}\label{lemma:flat}
Suppose that $Q$ is a finitely presented $\cal R_X^r$-module and $f \in \cal R^r$. Then $Q/f$ is finite projective over $\cal R_X^r/f$ if and only if $Q^{[s,r]}/f$ is finite projective over $\cal R^{[s,r]}/f$ for each each $0 < s \leq r$.
\end{lemma}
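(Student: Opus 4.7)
The plan is to handle each direction separately, rephrasing projectivity in terms of the coherent sheaf associated to $Q/f$ on the closed rigid subspace $V(f) \subset X^r$ cut out by $f$.

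First, for the forward direction, I would invoke flat base change. The restriction map $\cal R_X^r \to \cal R_X^{[s,r]}$ is flat, as noted just before the definition of a generalized $\varphi$-module, hence so is the induced map $\cal R_X^r/f \to \cal R_X^{[s,r]}/f$. Since finite projectivity is preserved under flat base change, the module $Q^{[s,r]}/f \simeq (Q/f) \otimes_{\cal R_X^r/f} \cal R_X^{[s,r]}/f$ is finite projective whenever $Q/f$ is.

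For the converse, I would first observe that $Q/f$ is automatically finitely presented over $\cal R_X^r/f$ (since $Q$ is finitely presented over $\cal R_X^r$), so the substantive content is only projectivity. The hypothesis then says, via Kiehl's theorem on each affinoid piece, that the coherent sheaf $\cal F$ on $V(f)$ associated to the compatible family $(Q^{[s,r]}/f)_s$ is locally free of finite rank on the admissible cover $\{V(f)\cap X^{[s,r]}\}_s$. The two remaining steps in the plan are: (a) recover $Q/f$ itself as the global sections $\Gamma(V(f),\cal F)$, which should follow from flatness and dense image of the restriction maps (yielding the requisite Mittag-Leffler condition on the inverse system); and (b) promote local freeness of $\cal F$ to projectivity of its module of global sections.

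The hard part will be step (b), since $V(f)$ is not affinoid and Kiehl's theorem does not apply directly. My preferred route would be to pick a surjection $(\cal R_X^r/f)^n \twoheadrightarrow Q/f$ with $n$ at least the maximum of the fiberwise ranks, verify that its kernel is finitely presented and also pointwise projective (by comparison with $Q^{[s,r]}/f$ plus free modules), and then lift the pointwise splittings compatibly across the admissible cover via an inverse-limit (Fr\'echet--Stein) argument; this is where the density and flatness of the transition maps do the real work. Together with the standard principle that locally free coherent sheaves on quasi-Stein spaces yield projective global-section modules, this should close the proof.
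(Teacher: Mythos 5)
Your forward direction matches the paper's exactly: $\cal R_X^r/f \to \cal R_X^{[s,r]}/f$ is flat, and finite projectivity is stable under flat base change. Your observation that $Q/f$ is automatically finitely presented is also used in the paper, though for a different purpose.

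The reverse direction is where you and the paper diverge, and I think your sketch has a genuine gap. The paper's first move is to invoke the fact that over \emph{any} ring a finitely presented module is projective if and only if it is flat (Matsumura, Corollary 7.12). This trades ``projective'' for ``flat,'' and flatness is checked by the finitely generated ideal criterion: $I \otimes_{\cal R_X^r/f} Q/f \to Q/f$ must be injective for every finitely generated ideal $I$. Both sides of this map are then shown to be co-admissible in the Schneider--Teitelbaum/KPX sense, and for co-admissible modules injectivity of a morphism can be tested on each closed slice $[s,r]$, where the assumed flatness of $Q^{[s,r]}/f$ applies directly. Nothing has to be glued or lifted.

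By contrast, you try to lift a compatible family of splittings of $(\cal R_X^{[s,r]}/f)^n \twoheadrightarrow Q^{[s,r]}/f$ across the admissible cover. This is precisely the delicate point that the flatness route avoids. Each slice admits \emph{some} splitting, but the set of splittings on a given slice is a torsor under $\Hom(Q^{[s,r]}/f, K^{[s,r]})$ where $K$ is the kernel, and the inverse limit over $s$ of a system of nonempty torsors can be empty; you need a genuine $\varprojlim^1$-vanishing argument, not just density of the transition maps. You call this ``the standard principle that locally free coherent sheaves on quasi-Stein spaces yield projective global-section modules,'' but that is not something you can cite off the shelf in this setting --- the paper itself remarks (after Proposition 2.2, citing Bellovin) that global sections of coherent sheaves on $X^{r_0}$ need not even be finitely generated; finite presentation of $Q/f$ is what saves you here, and that is an additional input your appeal to the ``standard principle'' does not acknowledge. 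You also silently use that the kernel $K$ is finitely presented, which requires coherence of $\cal R_X^r/f$ and should be justified. The flatness criterion sidesteps all of this because injectivity (unlike surjectivity or splitting) is a property that passes cleanly through the $\varprojlim$ via co-admissibility. I'd recommend rewriting the reverse direction using Matsumura Cor.~7.12 and the co-admissibility framework of KPX Lemma~2.1.4, which is how the paper closes the argument.
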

\begin{proof}
Since $Q$ is assumed to be finitely presented over $\cal R_X^r$, the same is true for $Q/f$ over $\cal R_X^r/f$ and thus by \cite[Corollary 7.12]{Matsumura-CommutativeRingTheory} it suffices to replace ``projective'' with ``flat'' in the statement of the lemma.

Even without that, one direction is clear: if $Q/f$ is finite projective over $\cal R_X^r/f$ then $Q^{[s,r]}/f = Q/f \tensor_{\cal R_X^r/f} \cal R^{[s,r]}/f$ is finite projective over $\cal R^{[s,r]}/f$ for each $0 < s \leq r$. 

We will now prove the reverse direction, so assume that $Q^{[s,r]}/f$ is flat over $\cal R_X^{[s,r]}/f$ for each $0 < s \leq r$. By \cite[Theorem 7.7]{Matsumura-CommutativeRingTheory} it suffices to show that 
\begin{equation}\label{eqn:need-to-inject}
I\tensor_{\cal R_X^r/f} Q/f \goto Q/f
\end{equation}
is injective for every finitely generated ideal $I \ci \cal R_X^r/f$. Now we use the language of ``co-admissible'' modules originally due to Schneider and Teitelbaum \cite{SchneiderTeitelbaum-padicDistributions}, and we will reference \cite[Section 2.1]{KedlayaPottharstXiao-Finiteness}. By \cite[Lemma 2.1.4(7)]{ KedlayaPottharstXiao-Finiteness} the $\cal R_X^r$-module $\cal R_X^r/f$ is co-admissible, and thus so is $I \ci \cal R_X^r/f$ by \cite[Lemma 2.1.4(6)]{KedlayaPottharstXiao-Finiteness}. Since $Q$ is co-admissible so is $Q/f = \coker(Q\overto{f} Q)$ by \cite[Lemma 2.1.4(5)]{KedlayaPottharstXiao-Finiteness}. Thus \eqref{eqn:need-to-inject} is a morphism of co-admissible $\cal R_X^r$-modules  and hence is injective if and only if
\begin{equation}\label{eqn:need-to-inject-s}
\left(I\tensor_{\cal R_X^r/f} Q/f\right)^{[s,r]} \goto Q^{[s,r]}/f
\end{equation}
is injective for each $0 < s\leq r$. But we see
\begin{equation*}
\left(I\tensor_{\cal R_X^r/f} Q/f\right)^{[s,r]} \simeq \left(I\tensor_{\cal R_{X}^r/f} \cal R_X^{[s,r]}/f\right) \tensor_{\cal R_X^{[s,r]}/f} Q^{[s,r]}/f
\end{equation*}
and then since $\cal R_X^r/f \goto \cal R_X^{[s,r]}/f$ is flat we see that $I\tensor_{\cal R_{X}^r/f} \cal R_X^{[s,r]}/f = I\cdot \cal R_X^{[s,r]}/f$ is an ideal in $\cal R_X^{[s,r]}/f$. In particular, this shows that the map \eqref{eqn:need-to-inject-s} may be identified  with the natural map
\begin{equation*}
I\cdot \cal R_X^{[s,r]}/f \tensor_{\cal R_X^{[s,r]}/f} Q^{[s,r]}/f \goto Q^{[s,r]}/f,
\end{equation*}
which is injective because $Q^{[s,r]}/f$ is flat over $\cal R_X^{[s,r]}/f$ by assumption. This completes the proof.
\end{proof}

Returning to $\varphi$-modules, if $Q$ is a generalized $\varphi$-module then the isomorphism $\varphi^{\ast}Q \simeq \restrict{Q}{(0,r_0/p]}$ translates into the choice of a compatible system of isomorphisms $\varphi^{\ast}Q^{[s,r]} \simeq Q^{[s/p,r/p]}$. In fact, all generalized $\varphi$-modules arise this way.

\begin{proposition}\label{prop:equiv-coh-sheaves}
There is are equivalences of categories
\begin{equation*}
\set{\text{\parbox{4cm}{\centering generalized $\varphi$-modules over $X^{r_0}$}}} \longleftrightarrow \set{\text{\parbox{8.5cm}{\centering coherent sheaves $\cal Q = (Q^{[s,r]})$ on $X^{r_0}$ equipped with naturally compatible isomorphisms $\varphi^*Q^{[s,r]} \iso Q^{[s/p,r/p]}$ }}}.
\end{equation*}
Moreover, the $\varphi$-modules on the left-hand side correspond to sheaves on the right-hand side for which each $Q^{[s,r]}$ is projective.
\end{proposition}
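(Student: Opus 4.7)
The essential content is to upgrade the already-discussed (non-equivariant) correspondence between finitely presented $\cal R_X^{r_0}$-modules and certain coherent sheaves on $X^{r_0}$ to account for the $\varphi$-structure. Recall the underlying equivalence: Kiehl's theorem on the affinoids $X^{[s,r]}$, combined with the admissible cover $\set{X^{[s,r]}}$ of $X^{r_0}$, identifies coherent sheaves on $X^{r_0}$ with compatible systems $(Q^{[s,r]})$ of finite modules; and a finitely presented $\cal R_X^{r_0}$-module $Q$ produces such a system by the base-change construction $Q^{[s,r]} := Q \tensor_{\cal R_X^{r_0}} \cal R_X^{[s,r]}$.

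\textbf{Forward direction.} If $Q$ is a generalized $\varphi$-module, the associated coherent sheaf $\cal Q = (Q^{[s,r]})$ comes equipped with compatible isomorphisms $\varphi^{\ast} Q^{[s,r]} \iso Q^{[s/p,r/p]}$ obtained by base-changing the given isomorphism $\varphi^{\ast} Q \iso \restrict{Q}{(0, r_0/p]}$.

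\textbf{Reverse direction.} Conversely, given $\cal Q = (Q^{[s,r]})$ with a compatible $\varphi$-structure, set $Q := \Gamma(X^{r_0}, \cal Q) = \invlim_s Q^{[s, r_0]}$. By the co-admissible module formalism of Schneider--Teitelbaum recalled in the proof of Lemma~\ref{lemma:flat}, $Q$ is a co-admissible $\cal R_X^{r_0}$-module whose base change recovers each $Q^{[s, r_0]}$, and the family of $\varphi$-isomorphisms assembles in the limit into an isomorphism $\varphi^{\ast} Q \iso \restrict{Q}{(0, r_0/p]}$. The remaining and main task is to show that $Q$ is in fact finitely presented over $\cal R_X^{r_0}$. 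This uses the $\varphi$-structure crucially: pick some $s_0 \leq r_0/p$ and a finite presentation of $Q^{[s_0, r_0]}$ over the noetherian Banach algebra $\cal R_X^{[s_0, r_0]}$. Applying $\varphi^{\ast}$ transports this to a finite presentation of $Q^{[s_0/p, r_0/p]}$ with the same number of generators and relations; gluing with the original presentation over the overlap $[s_0, r_0/p]$ (using that both restrictions agree with $Q^{[s_0, r_0/p]}$ inside the coherent sheaf), one obtains a finite presentation of $Q^{[s_0/p, r_0]}$. Iterating and taking the inverse limit, which is exact on co-admissible systems, yields a finite presentation of $Q$ as an $\cal R_X^{r_0}$-module.

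\textbf{The projective refinement.} A finitely presented module is projective iff it is flat, and by Lemma~\ref{lemma:flat} (applied with $f = 0$, after a trivial modification) together with flatness of the restrictions $\cal R_X^{r_0} \goto \cal R_X^{[s,r]}$, flatness of $Q$ is equivalent to flatness, and hence projectivity, of each $Q^{[s,r]}$. This yields the second assertion about $\varphi$-modules.

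\textbf{Main obstacle.} The crux is finite presentation in the reverse direction: without the $\varphi$-structure, a coherent sheaf on the Stein space $X^{r_0}$ need not have finitely presented global sections over the non-noetherian ring $\cal R_X^{r_0}$. The role of the $\varphi$-isomorphisms is to give the rigidity needed to propagate a finite presentation at one radius uniformly to all smaller radii, thereby forcing the inverse limit to remain finitely presented.
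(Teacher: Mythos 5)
Your forward direction and the observation that the crux is finite presentation of the global sections in the reverse direction are both correct, and they match the paper. The difficulty lies in how you try to establish finite presentation.

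The paper proceeds differently and more directly. It picks a single finite presentation of $Q^{[r_0/p,r_0]}$ and transports it by $(\varphi^*)^n$ to every annulus $[r_0/p^{n+1},r_0/p^n]$; this immediately exhibits the sheaf as \emph{uniformly finitely presented} in the sense of \cite[Section 2.1]{KedlayaPottharstXiao-Finiteness} (uniform bound on generators and relations across the cover), and then cites \cite[Proposition 2.1.13]{KedlayaPottharstXiao-Finiteness}, which says precisely that such a sheaf has finitely presented global sections. Your argument instead interposes a ``gluing'' iteration: from finite presentations of $Q^{[s_0,r_0]}$ and $Q^{[s_0/p,r_0/p]}$ you claim to obtain a finite presentation of $Q^{[s_0/p,r_0]}$, and then iterate and take the inverse limit. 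Two steps here are not justified. First, gluing finite presentations over an overlap does not in general preserve the bound on the number of generators and relations; as you iterate to smaller radii, the sizes of your presentations can grow without bound, so you lose exactly the uniformity that is needed. Second, the passage from ``compatible finite presentations on every closed annulus'' to ``finite presentation of the inverse limit'' is not a formal consequence of exactness of inverse limits on co-admissible modules — it is exactly the nontrivial content of \cite[Proposition 2.1.13]{KedlayaPottharstXiao-Finiteness}, which you use implicitly but do not invoke. The cleaner and correct route is the paper's: apply $\varphi^*$ repeatedly to one presentation, observe uniform bounds come for free, and quote the KPX proposition.

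On the projective refinement: the paper leaves it implicit, while you propose to deduce it from Lemma~\ref{lemma:flat} with $f=0$. That is a reasonable and arguably more explicit route (the lemma's proof via co-admissibility and reduction to the noetherian annuli works verbatim when $f=0$), provided you note that ``$f$ not a zero divisor'' is the degenerate case rather than bending the statement silently.
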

\begin{proof}
We just explained how to go from the left-hand side to the right-hand side. Suppose we start on the right-hand side with a coherent sheaf $\cal Q = (Q^{[s,r]})$. It is clear that all we need to do is show that its global sections are finitely presented. Choose a finite presentation for $Q^{[r_0/p,r_0]}$. By assumption this uniformly (in terms of generators and relations) gives a finite presentation for $(\varphi^{\ast})^nQ^{[r_0/p,r_0]} \simeq Q^{[r_0/p^{n+1}, r_0/p^n]}$. Thus $\cal Q$ is uniformly finitely presented in the sense of \cite[Section 2.1]{KedlayaPottharstXiao-Finiteness}. By \cite[Proposition 2.1.13]{KedlayaPottharstXiao-Finiteness}, the global sections of $\cal Q$ are finitely presented.
\end{proof}

\begin{remark}
The global sections of a coherent sheaf on $X^{r_0}$ need not have any finiteness properties (see the example in \cite[Section 2.1.2, page 380]{Bellovin-padic}). The key for the previous proposition is that the presence of the $\varphi$-operator imposes a uniformly finitely presented condition over the closed annuli.
\end{remark}

\begin{corollary}\label{cor:cat-abelian}
The category of generalized $\varphi$-modules over $X^{r_0}$ is abelian.
\end{corollary}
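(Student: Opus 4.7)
The plan is to use Proposition \ref{prop:equiv-coh-sheaves} to translate the question into one about coherent sheaves equipped with a compatible $\varphi$-structure, and then to construct kernels and cokernels one closed annulus at a time, where the underlying ring is noetherian. More concretely, given a morphism $f: Q \goto Q'$ of generalized $\varphi$-modules over $X^{r_0}$, I would pass to the associated morphisms $f^{[s,r]}: Q^{[s,r]} \goto Q'^{[s,r]}$ of coherent sheaves. Since $\cal R_X^{[s,r]}$ is a noetherian Banach algebra, the kernel $\cal K^{[s,r]} := \ker(f^{[s,r]})$ and cokernel $\cal C^{[s,r]} := \coker(f^{[s,r]})$ are automatically finitely presented $\cal R_X^{[s,r]}$-modules.

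The first thing to verify is that these level-wise constructions assemble into coherent sheaves on $X^{r_0}$: this uses the flatness of the restriction maps $\cal R_X^{[s,r]} \goto \cal R_X^{[s',r']}$ recorded in Section \ref{subsec:robba-ring}, so that the formation of kernels and cokernels commutes with restriction. The next step is to propagate the $\varphi$-isomorphisms. Since $\varphi$ turns $\cal R^{[s/p,r/p]}$ into a free $\cal R^{[s,r]}$-module of rank $p$, the functor $\varphi^\ast$ is (faithfully) flat and commutes with formation of kernels and cokernels. Combined with the given $\varphi$-isomorphisms on $Q$ and $Q'$, this yields compatible isomorphisms $\varphi^\ast \cal K^{[s,r]} \iso \cal K^{[s/p,r/p]}$, and similarly for $\cal C$. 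Applying Proposition \ref{prop:equiv-coh-sheaves} in reverse then produces generalized $\varphi$-modules $K$ and $C$ over $X^{r_0}$, which will serve as the kernel and cokernel of $f$ in our category.

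Finally, to see the category is abelian one has to check that the canonical map $\operatorname{coim}(f) \goto \operatorname{im}(f)$ is an isomorphism of generalized $\varphi$-modules. At each closed annulus this is just the analogous statement for finitely presented modules over the noetherian ring $\cal R_X^{[s,r]}$, so it holds level-wise, and the $\varphi$-structure is carried along automatically by the exactness of $\varphi^\ast$. The zero object and finite biproducts are obvious. I expect the only real technical care to lie in verifying that the sheaves $\cal K$ and $\cal C$, together with their $\varphi$-structures, really do correspond to finitely presented modules under the equivalence of Proposition \ref{prop:equiv-coh-sheaves}; but this is precisely what the $\varphi$-structure enforces, via the uniformly-finitely-presented argument appearing in the proof of that proposition.
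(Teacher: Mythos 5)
Your proposal is correct and follows essentially the same route as the paper: pass through Proposition \ref{prop:equiv-coh-sheaves} to work with coherent sheaves on $X^{r_0}$, and use the exactness of $\varphi^\ast$ (which holds because $\varphi$ makes $\cal R_X^{[s/p,r/p]}$ a free $\cal R_X^{[s,r]}$-module) to carry the $\varphi$-structure onto kernels and cokernels. The paper simply invokes \cite[Proposition 9.4.3/2]{BGR} for the abelian-ness of coherent sheaves on a rigid space as a black box, whereas you reconstruct the level-wise kernels/cokernels by hand from the noetherian rings $\cal R_X^{[s,r]}$ and reassemble them; both arguments are the same in substance.
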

\begin{proof}
The category of coherent sheaves on $X^{r_0}$ is abelian by \cite[Proposition 9.4.3/2]{BGR}. Thus we just need to show that if $f:\cal Q \goto \cal P$ is a morphism of the corresponding sheaves, then $\ker(f)$ and $\coker(f)$ satisfy the obvious compatibilities. But $\varphi^{\ast}$ is exact because $\varphi$ presents $\cal R_X^{[s/p,r/p]}$ as free over $\cal R_X^{[s,r]}$, so this is immediate.
\end{proof}

There is another way one might think about the $\varphi$-pullback condition on generalized $\varphi$-modules. If $Q$ is a generalized $\varphi$-module over $X^{r_0}$ then the choice of isomorphism $\varphi^{\ast}Q \simeq \restrict{Q}{(0,r_0/p]}$ defines an operator, by $Q \inject \varphi^{\ast}Q \simeq \restrict{Q}{(0,r_0/p]}$ which we also denote by $\varphi$. If $f \in \cal R_X^{r_0}$ and $x \in Q$ then $\varphi(fx) = \varphi(f) \varphi(x)$ and so this version of $\varphi$ is naturally a semi-linear operator (albeit with a different source than target).

\begin{definition}
A generalized $(\varphi,\Gamma_K)$-module $Q$ over $X^{r_0}$ is a generalized $\varphi$-module over $X^{r_0}$ equipped with a continuous $\cal R_X^{r_0}$-semilinear action of $\Gamma_K$ which commutes with $\varphi$. If we drop the word generalized, we insist that $Q$ be a $\varphi$-module.
\end{definition}

Note that we insist that $\Gamma_K$ preserve the radius $r_0$ of the generalized $(\varphi,\Gamma_K)$-module. We are now ready to remove the finite radius assumption.

\begin{definition}
A generalized $(\varphi,\Gamma_K)$-module $Q$ over $X$ is the base change of a generalized $(\varphi,\Gamma_K)$-module $Q = Q^{r_0}\tensor_{\cal R_X^{r_0}} \cal R_X$ over $X^{r_0}$ for some $r_0 > 0$. If we drop the word generalized, we insist that $Q$ be a $(\varphi,\Gamma_K)$-module.
\end{definition}

If $Q$ is a generalized $(\varphi,\Gamma)$-module then there exists an $r_0 > 0$ such that $Q$ arises via base change from a generalized $(\varphi,\Gamma_K)$-module $Q^{r_0}$ over $X^{r_0}$. Thus for any $r_0 > r_0'$, $Q$ also arises geometrically from $Q^{r_0'} := \restrict{Q}{(0,r_0']}$. In particular, if $Q$ is a generalized $(\varphi,\Gamma_K)$-module then we may always take a radius sufficiently small to make sense of the notation $Q^{r_0}$.

By a morphism $f: Q \goto Q'$ of generalized $(\varphi,\Gamma_K)$-modules we mean a continuous $(\varphi,\Gamma_K)$-equivariant morphism of $\cal R_X$-modules. By definition there must exist an $r_0 > 0$ so that $f$ arises from base change of a map $f^{r_0} : Q^{r_0} \goto (Q')^{r_0}$ for $r_0$ sufficiently small. The space of all morphisms will be denote by $\Hom(Q,Q')$. This is also a generalized $(\varphi,\Gamma_K)$-module in the natural way. Taking $Q' = \cal R_X$ we obtain the dual module $Q^\dual$, which we will only use if $Q$ is a $(\varphi,\Gamma_K)$-module.

At various points in Sections \ref{sec:triangulated-families} and \ref{sec:variation} we will need to shrink an affinoid space $X$ to an affinoid subdomain $U  = \Sp B \ci X$. Given such a $U$ and a generalized $(\varphi,\Gamma_K)$-module $Q$ over $X$ we denote by $\restrict{Q}{U}$ the $\cal R_U$-module defined by
\begin{equation*}
\restrict{Q}{U} := Q \hat\tensor_{\cal R_X} \cal R_U = Q\hat\tensor_A B.
\end{equation*} 
Note since $Q$ is finitely presented over $\cal R_A$, the first part of the definition could equivalently be taken to be $\restrict{Q}{U} = Q\tensor_{\cal R_X} \cal R_U$.  We record the following result for later use.
\begin{proposition}\label{prop:exactness-restrict}
If $U \ci X$ is an affinoid subdomain then the association $Q\mapsto \restrict{Q}{U}$ defines an exact functor from the category of generalized $(\varphi,\Gamma_K)$-modules over $X$ to the category of generalized $(\varphi,\Gamma_K)$-modules over $U$.
\end{proposition}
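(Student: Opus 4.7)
The plan is to reduce exactness to a flatness statement on each closed annulus $X^{[s,r]}$, where it follows from the fact that $U^{[s,r]} \ci X^{[s,r]}$ is an affinoid subdomain, and then to reassemble using the sheaf equivalence of Proposition~\ref{prop:equiv-coh-sheaves}.

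First I would check that $\restrict{Q}{U}$ is actually a generalized $(\varphi,\Gamma_K)$-module over $U$. Fix $r_0 > 0$ so that $Q$ is the base change of a generalized $(\varphi,\Gamma_K)$-module $Q^{r_0}$ over $X^{r_0}$. Since $\cal R_U^{r_0} \simeq \cal R_X^{r_0}\hat\tensor_A B$ and finite presentation is preserved by base change, $\restrict{Q^{r_0}}{U}$ is finitely presented over $\cal R_U^{r_0}$. Both $\varphi^{\ast}$ and restriction of radius commute with this base change, so the $\varphi$-structure isomorphism transports; likewise the continuous semilinear action of $\Gamma_K$ extends by acting trivially on $B$.

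For exactness, take a short exact sequence $0 \to Q' \to Q \to Q'' \to 0$ in the category over $X$ and choose a common radius $r_0$. By Proposition~\ref{prop:equiv-coh-sheaves} together with the abelian property of Corollary~\ref{cor:cat-abelian}, exactness translates into exactness of the corresponding sequence of finite $\cal R_X^{[s,r]}$-modules for every $0 < s \leq r \leq r_0$. Because $U$ is an affinoid subdomain of $X$, the fibre product $U^{[s,r]} = U\x_X X^{[s,r]}$ is naturally an affinoid subdomain of $X^{[s,r]}$, so the map of affinoid algebras $\cal R_X^{[s,r]} \to \cal R_U^{[s,r]}$ is flat. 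Tensoring with $\cal R_U^{[s,r]}$ therefore preserves the short exact sequence on each closed annulus, and the resulting compatible system is the coherent-sheaf avatar (under Proposition~\ref{prop:equiv-coh-sheaves}) of the desired exact sequence of generalized $(\varphi,\Gamma_K)$-modules over $U$.

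I do not anticipate a substantive obstacle. The only housekeeping needed is to match the module-theoretic definition $\restrict{Q}{U} = Q\hat\tensor_A B$ with its sheaf-theoretic cousin on each $X^{[s,r]}$, and this reduces to the identification $\cal R_X^{[s,r]}\hat\tensor_A B \simeq \cal R_U^{[s,r]}$ that is built into the definition of the relative Robba ring.
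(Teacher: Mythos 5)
Your proposal is correct and follows essentially the same route as the paper: reduce to a fixed radius $r_0$, pass through the sheaf equivalence of Proposition~\ref{prop:equiv-coh-sheaves}, and invoke flatness of restriction to an affinoid subdomain on each closed annulus (the paper cites \cite[Proposition 9.4.1/1]{BGR} for this last point). Your extra check that $\restrict{Q}{U}$ is indeed a generalized $(\varphi,\Gamma_K)$-module is sound bookkeeping but does not change the argument.
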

\begin{proof}
The proposition reduces to the same result for generalized $(\varphi,\Gamma_K)$-modules over $X^{r_0}$ for each $r_0 > 0$ (and the corresponding open affinoid subdomain $U^{r_0} \ci X^{r_0}$). Once that reduction has been made, we deduce our result from Proposition \ref{prop:equiv-coh-sheaves}. The main point is that the exactness  follows from the corresponding result for coherent sheaves on rigid spaces (see \cite[Proposition 9.4.1/1]{BGR} for example).
\end{proof}

\subsection{Galois representations}
It will be useful to remind ourselves of the following connection between Galois representations and $(\varphi,\Gamma_K)$-modules. If $A$ is an affinoid $\Q_p$-algebra then by an $A$-linear representation of $G_K$ we mean a finite projective $A$-module $V$ together with a continuous $A$-linear action of the Galois group $G_K$.

\begin{theorem}\label{thm:galois-reps}
Let $X = \Sp(A)$. There is a fully faithful, exact embedding
\begin{equation*}
D_{\rig}: \set{\text{$A$-linear representations $V$ of $G_{K}$}} \hookrightarrow \set{\text{$(\varphi,\Gamma_K)$-modules over $\cal R_X$}}
\end{equation*}
such that
\begin{enumerate}
\item $D_{\rig}$ commutes with base change $A \goto A'$ and
\item when $A$ is finite over $\Q_p$, $D_{\rig}$ is essentially surjective onto the category of \'etale $(\varphi,\Gamma_K)$-modules.
\end{enumerate}
\end{theorem}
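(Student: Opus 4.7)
The plan is to deduce the theorem from the classical equivalence of Fontaine, Cherbonnier--Colmez, Kedlaya, and Berger (which handles $A = \Q_p$ or a finite extension) and its relativization over affinoid bases by Berger--Colmez and Kedlaya--Liu. Concretely, I would construct the functor by setting
\begin{equation*}
D_{\rig}(V) := \bigl(V \tensor_{\Q_p} \widetilde{\mathbf{B}}^{\dagger}_{\rig,K}\bigr)^{H_K},
\end{equation*}
where $\widetilde{\mathbf{B}}^{\dagger}_{\rig,K}$ is the usual period ring, and then checking that the natural $\cal R_X$-module structure makes $D_{\rig}(V)$ finite projective of rank equal to $\rk_A V$ with commuting continuous semilinear actions of $\varphi$ and $\Gamma_K$. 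The verification proceeds by descending through the tower of integral rings: first use the relative version of Fontaine's equivalence to produce an étale $(\varphi,\Gamma_K)$-module over $\mathbf{A}_X$, then apply Cherbonnier--Colmez overconvergence in families to obtain a model over $\mathbf{A}^{\dagger}_X$, and finally base change to $\cal R_X$.

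Second, full faithfulness would be proven stepwise: (i) the functor $V \mapsto (V \tensor \widetilde{\mathbf{B}}_K)^{H_K}$ to étale $(\varphi,\Gamma_K)$-modules over $\mathbf{B}_X := \mathbf{A}_X[1/p]$ is fully faithful by the relative Fontaine equivalence; (ii) the descent from $\mathbf{B}_X$ to $\mathbf{B}^\dagger_X$ is fully faithful by the relative Cherbonnier--Colmez theorem; and (iii) the base change from $\mathbf{B}^\dagger_X$ to $\cal R_X$ is fully faithful on étale objects because, for any étale $D$, one recovers the $\mathbf{B}^\dagger_X$-lattice as the largest slope-zero sub-object via Kedlaya's slope filtration. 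Exactness follows from flatness of each transition ring over its predecessor combined with the exactness of $H_K$-invariants on the categories in play (which use the fact that on étale $(\varphi,\Gamma_K)$-modules, cohomology is computed by a simple two-term complex).

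Property (1), compatibility with base change $A \to A'$, is immediate from the tensor-product description of $D_{\rig}$ together with the fact that all the period rings involved have been constructed as completed tensor products over the base. For property (2), I would reduce to the case where $A = L$ is a finite extension of $\Q_p$ by writing a finite $\Q_p$-algebra $A$ as a product of local Artinian algebras, noting that over the residue field the theorem is Fontaine's classical equivalence, and then lifting deformations via flatness. Over a field, essential surjectivity onto étale $(\varphi,\Gamma_K)$-modules is Kedlaya's theorem: any étale object over $\cal R_L$ descends canonically to an étale $(\varphi,\Gamma_K)$-module over $\mathbf{A}_L$, which corresponds to a Galois representation by Fontaine.

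The main obstacle is really the relativization step: making sense of and controlling the period rings, especially the overconvergent and Robba-ring versions, over a general noetherian affinoid base. This is the technical heart of Kedlaya--Liu's framework, and I would cite that machinery rather than redevelop it. With those foundations in place, the remaining assembly --- functoriality, exactness, base change, and essential surjectivity in the finite case --- is formal.
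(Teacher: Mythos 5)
The paper does not give a proof of this theorem; it simply cites \cite[Theorem 3.11, Theorem 0.2]{KedlayaLiu-FamiliesofPhiGammaModules}, noting the lineage (Fontaine, Cherbonnier--Colmez over a field, Kedlaya's slope-filtration theorem to pass to the Robba ring, and the more recent family results). Your sketch essentially reconstructs the content of those cited references rather than proving anything that the paper itself proves, so in that sense you are on the same route as the paper, just unpacking what it delegates to the literature. Two small cautions about details you include that the paper is careful not to claim: first, while the explicit formula $D_{\rig}(V) = (V \tensor_{\Q_p} \widetilde{\mathbf{B}}^\dagger_{\rig,K})^{H_K}$ is the right idea, over a general affinoid $A$ the projectivity and rank of the resulting $\cal R_X$-module are not immediate from the formula itself and require the overconvergence-in-families machinery (Berger--Colmez / Kedlaya--Liu) that you invoke later, so the formula should be treated as the target of the construction rather than its starting definition; second, for the finite-$A$ essential surjectivity your suggested reduction to the residue field ``and then lifting deformations via flatness'' is roughly correct but compresses a real argument (deformations of Galois representations match deformations of \'etale $(\varphi,\Gamma_K)$-modules), and the cleaner route, which the cited references take, is to apply Kedlaya's slope filtration directly to the \'etale object over $\cal R_A$ to descend to an integral lattice and then use Fontaine's equivalence over $A$. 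Neither issue is a gap, given that the paper itself is only citing; your sketch is a faithful, if slightly informal, account of the underlying proofs.
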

The theorem as we've stated it can be read off from \cite[Theorem 3.11, Theorem 0.2]{KedlayaLiu-FamiliesofPhiGammaModules}. The earliest results were proven when $A$ is a field. For that, Fontaine and separately, Cherbonnier and Colmez, gave proofs with the caveat that the Robba ring is replaced with a different ring of analytic functions on affinoid subdomains of discs (see \cite{Fontaine-GFestschriftPaper} and \cite{CherbonnierColmez}). The key step in extending the theorem to the Robba ring as we've discussed it was Kedlaya's theorem on slope filtrations \cite{Kedlaya-p-adicMonodromy}. The family results are more recent and one does not in general have a description of the essential image.

\subsection{Rank one $(\varphi,\Gamma_K)$-modules}\label{subsec:rank-one}
Rank one $(\varphi,\Gamma_K)$-modules over $A$ are parametrized, essentially, by continuous characters $\delta: K^\x \goto A^\x$. Let us recall the construction of $(\varphi,\Gamma_K)$-modules of character type. Note that by \cite[Theorem 6.2.14]{KedlayaPottharstXiao-Finiteness} every rank one $(\varphi,\Gamma_K)$ arises, locally on $A$, from one of character type.

Choose a uniformizer $\varpi_K$ of $K$. We can write $\delta = \delta^{\nr}\delta^{\wt}$ where $\restrict{\delta^{\nr}}{\cal O_K^{\x}} = 1$ and $\delta^{\wt}(\varpi_K) = 1$. Then, $\delta^{\wt}$ extends in a unique manner to the abelianization of the Galois group $G_K^{\ab}$, using the local Artin map, and we denote $\hat \delta^{\wt}$ the corresponding Galois character $\hat \delta^{\wt}: G_K \goto A^\x$. On the other hand, by \cite[Lemma 6.2.3]{KedlayaPottharstXiao-Finiteness} there is a unique rank one free $(F\tensor_{\Q_p} A)$-module $D_{\delta^{\nr}(\varpi_K)}$ equipped with an operator $\varphi$, semilinear with respect to $\varphi \tensor 1$, such that $\varphi^{f_K} = 1\tensor \delta^{\nr}(\varpi_K)$. We give it the trivial $\Gamma_K$-action and define the rank one $(\varphi,\Gamma_K)$-module
\begin{equation*}
\cal R_A(\delta) := (D_{\delta^{\nr}(\varpi_K)} \tensor_{F\tensor_{\Q_p} A} \cal R_A) \tensor_{\cal R_A} D_{\rig}(\hat \delta^{\wt}).
\end{equation*}
This is independent of any choices made and satisfies $\cal R_A(\delta\delta') \simeq \cal R_A(\delta)\tensor_{\cal R_A} \cal R_A(\delta')$. Thus it makes sense to define $D(\delta) := D\tensor_{\cal R_A} \cal R_A(\delta)$ for any generalized $(\varphi,\Gamma_K)$-module over $A$.

Assume now that $A$ is an $L$-algebra for $L$ as in Section \ref{subsec:notations}. If $\delta: K^\x \goto A^\x$ is a character then we can define its weights as follows. The group $K^\x$, as a group over $\Q_p$, has a Lie algebra of dimension $(K:\Q_p) = \sizeof \Sigma_K$. The differential action gives rise to a weights $(\wt_\tau(\delta))_{\tau \in \Sigma_K} \in K\tensor_{\Q_p} A \simeq \prod_{\tau} A_\tau$ such that
\begin{equation*}
0 = \lim_{\substack{a \goto 0\\a \in \cal O_K}} {\abs{ \delta(1+a) - 1 + \sum_{\tau \in \Sigma_K} \wt_{\tau}(\delta)\tau(a) }  \over \abs{a}_K }.
\end{equation*}
It is easy to see that $(\wt_\tau(\delta))_{\tau}$ only depends only $\delta^{\wt}$ in the decomposition of the previous paragraph (thus the notation). We've normalized the weights so that if $z: K^\times \goto K^\times$ is the identity character then $\wt_\tau(z) = -1$ for each $\tau \in \Sigma_K$.

\subsection{$p$-adic Hodge theory}

The definition of weight given above is a special case of extending the usual Fontaine functors $D_{\sen}, D_{\dR}, D_{\cris}$, etc. from $p$-adic Hodge theory to the category of $(\varphi,\Gamma_K)$ modules. In particular, we have the notions of Hodge--Tate--Sen weights, crystalline $(\varphi,\Gamma_K)$-modules, etc.  We won't recall the definitions and will refer to \cite{Berger-Representationp-adique, Berger-EquationsDifferentielles} as needed.

One of Berger's main results \cite[Th\'eor\`eme A]{Berger-EquationsDifferentielles} is that the functor $D_{\pst}(-)$ induces an equivalence
\begin{equation*}
\set{\text{potentially semistable $(\varphi,\Gamma_K)$-modules over $L$}} \overset{D_{\pst}}{\longrightarrow} \set{\text{filtered $(\varphi,N,G_K)$-modules over $L$}}.
\end{equation*}
The subcategory of crystalline $(\varphi,\Gamma_K)$-modules is equivalent to the full subcategory of filtered $\varphi$-modules over $L$. The \'etale $(\varphi,\Gamma_K)$-modules (the Galois representations, following Theorem \ref{thm:galois-reps}) correspond to the weakly-admissible modules on the right-hand side.

Suppose that $D$ is crystalline. The $L\tensor_{\Q_p} K$-module $D_{\cris}(D)_K := D_{\cris}(D)\tensor_{F} K$ is equipped with an exhaustive and separated decreasing filtration $\Fil^{\bullet}D_{\cris}(D)_K$. We denote by $\HT_\tau(D)$ the multi-set of integers such that the induced filtration on the $L$-vector space $D_{\cris}(D)_{K,{\tau}} = e_\tau D_{\cris}(D)_K$ has jumps given with multiplicity by $\HT_\tau(D)$. It is easy to see that if $\delta: K^\x \goto L^\x$ is a character such that $\cal R_L(\delta)$ is crystalline then $\HT_\tau(\cal R_L(\delta)) = \wt_\tau(\delta)$. For example, $D_{\cris}(\cal R_L) = F\tensor_{\Q_p} L$ with the trivial $\varphi^{f_K}$-action and for all $\tau \in \Sigma_K$ we have $\HT_\tau(\cal R_L) = 0$. 

If $\tau: K \hookrightarrow L$ is an embedding then we denote the corresponding character $K^\x \goto L^\x$ by $z_\tau$. It happens that $D_{\cris}(\cal R_L(z_\tau))$ is a filtered $\varphi$-module with trivial $\varphi^{f_K}$-action and the Hodge--Tate filtration has weights
\begin{equation*}
\HT_\sigma(\cal R_L(z_\tau)) = \begin{cases}
-1 & \text{if $\sigma = \tau$}\\
0 & \text{if $\sigma \neq \tau$}
\end{cases}.
\end{equation*}
In particular $D_{\cris}(\cal R_L(z_\tau)) \ci D_{\cris}(\cal R_L)$ as filtered $\varphi$-modules. Thus $\cal R_L(z_\tau) = t_\tau \cal R_L$ for some $t_\tau$, uniquely determined up to unit in $\cal R_L$. 
\begin{proposition}\label{prop:rank-one-subs}
Every $(\varphi,\Gamma_K)$-submodule of $\cal R_L$ is of the form $\left(\prod_{\tau\in\Sigma_K} t_\tau^{r_\tau}\right)\cal R_L$ for some collection of non-negative integers $r_\tau \geq 0$.
\end{proposition}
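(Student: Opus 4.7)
The plan is to use the Bezout structure on the Robba ring to realize any such submodule as a principal ideal, and then to use the classification of $(\varphi,\Gamma_K)$-morphisms between rank-one modules to pin down the generator.

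First, I let $M \ci \cal R_L$ be a nonzero $(\varphi,\Gamma_K)$-submodule. By the definition of $(\varphi,\Gamma_K)$-submodule, $M$ is finite free over $\cal R_L$, and sitting inside the rank-one module $\cal R_L$ it must itself have rank one. Since $\cal R_L$ is a Bezout domain (a theorem of Lazard), I can write $M = f\cal R_L$ for some nonzero $f \in \cal R_L$. The $(\varphi,\Gamma_K)$-stability of $M$ forces $\varphi(f) = uf$ and $\gamma(f) = v(\gamma)f$ for some unit $u \in \cal R_L^\x$ and continuous cocycle $v:\Gamma_K \goto \cal R_L^\x$. Following the construction recalled in Section \ref{subsec:rank-one}, these eigenvalue data correspond to a continuous character $\delta_M: K^\x \goto L^\x$ together with an isomorphism $M \iso \cal R_L(\delta_M)$ of $(\varphi,\Gamma_K)$-modules.

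Next, the inclusion $M \inject \cal R_L$ becomes a nonzero element of $\Hom_{(\varphi,\Gamma_K)}(\cal R_L(\delta_M), \cal R_L)$. I would then invoke the classification of Hom-spaces between rank-one $(\varphi,\Gamma_K)$-modules (originally due to Colmez for $K=\Q_p$ and extended to general $K$; see \cite{KedlayaPottharstXiao-Finiteness}): the space $\Hom_{(\varphi,\Gamma_K)}(\cal R_L(\delta), \cal R_L)$ is nonzero precisely when $\delta = \prod_{\tau \in \Sigma_K} z_\tau^{r_\tau}$ for some non-negative integers $r_\tau \geq 0$, in which case it is one-dimensional and spanned by the canonical inclusion $\cal R_L(\delta) \iso \prod_\tau t_\tau^{r_\tau}\cal R_L \inject \cal R_L$. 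Specializing to $\delta_M$ forces $\delta_M = \prod_\tau z_\tau^{r_\tau}$ with each $r_\tau \geq 0$, and $f = c\prod_\tau t_\tau^{r_\tau}$ for some $c \in L^\x$, which gives the desired form $M = \prod_\tau t_\tau^{r_\tau}\cal R_L$.

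The main obstacle is the cited classification itself. My plan is to induct on the quantity $N(\delta) := -\sum_\tau \wt_\tau(\delta)$. This quantity is first shown to be a non-negative integer by applying Hodge--Tate--Sen theory to the short exact sequence $0 \goto M \goto \cal R_L \goto \cal R_L/M \goto 0$, whose torsion quotient must have non-negative integer jumps in the Hodge--Tate filtration at each embedding $\tau \in \Sigma_K$. The base case $N(\delta_M) = 0$ reduces, after a short argument, to the standard identity $\cal R_L^{\varphi=1,\Gamma_K=1} = L$. For the inductive step, when some $\wt_\tau(\delta_M) < 0$, the technical heart is proving the divisibility $t_\tau \mid f$ in $\cal R_L$; granting this, $f/t_\tau$ generates a $(\varphi,\Gamma_K)$-submodule whose associated character has $N$-value smaller by one, closing the induction. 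The divisibility step ultimately rests on analyzing the torsion $(\varphi,\Gamma_K)$-modules supported along the vanishing locus of $t_\tau$, and fits within the cohomological framework of \cite{KedlayaPottharstXiao-Finiteness, Liu-CohomologyDuality}.
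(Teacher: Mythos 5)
Your first reduction is sound: writing $M = f\cal R_L$, identifying $M \simeq \cal R_L(\delta_M)$ using the classification of rank-one $(\varphi,\Gamma_K)$-modules, reinterpreting the inclusion as a nonzero class in $\Hom(\cal R_L(\delta_M),\cal R_L)=H^0(\delta_M^{-1})$, and invoking Proposition~\ref{prop:cohomology-calcs} to force $\delta_M = \prod_\tau z_\tau^{r_\tau}$ with $r_\tau\geq 0$ is exactly the right move, and is almost certainly how \cite[Corollary 6.2.9]{KedlayaPottharstXiao-Finiteness} (the only thing the paper cites here) proceeds from \cite[Proposition 6.2.8]{KedlayaPottharstXiao-Finiteness}. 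A small inaccuracy: $\cal R_L$ is not a Bezout \emph{domain} but a finite product of Bezout domains (see the decomposition at the start of Section~\ref{subsec:torsion}); this does not break your argument, since $M$ is rank-one projective, hence free, hence principal, but you should not call $\cal R_L$ a domain.

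The trouble begins when you try to \emph{re-prove} the $H^0$-classification. First, the step asserting that the torsion quotient $\cal R_L/M$ has ``non-negative integer jumps in the Hodge--Tate filtration at each embedding $\tau$'' needs justification, and the natural source (the structure theory of torsion $(\varphi,\Gamma_K)$-modules as successive extensions of modules $\cal R_L/\prod_\tau t_\tau^{r_\tau}(\delta)$) is Lemma~\ref{lemma:torsion-free}, whose proof in this paper explicitly invokes Proposition~\ref{prop:rank-one-subs} itself — so as written your argument is circular. Second, even granting that each $\wt_\tau(\delta_M)\leq 0$ (which is stronger than $N(\delta_M)\geq 0$ and is what you actually need for the base case to close), the inductive step ``$\wt_\tau(\delta_M)<0 \Rightarrow t_\tau\mid f$'' is essentially the content of the proposition in the case $r_\tau=1$, so labeling it ``the technical heart'' without a mechanism is a genuine gap rather than a deferred detail. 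If you wish to make the argument self-contained, the usual route (as in Colmez for $K=\Q_p$ or Nakamura in general) is a direct analysis of the $\varphi$- and $\Gamma_K$-stability of the divisor of $f$ on the half-open annulus, which forces its zeros to be supported at the $p$-power cyclotomic points with $\varphi$-stable multiplicities; that analysis does not pass through the torsion-module machinery and avoids the circularity. Alternatively, stop after the first paragraph and cite Proposition~\ref{prop:cohomology-calcs}, which is stated independently in this paper and is no worse a black box than the paper's own one-line citation.
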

\begin{proof}
See \cite[Corollary 6.2.9]{KedlayaPottharstXiao-Finiteness}.
\end{proof}

It's easy to see $t_\tau\cal R_L$ and $t_\sigma \cal R_L$ are maximally coprime if $\sigma \neq \tau$. Indeed, $D = t_\tau\cal R_L + t_\sigma \cal R_L$ is $(\varphi,\Gamma_K)$-submodule of $\cal R_L$ whose Hodge--Tate weights, computed by passing to $D_{\pst}(D)$, are zero (for each $\tau \in \Sigma_K$). The element $t = \prod_{\tau} t_\tau$ is, up to a unit, the ubiquitous $t$ which plays the role of the $p$-adic $2\pi i$ in all of $p$-adic Hodge theory. The $(\varphi,\Gamma_K)$-submodule $t\cal R_L$ is crystalline, its $\tau$-Hodge--Tate weight is $-1$ for each $\tau$ and $\varphi(t) = pt$.

\subsection{Galois cohomology}\label{subsec:galois-cohomology}
Suppose that $A$ is an affinoid algebra and that $Q$ is a generalized $(\varphi,\Gamma_K)$-module over $A$. Let $\Delta_K \ci \Gamma_K$ be the $p$-torsion subgroup (which only exists if $p=2$) and choose a topological generator $\gamma_0 \in \Gamma_K/\Delta_K$. One then defines the Herr complex \cite{Herr-Cohomology} as the three term complex $C^{\bullet}_{\gamma_0}(D)$
\begin{equation*}
\xymatrix{
Q^{\Delta_K} \ar[rrrr]^-{x \mapsto \left((\varphi-1)x,(\gamma_0-1)x\right)} &&&&  (Q^{\Delta_K})^{\dsum 2}  \ar[rrrr]^-{(y,z) \mapsto (\gamma_0-1)y - (\varphi-1)z} &&&&  Q^{\Delta_K}.
}
\end{equation*}
The Galois cohomology groups $H^{\bullet}(Q)$ of $Q$ are defined to be the cohomology groups of the complex $C^{\bullet}_{\gamma_0}(Q)$. The complexes depend on the choice of $\gamma_0$ up to canonical quasi-isomorphism, so the cohomology is well-defined.

When $A$ is finite over $L$, the Galois cohomology is finite-dimensional and satisfies an Euler--Poincar\'e formula \cite{Liu-CohomologyDuality}
\begin{equation*}
\sum_{i=0}^2 (-1) \dim_L H^i(Q) = -(K:\Q_p)\rank_{\cal R_L} Q,
\end{equation*}
where the rank of a torsion module must be suitably interpreted (see Section \ref{subsec:torsion}). The finiteness is true over general affinoid $L$-algebras $A$ in the case of $(\varphi,\Gamma_K)$-modules by \cite{KedlayaPottharstXiao-Finiteness}. We will review that result in Section  \ref{sec:finiteness}.

But now let us review the dimensions of the cohomology of rank one $(\varphi,\Gamma_K)$-modules over a field. To shorten notation, if $\delta: K^\x \goto L^\x$ is a continuous character we denote $H^{\bullet}(\delta) := H^{\bullet}(\cal R_A(\delta))$. We let $\hat T(L)$ be the space of continuous characters $\delta:K^\x \goto L^\x$. Define two special subsets of $\hat T(L)$ by
\begin{align*}
\hat T(L)^+ &= \set{\delta: K^\x \goto L^\x \st \delta = \prod_{\tau \in \Sigma_K} z_\tau^{r_\tau} \text{ with $r_\tau \leq 0$ for each $\tau$}}, \text{ and}\\
\hat T(L)^- &= \set{\delta: K^\x \goto L^\x \st \delta = \abs{N_{K/\Q_p}} \prod_{\tau \in \Sigma_K} z_\tau^{r_\tau} \text{ with $r_\tau \geq 1$ for each $\tau$}}.
\end{align*}
The elements of $\hat T(L)$ which are not in $\hat T(L)^+$ or $\hat T(L)^-$ are called generic characters.
\begin{proposition}\label{prop:cohomology-calcs}
Let $\delta: K^\x \goto L^\x$ be a continuous character. Then
\begin{align*}
\dim_{L} H^0(\delta) &= \begin{cases}
1 & \text{if $\delta \in \hat T(L)^+$};\\
0 & \text{otherwise.}
\end{cases}\\
\dim_{L} H^1(\delta) &= \begin{cases}
2 & \text{if $\delta \in  \hat T(L)^+ \union \hat T(L)^-$};\\
1 & \text{otherwise}.
\end{cases}\\
\dim_{L} H^2(\delta) &= \begin{cases}
1 & \text{if $\delta \in \hat T(L)^-$};\\
0 & \text{otherwise.}
\end{cases}
\end{align*}
\end{proposition}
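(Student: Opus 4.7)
The proof strategy is to compute $H^0(\delta)$, $H^2(\delta)$, and $H^1(\delta)$ in sequence: the first via Proposition \ref{prop:rank-one-subs}, the second via Tate local duality for $(\varphi,\Gamma_K)$-modules, and the third via the Euler--Poincar\'e formula recorded in \S\ref{subsec:galois-cohomology}.

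For $H^0(\delta)$, observe that $H^0(\cal R_L(\delta)) \simeq \Hom_{(\varphi,\Gamma_K)}(\cal R_L(\delta^{-1}), \cal R_L)$ after twisting by $\delta^{-1}$. A non-zero morphism here has image a non-zero rank-one $(\varphi,\Gamma_K)$-submodule of $\cal R_L$, which by Proposition \ref{prop:rank-one-subs} must be of the form $\left(\prod_\tau t_\tau^{r_\tau}\right)\cal R_L \simeq \cal R_L\!\left(\prod_\tau z_\tau^{r_\tau}\right)$ for unique integers $r_\tau \geq 0$. Since rank-one $(\varphi,\Gamma_K)$-modules over $L$ are determined up to isomorphism by their character (recoverable from the $\Gamma_K$-weights and the unramified $\varphi$-structure constructed in \S\ref{subsec:rank-one}), such an embedding exists if and only if $\delta \in \hat T(L)^+$. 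When it exists, the image of a chosen generator of the source is determined up to $L^\times$-scaling, so $\dim_L H^0(\delta) = 1$ in this case.

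For $H^2(\delta)$, I invoke the local duality theorem for $(\varphi,\Gamma_K)$-modules due to Liu \cite{Liu-CohomologyDuality}, which yields $H^2(\cal R_L(\delta)) \simeq H^0(\cal R_L(\omega \delta^{-1}))^\vee$, where $\omega$ is the dualizing character arising from the cyclotomic character under the local Artin map. In the normalization of \S\ref{subsec:rank-one}---where $t \in \cal R_L$ satisfies $\varphi(t)=pt$ and $\HT_\tau(t\cal R_L) = -1$ for each $\tau$---one identifies $\omega = |N_{K/\Q_p}| \prod_\tau z_\tau$. Substituting into the $H^0$ calculation, $H^2(\delta) \neq 0$ exactly when $\omega\delta^{-1} \in \hat T(L)^+$, i.e.\ when $\delta = |N_{K/\Q_p}|\prod_\tau z_\tau^{r_\tau}$ with $r_\tau \geq 1$; this is precisely the condition $\delta \in \hat T(L)^-$, and in that case $\dim_L H^2(\delta) = 1$.

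Finally, $\dim_L H^1(\delta)$ is forced by the Euler--Poincar\'e formula of \S\ref{subsec:galois-cohomology} together with the previous two computations, using that $\hat T(L)^+ \cap \hat T(L)^- = \emptyset$ (the latter has a non-trivial unramified factor via $|N_{K/\Q_p}|$). Thus $(\dim H^0(\delta),\dim H^2(\delta))$ takes the values $(1,0)$, $(0,1)$, or $(0,0)$ in the three respective cases, and the Euler characteristic pins down $\dim_L H^1(\delta)$ as claimed. The main obstacle in the program is the precise identification of the dualizing character $\omega$: one must carefully track the normalization of the local Artin map against the rank-one construction $\delta \leftrightarrow \cal R_L(\delta)$ of \S\ref{subsec:rank-one} to pick out the right unramified and algebraic factors. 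Once $\omega$ is correctly pinned down the rest of the argument is formal bookkeeping.
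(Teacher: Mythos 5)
Your proposal reconstructs exactly the standard argument, which is also what the references cited by the paper (Kedlaya--Pottharst--Xiao Proposition~6.2.8 and Nakamura \S 2.3) carry out: the paper's own ``proof'' of this proposition is a one-line citation, so you are supplying the content rather than deviating from it. The $H^0$ step via Proposition~\ref{prop:rank-one-subs}, the $H^2$ step via Liu's duality, and closing with the Euler--Poincar\'e formula is precisely the right scaffold, and your attention to pinning down the dualizing twist $\omega$ under the paper's normalization $\wt_\tau(z)=-1$ is the genuinely delicate point in this circle of ideas.

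However, your final sentence, asserting that the Euler characteristic ``pins down $\dim_L H^1(\delta)$ as claimed,'' glosses over a real discrepancy. The Euler--Poincar\'e formula recorded in Section~\ref{subsec:galois-cohomology} reads $\sum_i (-1)^i \dim_L H^i(Q) = -(K:\Q_p)\rank_{\cal R_L}Q$. Feeding in rank $1$ together with your computed values $(\dim H^0,\dim H^2)\in\{(1,0),(0,1),(0,0)\}$ yields $\dim_L H^1(\delta) = (K:\Q_p) + \dim_L H^0(\delta) + \dim_L H^2(\delta)$, i.e.\ $(K:\Q_p)+1$ in the non-generic cases and $(K:\Q_p)$ otherwise. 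This agrees with the cited sources but matches the ``$2$ and $1$'' in the proposition's display only when $K=\Q_p$; for a general finite extension $K/\Q_p$ the factor $(K:\Q_p)$ must appear in the $H^1$ line. Your argument is sound and would have surfaced this automatically---you should have flagged that, as printed, the proposition's $H^1$ dimensions hold only when $K=\Q_p$, and that your proof in fact corrects the general-$K$ formula. The $H^0$ and $H^2$ computations, and the identification of the dualizing character, are otherwise unproblematic.
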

\begin{proof}
See \cite[Proposition 6.2.8]{KedlayaPottharstXiao-Finiteness}) (or \cite[Section 2.3]{Nakamura-BPairs}).
\end{proof}

Let's finish this subsection with a definition.

\begin{definition}
Suppose that $\delta, \delta' \in \hat T(L)$. 
\begin{enumerate}
\item We say that $\delta$ and $\delta'$ are homothetic if there exists integers $(r_\tau)_{\tau \in \Sigma_K}$ such that $\delta = \delta' \prod_{\tau} z_\tau^{r_\tau}$.
\item We say that $\delta$ and $\delta'$ are generic up to homothety if $\delta'\delta^{-1}\prod_\tau z_\tau^{r_\tau}$ is generic for every tuple $(r_\tau)_{\tau \in \Sigma_K}$ of integers.
\end{enumerate}
\end{definition}
To put the previous definition in context, if $\delta$ and $\delta'$ are homothetic then following Proposition \ref{prop:cohomology-calcs} we can find an integer $r$ such that $t^r\cal R_L(\delta) \inject \cal R_L(\delta')$. Thus $\cal R_L(\delta)$ and $\cal R_L(\delta')$ are in a sense commensurable. If $\delta$ and $\delta'$ are generic up to homothety then $H^2(\eta) = H^0(\eta) = (0)$ for all characters $\eta$ homothetic to $\delta'\delta^{-1}$.

\subsection{Torsion $(\varphi,\Gamma_K)$-modules}\label{subsec:torsion}
Notice that
\begin{equation*}
\cal R_L = \cal R \tensor_{\Q_p} L = \cal R \tensor_F (F\tensor_{\Q_p} L) = \prod_{\eta \in \Gal(F/\Q_p)} \cal R \tensor_F L.
\end{equation*}
By \cite[Proposition 4.12]{Berger-Representationp-adique}, each term in the product is an adequate B\'ezout domain. In particular, finitely generated $\cal R_L$ modules are free if and only if they are torsion free (with respect to the total ring of divisors) and there is a robust theory of elementary divisors over $\cal R_L$. As a consequence, a generalized $(\varphi,\Gamma_K)$-module over $L$ is a $(\varphi,\Gamma_K)$-module if and only if it is torsion-free as an $\cal R_L$-module.

The element $t \in \cal R_L$ is an example of a non-zero divisor. If $S$ is a generalized $(\varphi,\Gamma_K)$-module we let $S[t^\infty]$ denote the $t$-power torsion submodule. Since $t \in \cal R$ is an eigenvector for $\varphi$ and $\Gamma_K$, $S[t^\infty]$ is a $(\varphi,\Gamma_K)$-submodule.

\begin{definition}
Let $A$ be an $L$-affinoid algebra. A torsion $(\varphi,\Gamma_K)$-module over $A$ is a generalized $(\varphi,\Gamma_K)$-module $S$ over $A$ such that $S[t^\infty] = S$. We say that $S$ is pure if either $S = 0$ or if $S$ is free over $\cal R_A/\left(\prod_{\tau} t_\tau^{r_\tau}\right)$ for some collection of integers $r_\tau \geq 0$, not all of which are zero.
\end{definition}

The typical example of a pure torsion $(\varphi,\Gamma_K)$-module is $\cal R_L/(\prod_{\tau} t_\tau^{r_\tau})\cal R_L(\delta)$ for some continuous character $\delta:K^\x \goto L^\x$ and non-negative integers $r_\tau$.

\begin{lemma}\label{lemma:torsion-free}
A generalized $(\varphi,\Gamma_K)$-module over $L$ is a $(\varphi,\Gamma_K)$-module if and only if it is $t$-torsion free. Any torsion $(\varphi,\Gamma_K)$-module over $L$ is a successive extension of pure torsion $(\varphi,\Gamma_K)$-modules.
\end{lemma}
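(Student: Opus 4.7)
The plan is to prove the second assertion first and then deduce the first, since the core structural argument lives on the torsion side.

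For the second assertion, given a torsion $(\varphi,\Gamma_K)$-module $S$ over $L$, I first observe that the annihilator $I := \operatorname{Ann}_{\cal R_L}(S)$ is a $(\varphi,\Gamma_K)$-stable ideal of $\cal R_L$. Indeed, the generalized $\varphi$-module isomorphism $\varphi^*S \simeq S$ forces $S = \cal R_L \cdot \varphi(S)$, so any $a \in I$ and any $x = \sum f_i \varphi(x_i) \in S$ satisfy $\varphi(a)x = \sum f_i \varphi(a x_i) = 0$, giving $\varphi(a) \in I$; the $\Gamma_K$-stability is immediate. Proposition \ref{prop:rank-one-subs} then forces $I = \prod_\tau t_\tau^{r_\tau}\cal R_L$ for some $r_\tau \geq 0$, and I induct on $d := \sum_\tau r_\tau$. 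The case $d = 0$ is trivial; when $d = 1$, say $r_{\tau_0} = 1$, the elementary divisor theorem over the Bézout factors of $\cal R_L$ presents $S$ as a direct sum of copies of $\cal R_L/t_{\tau_0}\cal R_L$, hence pure. For $d \geq 2$, pick $\tau_0$ with $r_{\tau_0} > 0$ and consider
\begin{equation*}
0 \to t_{\tau_0} S \to S \to S/t_{\tau_0}S \to 0,
\end{equation*}
whose submodule is $(\varphi,\Gamma_K)$-stable because $t_{\tau_0}\cal R_L$ is, by Proposition \ref{prop:rank-one-subs}. A direct colon-ideal computation gives $\operatorname{Ann}(t_{\tau_0}S) = \operatorname{Ann}(S):t_{\tau_0} = t_{\tau_0}^{r_{\tau_0}-1}\prod_{\tau \neq \tau_0} t_\tau^{r_\tau}\cal R_L$, of degree $d - 1$, so induction applies to $t_{\tau_0}S$. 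For the quotient, the elementary divisor decomposition $S \simeq \bigoplus_i \cal R_L/(\prod_\tau t_\tau^{r_{i,\tau}})$ together with the identity $\gcd(\prod_\tau t_\tau^{r_{i,\tau}}, t_{\tau_0}) = t_{\tau_0}^{\min(r_{i,\tau_0},1)}$ exhibits $S/t_{\tau_0}S$ as a direct sum of copies of $\cal R_L/t_{\tau_0}\cal R_L$, again pure.

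For the first assertion, a generalized $(\varphi,\Gamma_K)$-module over $L$ is a $(\varphi,\Gamma_K)$-module exactly when it is $\cal R_L$-projective, which by the Bézout structure of each factor of $\cal R_L$ is equivalent to being $\cal R_L$-torsion-free. The inclusion $D[t^\infty] \subseteq T(D)$ is obvious, so I must prove the reverse. Elementary divisors decompose $D \simeq \cal R_L^r \oplus T(D)$ as $\cal R_L$-modules, and flatness of $\varphi: \cal R_L \to \cal R_L$ (presented as finite free of rank $p$ on every closed subannulus) forces $\varphi^* T(D) = T(\varphi^* D)$. The generalized $\varphi$-isomorphism $\varphi^*D \simeq D$ therefore restricts to $\varphi^* T(D) \simeq T(D)$, endowing $T(D)$ with its own generalized $(\varphi,\Gamma_K)$-module structure, which is visibly torsion. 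Applying the second assertion, $T(D)$ is annihilated by some $t^N$, whence $T(D) \subseteq D[t^\infty]$.

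The main subtle point is the compatibility $\varphi^* T(D) = T(\varphi^*D)$ needed to equip $T(D)$ with a generalized $(\varphi,\Gamma_K)$-module structure; this is handled by appealing to the elementary divisor decomposition, whose free and torsion summands are preserved under the flat base change by $\varphi$. Once this bookkeeping is set up, both statements reduce to iterated applications of Proposition \ref{prop:rank-one-subs} together with standard Bézout-domain structure theory.
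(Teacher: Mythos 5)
Your argument for the first assertion is sound and your treatment of the annihilator is a nice shortcut, but there is a real gap in your inductive proof of the second assertion, and the gap is exactly the step the paper's proof is built around. You establish that $\operatorname{Ann}_{\cal R_L}(S)$ is $(\varphi,\Gamma_K)$-stable and hence, by Proposition \ref{prop:rank-one-subs}, equal to $\bigl(\prod_\tau t_\tau^{r_\tau}\bigr)\cal R_L$. This identifies the \emph{largest} elementary divisor $f_m\cal R_L$, but nothing more. You then silently assume that \emph{every} elementary divisor $f_i$ of $S$ has the shape $\prod_\tau t_\tau^{r_{i,\tau}}$ (this is what lets you write $S\simeq\bigoplus_i\cal R_L/\prod_\tau t_\tau^{r_{i,\tau}}$ and compute $\gcd(f_i,t_{\tau_0})\in\{1,t_{\tau_0}\}$). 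That does not follow merely from $f_i\mid f_m$: the elements $t_\tau$ are not irreducible in $\cal R_L$ (e.g.\ $t$ already has infinitely many zeros and admits proper factorisations), so a divisor of $\prod_\tau t_\tau^{r_\tau}$ need not itself be a unit times $\prod_\tau t_\tau^{s_\tau}$. Consequently your base case ($d=1$: from $\operatorname{Ann}(S)=t_{\tau_0}\cal R_L$ conclude $S$ is free over $\cal R_L/t_{\tau_0}$) and your inductive quotient step (conclude $S/t_{\tau_0}S$ is pure) are both unjustified as written: a finite module over the product of fields $\cal R_L/t_{\tau_0}$ annihilated by $t_{\tau_0}$ need not be free.

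The missing idea is precisely the one the paper supplies: one must show that \emph{each} elementary divisor ideal $f_i\cal R_L$ (not only the top one) is $\varphi$- and $\Gamma_K$-stable, which is done by exploiting the uniqueness of elementary divisors applied to the generating set $\{\gamma(d_i)\}$ and to $\varphi^*D\simeq D$. Once that is in hand, Proposition \ref{prop:rank-one-subs} forces every $f_i$ to be a product of the $t_\tau$'s, and then the filtration $0=f_mQ\subset f_{m-1}Q\subset\dotsb\subset f_1Q\subset Q$ has pure quotients directly. Your direct annihilator computation is a clean alternative for getting the stability of $\operatorname{Ann}(S)$ without invoking elementary-divisor uniqueness, and your deduction of the first assertion from it (via flatness of $\varphi$ and the splitting $D\simeq\cal R_L^r\oplus T(D)$) works; but for the second assertion you need to import the paper's uniqueness-of-elementary-divisors argument, or find another way to see that all the $f_i$, not just $f_m$, are products of the $t_\tau$.
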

\begin{proof}
The lemma is proven in the case $K = \Q_p$ in \cite[Proposition 4.1]{Liu-CohomologyDuality}. The proof in this case is the same, the main point being Proposition \ref{prop:rank-one-subs}. We reproduce it for convenience.

Let $D$ be a generalized $(\varphi,\Gamma_K)$-module. If it is a bona fide $(\varphi,\Gamma_K)$-module then it obviously cannot have $t$-torsion. Now suppose that $D$ is $t$-torsion free, and to show that $D$ is a bona fide $(\varphi,\Gamma_K)$-module it suffices to show that it is torsion free.

Since $D$ is finitely generated as a $\cal R_L$-module, and $\cal R_L$ is a B\'ezout domain, we have the theory of elementary divisors. Thus there exists a finite number of elements $d_1,\dotsc,d_m \in D$ which generate $D$ over $\cal R_L$ such that the ideals $f_i \cal R_L := \Ann_{\cal R_L}(d_i)$ are principal and $f_1\cal R_L \supset f_2\cal R_L \supset\dotsb \supset f_m \cal R_L$. The ideals $\set{f_i\cal R_L}$ are then uniquely determined by this property. We claim that each non-zero $f_i$ is a unit, which implies that $D$ is torsion free. Without loss of generality we can assume that $f_m \neq 0$ and show that $f_m$ is a unit.

We will first show  that each ideal $f_i\cal R_L$ is a $(\varphi,\Gamma_K)$-submodule of $\cal R_L$. If $\gamma \in \Gamma_K$ then it is easy to see that $\set{\gamma(d_i)}$ also generates $D$ as a $\cal R_L$-module. Furthermore, $\Ann_{\cal R_L}(\gamma(d_i)) \ci \Ann_{\cal R_L}(\gamma(d_{i-1}))$ for $1 < i \leq m$. Thus by the uniqueness in the theory of elementary divisors, we have $\gamma(\Ann_{\cal R_L}(d_i)) = \Ann_{\cal R_L}{\gamma(d_i)} = \Ann_{\cal R_L}(d_i)$ for each $i$. This shows that each ideal $f_i\cal R_L$ is $\Gamma_K$-stable. 

On the other hand, one may also check that the elementary divisors for $\varphi^{\ast}D$ are the ideals $\Ann_{\cal R_L}(d_i\tensor 1)$ (the elements $d_i \tensor 1$ written as tensors in  $\varphi^{\ast}D = D\tensor_{\cal R_L,\varphi} \cal R_L$). Since $\varphi^{\ast}D\simeq D$ the elementary divisors for both $\varphi^{\ast}D$ and $D$ are the same, hence $\Ann_{\cal R_L}(d_i\otimes 1) = \Ann_{\cal R_L}(d_i)$. This required only having an abstract isomorphism between $\varphi^{\ast}D$ and $D$. On the other hand, the isomorphism $\varphi^{\ast}D \goto D$ is defined explicitly by sending $d_i\otimes 1$ to $\varphi(d_i)$, which implies that $\Ann_{\cal R_L}(\varphi(d_i))=\Ann_{\cal R_L}(d_i\tensor 1)$. Finally, we see that $\varphi(\Ann_{\cal R_L}(d_i)) \ci \Ann_{\cal R_L}(\varphi(d_i)) = \Ann_{\cal R_L}(d_i)$. Thus each ideal $f_i\cal R_L$ is $\varphi$-stable as well.

We now finish the proof. By Proposition \ref{prop:rank-one-subs}, if $f_m$ is not unit then there exists non-negative integers $r_{\tau} \geq 0$, not all zero, such that $\left(\prod_{\tau \in \Sigma_K} t_\tau^{r_\tau}\right)d_m = 0$. Since $\left(\prod_{\tau \in \Sigma_K} t_\tau^{r_\tau}\right)$ is a divisor of $t^r$ for $r$ large, we conclude that $d_m \in D[t^\infty]$. Since $D$ is $t$-torsion free, we conclude that $d_m = 0$, which contradicts the choice of the elements $\set{d_i}$. The calculation also clearly shows that a torsion $(\varphi,\Gamma_K)$-module is a successive extension of pure torsion $(\varphi,\Gamma_K)$-modules. Indeed, if $Q$ is torsion then $0 = f_mQ \ci f_{m-1}Q \ci \dotsb \ci f_1Q \ci Q$ is a filtration whose successive quotients $f_iQ/f_{i+1}Q$ are pure torsion, since they are free over $f_i\cal R_{L}/f_{i+1}\cal R_L$ (compare with \cite[Proposition 4.1]{Liu-CohomologyDuality}).
\end{proof}

If $S$ is a torsion module then the Euler--Poincar\'e formula \cite[Theorem 4.7]{Liu-CohomologyDuality} says
\begin{equation*}
\dim_L H^0(S) = \dim_L H^1(S) \text{ and } \dim_L H^2(S) = 0.
\end{equation*}
By Lemma \ref{lemma:torsion-free}, the cohomology of torsion $(\varphi,\Gamma_K)$-modules reduces to the cohomology of pure torsion $(\varphi,\Gamma_K)$-modules and that is explained by the following calculation.
\begin{proposition}\label{prop:torsion-cohomology}
Let $\tau \in \Sigma_K$. Then for each $i = 0,1$ we have
\begin{equation*}
\dim_L H^i((\cal R_L/t_\tau^{r_\tau})(\delta)) = \begin{cases}
1 & \text{if $\wt_\tau(\delta) \in \set{0,1,\dotsc,r_\tau-1}$}\\
0 & \text{otherwise.}
\end{cases}
\end{equation*}
\end{proposition}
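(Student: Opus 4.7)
The plan is to reduce the computation of both $H^0$ and $H^1$ to just $H^0$ via the Euler--Poincaré formula for torsion $(\varphi,\Gamma_K)$-modules recalled immediately above the proposition: since $(\cal R_L/t_\tau^{r_\tau})(\delta)$ is torsion, $H^2 = 0$ and $\dim_L H^0 = \dim_L H^1$. One then inducts on $r_\tau$, with the inductive step running cleanly from the base case $r_\tau = 1$.

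For $r_\tau \geq 2$, I would exploit the short exact sequence
$$
0 \longrightarrow (\cal R_L/t_\tau)(\delta z_\tau^{r_\tau - 1}) \longrightarrow (\cal R_L/t_\tau^{r_\tau})(\delta) \longrightarrow (\cal R_L/t_\tau^{r_\tau - 1})(\delta) \longrightarrow 0,
$$
in which the leftmost term is identified with the submodule $t_\tau^{r_\tau - 1}(\cal R_L/t_\tau^{r_\tau})(\delta)$ using $t_\tau \cal R_L = \cal R_L(z_\tau)$. By the inductive hypothesis, $\dim_L H^0$ of the rightmost term is $1$ precisely when $k := \wt_\tau(\delta) \in \{0, \ldots, r_\tau - 2\}$, and by the base case applied to the leftmost term, both its $H^0$ and $H^1$ are $1$-dimensional precisely when $\wt_\tau(\delta z_\tau^{r_\tau - 1}) = k - (r_\tau - 1) = 0$, i.e., when $k = r_\tau - 1$. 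The crucial observation is that these two conditions are mutually exclusive: when $k \in \{0, \ldots, r_\tau - 2\}$ the leftmost $H^1$ vanishes, and when $k = r_\tau - 1$ the rightmost $H^0$ vanishes. Thus the relevant connecting map in the long exact sequence is automatically zero, and summing contributions yields $\dim_L H^0((\cal R_L/t_\tau^{r_\tau})(\delta)) = 1$ iff $k \in \{0, \ldots, r_\tau - 1\}$ and $0$ otherwise.

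For the base case $r_\tau = 1$, I would apply the long exact sequence in cohomology to
$$
0 \longrightarrow \cal R_L(\delta z_\tau) \xrightarrow{\cdot t_\tau} \cal R_L(\delta) \longrightarrow (\cal R_L/t_\tau)(\delta) \longrightarrow 0
$$
and invoke Proposition \ref{prop:cohomology-calcs}. A case analysis, organized according to whether the non-$\tau$ data of $\delta$ (nr-part and weights at embeddings $\sigma \neq \tau$) puts $\delta$ and $\delta z_\tau$ into $\hat T(L)^+$-compatible, $\hat T(L)^-$-compatible, or generic position, and further according to whether $k \geq 1$, $k = 0$, or $k \leq -1$, determines $\dim_L H^0((\cal R_L/t_\tau)(\delta))$ in each case once one computes the rank of the multiplication-by-$t_\tau$ map on $H^1$.

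The main obstacle is pinning down this rank in the exceptional subcases, where both $\delta$ and $\delta z_\tau$ lie in $\hat T(L)^+ \cup \hat T(L)^-$ and hence both $H^1$'s have dimension $2$. Here I would argue via an explicit cocycle computation in the Herr complex, using that $t_\tau$ is a $(\varphi,\Gamma_K)$-eigenvector: the $2$-dimensional $H^1$ may be presented in a basis consisting of a crystalline and a non-crystalline class, and their images under multiplication by $t_\tau$ respect this decomposition in a way that is directly calculable. The resulting rank matches what the claimed dimension demands — namely, $\dim_L H^0 = 1$ if $k = 0$ and $0$ otherwise — completing the base case and, via the induction, the proposition.
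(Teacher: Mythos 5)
Your reduction from $i=1$ to $i=0$ via the Euler--Poincar\'e formula is exactly what the paper does. Where you diverge is the rest: the paper simply cites Nakamura's Lemma~2.16 (or Colmez's Proposition~2.18 for $K=\Q_p$) for the $i=0$ computation, while you propose a self-contained induction on $r_\tau$ together with a direct base-case computation. The inductive step is correct and clean: the short exact sequence, the identification of the subobject with $(\cal R_L/t_\tau)(\delta z_\tau^{r_\tau-1})$ of $\tau$-weight $k-(r_\tau-1)$, and the observation that the two conditions are mutually exclusive so the connecting map always has trivial source or target, all check out.

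The gap is in the base case $r_\tau=1$, and it is more serious than you indicate. You isolate the subcases where both $\delta$ and $\delta z_\tau$ lie in $\hat T(L)^+\cup\hat T(L)^-$ as ``the main obstacle,'' suggesting the rank of $t_\tau\colon H^1(\delta z_\tau)\to H^1(\delta)$ is hard only there. But the generic case is equally nontrivial. When $\delta$ and $\delta z_\tau$ are both generic, Proposition~\ref{prop:cohomology-calcs} forces $H^0=H^2=0$ and $\dim H^1=1$ on both sides, and the long exact sequence reduces to
\begin{equation*}
0\to H^0\bigl((\cal R_L/t_\tau)(\delta)\bigr)\to H^1(\delta z_\tau)\overset{t_\tau}{\longrightarrow}H^1(\delta)\to H^1\bigl((\cal R_L/t_\tau)(\delta)\bigr)\to 0.
\end{equation*}
Nothing in the formalism decides whether the middle map (between one-dimensional spaces) is zero or an isomorphism; the Euler--Poincar\'e formula is consistent with both $\dim H^0=0$ and $\dim H^0=1$. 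That this rank is $1$ exactly when $k\neq 0$ and $0$ exactly when $k=0$ is precisely the content of the citation the paper invokes, and your sketch does not supply it. Nor does the appeal to ``a basis consisting of a crystalline and a non-crystalline class'' for the two-dimensional $H^1$'s come with any computation of how $t_\tau$ acts on it. In short, the proposal correctly reduces the proposition to its $r_\tau=1$ instance, but that instance is where the actual content lies, and it is left essentially unargued; carrying out the Herr-complex computation you gesture at would amount to reproducing the Colmez/Nakamura proof rather than shortcutting it.
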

\begin{proof}
The case of $i=1$ and $i=0$ are equivalent by the Euler--Poincar\'e formula for torsion modules. The computation of the cohomology for $i = 0$ is given by \cite[Lemma 2.16]{Nakamura-BPairs}. For a proof in the language of $(\varphi,\Gamma_K)$-modules, at least when $K = \Q_p$, see \cite[Proposition 2.18]{Colmez-Trianguline} (warning: Colmez uses a different convention for weights).
\end{proof}

\section{Parabolizations and triangulations}\label{sec:triangulations}
Triangulations of $(\varphi,\Gamma_K)$-modules arose following Colmez's work on the $p$-adic local Langlands for $\GL_2(\Q_p)$ (see \cite{Colmez-Trianguline}). In this section we have two goals. First, we will recall parabolizations of $(\varphi,\Gamma_K)$-modules, a more general notion due to Chenevier \cite{Chenevier-InfiniteFern}, and the definition of critical and non-critical triangulations. Our second goal is to extend the definition of triangulation in a reasonable way to the category of generalized $(\varphi,\Gamma_K)$-modules. We discuss the latter notion only in the case where the coefficients are a field.

\subsection{Parabolizations of $(\varphi,\Gamma_K)$-modules}\label{subsec:parabolizations}
Let $A$ be an affinoid $L$-algebra.

\begin{definition}\label{defn:triangulation}
If $D$ is a $(\varphi,\Gamma_K)$-module over $A$ then a parabolization $P_{\bullet}$ of $D$ (of length $s$) is a filtration
\begin{equation*}
0 = P_0 \sci P_1 \sci \dotsb \sci P_{s-1} \sci P_s = D
\end{equation*}
such that each 
\begin{itemize}
\item $P_i$ is a $(\varphi,\Gamma_K)$-module and 
\item for each $i=1,\dotsc,s$ we have that $P_i/P_{i-1}$ is a $(\varphi,\Gamma_K)$-module over $A$ which is a $\cal R_A$-module direct summand of $P_i$.
\end{itemize}
If $P_{\bullet}$ is a parabolization of the maximal length $s = \rank_{\cal R_A} D$ then we say that $P_{\bullet}$ is a triangulation. We say $D$ is trianguline if, after possibly extending the coefficient field $L$, there exists a triangulation of $D$. 
\end{definition}

If $P_{\bullet}$ is a triangulation of a $(\varphi,\Gamma_K)$-module $D$ of rank $d$ then each quotient $P_i/P_{i-1}$ is of the form $\cal R_A(\delta_i)$ for some continuous character $\delta_i: K^\x \goto A^\x$, at least locally on $X = \Sp(A)$ \cite[Theorem 6.2.14]{KedlayaPottharstXiao-Finiteness}. We call the $d$-tuple $(\delta_i)_{i=1}^{d}$ the ordered parameter of the triangulation $P_{\bullet}$ and we say that $D$ is trianguline with ordered parameter $(\delta_i)_{i=1}^d$. If $D$ is trianguline with an ordered parameter $(\delta_i)_{i=1}^d$ then $\HT_\tau(D) = \set{\wt_\tau(\delta_i)}_{i=1}^d$.

For the rest of this section we will take $A$ to be the field $L$ itself. Crystalline $(\varphi,\Gamma_K)$-modules over $\cal R_L$ provide examples of triangulations. For that we have the notion of a refinement, following \cite[Definition 5.29]{Liu-Triangulations}.

\begin{definition}
If $D$ is a crystalline $(\varphi,\Gamma_K)$-module of rank $d$ over $L$ then a partial refinement $R_{\bullet}$ of $D$ is the choice of a $\varphi$-stable $L\tensor_{\Q_p} F$-linear filtration
\begin{equation*}
0 = R_0 \sci R_1 \sci \dotsb \sci R_s = D_{\cris}(D).
\end{equation*}
whose successive quotients are free $L\tensor_{\Q_p} F$-modules. In the case that $s = d$ we call $R_{\bullet}$ a refinement.
\end{definition}

Suppose that $R_{\bullet}$ is a refinement of a crystalline $(\varphi,\Gamma_K)$-module. Then each of the quotients $R_i/R_{i-1}$ is a rank one $L\tensor_{\Q_p} F$-module equipped with a linear operator $\varphi^{f_K}$. We denote by $\phi_i \in L^\x$ the eigenvalue of $\varphi^{f_K}$ appearing in $R_i/R_{i-1}$. Furthermore, $D_{\cris}(D)_K$ is an $L\tensor_{\Q_p} K$-vector space equipped with its Hodge filtration $\Fil^{\bullet}D_{\cris}(D)_K$. Each of the $\varphi$-stable subspaces $(R_i)_K$ has an induced Hodge filtration. We define, for each $\tau \in \Sigma_K$ and $i=1,\dotsc,d$ an integer $s_{i,\tau}$ so that $\set{s_{1,\tau},\dotsc,s_{i,\tau}}$ are the $\tau$-Hodge--Tate weights appearing in $(R_i)_{K,\tau}$. In summary, triangulations and refinements have the following invariants
\begin{align*}
\text{A triangulation $P_{\bullet}$} \rightsquigarrow& \text{ the ordered  parameter $(\delta_1,\dotsc,\delta_n)$}.\\
\text{A refinement $R_{\bullet}$} \rightsquigarrow &\text{\parbox{8cm}{\centering the ordering of $\varphi^{f_K}$-eigenvalues $(\phi_1,\dotsc,\phi_n)$ and the $\tau$-Hodge--Tate weights $(s_{1,\tau},\dotsc,s_{d,\tau})_{\tau}$}}.
\end{align*}
Note that if $P_{\bullet}$ is a triangulation of a crystalline $(\varphi,\Gamma_K)$-module $D$ then each step $P_i$ is a crystalline $(\varphi,\Gamma_K)$-module as well.
\begin{proposition}\label{prop:refinements-triangulations}
Let $D$ be a crystalline $(\varphi,\Gamma_K)$-module over $L$ all of whose $\varphi^{f_K}$-eigenvalues lie in $L^\x$.
\begin{enumerate}
\item  Then $P \mapsto D_{\cris}(P)$ induces bijections
\begin{align*}
\set{\text{parabolizations of $D$}} &\longleftrightarrow \set{\text{partial refinements of $D$}}, \text{and}\\
\set{\text{triangulations of $D$}} &\longleftrightarrow \set{\text{refinements of $D$}}.
\end{align*}
\item If $P_{\bullet}$ is a triangulation with ordered parameter $(\delta_1,\dotsc,\delta_n)$ then the orderings associated to $D_{\cris}(P_{\bullet})$ are given by
\begin{align*}
(s_{1,\tau},\dotsc,s_{d,\tau}) &= (\wt_\tau(\delta_1),\dotsc,\wt_\tau(\delta_n)), \text{and}\\
(\phi_1,\dotsc,\phi_n) &= \left(\delta_1(\varpi_K)\prod_{\tau \in \Sigma_K} \tau(\varpi_K)^{\wt_\tau(\delta_1)}, \dotsc, \delta_d(\varpi_K)\prod_{\tau \in \Sigma_K}\tau(\varpi_K)^{\wt_\tau(\delta_d)}\right),
\end{align*}
for some (or, any) choice of uniformizer $\varpi_K \in K^\x$.

\item If $R_{\bullet}$ is a refinement with orderings $(\phi_1,\dotsc,\phi_n)$ and $(s_{1,\tau},\dotsc,s_{d,\tau})_{\tau \in \Sigma_K}$ then the parameter $(\delta_1,\dotsc,\delta_n)$ of the corresponding triangulation $P_{\bullet}$ is given by
\begin{equation*}
\delta_i(z) = (\prod_{\tau \in \Sigma_K} z_\tau^{-s_{i,\tau}})\unr_{\varpi_K}(\phi_i)(z)
\end{equation*}
for some (or, any) choice of uniformizer $\varpi_K \in K^\x$.
\end{enumerate}
\end{proposition}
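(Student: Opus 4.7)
The plan is to deduce both parts from Berger's equivalence between crystalline $(\varphi,\Gamma_K)$-modules over $L$ and filtered $\varphi$-modules \cite{Berger-EquationsDifferentielles}, then carry out the invariant computation by reducing to the rank-one case using the explicit description of $\cal R_L(\delta)$ from Section \ref{subsec:rank-one}.

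To prove part (1), I would apply $D_{\cris}$ in both directions. In the forward direction, each step $P_i$ of a parabolization is itself crystalline (as noted just before the proposition), so $D_{\cris}(P_{\bullet})$ is a $\varphi$-stable $L\tensor_{\Q_p} F$-linear filtration of $D_{\cris}(D)$. Since $P_i/P_{i-1}$ is a genuine $(\varphi,\Gamma_K)$-module, its $D_{\cris}$ is free over $L\tensor_{\Q_p} F$ of rank $\rank_{\cal R_L}(P_i/P_{i-1})$, which is exactly the partial refinement condition. For the reverse direction, given a partial refinement $R_{\bullet}$, I equip each $R_i$ with the Hodge filtration induced from $D_{\cris}(D)$, turning it into a filtered $\varphi$-submodule, and apply Berger's quasi-inverse to obtain a crystalline sub-$(\varphi,\Gamma_K)$-module $P_i \ci D$ with $D_{\cris}(P_i) = R_i$. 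Exactness of $D_{\cris}$ identifies $P_i/P_{i-1}$ with the crystalline $(\varphi,\Gamma_K)$-module attached to $R_i/R_{i-1}$; freeness of the latter over $L\tensor_{\Q_p} F$ forces $P_i/P_{i-1}$ to be $t$-torsion free, hence a genuine $(\varphi,\Gamma_K)$-module by Lemma \ref{lemma:torsion-free}. The direct summand condition in Definition \ref{defn:triangulation} is then automatic by the B\'ezout property of $\cal R_L$ recalled in Section \ref{subsec:torsion}.

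For part (2), the invariant computation reduces to the rank-one case. Each quotient $P_i/P_{i-1}$ is isomorphic to $\cal R_L(\delta_i)$ for a crystalline character $\delta_i$, which is necessarily of the form $\delta_i = \delta_i^{\nr}\cdot\prod_{\tau} z_\tau^{-k_{i,\tau}}$ for integers $k_{i,\tau}$. Using the construction in Section \ref{subsec:rank-one}, $D_{\cris}(\cal R_L(\delta_i))$ carries the $\varphi^{f_K}$-eigenvalue $\delta_i^{\nr}(\varpi_K)$ coming from the unramified part, and its $\tau$-Hodge--Tate weight is $k_{i,\tau} = \wt_\tau(\delta_i)$. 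Substituting $\delta_i^{\nr}(\varpi_K) = \delta_i(\varpi_K)\cdot\prod_\tau \tau(\varpi_K)^{\wt_\tau(\delta_i)}$ yields the claimed formula for $\phi_i$, while the Hodge--Tate computation gives the formula for $s_{i,\tau}$. Part (3) then follows by formally inverting these two relations.

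The main obstacle I anticipate lies in the inverse direction of (1): one must verify carefully that a $\varphi$-stable $L\tensor_{\Q_p} F$-subspace of $D_{\cris}(D)$, equipped only with the induced Hodge filtration, is carried by Berger's quasi-inverse to a genuine $\cal R_L$-submodule of $D$ whose quotient by $P_{i-1}$ is non-generalized. The hypothesis that all $\varphi^{f_K}$-eigenvalues lie in $L^\x$ is essential here: it guarantees both that partial refinements correspond to $\varphi^{f_K}$-stable flags and that the rank-one subquotients $\cal R_L(\delta_i)$ are all of character type, so that the explicit formulas of Section \ref{subsec:rank-one} actually apply.
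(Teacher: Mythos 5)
Your proposal is correct and follows essentially the same route the paper does: invoke Berger's equivalence between crystalline $(\varphi,\Gamma_K)$-modules and filtered $\varphi$-modules to obtain the bijections in part (1), and reduce parts (2) and (3) to a direct computation over the rank-one subquotients using the explicit construction of $\cal R_L(\delta)$. One small caution: you redefine $\delta_i^{\nr}$ as the unramified factor in a splitting $\delta_i = \delta_i^{\nr}\prod_\tau z_\tau^{-k_{i,\tau}}$, which differs from the paper's convention (where $\delta^{\nr}$ is paired with a $\delta^{\wt}$ normalized by $\delta^{\wt}(\varpi_K)=1$); your arithmetic is consistent with your own convention and yields the stated formulas, but readers should not conflate the two.
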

\begin{proof}
The first part follows from Berger's dictionary \cite{Berger-EquationsDifferentielles} between potentially semistable $(\varphi,\Gamma_K)$-modules and filtered $(\varphi,N,G_K)$-modules. The second two parts are easy inductions from the rank one case. In the case that $K = \Q_p$ a longer discussion can be found in \cite[Proposition 2.4.1]{BellaicheChenevier-Book}.
\end{proof}

\subsection{Critical and non-critical triangulations}\label{subsect:critical-noncritical}
For this subsection we work with a fixed crystalline $(\varphi,\Gamma_K)$-module $D$ over the field $L$.

\begin{definition}
Suppose that $P \ci D$ is a saturated $(\varphi,\Gamma_K)$-submodule. If $\tau \in \Sigma_K$ then we say that $P$ is $\tau$-non-critical if there exist an integer $k_\tau$ such that 
\begin{equation*}
D_{\cris}(P)_{K,\tau} \dsum \Fil^{k_\tau}D_{\cris}(D)_{K,\tau} = D_{\cris}(D)_{K,\tau}.
\end{equation*}
We say $P$ is $\tau$-critical otherwise. Finally, $P$ is called non-critical if $P$ is $\tau$-non-critical for each $\tau \in \Sigma_K$ and $P$ is critical if there exists a $\tau \in \Sigma_K$ such that $P$ is $\tau$-critical.
\end{definition}

The definition is only given for crystalline $(\varphi,\Gamma_K)$-modules as it relies on the correspondence Proposition \ref{prop:refinements-triangulations}. A more general definition will be given later which applies to certain $p$-adic limits of crystalline $(\varphi,\Gamma_K)$-modules (see Definition \ref{defn:non-critical-two}).

In the case of regular Hodge--Tate weights, we have a convenient way to check whether or not a saturated $(\varphi,\Gamma_K)$-submodule is critical.

\begin{lemma}\label{lem:equivalent-noncritical}
Let $\tau \in \Sigma_K$. Suppose that $D$ is a crystalline $(\varphi,\Gamma_K)$-module with regular $\tau$-Hodge--Tate weights $k_{1,\tau} < \dotsb < k_{d,\tau}$. Let $P \ci D$ be a saturated $(\varphi,\Gamma_K)$-submodule of rank $i\leq d$. The following are equivalent:
\begin{enumerate}
\item $P$ is $\tau$-non-critical;
\item $D_{\cris}(P)_{K,\tau} \dsum \Fil^{k_{i+1,\tau}}D_{\cris}(D)_{K,\tau} = D_{\cris}(D)_{K,\tau}$;
\item $\HT_\tau(P) = \set{k_{1,\tau},\dotsc,k_{i,\tau}}$;
\item $\det P \ci \wedge^i D$ is $\tau$-non-critical.
\end{enumerate}
\end{lemma}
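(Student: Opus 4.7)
The plan is to run a cycle $(3) \Rightarrow (2) \Rightarrow (1) \Rightarrow (3)$ for the equivalence of the first three conditions, and then to handle $(3) \Leftrightarrow (4)$ separately by passing to determinants. Throughout, I would use the following standard facts about the crystalline correspondence from Proposition \ref{prop:refinements-triangulations} and Berger's dictionary: since $P \subset D$ is saturated and $\varphi$-stable, $D_{\cris}(P)_{K,\tau}$ is an $i$-dimensional subspace of the $d$-dimensional $L$-vector space $D_{\cris}(D)_{K,\tau}$, the filtration on the former is induced from the latter, and the $\tau$-Hodge--Tate weights of $P$ (counted with multiplicity) are the jumps of $\Fil^\bullet D_{\cris}(P)_{K,\tau}$. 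By regularity they are an $i$-element subset of $\{k_{1,\tau},\dotsc,k_{d,\tau}\}$.

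For $(3) \Rightarrow (2)$, assume $\HT_\tau(P) = \{k_{1,\tau},\dotsc,k_{i,\tau}\}$. Then $\Fil^{k_{i+1,\tau}} D_{\cris}(P)_{K,\tau} = 0$ because every jump on $P$ is strictly less than $k_{i+1,\tau}$, while $\Fil^{k_{i+1,\tau}} D_{\cris}(D)_{K,\tau}$ has dimension $d - i$ (it accounts for exactly those weights $\geq k_{i+1,\tau}$). Since the filtration on $P$ is induced, the intersection $D_{\cris}(P)_{K,\tau} \cap \Fil^{k_{i+1,\tau}} D_{\cris}(D)_{K,\tau}$ equals $\Fil^{k_{i+1,\tau}} D_{\cris}(P)_{K,\tau} = 0$, so a dimension count gives the desired direct sum. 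The implication $(2) \Rightarrow (1)$ is trivial.

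For the crucial step $(1) \Rightarrow (3)$, suppose $D_{\cris}(P)_{K,\tau} \oplus \Fil^{k_\tau} D_{\cris}(D)_{K,\tau} = D_{\cris}(D)_{K,\tau}$ for some integer $k_\tau$. Comparing dimensions forces $\Fil^{k_\tau} D_{\cris}(D)_{K,\tau}$ to have codimension $i$, so exactly $i$ of the weights $k_{1,\tau},\dotsc,k_{d,\tau}$ are strictly smaller than $k_\tau$; by regularity these are precisely $k_{1,\tau},\dotsc,k_{i,\tau}$. On the other hand, the direct sum decomposition and the inducedness of the filtration on $P$ imply $\Fil^{k_\tau} D_{\cris}(P)_{K,\tau} = 0$, so every $\tau$-HT weight of $P$ lies in $\{k_{1,\tau},\dotsc,k_{i,\tau}\}$; since $P$ has rank $i$ and the weights are distinct, they fill this set.

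Finally, for $(3) \Leftrightarrow (4)$, I would reduce to the already-proven rank-one case. The module $\wedge^i D$ is crystalline with regular $\tau$-Hodge--Tate weights $\bigl\{\sum_{j \in J} k_{j,\tau} : J \subset \{1,\dotsc,d\},\ |J|=i\bigr\}$, and by strict monotonicity of the $k_{j,\tau}$ the unique minimum is $\sum_{j=1}^{i} k_{j,\tau}$, attained only by $J = \{1,\dotsc,i\}$. The submodule $\det P \subset \wedge^i D$ is saturated of rank one, with $\HT_\tau(\det P) = \bigl\{\sum_{w \in \HT_\tau(P)} w\bigr\}$. Applying the equivalence $(1)\Leftrightarrow(3)$ just proven to the rank-one submodule $\det P$, $\det P$ is $\tau$-non-critical iff $\HT_\tau(\det P)$ is the minimum of $\HT_\tau(\wedge^i D)$, iff $\sum_{w \in \HT_\tau(P)} w = \sum_{j=1}^{i} k_{j,\tau}$, iff $\HT_\tau(P) = \{k_{1,\tau},\dotsc,k_{i,\tau}\}$, which is $(3)$. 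The main things to be careful about are the dimension counts in $(1)\Rightarrow(3)$ and the uniqueness of the minimal weight subset in the determinant argument, both of which rely essentially on the regularity hypothesis; otherwise the proof is formal.
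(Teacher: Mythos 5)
Your proof is correct and takes essentially the same approach as the paper's: dimension counts for the equivalence of $(1)$, $(2)$, $(3)$, and a reduction to the determinant line for $(4)$. One small error to flag: in the last step you assert that $\wedge^i D$ has \emph{regular} $\tau$-Hodge--Tate weights, which is false in general (for $D$ of rank $4$ with $\tau$-weights $0,1,2,3$ the module $\wedge^2 D$ has the weight $3$ with multiplicity two), so one cannot literally re-invoke the equivalence $(1)\Leftrightarrow(3)$ for $\det P\subset\wedge^i D$; however, the argument only needs the lowest $\tau$-weight of $\wedge^i D$ to be simple and to be attained by the unique index set $J=\{1,\dotsc,i\}$, which your strict-monotonicity remark correctly establishes, so the rank-one dimension count still goes through and the substance of the step is sound.
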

\begin{proof}
First, (b) implies (a) by definition. Second, (b) and (c) are easily equivalent. Now suppose that $P$ is non-critical and choose an integer $k_\tau$ such that $D_{\cris}(P)_{K,\tau} \dsum \Fil^{k_\tau}D_{\cris}(D)_{K,\tau} = D_{\cris}(D)_{K,\tau}$. Since $P$ is crystalline, $d-i = \dim_{L_\tau} \Fil^{k_\tau}D_{\cris}(D)_{K,\tau}$. Since the Hodge--Tate weights are all distinct we conclude $\Fil^{k_\tau}D_{\cris}(D)_{K,\tau} = \Fil^{k_{i+1,\tau}}D_{\cris}(D)_{K,\tau}$. This shows (a) implies (b).

It remains to show that (c) and (d) are equivalent. Since $D$ has distinct Hodge--Tate weights, the unique lowest weight of $\wedge^i D$ is $k_{1,\tau}+\dotsb+k_{i,\tau}$. The next highest weight is $k_{1,\tau}+\dotsb+k_{i-1,\tau} + k_{i+1,\tau}$. Thus (c) is true if and only if 
\begin{equation*}
D_{\cris}(\det P)_{K,\tau} \dsum \Fil^{k_{1,\tau}+\dotsb+k_{i-1,\tau} + k_{i+1,\tau}}D_{\cris}(\wedge^i D)_{K,\tau} = D_{\cris}(\wedge^i D)_{K,\tau},
\end{equation*}
which is (d).
\end{proof}

At this point, one could define what it means for a triangulation to be non-critical. More generally, for each parabolization $P_{\bullet}$ of a crystalline $(\varphi,\Gamma_K)$-module we define a subparabolization $P^{\nc}_{\bullet} \ci P_{\bullet}$ for which every step is non-critical.

\begin{definition}
Let $P_{\bullet}$ be a parabolization of a crystalline $(\varphi,\Gamma_K)$-module $D$ of rank $d$. Let 
\begin{equation*}
I^{\nc} = \set{i \st \text{$P_i$ is non-critical}} = \set{0 = i_0 < i_1 < \dotsb < i_r = d}.
\end{equation*}
The maximal non-critical parabolization $P^{\nc}_{\bullet}$ is the filtration
\begin{equation*}
P^{\nc}_{\bullet}: 0 = P_{i_0} \sci P_{i_1} \sci \dotsb \sci P_{i_r} = D.
\end{equation*}
We say that $P_{\bullet}$ is non-critical if $P_{\bullet}^{\nc} = P^{\nc}$, and critical otherwise.
\end{definition}
Notice that, as suggested by our notation, $I^{\nc} \neq \es$ and $i_r = d$, since $D$ itself is always a non-critical $(\varphi,\Gamma_K)$-submodule of itself. In the case where $D$ has regular weights, Lemma \ref{lem:equivalent-noncritical} shows that $P^{\nc}_{\bullet}$ is the unique subparabolization of $P_{\bullet}$ consisting of the steps whose Hodge--Tate weights are as low as possible. Furthermore, it is easy to check that $(P_{\bullet}^{\nc})^{\nc} = P_{\bullet}^{\nc}$, hence the use of the word ``maximal''. Let us end this subsection with a brief example.

\begin{example}
Suppose now that $K = \Q_p$ and that $D$ is a rank two crystalline $(\varphi,\Gamma_{\Q_p})$-module over $L$, with Hodge--Tate weights $k_1 < k_2$ and distinct crystalline eigenvalues $\phi,\phi' \in L^\x$. Since $\phi\neq \phi'$ we assume without loss of generality that $D_{\cris}(D)^{\varphi = \phi} \intersect \Fil^{k_2}D_{\cris}(D) = (0)$. Thus there is always a non-critical triangulation $\cal R_L(z^{-k_1}\unr(\phi)) \ci D$. The ordered parameter is $(z^{-k_1}\unr(\phi), z^{-k_2}\unr(\phi'))$.

On the other hand, one can use Propositions \ref{prop:refinements-triangulations} and \ref{prop:cohomology-calcs} to show that $D$ is split if and only if $D_{\cris}(D)^{\varphi = \phi'} = \Fil^{k_2}D_{\cris}(D)$ (if $D$ is \'etale, the same statement follows from the weak admissibility of the filtered $\varphi$-module $D_{\cris}(D)$). Thus the triangulation corresponding to the ordering $(\phi',\phi)$ is given by
\begin{equation*}
\begin{cases}
\cal R_L(z^{-k_1}\unr(\phi')) \ci D & \text{if $D$ is non-split,}\\
\cal R_L(z^{-k_2}\unr(\phi')) \ci D & \text{if $D$ is split.}
\end{cases}
\end{equation*}
This triangulation is critical if and only if $D$ is split.
\end{example}

\subsection{Generalized triangulations}\label{subsect:generalized-triangulations}
Recall that a pure torsion $(\varphi,\Gamma_K)$-module over $L$ is a generalized $(\varphi,\Gamma_K)$-module that is either zero or free over $\cal R_L/\prod_\tau t_\tau^{r_\tau}$ for some collection $(r_\tau)_{\tau}$ of non-negative integers, not all zero. 

\begin{definition}
We say a pure torsion $(\varphi,\Gamma_K)$-module $Q$ is of character type if either $Q = 0$ or there exists a continuous character $\delta:K^\x \goto L^\x$ and a collection of non-negative integers $(r_\tau)_{\tau}$, not all zero, such that $Q \simeq \coker((\prod_\tau t_\tau^{r_\tau})\cal R_L(\delta) \goto \cal R_L(\delta))$.
\end{definition}
If $Q$ is non-zero and pure torsion of character type then we refer to the $\Sigma_K$-tuple $(r_\tau)_{\tau \in \Sigma_K}$ as the torsion exponents of $Q$ and $(\wt_\tau(\delta))_{\tau \in \Sigma_K}$ as the torsion weights of $Q$. The zero module $(0)$ has, by definition, torsion exponents $(0,\dotsc,0)$ and torsion weights $(w_\tau)_{\tau \in \Sigma_K}$ for any collection of integers $w_\tau$. By Proposition \ref{prop:torsion-cohomology}, these invariants, taken together, completely classify $Q$ among pure torsion $(\varphi,\Gamma_K)$-modules of character type.

\begin{definition}
Let $Q$ be a generalized $(\varphi,\Gamma_K)$-module over $\cal R_L$. A generalized triangulation of $Q$ is a filtration $Q_{\bullet}$
\begin{equation*}
Q_{\bullet}: 0 = Q_0 \ci Q_1 \ci Q_2 \ci \dotsb \ci Q_{d-1} \ci Q_d = Q
\end{equation*}
such that for $1 \leq i \leq d$, $Q_i/Q_{i-1}$ is either a rank one $(\varphi,\Gamma_K)$-module or a pure torsion $(\varphi,\Gamma_K)$-module of character type and in either case $Q_i/Q_{i-1}$ is a direct summand of $Q_i$ as a $\cal R_L$-module. We say that $Q$ is triangulated if it is equipped with a triangulation and trianguline if it may be triangulated, after possibly extending scalars.
\end{definition}

Note that we allow for consecutive steps $Q_i \ci Q_{i+1}$ to be equal, since $(0)$ is a pure torsion $(\varphi,\Gamma_K)$-module of character type under our definition. This has two consequences. First, even if $Q$ is a bona fide $(\varphi,\Gamma_K)$-module then a generalized triangulation is not a triangulation in the sense of Section \ref{subsec:parabolizations}. We will deal with this ambiguity in the definition of standard triangulation below. Second, since we can always repeat steps in a generalized triangulation, the length of a generalized triangulation $Q_{\bullet}$ depends on $Q_{\bullet}$; it is not intrinsic to $Q$, unlike lengths of triangulations of bona fide $(\varphi,\Gamma_K)$-modules. This remains true even if a generalized triangulation $Q_{\bullet}$ is strictly increasing: the length still cannot be read off from $Q$ since $\cal R_L/t_\tau t_\sigma \simeq \cal R_L/t_\tau \oplus \cal R_L/t_\sigma$ if $\sigma \neq \tau$.

Note that if $Q$ is a generalized $(\varphi,\Gamma_K)$-module then the torsion submodule $Q_{\tor} = Q[t^\infty] \ci Q$ is $(\varphi,\Gamma_K)$-stable and an $\cal R_L$-module summand. The quotient $Q/Q_{\tor}$ is a bona fide $(\varphi,\Gamma_K)$-module whose rank depends only on $Q$. We isolate those generalized triangulations which  appear in practice.
\begin{definition}
If $Q$ is a generalized $(\varphi,\Gamma)$-module and $Q_{\bullet}$ is a generalized triangulation of $Q$ then $Q_{\bullet}$ is called a standard triangulation if there exists a $0\leq i \leq d$ such that $Q_i = Q_{\tor}$. The integer $i$ is called the torsion length of $Q_{\bullet}$ and the integer $d-i$ is called the free length of $Q$.
\end{definition} 
In the definition, the torsion length $i$ depends on $Q_{\bullet}$ whereas the free length $d-i$ depends only on $Q$ (since it is equal to $\rank_{\cal R_L} Q/Q_{\tor}$). And now a standard triangulation is closer to a triangulation in the case where $Q$ is a bona fide $(\varphi,\Gamma_K)$-module. Indeed, in that case $Q_{\tor} = (0)$ and so a standard triangulation is of the form
\begin{equation*}
0 = 0 = \dotsb = 0 = Q_i \sci {Q}_{i+1} \sci \dotsb \sci Q_d = Q
\end{equation*}
where $P_j := Q_{i+j}$ defines a triangulation of $Q$ as in Definition \ref{defn:triangulation}.

A standard triangulation of a generalized $(\varphi,\Gamma_K)$-module has a number of invariants which we now detail. Suppose that $Q_{\bullet}$ is a standard triangulation. 
\begin{itemize}
\item The induced generalized triangulation on $Q/Q_{\tor}$ is an actual triangulation with ordered parameter $(\delta_{i+1},\dotsc,\delta_d)$ whose length is the free length of $Q$. 
\item The induced generalized triangulation on $Q_{\tor}$  also has invariants. For one, it has its length $i$. Second, if $1 \leq j \leq i$ then $Q_j/Q_{j-1}$ is pure torsion of character type and thus has exponents $(r_{j,\tau})_{\tau}$ and weights $(w_{j,\tau})_\tau$.
\end{itemize}
Note that it may happen that for some $\tau$, $r_{j,\tau} = 0$. For example, if there is an integer $j$ such that $Q_j = Q_{j-1}$ then the corresponding torsion exponents are $r_{j,\tau} = 0$ for all $\tau$. However, ranging over $j$ we can {\em a priori} predict the frequency at which this happens.

\begin{definition}
If $Q$ is a generalized $(\varphi,\Gamma_K)$-module then its $\tau$-torsion length is defined by
\begin{equation*}
\ell_\tau(Q) = \rank_{\cal R_L/t_\tau} Q_{\tor}/t_\tau.
\end{equation*}
\end{definition}

Fix a $\tau \in \Sigma_K$ and a standard triangulation $Q_{\bullet}$ of a generalized $(\varphi,\Gamma_K)$-module with torsion length $i$ and exponents $((r_{j,\tau})_{\tau})_{1\leq j \leq i}$. Since the successive quotients of a generalized triangulation are direct summands as $\cal R_L$-modules, it is easy to see that $\ell_\tau(Q) = \sizeof\set{j \st r_{j,\tau} \neq 0}$ and that $\ell_\tau(Q) \leq i$ for all $\tau$. To summarize the previous discussion we separate out the following definition.

\begin{definition}
If $Q_\bullet$ is a standard triangulation of a generalized $(\varphi,\Gamma_K)$-module $Q$ then, in the notation above,
\begin{itemize}
\item the torsion length is the unique integer $i\geq 1$ such that $Q_{\tor} = Q_i$,
\item the torsion exponents are $((r_{j,\tau})_{\tau})_{1\leq j \leq i}$, 
\item the torsion weights are $((w_{j,\tau})_\tau)_{1\leq j\leq i}$ and 
\item the free parameter is the ordered parameter $(\delta_j)_{j>i}$.
\end{itemize}
If we specify an element $\tau \in \Sigma_K$ then we refer to $(r_{j,\tau})_{1\leq j\leq i}$ and $(w_{j,\tau})_{1\leq j\leq i}$ as the $\tau$-torsion exponents and $\tau$-torsion weights.
\end{definition}

Finally, we finish this section with a result that explains how standard triangulations of generalized $(\varphi,\Gamma_K)$-modules are inherently more flexible than triangulations of bona fide $(\varphi,\Gamma_K)$-modules. Recall that we defined the notion of homothety among continuous characters of $K^\x$ at the end of Section \ref{subsec:galois-cohomology}.

\begin{proposition}\label{lemm:stupid-lemma}
Suppose that $Q$ is a generalized $(\varphi,\Gamma_K)$-module, $Q_{\bullet}$ is a standard triangulation of torsion length $i$ with torsion exponents $((r_{j,\tau})_\tau)_{1\leq j\leq i}$, torsion weights $((w_{j,\tau})_\tau)_{1\leq j\leq i}$ and free parameter $(\delta_j)_{j>i}$. Assume furthermore that $\delta_{i+1}$ is not homothetic to $\delta_j$ for $j > i+1$.

Then, for every $\Sigma_K$-tuple $(r_{i+1,\tau})_{\tau}$ of non-negative integers such that
\begin{align*}
j\leq i \implies w_{j,\tau} - \wt_\tau(\delta_{i+1}) \nin& \set{-r_{i+1,\tau},\dotsc, r_{j,\tau} - r_{i+1,\tau} - 1}
= \set{-r_{i+1,\tau} + m \st 0\leq m < r_{j,\tau}},
\end{align*}
there exists a unique (up to scalar) inclusion $\prod_{\tau} t_\tau^{r_{i+1,\tau}}\cal R_{L}(\delta_{i+1}) \inject Q$. Its cokernel is a generalized $(\varphi,\Gamma_K)$-module which is naturally equipped with a standard triangulation having invariants:
\begin{itemize}
\item torsion length $i+1$
\item torsion exponents $((r_{j,\tau})_\tau)_{j\leq i+1}$,
\item torsion weights $((w_{j,\tau})_\tau)_{j\leq i} \union (\wt_\tau(\delta_{i+1}))_\tau$ and
\item free parameter $(\delta_{j})_{j > i+1}$.
\end{itemize}
\end{proposition}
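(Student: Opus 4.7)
Put $\eta := \delta_{i+1}\prod_\tau z_\tau^{r_{i+1,\tau}}$, so that $M := \prod_\tau t_\tau^{r_{i+1,\tau}}\cal R_L(\delta_{i+1}) \simeq \cal R_L(\eta)$ as $(\varphi,\Gamma_K)$-modules; in particular $\wt_\tau(\eta) = \wt_\tau(\delta_{i+1}) - r_{i+1,\tau}$. The plan is to identify $\Hom_{(\varphi,\Gamma_K)}(M, Q) = H^0(Q(\eta^{-1}))$ and compute it using the standard filtration $Q_\bullet$, then analyze the resulting map and its cokernel.

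The inductive cohomology computation runs as follows. For $j \le i$ the twist $(Q_j/Q_{j-1})(\eta^{-1}) \simeq \bigoplus_\tau \cal R_L/t_\tau^{r_{j,\tau}}(\chi_j\eta^{-1})$ has $\tau$-weight equal to $w_{j,\tau} - \wt_\tau(\delta_{i+1}) + r_{i+1,\tau}$, which by hypothesis lies outside $\{0,\dotsc,r_{j,\tau}-1\}$, so $H^0 = H^1 = 0$ by Proposition~\ref{prop:torsion-cohomology}. Inductively $H^*(Q_i(\eta^{-1})) = 0$ for $* = 0,1$. The twist $(Q_{i+1}/Q_i)(\eta^{-1}) \simeq \cal R_L(\prod_\tau z_\tau^{-r_{i+1,\tau}})$ lies in $\hat T(L)^+$, contributing $H^0 = L$, so the long exact sequence gives $H^0(Q_{i+1}(\eta^{-1})) \simeq L$. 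For $j > i+1$ the non-homothety hypothesis prevents $\delta_j\eta^{-1}$ from lying in $\hat T(L)^+$, so its $H^0$ vanishes and the long exact sequence inductively yields $H^0(Q(\eta^{-1})) \simeq L$. Picking a generator produces the asserted morphism $f\colon M \to Q$, which is unique up to scalar; injectivity follows since the composite $M \to Q \to Q/Q_i$ lands in $Q_{i+1}/Q_i = \cal R_L(\delta_{i+1})$ as the canonical (hence injective) inclusion.

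To build a standard triangulation of $Q/f(M)$, I would use that $H^0(Q_i(\eta^{-1})) = H^1(Q_i(\eta^{-1})) = 0$ to split the pullback extension $0 \to Q_i \to E \to M \to 0$ uniquely in the category of $(\varphi,\Gamma_K)$-modules, where $E := \pi^{-1}(M) \subset Q_{i+1}$ for $\pi\colon Q_{i+1}\to \cal R_L(\delta_{i+1})$. Consequently $E/f(M) \simeq Q_i$ canonically and the candidate filtration $(Q/f(M))_j = Q_j$ for $j\le i$, $(Q/f(M))_{i+1} = Q_{i+1}/f(M)$, $(Q/f(M))_j = Q_j/f(M)$ for $j>i+1$, has the predicted $(\varphi,\Gamma_K)$-module successive quotients, with new torsion step $\cal R_L(\delta_{i+1})/M \simeq \bigoplus_\tau \cal R_L/t_\tau^{r_{i+1,\tau}}(\delta_{i+1})$.

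The main obstacle, I expect, is verifying the $\cal R_L$-direct summand condition at the new torsion step. Choosing an $\cal R_L$-module decomposition $Q_{i+1} = Q_i \oplus \cal R_L(\delta_{i+1})$, write $f(m) = (\alpha(m),m)$ for an $\cal R_L$-linear $\alpha\colon M \to Q_i$; splitting the cokernel amounts to extending $\alpha$ to $\cal R_L(\delta_{i+1}) \to Q_i$, equivalently to showing $\alpha(M) \subset e Q_i$ for $e := \prod_\tau t_\tau^{r_{i+1,\tau}}$. The cocycles $\rho_\varphi, \rho_\gamma\colon \cal R_L(\delta_{i+1}) \to Q_i$ encoding the extension $Q_{i+1}$ are $\varphi$- and $\gamma$-semilinear respectively, and since $\varphi(e), \gamma(e) \in e\cdot\cal R_L^{\times}$ they send $M = e\cdot\cal R_L(\delta_{i+1})$ into $eQ_i$. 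Reducing the cocycle identity for $\alpha$ modulo $eQ_i$ then makes the induced $\bar\alpha\colon M \to \bar Q_i := Q_i/eQ_i$ into a $(\varphi,\Gamma_K)$-equivariant map. The $\cal R_L$-direct summand structure of the original filtration of $Q_i$ induces a filtration of $\bar Q_i$ with successive quotients $\bigoplus_\tau \cal R_L/t_\tau^{\min(r_{j,\tau},r_{i+1,\tau})}(\chi_j)$, and the hypothesis excluding $w_{j,\tau}-\wt_\tau(\delta_{i+1})$ from $\{-r_{i+1,\tau},\dotsc,r_{j,\tau}-r_{i+1,\tau}-1\}$ automatically excludes it from the smaller range relevant for these truncated quotients, so each $H^0$ vanishes by Proposition~\ref{prop:torsion-cohomology}. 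Thus $\bar\alpha = 0$, $\alpha$ extends, and the desired standard triangulation exists.
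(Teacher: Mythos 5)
Your proof is correct and tracks the paper's argument quite closely in its first two paragraphs: you set $\eta = \delta_{i+1}\prod_\tau z_\tau^{r_{i+1,\tau}}$ (the paper's $\delta$), compute $H^0(Q_j(\eta^{-1})) = H^1(Q_j(\eta^{-1})) = 0$ for $j\le i$ via Proposition~\ref{prop:torsion-cohomology}, use $\delta_{i+1}\eta^{-1} = \prod_\tau z_\tau^{-r_{i+1,\tau}} \in \hat T(L)^+$ to get $H^0(Q_{i+1}(\eta^{-1})) = L$, and then the non-homothety hypothesis to propagate to $H^0(Q(\eta^{-1})) = L$; injectivity is obtained exactly as in the paper by observing the composite into $Q/Q_i$ hits $\cal R_L(\delta_{i+1})$ by the canonical inclusion.

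Where you genuinely diverge is in the treatment of the sequence
\begin{equation*}
0 \goto Q_i \goto Q_{i+1}/f(M) \goto \cal R_L(\delta_{i+1})/M \goto 0
\end{equation*}
and the $\cal R_L$-direct-summand condition at the new torsion step. The paper dispatches this in a single sentence, asserting that because $\cal R_L(\delta_{i+1})$ is an $\cal R_L$-summand of $Q_{i+1}$, the sequence is $\cal R_L$-split. That assertion, read literally, is too quick: if $f(m)=(\alpha(m),m)$ under a chosen $\cal R_L$-decomposition $Q_{i+1}=Q_i\oplus\cal R_L(\delta_{i+1})$, then producing an $\cal R_L$-splitting of the displayed sequence is equivalent to extending $\alpha$ from $M$ to $\cal R_L(\delta_{i+1})$, i.e.\ to showing $\alpha(M)\subset \left(\prod_\tau t_\tau^{r_{i+1,\tau}}\right)Q_i$ — and this is false for an arbitrary $\cal R_L$-linear map into a torsion module. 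Your argument does the necessary work: you observe that the cocycles $\rho_\varphi,\rho_\gamma$ carry $M$ into $\left(\prod_\tau t_\tau^{r_{i+1,\tau}}\right)Q_i$ because $\prod_\tau t_\tau^{r_{i+1,\tau}}$ is a $(\varphi,\Gamma_K)$-eigenvector up to units, deduce that $\bar\alpha : M\goto Q_i/\left(\prod_\tau t_\tau^{r_{i+1,\tau}}\right)Q_i$ is $(\varphi,\Gamma_K)$-equivariant, and kill it by a $H^0$-vanishing for the truncated quotients — noting that the inequality hypotheses of the proposition exclude exactly the range needed since $\min(r_{j,\tau},r_{i+1,\tau})\le r_{j,\tau}$. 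This is a sound and well-justified replacement for the paper's terse assertion, and it correctly locates why the integrality hypotheses are indispensable at this step and not merely in the earlier $H^0$-count. Your preliminary use of $\Ext^1(M,Q_i)=H^1(Q_i(\eta^{-1}))=0$ to split $0\to Q_i\to E\to M\to 0$ in the category of $(\varphi,\Gamma_K)$-modules is a nice supplementary observation giving $E/f(M)\iso Q_i$ canonically, though the direct-summand issue at the new torsion step still requires your cocycle argument; you rightly kept both.
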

\begin{proof}
Let $\delta = \delta_{i+1}\prod_{\tau} z_\tau^{r_{i+1,\tau}}$. For each $j\leq i$ we choose a character $\delta_j$ whose $\tau$-weight is $w_{j,\tau}$ and so that $Q_{j}/Q_{j-1} \simeq \cal R_L(\delta_j)/\prod_{\tau} t_\tau^{r_{j,\tau}}$. We quickly calculate
\begin{equation*}
\wt_\tau(\delta_j\delta^{-1}) = w_{j,\tau} - \wt_{\tau}(\delta_{i+1}) + r_{i+1,\tau}.
\end{equation*}
By our assumptions, $\wt_\tau(\delta_j\delta^{-1}))  \nin \set{0,1,\dotsc,r_{j,\tau}-1}$ for each $\tau$. Using Proposition \ref{prop:torsion-cohomology} we see that 
\begin{equation*}
\Hom(\cal R_L(\delta),Q_j/Q_{j-1}) = H^0((\cal R_L/\prod_\tau t_\tau^{r_{j,\tau}})(\delta_j\delta^{-1})) = (0).
\end{equation*}
By induction on $1\leq j \leq i$ we see that $\Hom(\cal R_L(\delta),Q_i) = (0)$. Since $Q_i$ is torsion, the Euler--Poincar\'e formula for torsion modules implies $H^1(Q_i(\delta^{-1})) = (0)$ as well. We deduce from the long exact sequence in cohomology that the natural map $\Hom(\cal R_L(\delta),Q) \goto \Hom(\cal R_L(\delta),Q/Q_i)$ is an isomorphism.

On the other hand, $Q/Q_i$ is triangulated by a parameter $(\delta_{i+1},\dotsc)$ whose higher terms are not homothetic to $\delta_{i+1}$. From that we deduce that the inclusion $\Hom(\cal R_L(\delta),\cal R_L(\delta_{i+1})) \inject \Hom(\cal R_L(\delta),Q/Q_{i})$ is an isomorphism also. Putting the two calculations together, we see that
\begin{equation*}
\dim_L \Hom(\cal R_L(\delta),Q) = \dim_L \Hom(\cal R_L(\delta), \cal R_L(\delta_{i+1})) =1.
\end{equation*}
This shows that the morphism in the lemma exists and is unique up to a scalar.

But, the calculation shows more. We've shown in fact that any non-zero morphism $e: \cal R_L(\delta) \goto Q$ factors through $Q_{i+1}$ and that $e$ remains non-zero when mapped into the quotient $\cal R_L(\delta_{i+1})$ of $Q_{i+1}$. Since both $\cal R_L(\delta)$ and $\cal R_L(\delta_{i+1})$ are rank one, $e$ must be injective and it induces induces an exact sequence
\begin{equation}\label{eqn:ses-itmustsplit}
0 \goto Q_{i} \goto Q_{i+1}/\cal R_L(\delta) \goto \cal R_L(\delta_{i+1})/\cal R_{L}(\delta) \goto 0.
\end{equation}
Since $\cal R_L(\delta_{i+1})$ is a direct summand of $Q_{i+1}$ as a $\cal R_L$-module, the sequence \eqref{eqn:ses-itmustsplit} is also split as a sequence of $\cal R_L$-modules. This means that the standard triangulation $Q_{\bullet}$ on $Q$ induces a standard triangulation $Q_{\bullet}'$ on $Q/\cal R_L(\delta)$ whose successive quotients are given by
\begin{equation*}
Q'_{j}/Q'_{j-1} = \begin{cases}
Q_j/Q_{j-1} & \text{if $j \neq i+1$}\\
\cal R_L(\delta_{i+1})/\prod_\tau t_\tau^{r_{i+1,\tau}} & \text{if $j = i+1$}.
\end{cases}
\end{equation*}
The invariants are of the new standard triangulation are easily calculated from this.
\end{proof}
Note that one can always find infinitely many such integers $r_{i+1,\tau}$ which satisfy the hypotheses of the proposition. Also note that if $r_{i+1,\tau} = 0$ for all $\tau \in \Sigma_K$ then the standard triangulation we just produced will have two consecutive steps which are equal.

\section{Galois cohomology in families}
\label{sec:finiteness}

In this short section we expand on Section \ref{subsec:galois-cohomology}. In particular, we recall the main results of \cite{KedlayaPottharstXiao-Finiteness} and develop a simple cohomology and base change framework for generalized $(\varphi,\Gamma_K)$-modules. The framework will be applied in Sections \ref{sec:triangulated-families} and \ref{sec:variation}. Throughout this section we will let $A$ be a {\em reduced} affinoid $L$-algebra and $X = \Sp(A)$.

 Suppose that $N_{\bullet} = [ \dotsb \rightarrow N_1 \rightarrow N_0]$ is a complex of $A$-modules and $M$ is an $A$-module such that $\Tor_j^A(N_p, M) = (0)$ for each $j\geq 1$ and $p\geq 0$. Then the K\"unneth spectral sequence, see \cite[Theorem 5.6.4]{Weibel}, is a first quadrant spectral sequence
\begin{equation*}
E^2_{pq} = \Tor_p^A(H_q(N_{\bullet}), M) \Rightarrow H_{p+q}(N_{\bullet}\tensor_A M).
\end{equation*}

\begin{definition}
An $A$-module $Q$ is called nearly flat if $\Tor_j^A(Q,L(x)) = (0)$ for all $j\geq 1$ and $x \in \Sp(A)$.
\end{definition}

Recall that if $x \in X(\bar \Q_p)$ and $Q$ is an $A$-module then $Q_x$ denotes the fiber $Q\tensor_{A} L(x)$.

\begin{proposition}\label{prop:coh-kunneth}
If $Q$ is a nearly flat generalized $(\varphi,\Gamma_K)$-module over $A$ then there is a first quadrant spectral sequence
\begin{equation*}
\Tor_p^A(H^{2-q}(Q),L(x)) \Rightarrow H^{2-(p+q)}(Q_x)
\end{equation*}
which degenerates on the $E^3$-page.
\end{proposition}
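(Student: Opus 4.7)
The plan is to apply the K\"unneth spectral sequence recalled just above the proposition to the Herr complex $C^{\bullet}_{\gamma_0}(Q)$, which computes $H^{\bullet}(Q)$.

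\emph{Near flatness of the Herr terms.} The terms of the Herr complex are $Q^{\Delta_K}$ and $(Q^{\Delta_K})^{\oplus 2}$. Since $\Delta_K$ is either trivial or of order two, its order is a unit in the $\Q_p$-algebra $A$. The averaging idempotent $e = |\Delta_K|^{-1}\sum_{g\in\Delta_K} g$ therefore splits $Q$ as an $A$-module into $Q = Q^{\Delta_K}\oplus (1-e)Q$. Near flatness passes to direct summands, so $Q^{\Delta_K}$, and hence each term of the Herr complex, is nearly flat.

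\emph{Base change compatibility.} There is a canonical isomorphism $C^\bullet_{\gamma_0}(Q)\otimes_A L(x) \cong C^\bullet_{\gamma_0}(Q_x)$. Indeed, the maps $\varphi-1$ and $\gamma_0-1$ are $A$-linear, so they commute with base change tautologically; and the functor $(-)^{\Delta_K}$ commutes with base change because it is cut out by the idempotent $e$ above, which is itself $A$-linear.

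\emph{Application of K\"unneth.} Reindex the three-term cochain complex $C^\bullet_{\gamma_0}(Q)$ as a chain complex $\tilde N_\bullet$ by setting $\tilde N_i := C^{2-i}_{\gamma_0}(Q)$ for $i=0,1,2$, so that $H_q(\tilde N_\bullet) = H^{2-q}(Q)$. The terms of $\tilde N_\bullet$ are nearly flat, so the K\"unneth spectral sequence recalled in the preamble yields
\begin{equation*}
E^2_{p,q} = \Tor_p^A(H^{2-q}(Q), L(x)) \Longrightarrow H_{p+q}(\tilde N_\bullet \otimes_A L(x)) \cong H^{2-(p+q)}(Q_x),
\end{equation*}
the final identification being the base change compatibility above.

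\emph{Degeneration.} The $E^2$-page is supported in the horizontal strip $q\in\{0,1,2\}$ since $H^{2-q}(Q)$ vanishes outside this range. The differential $d_r$ has bidegree $(-r,r-1)$, so for $r$ sufficiently large it pushes the target outside the support strip and must vanish. A direct count on this three-row strip shows that the last potentially nonzero differentials occur on the $E^3$-page, giving the advertised degeneration.

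The only genuine technical point is the base change compatibility of $(-)^{\Delta_K}$, which is handled uniformly by the averaging idempotent; with that splitting in hand, the rest is a direct invocation of the K\"unneth statement that precedes the proposition.
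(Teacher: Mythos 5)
Your argument is correct and follows essentially the same route as the paper: apply the K\"unneth spectral sequence to the Herr complex with coefficient module $L(x)$, verify the near-flatness hypothesis on the terms, and read off the degeneration from the three-row support. You make explicit two points the paper leaves brief — the averaging idempotent showing $Q^{\Delta_K}$ is a nearly flat direct summand (the paper simply remarks it is a direct summand, crediting the referee), and the base-change compatibility $C^\bullet_{\gamma_0}(Q)\otimes_A L(x)\cong C^\bullet_{\gamma_0}(Q_x)$ — but these are elaborations, not a different method.
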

\begin{proof}
We apply the K\"unneth spectral sequence to $M = L(x)$ and the three term Herr complex $C^{\bullet}_{\varphi,\gamma_K}(Q)$, after making the obvious shift between homology and cohomology. The hypothesis in the K\"unneth spectral sequence is valid since $Q$ is nearly flat and each term of $C^{\bullet}_{\varphi,\gamma_K}(Q)$ is a direct sum of finitely many copies of $Q$, if $p$ is odd. If $p=2$ then each term of $C_{\varphi,\gamma_K}^{\bullet}(Q)$ is actually a direct sum of finitely many copies of $Q^{\Delta_K}$, itself a direct summand of $Q$ (we thank the anonymous referee for this precision).

As an aid to the reader, let us explicitly write out the $E^2$-page of the spectral sequence
\begin{equation}
\xymatrix{
\vdots & \vdots & \vdots \\
0 & 0 & 0 & \dotsb\\
H^0(Q)\tensor_A L(x)& \Tor_1^A(H^0(Q),L(x)) & \Tor_2^A(H^0(Q),L(x)) & \dotsb\\
H^1(Q) \tensor_A L(x) & \Tor_1^A(H^1(Q),L(x)) & \Tor_2^A(H^1(Q),L(x)) \ar[ull] & \dotsb\\
H^2(Q) \tensor_A L(x) & \Tor_1^A(H^2(Q),L(x)) & \Tor_2^A(H^2(Q),L(x)) \ar[ull] & \dotsb
}
\end{equation}
The arrows drawn are the differentials. And now it is clear that the spectral sequence stabilizes on the $E^3$-page since the differentials there and afterwards are all zero.
\end{proof}

\begin{proposition}\label{prop:torsion-kunneth}
Suppose $f \in \cal R_L$ is not a zero divisor. If $Q$ is a nearly flat generalized $(\varphi,\Gamma_K)$-module over $A$ then for each $x \in X$:
\begin{enumerate}
\item There is a four term exact sequence
\begin{equation*}
0 \goto \Tor_2^{A}(Q/f, L(x)) \goto Q[f]\tensor_{A} L(x) \goto Q_x[f] \goto \Tor_1^A(Q/f, L(x)) \goto 0.
\end{equation*}
\item If $i\geq 1$ then $\Tor_{i+2}^A(Q/f,L(x)) \simeq \Tor_i^A(Q[f],L(x))$.
\end{enumerate}
\end{proposition}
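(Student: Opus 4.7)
The natural starting point is to split the evident four term exact sequence
\begin{equation*}
0 \to Q[f] \to Q \xrightarrow{f} Q \to Q/f \to 0
\end{equation*}
into two short exact sequences of $A$-modules
\begin{align*}
\text{(I)} &\quad 0 \to Q[f] \to Q \to fQ \to 0, \\
\text{(II)} &\quad 0 \to fQ \to Q \to Q/f \to 0,
\end{align*}
apply $-\otimes_A L(x)$ to each, and use the vanishing $\Tor_j^A(Q,L(x)) = 0$ for $j\geq 1$ to telescope the resulting long exact sequences.

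For part (b), the long exact sequence associated to (I) provides isomorphisms $\Tor_{i+1}^A(fQ,L(x)) \simeq \Tor_i^A(Q[f],L(x))$ for all $i\geq 1$, since the $\Tor$'s of $Q$ vanish in positive degree. Similarly, (II) gives $\Tor_{i+1}^A(Q/f,L(x)) \simeq \Tor_i^A(fQ,L(x))$ for $i\geq 1$. Composing these two identifications yields the isomorphism claimed in (b).

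Part (a) requires a bit more care because we must identify the natural map $Q[f]\tensor_A L(x) \to Q_x[f]$. The low-degree portions of the long exact sequences for (I) and (II) read
\begin{align*}
0 \to \Tor_1^A(fQ,L(x)) \to Q[f]\tensor_A L(x) \to Q_x \xrightarrow{\alpha} fQ\tensor_A L(x) \to 0, \\
0 \to \Tor_1^A(Q/f,L(x)) \to fQ\tensor_A L(x) \xrightarrow{\beta} Q_x \to (Q/f)_x \to 0.
\end{align*}
The crucial observation is that the composition $\beta\circ\alpha$ equals multiplication by $f$ on $Q_x$, since $\alpha$ is induced by $Q\twoheadrightarrow fQ$, $q\mapsto fq$ and $\beta$ by the inclusion $fQ\inject Q$. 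Consequently $Q_x[f] = \alpha^{-1}(\ker\beta) = \alpha^{-1}(\Tor_1^A(Q/f,L(x)))$, and since $\alpha$ is surjective we obtain a short exact sequence
\begin{equation*}
0 \to \ker(\alpha) \to Q_x[f] \to \Tor_1^A(Q/f,L(x)) \to 0.
\end{equation*}
From (I), $\ker(\alpha)$ equals the image of $Q[f]\tensor_A L(x) \to Q_x$, and the kernel of that map is $\Tor_1^A(fQ,L(x))$, which by the $i=1$ case of the isomorphism established in (b) is canonically $\Tor_2^A(Q/f,L(x))$. Splicing these identifications together produces the four-term exact sequence asserted in (a).

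The main obstacle, such as it is, is verifying that the connecting homomorphisms assemble coherently so that the map in (a) truly is the natural map induced from $Q[f] \inject Q$; once $\beta\circ\alpha=f$ is confirmed, the argument is formal diagram chasing with the long exact sequences attached to (I) and (II).
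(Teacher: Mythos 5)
Your proof is correct, and it takes a genuinely different (more elementary) route than the paper. The paper applies the K\"unneth spectral sequence to the two-term complex $N_\bullet = [Q \xrightarrow{\ f\ } Q]$, reads off $H_0 = Q/fQ$ and $H_1 = Q[f]$, and extracts both statements directly from the convergence/stabilization of the spectral sequence. You instead split the four-term sequence $0 \to Q[f] \to Q \xrightarrow{f} Q \to Q/f \to 0$ into the short exact sequences (I) and (II), apply the long exact sequence in $\Tor$ to each (using the nearly-flat hypothesis to kill $\Tor_{\geq 1}^A(Q,L(x))$), and then splice. The key step that lets the splicing produce the correct four-term sequence --- the observation that $\beta\circ\alpha$ is multiplication by $f$ on $Q_x$, hence $Q_x[f] = \alpha^{-1}(\ker\beta)$ --- is exactly right and is the substance of the diagram chase. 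Your isomorphisms $\Tor_{i+1}^A(fQ,L(x)) \simeq \Tor_i^A(Q[f],L(x))$ and $\Tor_{i+1}^A(Q/f,L(x)) \simeq \Tor_i^A(fQ,L(x))$ compose to give part (b) and, specialized to $i=1$, identify the leftmost term in part (a). The trade-off: the spectral-sequence proof is shorter and packages the bookkeeping in a standard tool (and is consistent with how the paper already set up Proposition \ref{prop:coh-kunneth}), while your version is self-contained and makes the identification of the middle map $Q[f]\otimes_A L(x) \to Q_x[f]$ with the natural map fully explicit, which the spectral-sequence proof leaves somewhat implicit.
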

\begin{proof}
Consider the complex $N_{\bullet}$ of $\cal R_A$-modules  given by $N_{\bullet} = [Q \overto{f} Q]$ and its base change $N_{\bullet,x} = [Q_x \overto{f} Q_x]$ to $x \in X$. Apply the K\"unneth spectral sequence again with $M = L(x)$. The homology groups are $H_0(N_{\bullet}) = Q/fQ$ and $H_1(N_{\bullet}) = Q[f]$ and vanish in degree $i\geq 2$ (the same for $N_{\bullet,x}$). From the spectral sequence we get for all $i\geq 2$ a short exact sequence
\begin{multline*}
0 \goto \coker\left(\Tor_{i+1}^A(Q/f,L(x)) \goto \Tor_{i-1}^A(Q[f],L(x))\right) \goto H_{i}(N_{\bullet,x})\\ \goto \ker\left(\Tor_{i}^A(Q/f,L(x)) \goto \Tor_{i-2}^A(Q[f],L(x))\right) \goto 0.
\end{multline*}
The middle term vanishes since $i\geq 2$ and thus by induction we see part (b) is true and that $\Tor_{2}^A(Q/f,L(x)) \inject Q[f]\tensor_A L(x)$, making the sequence in (a) exact on the left. The spectral sequence taken when $p+q = 1$ gives a short exact sequence
\begin{equation*}
0 \goto \coker\left(\Tor_2^A(Q/f,L(x)) \goto Q[f] \tensor_{A} L(x)\right) \goto Q_x[f] \goto \Tor_1^A(Q/f,L(x)) \goto 0,
\end{equation*}
which shows the rest of (a).
\end{proof}

Following these two general base change theorems we can begin to set up a cohomology and base change framework. Recall from the end of Section \ref{subsec:robba-ring} that if $Q$ is a generalized $(\varphi,\Gamma_K)$-module over $X$ and $U = \Sp(B) \ci X$ is an admissible open affinoid subdomain then $\restrict{Q}{U} := Q\tensor_{\cal R_X} \cal R_U$.
\begin{definition}
If $Q$ is a generalized $(\varphi,\Gamma_K)$-module over $A$ then we say $Q$ has finite cohomology if for every affinoid subdomain $U = \Sp(B) \ci X$, $H^i(\restrict{Q}{U})$ is a finite $B$-module for $i=0,1,2$.
\end{definition}

\begin{remark}
If $Q$ is a generalized $(\varphi,\Gamma_K)$-module over $A$ with finite cohomology, then $\restrict{Q}{U}$ is a generalized $(\varphi,\Gamma_K)$-module over $B$ with finite cohomology for all affinoid subdomains $U = \Sp(B) \ci X$.
\end{remark}

\begin{theorem}[Kedlaya-Pottharst-Xiao]\label{theorem:finiteness}
If $Q$ is a generalized $(\varphi,\Gamma_K)$-module over $A$ then $Q$ has finite cohomology in the following situations:
\begin{enumerate}
\item $Q$ is a $(\varphi,\Gamma_K)$-module;
\item $Q$ is of the form $\coker(Q_1 \overset{e}{\inject} Q_2)$ where both $Q_1$ and $Q_2$ have finite cohomology and $e$ is $(\varphi,\Gamma_K)$-equivariant.
\end{enumerate}
\end{theorem}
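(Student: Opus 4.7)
The two parts will be handled very differently. Part (a) is (after unpacking the definition) essentially a restatement of the main finiteness theorem of Kedlaya--Pottharst--Xiao for $(\varphi,\Gamma_K)$-modules. The plan is simply to cite the corresponding statement from \cite{KedlayaPottharstXiao-Finiteness}: for a $(\varphi,\Gamma_K)$-module over a reduced $L$-affinoid algebra, each Herr cohomology group is a finitely generated module, and the category of $(\varphi,\Gamma_K)$-modules is stable under base change along an affinoid subdomain (the restriction $\restrict{Q}{U}$ of a $(\varphi,\Gamma_K)$-module is again a $(\varphi,\Gamma_K)$-module over $U$). Combining these two facts gives the finiteness of $H^i(\restrict{Q}{U})$ over $B$ for every $U = \Sp(B) \subseteq X$, which is exactly the definition of finite cohomology.

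For part (b), the plan is to run the long exact sequence in Herr cohomology. Fix any affinoid subdomain $U = \Sp(B) \subseteq X$. By Proposition \ref{prop:exactness-restrict}, the restriction functor is exact, so the short exact sequence
\begin{equation*}
0 \to Q_1 \to Q_2 \to Q \to 0
\end{equation*}
restricts to a short exact sequence of generalized $(\varphi,\Gamma_K)$-modules over $B$. The Herr complex is functorial and term-by-term exact in its module variable (for $p=2$ one uses that $\Delta_K$ has order prime to the residue characteristic of $L$, so $(-)^{\Delta_K}$ is exact on $\mathbb{Q}_p$-vector spaces), and hence we obtain a long exact sequence
\begin{equation*}
0 \to H^0(\restrict{Q_1}{U}) \to H^0(\restrict{Q_2}{U}) \to H^0(\restrict{Q}{U}) \to H^1(\restrict{Q_1}{U}) \to \dotsb \to H^2(\restrict{Q_2}{U}) \to H^2(\restrict{Q}{U}) \to 0,
\end{equation*}
where the surjectivity at the far right uses that the Herr complex has only three terms.

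Finally, since $B$ is noetherian and $H^i(\restrict{Q_j}{U})$ is finite over $B$ for $j=1,2$ and $i=0,1,2$ by hypothesis, each $H^i(\restrict{Q}{U})$ sits in a finite exact sequence of finite $B$-modules (a quotient of $H^i(\restrict{Q_2}{U})$ by a subobject, extended by a submodule of $H^{i+1}(\restrict{Q_1}{U})$), hence is finite. The only step requiring any care is the exactness of $(-)^{\Delta_K}$ in the $p=2$ case, but as noted this is automatic, so the argument is routine once the long exact sequence is in place.
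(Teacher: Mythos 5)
Your proof is correct and follows essentially the same route as the paper: part (a) is a citation of the main finiteness theorem of Kedlaya--Pottharst--Xiao together with stability of $(\varphi,\Gamma_K)$-modules under restriction to affinoid subdomains, and part (b) uses Proposition \ref{prop:exactness-restrict} to restrict the short exact sequence to $U$ and then runs the long exact sequence in Herr cohomology, using noetherianity of $B$. The extra remarks on exactness of $(-)^{\Delta_K}$ and the right-end surjectivity are fine but not something the paper dwells on.
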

\begin{proof}
If $Q$ is a bona fide $(\varphi,\Gamma_K)$-module over $X$ then it is also a bona fide $(\varphi,\Gamma_K)$-module over $U$ and thus each cohomology group $H^i(\restrict{Q}{U})$ is a finite $B$-module by the main theorem of  \cite{KedlayaPottharstXiao-Finiteness}.

Now suppose that $Q_1$ and $Q_2$ have finite cohomology and that $e: Q_1 \goto Q_2$ is an injective $(\varphi,\Gamma_K)$-equivariant map. Since $U\ci X$ is an affinoid subdomain, Proposition \ref{prop:exactness-restrict} implies that we have a short exact sequence of generalized $(\varphi,\Gamma_K)$-module over $U$
\begin{equation}\label{eqn:assoc-sequence}
0 \goto \restrict{Q_1}{U} \goto \restrict{Q_2}{U} \goto  \restrict{Q}{U} \goto 0.
\end{equation}
The finiteness of each $H^i(\restrict{Q}{U})$ as a $B$-module now follows from the finiteness each $H^i(\restrict{Q_j}{U})$ ($i=0,1,2$ and $j=1,2$) and the long exact sequence in cohomology associated to the sequence \eqref{eqn:assoc-sequence}.
\end{proof}

If $Q$ is a generalized $(\varphi,\Gamma_K)$-module over $A$ then we define functions on $X$ by the formula
\begin{equation*}
d_Q^i(x) := \dim_{L(x)} H^i(Q_x).
\end{equation*}
Notice the fiber is taken prior to taking cohomology.

If $Q$ has finite cohomology then Nakayama's lemma, together with the fact that affinoid algebras are Jacobson \cite[Proposition 6.1.1/3]{BGR}, implies that $x \mapsto \dim_{L(x)} H^i(Q)\tensor_A L(x)$ is upper semi-continuous on $X$ and, since $X$ is reduced, locally constant if and only if $H^i(Q)$ is flat. For $i$ fixed, we will say that $H^i(Q)$ satisfies base change if the natural map $H^i(Q)\tensor_{A} L(x) \goto H^i(Q_x)$ is an isomorphism for all $x \in X$.

\begin{proposition}\label{prop:base-change-framework}
If $Q$ is a nearly flat generalized $(\varphi,\Gamma_K)$-module with finite cohomology and $x \mapsto d^i_Q(x)$ is locally constant for $i\geq k$ then $H^i(Q)$ is flat over $X$ for $i\geq k$ and satisfies base change for $i\geq k-1$.
\end{proposition}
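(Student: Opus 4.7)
The plan is to combine the K\"unneth spectral sequence of Proposition~\ref{prop:coh-kunneth} with a downward induction on $i$, using the semi-continuity principle recalled just before the statement: for a finite module $M$ over a reduced affinoid algebra $A$, the function $x\mapsto \dim_{L(x)} M\tensor_A L(x)$ is upper semi-continuous and locally constant precisely when $M$ is flat. Since the Herr complex has three terms, $H^i(Q)=0$ for $i\geq 3$, and both conclusions are trivial in that range; I need only treat $i\in\{0,1,2\}$.

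The main step is a position-by-position analysis of the spectral sequence at a fixed $i$. The group $H^i(Q_x)$ carries a filtration whose successive quotients are the terms $E^\infty_{p,q}$ with $p+q=2-i$ and $p,q\geq 0$, and each such $E^\infty_{p,q}$ is a subquotient of $E^2_{p,q}=\Tor_p^A(H^{2-q}(Q),L(x))$. Because the sequence degenerates at $E^3$, only the differential $d_2$ of bidegree $(-2,+1)$ can act on these positions. For $i=2$ the only point of the first quadrant with $p+q=0$ is the corner $(0,0)$ and no $d_2$ touches it, so $H^2(Q_x)=H^2(Q)\tensor_A L(x)$ unconditionally. For $i\leq 1$ the contributing positions are $(0,2-i)$, $(1,1-i)$ and, when $i=0$, $(2,0)$; the $\Tor$-terms at $(1,1-i)$ and $(2,-i)$, as well as the incoming $d_2$ landing at $(0,2-i)$ (whose source is $(2,1-i)=\Tor_2^A(H^{i+1}(Q),L(x))$), all vanish as soon as $H^{i+1}(Q)$ and, when $i=0$, $H^{i+2}(Q)$ are flat. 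Under these flatness hypotheses one therefore obtains base change $H^i(Q_x)=H^i(Q)\tensor_A L(x)$.

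I would then run a downward induction from $i=2$ down to $i=k$. The base case $i=2$ already has base change; if $k\leq 2$ then by hypothesis $d_Q^2$ is locally constant, so base change makes $x\mapsto\dim_{L(x)} H^2(Q)\tensor_A L(x)$ locally constant, and the semi-continuity principle upgrades this to flatness of the finite $A$-module $H^2(Q)$ (finiteness is supplied by the standing assumption that $Q$ has finite cohomology, and $X$ is reduced by hypothesis). The inductive step is identical: assuming $H^j(Q)$ is flat for all $j>i$ in $\{0,1,2\}$, the spectral sequence analysis yields base change at $i$, and if $i\geq k$ then local constancy of $d_Q^i$ combined with the semi-continuity principle gives flatness of $H^i(Q)$. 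Once the induction terminates at $i=k$, applying the spectral sequence analysis one last time at $i=k-1$---which requires only flatness of $H^k(Q)$ and $H^{k+1}(Q)$, both now in hand---delivers the remaining base change assertion.

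I do not expect any serious obstacle: the needed spectral sequence and semi-continuity statement are both already recorded. The only real care is in bookkeeping the bidegrees so that the $\Tor$-groups and differentials which must vanish are precisely those killed by the inductive flatness hypothesis.
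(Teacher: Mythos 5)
Your proposal is correct and, despite being phrased as a downward sweep on the cohomological degree $i$ rather than a descending induction on the parameter $k$, is essentially the same argument as the paper's: both use the K\"unneth spectral sequence of Proposition~\ref{prop:coh-kunneth} to propagate base change downward one degree at a time, and both use the semi-continuity/Nakayama observation (with $X$ reduced and cohomology finite) to upgrade local constancy of $d^i_Q$ to flatness of $H^i(Q)$. The paper gets the unconditional base change in top degree from $H^2$ being a cokernel, while you read it off the corner $(0,0)$ of the spectral sequence — an equivalent observation.
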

\begin{proof}
One argues by descending induction on $k$. Since the cohomology vanishes in degrees $k\geq 3$, the proposition is vacuous for $k\geq 4$. When $k = 3$, $H^2(Q)$ is a cokernel, by definition, and thus always satisfies base change.

Fix $k\leq 2$ and assume the result is true for $k+1$. Since the hypotheses for $k$ imply those of $k+1$, the induction hypothesis implies that $H^k(Q)$ satisfies base change. By assumption, $d_Q^k(x)$ is locally constant. Since $X$ is reduced, Nakayama's lemma implies that $H^k(Q)$ is flat over $X$. The fact that $H^{k-1}(Q)$ satisfies base change now follows from Proposition \ref{prop:coh-kunneth}.
\end{proof}

Recall that if $X$ is a reduced rigid space and $x_0 \in X(\bar \Q_p)$ then a subset $Z \ci X(\bar \Q_p)$ is said to accumulate at $x_0$ if there exists a neighborhood basis of affinoid neighborhoods $U$ of $x_0$ such that $Z \intersect U$ is Zariski dense in $U$ for all $U$.

\begin{corollary}\label{cor:free-and-base-change}
Let $Q$ be a nearly flat generalized $(\varphi,\Gamma_K)$-module with finite cohomology. If $x \in X$ and there exists a Zariski dense subset $Z \ci X(\bar \Q_p)$ accumulating at $x$ such that, for each $0\leq i\leq 2$, $d^i_Q(x) = d^i_Q(u)$ for all $u \in Z$ then $H^i(\restrict{Q}{U})$ is flat and satisfies base change for $0\leq i \leq 2$ for all sufficiently small affinoid subdomains $x \in U \ci X$.
\end{corollary}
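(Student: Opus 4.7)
The plan is to apply Proposition \ref{prop:base-change-framework} with $k=0$ to a sufficiently small $\restrict{Q}{U}$, which will yield flatness and base change for every $H^i(\restrict{Q}{U})$, $i=0,1,2$. The input to the proposition is that $d^i_Q$ is locally constant near $x$ for every $i\in\set{0,1,2}$, and I will establish this by descending induction on $i$, from $i=2$ down to $i=0$, shrinking the neighborhood at each stage.

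For the base case $i=2$, the first-quadrant spectral sequence of Proposition \ref{prop:coh-kunneth} places $H^2(Q_u)$ in the corner $E^2_{0,0}=H^2(Q)\tensor_A L(u)$, so $H^2$ satisfies base change unconditionally and $d^2_Q(u)=\dim_{L(u)}H^2(Q)\tensor_A L(u)$ is the fiber dimension of a finite $A$-module (finiteness by Theorem \ref{theorem:finiteness}). In particular, the sets $\set{u\st d^2_Q(u)\geq n}$ are Zariski closed. Choose an affinoid $U_0\ni x$ in which $Z$ is Zariski dense. Then $\set{u\in U_0\st d^2_Q(u)\geq d^2_Q(x)}$ is Zariski closed and contains the dense set $Z\cap U_0$, hence equals $U_0$. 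On the other hand $\set{u\in U_0\st d^2_Q(u)\geq d^2_Q(x)+1}$ is Zariski closed and fails to contain $x$, so its complement is an admissible open neighborhood of $x$, which contains an affinoid subdomain $U_1\ni x$ on which $d^2_Q\equiv d^2_Q(x)$.

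For the inductive step, assume $d^j_Q$ is constant on an affinoid $U\ni x$ for all $j>i$, where $i\in\set{0,1}$. Since restriction preserves near flatness and finite cohomology, Proposition \ref{prop:base-change-framework} applies to $\restrict{Q}{U}$ with $k=i+1$, giving in particular base change for $H^i(\restrict{Q}{U})$. Hence $d^i_Q(u)=\dim_{L(u)}H^i(\restrict{Q}{U})\tensor_{\cal O(U)}L(u)$ for $u\in U$, and the identical Zariski-density / upper semi-continuity argument — now applied to the finite $\cal O(U)$-module $H^i(\restrict{Q}{U})$ and to the set $Z\cap U$, which remains Zariski dense in $U$ by the accumulation hypothesis — produces an affinoid $U'\ci U$ containing $x$ on which $d^i_Q$ is constant. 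After completing the descent to $i=0$ we have an affinoid $U\ni x$ on which every $d^i_Q$ is constant, and a final application of Proposition \ref{prop:base-change-framework} with $k=0$ delivers the flatness and base change of every $H^i(\restrict{Q}{U})$.

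The argument is essentially bookkeeping; the only point that requires checking, and is not really an obstacle, is the passage from the Zariski-open neighborhoods provided by semi-continuity to the affinoid neighborhoods demanded by the statement. This follows from the fact that the complement of a proper Zariski closed subset of an affinoid is admissibly open and hence contains an affinoid subdomain around any of its points, together with the observation that restriction to affinoid subdomains preserves near flatness, finite cohomology, flatness of $H^i$, and base change.
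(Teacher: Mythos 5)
Your proof is correct and follows essentially the same route as the paper: start from the unconditional base change for $H^2$, use upper semi-continuity plus the Zariski density of $Z$ to force $d^2_Q$ constant on a neighborhood, feed this into Proposition \ref{prop:base-change-framework}, and iterate downward using the accumulation hypothesis to keep $Z$ dense after each shrink. You have simply spelled out the descending induction and the passage from Zariski opens to affinoid subdomains more explicitly than the paper's terse treatment.
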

\begin{proof}
$H^2(Q)$ always satisfies base change, whence $u \mapsto d_Q^2(u)$ is upper semi-continuous on $X$. Since $Z$ is Zariski dense, and $d_Q^2(x) = d_Q^2(u)$ for $u \in Z$, we may shrink $X$ and assume that $d_Q^2(-)$ is constant on $X$. It follows from Proposition \ref{prop:base-change-framework} that $H^2(Q)$ is flat and $H^1(Q)$ satisfies base change. By assumption on $Z$ accumulating at $X$, the hypotheses of the proposition remain true after we've shrunk $X$. Thus we may re-do the same proof to show the result for $i=1$ and then $i = 0$.
\end{proof}

\section{Triangulated families}\label{sec:triangulated-families}
Here we introduce triangulated families: families of generalized $(\varphi,\Gamma_K)$-modules which point-by-point have a triangulation. They aren't the most natural families to consider, as the data are given pointwise, but they will be a useful intermediary for Theorem \ref{thm:main-theorem}. Throughout this section we write $X = \Sp(A)$ for a reduced rigid analytic affinoid space over $L$.

\subsection{Triangulated families}
\begin{definition}
A pointwise triangulated family of generalized $(\varphi,\Gamma_K)$-modules with torsion centered at $x_0 \in X$ is 
\begin{itemize}
\item A generalized $(\varphi,\Gamma_K)$-module $Q$ over $X$;
\item An ordered tuple $((s_{j,\tau})_{\tau})_{1\leq j\leq d}$ of integers (called the torsion weights);
\item An ordered tuple  $(\delta_{j})_{1\leq j\leq d}$  of continuous characters $\delta_j: K^\x \goto \Gamma(X,\cal O)^\x$ (called the parameter);
\item A Zariski dense set of points $X^{\nc} \ci X(\bar \Q_p)$ (called the non-critical points);
\item A point $x_0 \in X(\bar \Q_p)$ (called the center).
\end{itemize}
such that
\begin{enumerate}[(TF1)]
\item If $y \in X^{\nc} \union \set{x_0}$ then $\wt_\tau(\delta_{1,y}) < \dotsb < \wt_\tau(\delta_{d,y})$ are distinct integers and $\set{s_{j,\tau}}_{j} = \set{\wt_{\tau}(\delta_{j,x_0})}_j$ (as sets) for all $\tau \in \Sigma_K$.\label{axiom:tf-weight}
\item If $i < j$ and $x \in X(\bar \Q_p)$ then $\delta_{i,x}^{-1}\delta_{j,x}$ is generic up to homothety;\label{axiom:tf-generic}
\item For each $x \in X(\bar \Q_p)$, there exists on $Q_{x}$ a standard triangulation of torsion length $i(x_0)$ and free length $d-i(x_0)$, with $0 \leq i(x_0) \leq d$, independent of $x$, whose free parameter is term-by-term homothetic to $(\delta_{j,x})_{i(x_0)<j \leq d}$;\label{axiom:tf-triangulation}
\item The standard triangulation $Q_{x_0,\bullet}$ has invariants:\label{axiom:tf-trix}
\begin{itemize}
\item[-] torsion exponents $((s_{j,\tau}-\wt_\tau(\delta_{1,x_0}))_{\tau})_{1 \leq j\leq i(x_0)}$
\item[-] torsion weights $((s_{j,\tau})_\tau)_{1 \leq j\leq i(x_0)}$
\item[-] free parameter $\left(\delta_{j,x_0}\prod_{\tau} z_\tau^{\wt_\tau(\delta_{j,x_0})-s_{j,\tau}}\right)_{i(x_0) < j \leq d}$
\end{itemize}
\item If $y \in X^{\nc}$ then the triangulation $Q_{y,\bullet}$ has invariants\label{axiom:tf-triy}:
\begin{itemize}
\item[-] torsion exponents $((\wt_\tau(\delta_{j,x_0})-\wt_\tau(\delta_{1,x_0}))_{\tau})_{1 \leq j\leq i(x_0)}$
\item[-] torsion weights $((\wt_\tau(\delta_{j,y}))_{\tau})_{1 \leq j\leq i(x_0)}$
\item[-] free parameter $(\delta_{j,y})_{i(x_0) < j \leq d}$
\end{itemize}
\item For each $C > 0$ the set of points \label{axiom:tf-accumulates}
\begin{equation*}
X^{\nc}_{C} = \set{y \in X^{\nc} \st \wt_{\tau}(\delta_{j,y}) - \wt_\tau(\delta_{j-1,y}) > C \text{ for all $2\leq j\leq d$} }
\end{equation*}
accumulates at $x_0$.
\end{enumerate}
\end{definition}
\begin{remark}
The axiom (TF\ref{axiom:tf-triy}) does not have a typo. The torsion exponents are given in terms of weights of characters at the point $x_0$ and are independent of $y \in X^{\nc}$. This is why $x_0$ is called the ``center'' of the torsion.
\end{remark}
\begin{remark}
We stress that if $Q$ is a pointwise triangulated family of generalized $(\varphi,\Gamma_K)$-modules over $X$ then there is no reason to believe that $x \mapsto Q_{x,j}$ defines a generalized $(\varphi,\Gamma_K)$-module $Q_j$ over $X$; the datum of the standard triangulations is really given only point-by-point.
\end{remark}
We will dwell further beyond these remarks. First, $i(x_0)$ is the torsion length of the standard triangulation $Q_{x_0,\bullet}$ but beware that $Q_{x_0}$ may actually be torsion free, even if $i(x_0) > 0$. Indeed, our definitions allow for successive quotients in $Q_{x_0,\bullet}$ to be zero. In particular, if $i(x_0) = 1$ and $s_{1,\tau} = \wt_\tau(\delta_{1,x_0})$ for all $\tau \in \Sigma_K$ then $Q_{x_0}$ is torsion free. This situation doesn't arise in our applications, but we mention it because the remark applies equally well to $y \in X^{\nc}$. Indeed, if $y \in X^{\nc}$ then (TF\ref{axiom:tf-trix}) says that the torsion exponents of $Q_{y,1}$ are given by $\wt_\tau(\delta_{1,x_0}) - \wt_\tau(\delta_{1,x_0}) = 0$ for all $\tau \in \Sigma_K$ and thus $Q_{1,y} = (0)$.

More generally, when $i(x_0) > 0$ and $y \in X^{\nc}$ there will be $i(x_0) -1$ distinct torsion steps in the standard triangulation $Q_{y,\bullet}$. On the other hand, in practice, $Q_{x_0,\bullet}$ will have $i(x_0)$ torsion steps. However, consider the situation where for all $\tau \in \Sigma_K$ there exists a $j$ such that $j \leq i(x_0)$ and $s_{j,\tau} = \wt_\tau(\delta_{1,x_0})$. Then, for each $\tau \in \Sigma_K$ we have an equality of $\tau$-torsion lengths $\ell_\tau(Q_y) = \ell_\tau(Q_{x_0})$ at $x_0$ versus $y \in X^{\nc}$. Thus the discrepancy in the number of torsion steps is really an artifact of how we are doing the bookkeeping. We will see in the course of proving Theorem \ref{thm:main-theorem} that it can even happen that the $t_\tau$-torsion submodule of $Q_{x_0}$ is isomorphic, as an $\cal R_L$-module, to the $t_\tau$-torsion submodule of $Q_y$ for one (and thus all) $y \in X^{\nc}$. 

\begin{example}
Theorem \ref{thm:make-torsion} below will explain how to build new pointwise triangulated families out of old ones. But let us motivate the definition of pointwise triangulated families, and the need for Theorem \ref{thm:make-torsion}, with an example previewing the applications in Section \ref{sec:variation}.

Consider a bona fide $(\varphi,\Gamma_K)$-module $D$ of rank $d$ over $X$  and assume that it is actually a pointwise triangulated family $D$ centered at $x_0 \in X$ with $i(x_0) = 0$. For example, you could start with a densely pointwise strictly trianguline $(\varphi,\Gamma_K)$-module in the sense of \cite[Section 6.3]{KedlayaPottharstXiao-Finiteness} ($D$ is triangulated at every point following \cite[Theorem 6.3.13]{KedlayaPottharstXiao-Finiteness}). 

Starting from $D$ (with the given parameter $(\delta_1,\dotsc,\delta_d)$) one can construct locally on $X$, near $x_0$, a $(\varphi,\Gamma_K)$-equivariant morphism $\cal R_X(\delta_1) \overset{\bf e}{\inject} D$ using \cite[Theorem 6.3.9]{KedlayaPottharstXiao-Finiteness}. Moreover, we can assume that the base change $\bf e_x$ to any point $x$ is still injective. If we set $Q = \coker(\bf e)$ then $Q$ provides an example of a pointwise triangulated family of {\em generalized} $(\varphi,\Gamma_K)$-modules which is not necessarily a $(\varphi,\Gamma_K)$-module. It is even nearly flat.

At the points $y \in X^{\nc}$, the fiber $Q_y$ will be a bona fide $(\varphi,\Gamma_K)$-module of rank $d-1$. At $x_0$ however, this may not be the case. The $(\varphi,\Gamma_K)$-module $D_{x_0}$ is triangulated by a triangulation whose first step is $t^{-s}\cal R_{L(x_0)}(\delta_{1,x_0})$ for some integer $s\geq 0$. When $s > 0$, which is the case when the triangulation of $D_{x_0}$ is {\em critical}, we see that the fiber $Q_{x_0}$ has a non-trivial torsion submodule which is killed by $t^s$.   Nevertheless, the generalized $(\varphi,\Gamma_K)$-module $Q$ is still a pointwise triangulated family of $(\varphi,\Gamma_K)$-modules over $X$ with torsion centered at $x_0$, and $i(x_0) = 1$. One can then hope to iterate this process, in the style of \cite{KedlayaPottharstXiao-Finiteness}, using these more general families.
\end{example}

Let's continue now with constructing new pointwise triangulated families out of old ones in general.

\begin{theorem}\label{thm:make-torsion}
If $Q$ is a nearly flat pointwise triangulated family with torsion centered at $x_0 \in X(\bar \Q_p)$, free length $d-i(x_0) > 0$ and finite cohomology then there exists an  affinoid neighborhood $x_0 \in U \ci X$ and a short exact sequence
\begin{equation*}
0 \goto \prod_\tau t_\tau^{\wt_\tau(\delta_{i(x_0)+1,x_0})-\wt_\tau(\delta_{1,x_0})}\cal R_U(\delta_{i(x_0)+1}) \goto \restrict{Q}{U} \goto Q' \goto 0
\end{equation*}
of generalized $(\varphi,\Gamma_K)$-modules over $U$.
Moreover, $Q'$ is a nearly flat pointwise triangulated family with torsion centered at $x_0$, free length $d-i(x_0) - 1$ and finite cohomology whose given data is the same as $Q$. Thus $Q'$ as in Theorem \ref{thm:make-torsion} satisfies (TF\ref{axiom:tf-weight}) - (TF\ref{axiom:tf-accumulates}) with $i(x_0)$ replaced by $i(x_0) + 1$.
\end{theorem}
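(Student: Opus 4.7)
The plan is to build $\mathbf{e}$ as a generator of a Hom module that becomes free of rank one after shrinking $X$, and then identify the cokernel $Q'$ as a pointwise triangulated family by combining the cohomology-and-base-change framework of Section \ref{sec:finiteness} with the fiberwise analysis of Proposition \ref{lemm:stupid-lemma}.

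\textbf{Construction of $\mathbf{e}$.} Set $\delta' := \delta_{i(x_0)+1}\prod_\tau z_\tau^{r_\tau}$ with $r_\tau := \wt_\tau(\delta_{i(x_0)+1,x_0})-\wt_\tau(\delta_{1,x_0})$, so that $M := \cal R_X(\delta')$ is identified with $\prod_\tau t_\tau^{r_\tau}\cal R_X(\delta_{i(x_0)+1})$. I compute the fiberwise dimensions $d^i_{Q\otimes \cal R(\delta'^{-1})}(x)$ at points of $Z := X^{\nc}_C \cup \set{x_0}$ for $C$ large. Applying Proposition \ref{lemm:stupid-lemma} fiberwise to the standard triangulation data of (TF\ref{axiom:tf-trix}) at $x_0$ and (TF\ref{axiom:tf-triy}) at $y \in X^{\nc}_C$ yields $d^0 = 1$: the required inequality on $r_\tau$ reduces to the trivial $s_{j,\tau}-\wt_\tau(\delta_{i(x_0)+1,x_0}) \geq s_{j,\tau}-\wt_\tau(\delta_{i(x_0)+1,x_0})$ at $x_0$, and holds at $y \in X^{\nc}_C$ once $C$ exceeds the bounded exclusion sets. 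Using (TF\ref{axiom:tf-generic}) together with Proposition \ref{prop:cohomology-calcs}, each piece of the triangulation has vanishing $H^2$: torsion pieces by the Euler--Poincar\'e formula, and free pieces because they are either generic up to homothety or purely algebraic, hence never lie in $\hat T(L)^-$. Thus $d^2 = 0$ on $Z$, and Euler--Poincar\'e makes $d^1$ constant on $Z$. Since $Z$ accumulates at $x_0$ by (TF\ref{axiom:tf-accumulates}), Theorem \ref{theorem:finiteness}(a) and Corollary \ref{cor:free-and-base-change} let me shrink $X$ to an affinoid neighborhood $U$ of $x_0$ on which $H := \Hom_{(\varphi,\Gamma_K)}(M|_U, Q|_U)$ is $\cal O(U)$-free of rank one and commutes with base change; a generator provides the sought $\mathbf{e} : M|_U \to Q|_U$ whose every fiber $\mathbf{e}_x$ is non-zero.

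\textbf{Injectivity and near-flatness of the cokernel.} The main obstacle is to show that $\mathbf{e}$ is injective with nearly flat cokernel $Q' := \coker(\mathbf{e})$. Fiberwise injectivity at $y \in Z$ is part of the conclusion of Proposition \ref{lemm:stupid-lemma}, but for a general $x \in U$ it must be obtained by a global argument. To this end I use the two-term K\"unneth spectral sequence for the complex $[M|_U \xrightarrow{\mathbf{e}} Q|_U]$, in the spirit of Proposition \ref{prop:torsion-kunneth}: it identifies $\ker(\mathbf{e}_x)$ as an extension of $\Tor_1^{\cal O(U)}(Q',L(x))$ by a quotient of $\ker(\mathbf{e})\otimes_{\cal O(U)} L(x)$, and gives $\Tor_p^{\cal O(U)}(Q',L(x)) = 0$ for $p \geq 2$ once $\ker(\mathbf{e}) = 0$. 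The kernel $\ker(\mathbf{e})$ is a generalized $(\varphi,\Gamma_K)$-submodule of the rank-one free module $M|_U$ (Corollary \ref{cor:cat-abelian}) whose fiber at every point of $Z$ vanishes; combining this vanishing with the relative Nakayama lemma of the appendix forces $\ker(\mathbf{e}) = 0$ after shrinking $U$. The remaining $\Tor_1$ obstruction vanishes on $Z$ by fiberwise injectivity there, and a second application of Corollary \ref{cor:free-and-base-change}---now to the cohomology of $Q'$, which is finite by Theorem \ref{theorem:finiteness}(b)---propagates this vanishing from $Z$ to all of $U$, giving that $\mathbf{e}_x$ is injective throughout $U$ and $Q'$ is nearly flat.

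\textbf{Verification of the triangulated family axioms.} With the short exact sequence $0 \to M|_U \to Q|_U \to Q' \to 0$ in hand, the axioms (TF\ref{axiom:tf-weight}), (TF\ref{axiom:tf-generic}), and (TF\ref{axiom:tf-accumulates}) for $Q'$ are inherited unchanged from $Q$, since the weight data, parameter characters, and dense subset $X^{\nc}$ do not change. For (TF\ref{axiom:tf-triangulation})--(TF\ref{axiom:tf-triy}), at each $x \in Z$ the cokernel description in Proposition \ref{lemm:stupid-lemma} supplies a standard triangulation of $Q'_x$ whose torsion length, exponents, weights, and free parameter match the data in the theorem's conclusion, with the new torsion step having exponent $r_\tau$ and weight $\wt_\tau(\delta_{i(x_0)+1,x})$ coming from the image of $\mathbf{e}_x$. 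At a general $x \in U$---where Proposition \ref{lemm:stupid-lemma}'s weight hypothesis on the existing torsion of $Q_x$ may fail---the existence of some standard triangulation on $Q'_x$ of torsion length $i(x_0)+1$ with free parameter term-by-term homothetic to $(\delta_{j,x})_{j > i(x_0)+1}$ still follows from the SES combined with a standard triangulation on $Q_x$ provided by (TF\ref{axiom:tf-triangulation}) for $Q$.
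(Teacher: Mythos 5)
Your overall architecture matches the paper's: construct $\mathbf{e}$ as a generator of a rank-one Hom module via Proposition~\ref{lemm:stupid-lemma} fiberwise on $Z = X^{\nc}_C \cup \{x_0\}$ together with Corollary~\ref{cor:free-and-base-change}, set $Q' = \coker(\mathbf{e})$, deduce finite cohomology from Theorem~\ref{theorem:finiteness}, and check the axioms. That part is sound and essentially identical to the paper. The divergence — and the problem — is in how you establish injectivity of $\mathbf{e}$ and near-flatness of $Q'$.

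The gap is in the sentence asserting that $\ker(\mathbf{e})$ ``has fiber at every point of $Z$ vanishing.'' What Proposition~\ref{lemm:stupid-lemma} gives you at $z \in Z$ is $\ker(\mathbf{e}_z) = 0$, which is \emph{not} the same as $\ker(\mathbf{e})\otimes_{A}L(z) = 0$. The K\"unneth spectral sequence you invoke shows exactly why: the four-term sequence is
\[
0 \to \Tor_2^A(Q',L(z)) \to \ker(\mathbf{e})\otimes_A L(z) \to \ker(\mathbf{e}_z) \to \Tor_1^A(Q',L(z)) \to 0,
\]
so vanishing of $\ker(\mathbf{e}_z)$ gives an \emph{isomorphism} $\Tor_2^A(Q',L(z)) \cong \ker(\mathbf{e})\otimes_A L(z)$, which you cannot conclude is zero without already knowing $Q'$ is nearly flat — precisely the thing you are trying to prove. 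The argument is circular. Similarly, your second step, ``a second application of Corollary~\ref{cor:free-and-base-change} ... propagates this vanishing,'' does not do what you need: that corollary governs the Galois cohomology groups $H^i(Q')$ and their base-change behavior, not the $\Tor$-modules of $Q'$ as an $A$-module, which is the obstruction to near-flatness.

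The paper instead proves a dedicated Lemma~\ref{lemma:injective} for exactly this purpose, and both parts are used in the proof. Part (a) shows $\mathbf{e}$ is injective: it passes to closed annuli $X^{[s,r_0]}$ (where $\cal R_X^{[s,r_0]}$ is noetherian), looks at the \emph{image} $I$ of $\mathbf{e}^{[s,r_0]}$, observes $\dim_{L(v)}I_v = 1$ on a Zariski-dense set and $\leq 1$ always since $I$ is a quotient of a rank-one free, and uses the classical Nakayama upper-semicontinuity to conclude $I$ is flat of rank one, hence $\mathbf{e}^{[s,r_0]}$ is an isomorphism onto $I$. Part (b) then handles fiberwise injectivity (hence near-flatness of $Q'$) by exploiting the constancy of $x \mapsto \rank_{\cal R_{L(x)}[1/t]} Q_x[1/t]$, which is guaranteed by axiom~(TF\ref{axiom:tf-triangulation}), together with the relative Nakayama Proposition~\ref{prop:nakayama-lemma}. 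Note that looking at the image rather than the kernel is what makes the Nakayama step work: for a quotient of a rank-one free module the fiber dimension is bounded above by $1$, which is exactly what upper-semicontinuity needs. Your kernel-based argument has no such a priori bound, which is why it cannot close. You would need to import Lemma~\ref{lemma:injective} (or reprove its content) to complete the injectivity and near-flatness steps.
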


\begin{proof}
This is an application of our cohomology and base change framework. To shorten notation, let $k_{j,\tau} = \wt_\tau(\delta_{j,x_0})$ and $\delta = \delta_{i(x_0)+1}\prod_\tau z_\tau^{k_{i(x_0)+1,\tau}-k_{1,\tau}}$. Note that $\wt_\tau(\delta) = k_{1,\tau}$ for each $\tau \in \Sigma_K$. We're going to compute the cohomology $H^{\bullet}(Q(\delta^{-1}))$.

We begin by computing the cohomology at $x_0$. First, $H^2(Q_{x_0}(\delta_{x_0}^{-1})) = (0)$ by (TF\ref{axiom:tf-generic}) and the vanishing of cohomology in degree two for torsion $(\varphi,\Gamma_K)$-modules. By the Euler--Poincar\'e formula it remains to compute the cohomology in degree zero. Let $\twid \delta$ be the character
\begin{equation*}
\twid \delta = \delta_{i(x_0)+1}\prod_{\tau} z_\tau^{k_{i(x_0)+1,\tau}-s_{i(x_0)+1,\tau}} \cdot 
\end{equation*}
By (TF\ref{axiom:tf-trix}), $\twid \delta$ is the first character in the free parameter of the standard triangulation $Q_{x_0,\bullet}$ and we see easily that $\delta = \left(\prod_{\tau} z_\tau^{s_{i(x_0)+1,\tau} - k_{1,\tau}}\right) \twid \delta$. For each $j \leq i(x_0)+1$ set $r_{j,\tau} = s_{j,\tau} - k_{1,\tau}$. According to (TF\ref{axiom:tf-trix}), for $j\leq i(x_0)$, these are the torsion exponents of the standard triangulation $Q_{x_0,\bullet}$. Moreover, if $j \leq i(x_0)$ then
\begin{equation*}
s_{j,\tau} - \wt_\tau(\twid \delta) = -r_{i(x_0)+1,\tau} + r_{j,\tau}.
\end{equation*}
Thus Proposition \ref{lemm:stupid-lemma}, whose non-homothetic hypothesis is valid by (TF\ref{axiom:tf-generic}),  implies that there exists a unique, up to scalar, $(\varphi,\Gamma_K)$-equivariant inclusion 
\begin{equation*}
\cal R_{L(x_0)}(\delta_{x_0}) = \prod_\tau t_\tau^{r_{i+1,\tau}}\cal R_{L(x_0)}(\twid \delta) \inject Q_{x_0},
\end{equation*}
so $\dim_{L(x_0)}H^0(Q_{x_0}(\delta_{x_0}^{-1})) = 1$. Moreover, Proposition \ref{lemm:stupid-lemma} also explicitly describes the induced standard triangulation on the quotient $Q_{x_0}/\cal R_{L(x_0)}(\delta_{x_0})$.

We now compute the cohomology at (some) points $y \in X^{\nc}$. As above, $H^2(Q_y(\delta_y^{-1})) = (0)$ by (TF\ref{axiom:tf-generic}) and the vanishing of cohomology in degree two for torsion $(\varphi,\Gamma_K)$-modules. Reset the definitions from the previous paragraph and make $r_{j,\tau} := k_{j,\tau} - k_{1,\tau}$ for all $j\leq i(x_0) + 1$. Once again, if $j \leq i(x_0)$ then $(r_{j,\tau})_\tau$ gives the torsion exponents of the standard triangulation $Q_{y,\bullet}$. For all $j\leq i(x_0)$, we easily compute
\begin{equation*}
\wt_\tau(\delta_{j,y}) - \wt_\tau(\delta_y) = \wt_\tau(\delta_{j,y}) - \wt_\tau(\delta_{i(x_0)+1,y}) + r_{i(x_0)+1,\tau}.
\end{equation*}
Choose a $C$ so large that if $y \in X^{\nc}_C$ then for all $\tau$ and $1\leq j \leq i(x_0)$, the negative integer $\wt_\tau(\delta_{j,y}) - \wt_\tau(\delta_{i(x_0)+1,y})$ is not among the finitely many values $\set{-r_{i+1,\tau},\dotsc, r_{j,\tau} - r_{i+1,\tau}- 1}$. Then by (TF\ref{axiom:tf-generic}), we can apply Proposition \ref{lemm:stupid-lemma} and conclude that if $y \in X^{\nc}_{C}$ then $H^0(Q_y(\delta_y^{-1}))$ is one-dimensional over $L(y)$. Again, Proposition \ref{lemm:stupid-lemma} also explains how a non-zero morphism $\cal R_{L(y)}(\delta_y) \inject Q_y$ will induce a standard triangulation on $Q_y$ by replacing the subquotient $\cal R_{L(y)}(\delta_{i(x_0)+1,y})$ of $Q_{y}$ by 
\begin{equation*}
\cal R_{L(y)}(\delta_{i(x_0)+1,y})/\cal R_{L(y)}(\delta) = \cal R_{L(y)}(\delta_{i(x_0)+1,y})/(\prod_{\tau} t_{\tau}^{k_{i(x_0)+1,\tau}-k_{1,\tau}}).
\end{equation*} By (TF\ref{axiom:tf-accumulates}) we can replace $X^{\nc}$ by $X^{\nc}_{C}$ and assume that $H^0(Q_y(\delta_y^{-1}))$ has constant dimension over all of $X^{\nc}$.

We now go back to the entire family. By the Euler--Poincar\'e formula the function $y\mapsto \dim_{L(y)}H^1(Q_y(\delta_y^{-1}))$ is constant on $X^{\nc}$ and agrees with $\dim_{L(x_0)}H^1(Q_{x_0}(\delta_{x_0}^{-1}))$. Since $Q$ is nearly flat and has finite cohomology, Corollary \ref{cor:free-and-base-change} implies we can choose an open neighborhood $x_0 \in U \ci X$ so that each $H^i(\restrict{Q(\delta^{-1})}{U})$ is free over $U$ and satisfies base change for each $i$. We now replace $X$ by such a $U$.

Choose a basis vector $\bf e \in H^0(Q(\delta^{-1}))$. Since $H^0(Q(\delta^{-1}))$ satisfies base change, if we specialize $\bf e$ to either $x=x_0$ or $x=y \in X^{\nc}$, we get an injective morphism $\bf e_x: \cal R_{L(x)}(\delta_x) \inject Q_{x}$ by the previous two paragraphs. By Lemma \ref{lemma:injective}(a) below we conclude that $\bf e$ is also injective. Let $Q' = \coker(\bf e)$ so that there is a short exact sequence
\begin{equation}\label{eqn:theorem5ses}
0 \goto \cal R_X(\delta) \overto{\bf e} Q \goto Q' \goto 0.
\end{equation}
Since $Q$ has finite cohomology, by assumption, and $\cal R_X(\delta)$ has finite cohomology by Theorem \ref{theorem:finiteness}(a),  $Q'$ has finite cohomology by Theorem \ref{theorem:finiteness}(b).

We also have to show that $Q'$ is nearly flat (after possibly shrinking $X$ more). Since $Q$ is nearly flat over $X$ and $\cal R_X$ is flat over $X$, it suffices to show that we can shrink $X$ around $x_0$ so that $\bf e_x$ is injective for all $x$. Since $\bf e_{x_0}$ is injective it suffices by Lemma \ref{lemma:injective}(b) to show that $x \mapsto \rank_{\cal R_x[1/t]} Q_x[1/t]$ is constant on $X$. But this follows from (TF\ref{axiom:tf-triangulation}), which implies that $Q_{x}[1/t]$ is finite free over $\cal R_{L(x)}[1/t]$ of rank $d-(i(x_0)+1)$ independent of $x$.

Finally, we need to check that $Q'$ is a triangulated family with the same data as $Q$, except $i(x_0)$ replaced by $i(x_0)+1$. The three axioms (TF\ref{axiom:tf-weight}), (TF\ref{axiom:tf-generic}) and (TF\ref{axiom:tf-accumulates}) don't depend on $Q$, so those are still true. The axiom (TF\ref{axiom:tf-triangulation}) is true because of \eqref{eqn:theorem5ses} and the fact that $Q'$ is nearly flat, so that all the base changes $\bf e_x$ are injective.  Checking either (TF\ref{axiom:tf-trix}) or (TF\ref{axiom:tf-triy}) follows from our use of Proposition \ref{lemm:stupid-lemma} at the points $x = x_0$ and $x = y$.
\end{proof}

There were two points unresolved in the previous theorem, both of which we resolve with the following lemma. For the second part, and more in the following results, we will make use of the appendix on Nakayama's lemma.

\begin{lemma}\label{lemma:injective}
Let $X$ be a reduced affinoid space, $Q$ a generalized $(\varphi,\Gamma_K)$-module over $X$ and $f: \cal R_X \goto Q$ a $(\varphi,\Gamma_K)$-equivariant map. 
\begin{enumerate}
\item If $X' \ci X(\bar \Q_p)$ is Zariski dense in $X$ and the specialization $f_u: \cal R_{L(u)} \goto Q_u$ is injective for all $u \in X'$ then $f$ is injective. 
\item If $x \mapsto \rank_{\cal R_{x}[1/t]} Q_x[1/t]$ is constant on $X$, $x_0 \in X$ and $f_{x_0}$ is injective then there exists an affinoid neighborhood ${x_0} \in U \ci X$ such that $f_u$ is injective for all $u \in U$.
\end{enumerate}
\end{lemma}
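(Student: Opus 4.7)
The plan for part (a) is to show that any element of $\ker(f)$, thought of as a Laurent expansion on a closed sub-annulus with coefficients in $A$, has coefficients forced to vanish on the Zariski dense set $X'$ and hence to vanish identically. Concretely, take $r \in \ker(f)$ and fix $0 < s \leq r_0$ such that $r \in \cal R_A^{[s,r_0]} \simeq \cal R^{[s,r_0]} \hat\tensor_{\Q_p} A$; I would then expand $r = \sum_{n} a_n \pi_K^n$ with coefficients $a_n \in A \hat\tensor_{\Q_p} F'$. For each $u \in X'$ the injectivity of $f_u$ combined with $f(r)=0$ forces $r_u = 0$ in $\cal R_{L(u)}^{[s,r_0]}$, so each $a_n(u) = 0$. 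Since $X$ is reduced and $X'$ is Zariski dense in $X$, each $a_n$ vanishes identically, whence $r = 0$.

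For part (b) the plan has two steps. The first is a reduction to a rank condition. By Proposition \ref{prop:rank-one-subs}, any nonzero sub-$(\varphi,\Gamma_K)$-module of $\cal R_{L(u)}$ has the form $\left(\prod_\tau t_\tau^{r_\tau}\right)\cal R_{L(u)}$ with some $r_\tau \geq 1$; since $t = \prod_\tau t_\tau$ up to a unit, every such submodule becomes the unit ideal after inverting $t$. Thus $f_u$ is injective if and only if the localized map $f_u[1/t] \colon \cal R_{L(u)}[1/t] \to Q_u[1/t]$ is injective. Letting $C := \coker(f)$---a finitely generated $\cal R_X$-module whose fibers are $C_u = \coker(f_u)$ by right exactness---the sequence $0 \to \operatorname{image}(f_u)[1/t] \to Q_u[1/t] \to C_u[1/t] \to 0$ consists of finitely generated modules over the product of B\'ezout domains $\cal R_{L(u)}[1/t]$, so ranks are additive. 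Combined with the constant rank hypothesis $\rank_{\cal R_{L(u)}[1/t]} Q_u[1/t] = d$, this yields the dichotomy that $f_u$ is injective exactly when $\rank_{\cal R_{L(u)}[1/t]} C_u[1/t] = d-1$, while otherwise this rank equals $d$.

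The second step, and the main technical obstacle, is to establish that the locus $\{u \in X : \rank_{\cal R_{L(u)}[1/t]} C_u[1/t] \leq d-1\}$ is open around $x_0$. Since $\cal R_X[1/t]$ is not Noetherian, this upper semicontinuity is not immediate. The plan is to deduce it by restricting to the closed sub-annuli $X^{[s,r]}$: there $\cal R_X^{[s,r]}$ is a Noetherian Banach algebra, $C^{[s,r]}$ is a coherent sheaf, and upper semicontinuity of fiberwise rank is standard. A compatibility argument using that the restriction maps $\cal R_X^{[s,r]} \to \cal R_X^{[s',r']}$ are flat with dense image for $s \leq s'$, in the spirit of Lemma \ref{lemma:flat}, will then transfer the openness from the closed sub-annuli back to an open neighborhood of $x_0$ in $X$, completing the plan.
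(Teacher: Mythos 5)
Your part (a) is correct, and is actually a simpler and more elementary argument than the one in the paper. You argue coefficientwise: restrict to a closed sub-annulus so that the kernel element $r \in \cal R_X^{[s,r_0]} \simeq \cal R^{[s,r_0]}\hat\tensor_{\Q_p} A$ can be expanded as a Laurent series $\sum a_n\pi_K^n$ with $a_n \in A\tensor_{\Q_p} F'$; then $r_u = 0$ forces $a_n(u)=0$ for all $n$, and Zariski density of $X'$ in the reduced $X$ forces each $a_n=0$. This works (note $A\tensor_{\Q_p}F'$ is reduced since we are in characteristic $0$, so Zariski density applies coefficient by coefficient). The paper instead studies the image $I = \operatorname{im}(f^{[s,r_0]})$, shows $\dim_{L(v)} I_v$ is locally constant on the relative annulus via Nakayama, concludes $I$ is flat, and deduces $\cal R_X^{[s,r_0]}\to I$ is an isomorphism. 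The paper's method is heavier but builds exactly the machinery reused in part (b); your version is lighter but does not set that up.

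For part (b) your reduction is the same as the paper's: using Proposition \ref{prop:rank-one-subs} to see that $f_u$ is injective iff $f_u[1/t]$ is, setting $C=\coker(f)$, and showing via rank-additivity over $\cal R_{L(u)}[1/t]$ (which is legitimate since $C_u$ is a generalized $(\varphi,\Gamma_K)$-module by Corollary \ref{cor:cat-abelian} and $C_u[1/t]$ is free by Lemma \ref{lemma:torsion-free}) that $f_u$ injective is equivalent to $\rank_{\cal R_{L(u)}[1/t]}C_u[1/t]=d-1$. However, the last step has a genuine gap. You appeal to ``standard upper semicontinuity of fiberwise rank'' on $X^{[s,r]}$ and then propose a ``compatibility argument in the spirit of Lemma \ref{lemma:flat}'' to descend to an affinoid neighborhood in $X$. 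This is not the right tool, and the step is not routine: the locus you need to be open is a subset of $X$, not of $X^{[s,r]} = X\times_{\Q_p}\Af^1[s,r]$, and the quantity being semicontinuous is $u\mapsto \rank_{\kappa(u)\tensor \cal R^{[s,r]}[1/t]} C^{[s,r]}_u[1/t]$, not the usual fiber dimension on $\operatorname{Sp}(\cal R_X^{[s,r]})$. Two nontrivial facts are needed and are not about flatness of restriction maps between annuli: (i) a Nakayama-type statement for modules over a completed tensor product $A\hat\tensor_K B$ after inverting $f\in B$, asserting that the rank-drop locus downstairs in $\Sp(A)$ is a union of affinoid subdomains (this is precisely Proposition \ref{prop:nakayama-lemma}(a) in the appendix), and (ii) the fact that images of admissible opens of $X\times_K Y$ under the projection to $X$ are unions of affinoid subdomains of $X$ (Lemma \ref{lemma:prod-facts}(b)), which is what lets one pass from openness upstairs to a genuine affinoid neighborhood of $x_0$. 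Without these, ``transfer the openness back to $X$'' remains unjustified. To fix the proof you should replace the Lemma \ref{lemma:flat} reference with the appendix machinery: apply Proposition \ref{prop:nakayama-lemma}(a) with $B = \cal R^{[s,r_0]}$, $f=t$, and $M = C^{[s,r_0]}$ to conclude that $\{u: \rank_{\cal R_{L(u)}^{[s,r_0]}[1/t]} C^{[s,r_0]}_u[1/t] \leq d-1\}$ is a union of affinoid subdomains of $X$ containing $x_0$, and then take $U$ to be one of them.
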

\begin{remark}
In part (b), $Q_x[1/t]$ is automatically free over $\cal R_x[1/t]$ for each $x$ by Lemma \ref{lemma:torsion-free}.
\end{remark}
\begin{proof}
We need to make use of the recollection given in Section \ref{subsec:robba-ring}. Choose an $r_0$ so that $Q$ arises via base change from $X^{r_0}$ and $f$ arises from a $(\varphi,\Gamma_K)$-equivariant map $f^{r_0}: \cal R_X^{r_0} \goto Q^{r_0}$ as well. 

We first prove (a). It suffices to show that $f^{r_0}$ is injective. If $0 < s < r_0$ then let $f^{[s,r_0]}$ denote the induced map
\begin{equation*}
\cal R_X^{[s,r_0]} \overto{f^{[s,r_0]}} Q^{[s,r_0]} := Q^{r_0} \tensor_{\cal R_X^{r_0}} \cal R_X^{[s,r_0]}.
\end{equation*}
By \cite[Lemma 2.1.4(2)]{KedlayaPottharstXiao-Finiteness} it suffices to show that $f^{[s,r_0]}$ is injective. 

We've now reduced to working over a closed relative annulus $X^{[s,r_0]}$ whose rigid functions $\cal R_X^{[s,r_0]}$ are, in particular, noetherian. Write $I$ for the image of $f^{[s,r_0]}$ in $Q^{[s,r_0]}$. If $u \in X'$ then $\cal R_{L(u)}^{[s,r]} \goto I\tensor_A  L(u)$ is injective, as it factors $f_u$ and we've assumed that $f_u$ is injective. It is also surjective since tensor product is right exact. Thus it is an isomorphism. 

We deduce from Nakayama's lemma, applied to the finite module $I$ over the noetherian ring $\cal R_X^{[s,r_0]}$ that $\dim_{L(v)} I_v \geq 1$ for all $v \in X^{[s,r_0]}$ (note: $v$ is in the relative annulus, not just $X$, and $I_v := I\tensor_{\cal R_X^{[s,r_0]}} L(v)$). But $\dim_{L(v)} I_v \leq 1$ for all $v \in X^{[s,r_0]}$ since $I_v$ is a quotient of something free of rank one over $X^{[s,r_0]}$. Since $X$ is reduced, so is $X^{[s,r_0]}$ and we just showed that $v \mapsto \dim_{L(v)} I_v$ is constant on the relative annulus $X^{[s,r_0]}$. Thus $I$ must be flat over $X^{[s,r_0]}$ by Nakayama's lemma again. In particular, $\cal R_{X}^{[s,r_0]} \goto I$ is an isomorphism, as was to be shown.

Let's now prove part (b). We note that the proof of part (a) didn't use that $f$ was $(\varphi,\Gamma_K)$-equivariant. Consider $C^{r_0} = \coker(\cal R_X^{r_0} \overto{f^{r_0}} Q^{r_0})$. By Corollary \ref{cor:cat-abelian}, $C^{r_0}$ is a generalized $(\varphi,\Gamma_K)$-module. If $u \in X$ then $C_u^{r_0}[1/t]$ is free over $\cal R_{L(u)}^{r_0}[1/t]$ by Lemma \ref{lemma:torsion-free}, so it makes sense to consider the function $\rank(u):= \rank_{\cal R_{L(u)}^{r_0}[1/t]}(C_u^{r_0}[1/t])$. Moreover, if we choose any $0 < s < r_0$ then we also have\footnote{It is important here that $C^{r_0}$ is finitely presented, so that $C_u^{[s,r_0]} = C_u^{r_0} \tensor_{\cal R_{L(u)}^{r_0}} \cal R_{L(u)}^{[s,r_0]}$ uses the usual tensor product.}
\begin{equation}\label{eqn:reduce-to-affinoid}
\rank(u) = \rank_{\cal R_{L(u)}^{[s,r_0]}[1/t]} C_u^{[s,r_0]}[1/t].
\end{equation}
Consider such a choice of $s$ made now. 

Since $\rank_{\cal R_{L(u)}^{r_0}[1/t]} Q_u^{r_0}[1/t] =: q$ is constant on $X$, we know that $\rank(u)$ is either $q$ or $q-1$. Since $f_{x_0}^{r_0}$ is injective, so is $f_{x_0}^{r_0}[1/t]$ and thus $\rank(x_0) = q-1$ is the minimal possible value. Since $\cal R_X^{[s,r_0]}$ is affinoid, the expression \eqref{eqn:reduce-to-affinoid} and Proposition \ref{prop:nakayama-lemma}(a) together imply that we may replace $X$ by an affinoid subdomain containing $x_0$ so that $\rank(u) = q-1$ for all $u \in X$. But that clearly is equivalent to $\ker f^{r_0}_u[1/t] = (0)$ for all $u \in X$. Since $f_u^{r_0}: \cal R_{L(u)}^{r_0} \goto Q_u^{r_0}$ has source $\cal R_{L(u)}$, Proposition \ref{prop:rank-one-subs} and the fact that $f_u^{r_0}[1/t]$ is injective for all $u\in X$ implies that $\ker f_u^{r_0} = (0)$ for all $u\in X$ (and thus $f_u$ is injective also).
\end{proof}

\subsection{Killing torsion}
Notice that Theorem \ref{thm:make-torsion} possibly introduces torsion $(\varphi,\Gamma_K)$-modules into the picture. Thus its utility rests on being able to kill torsion in certain pointwise triangulated families. This is achieved in Corollary \ref{cor:tau-balanced}. First we need preparation. The following is an application of Nakayama's lemma in the appendix.

\begin{lemma}\label{lemma:ranks-by-t}
Let $\tau \in \Sigma_K$. Suppose that $Q$ is a generalized $(\varphi,\Gamma_K)$-module such that for each $x \in X$, $Q_x/t_\tau$ is finite free over $\cal R_{L(x)}/t_\tau$. Suppose that $x_0 \in X(\bar \Q_p)$ and $Z \ci X(\bar \Q_p)$ is a set of points accumulating at $x_0$ and $\rank_{\cal R_{x_0}/t_\tau} Q_{x_0}/t_\tau = \rank_{\cal R_z/t_\tau} Q_z/t_\tau$ for each $z \in Z$. Then there exists an affinoid subdomain $x_0 \in U \ci X$ such that $u\mapsto \rank_{\cal R_{L(u)}/t_\tau} Q_u/t_\tau$ is constant on $U$.
\end{lemma}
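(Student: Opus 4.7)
The plan is to descend to a closed relative annulus, where the problem becomes one of controlling fiber dimensions of a genuine finite module over an affinoid algebra, and then to combine the standard semi-continuity statements with the Zariski density of $Z$ near $x_0$ to force local constancy of the rank.

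First, I would choose $r_0 > 0$ small enough that $Q$ arises via base change from a generalized $(\varphi,\Gamma_K)$-module $Q^{r_0}$ over $X^{r_0}$ and that $t_\tau \in \cal R^{r_0}$, and fix any $0 < s \leq r_0$. Set $M := Q^{[s,r_0]}/t_\tau$, a finite module over $B := \cal R_X^{[s,r_0]}/t_\tau$. The key observation is that $B$ is actually finite over $A$: the element $t_\tau$ is, up to a unit, a divisor of Fontaine's $t$, whose zeros accumulate only at $\abs{T} = 1$. So $t_\tau$ has only finitely many zeros in the closed annulus $\Af^1_{/F'}[s,r_0]$, making $\cal R^{[s,r_0]}/t_\tau$ a finite-dimensional $L$-algebra, say of dimension $c > 0$. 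Consequently $M$ is a finite $A$-module.

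For each $x \in X$, the finite presentation of $Q_x$ over $\cal R_{L(x)}$ yields
\begin{equation*}
M \tensor_A L(x) \simeq Q_x^{[s,r_0]}/t_\tau \simeq (Q_x/t_\tau) \tensor_{\cal R_{L(x)}/t_\tau} \cal R_{L(x)}^{[s,r_0]}/t_\tau,
\end{equation*}
and since $Q_x/t_\tau$ is free of rank $r(x) := \rank_{\cal R_{L(x)}/t_\tau} Q_x/t_\tau$ by hypothesis, this base change is free of the same rank over $\cal R_{L(x)}^{[s,r_0]}/t_\tau$. In particular $\dim_{L(x)} M \tensor_A L(x) = c \cdot r(x)$, converting our question into one about fiber dimensions of the finite $A$-module $M$.

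To finish, I would invoke Fitting-ideal semi-continuity for $M$: the locus $\set{x \st \dim_{L(x)} M \tensor_A L(x) \geq c \cdot r(x_0)}$ is Zariski closed in $X$ and $\set{x \st \dim_{L(x)} M \tensor_A L(x) \leq c\cdot r(x_0)}$ is Zariski open. By the accumulation hypothesis I may first shrink $X$ to an affinoid neighborhood of $x_0$ in which $Z$ is Zariski dense; then the first (closed) locus contains $Z$ and hence all of $X$, while the second (open) locus contains $x_0$ and provides the desired affinoid neighborhood $U$ on which $r$ is constantly equal to $r(x_0)$. The main obstacle is really the finiteness of $\cal R^{[s,r_0]}/t_\tau$ over $L$; once that is in hand, the rest is precisely the kind of Nakayama-style argument packaged by Proposition \ref{prop:nakayama-lemma} of the appendix.
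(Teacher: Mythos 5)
Your observation that $\cal R_L^{[s,r_0]}/t_\tau$ is finite over $L$ is correct and elegant: since $t_\tau$ has only finitely many zeros in the closed annulus $\Af^1_{/F'}[s,r_0]$, the quotient is a zero-dimensional affinoid, hence finite over $L$. This would let you treat $M = Q^{[s,r_0]}/t_\tau$ as a finite $A$-module and invoke the classical Fitting-ideal/Nakayama argument directly over $A$, bypassing the paper's more elaborate relative Nakayama lemma (Proposition \ref{prop:nakayama-lemma}) and, incidentally, the need to know $\cal R^{[s,r_0]}/t_\tau$ is reduced.

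However, there is a genuine gap in the step
\begin{equation*}
Q_x^{[s,r_0]}/t_\tau \simeq (Q_x/t_\tau) \tensor_{\cal R_{L(x)}/t_\tau} \cal R_{L(x)}^{[s,r_0]}/t_\tau.
\end{equation*}
There is no ring homomorphism $\cal R_{L(x)}/t_\tau \to \cal R_{L(x)}^{[s,r_0]}/t_\tau$ that would make this base change meaningful. Recall that $\cal R_{L(x)} = \bigunion_{r>0}\cal R_{L(x)}^r$ consists of functions converging on annuli $(0,r]$ with $r$ arbitrarily small, i.e.\ germs near the boundary $\abs{T}=1$. As $r$ shrinks, $(0,r]$ becomes disjoint from the fixed closed annulus $[s,r_0]$, so an element of $\cal R_{L(x)}$ has no well-defined restriction to $[s,r_0]$. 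Concretely, the zeros of $t_\tau$ accumulate only at $\abs{T}=1$, so $\Spec(\cal R_{L(x)}/t_\tau)$ sees only those zeros near the boundary, while $\Spec(\cal R^{[s,r_0]}_{L(x)}/t_\tau)$ consists of the (finitely many) zeros inside $[s,r_0]$; these are essentially disjoint. Thus freeness of $Q_x/t_\tau$ over $\cal R_{L(x)}/t_\tau$ says nothing, by base change alone, about $Q_x^{[s,r_0]}/t_\tau$ over $\cal R^{[s,r_0]}_{L(x)}/t_\tau$.

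The paper addresses exactly this point: it uses the $\varphi$-module structure to propagate freeness from $\cal R_{L(x)}/t_\tau$ (near the boundary) down to $\cal R^{r_0}_{L(x)}/t_\tau$, exploiting that Frobenius $\cal R^{r_0}_{L(u)} \to \cal R^{r_0/p}_{L(u)}$ is faithfully flat and identifies $\varphi^\ast Q^{r_0}$ with $Q^{r_0/p}$ (together with the finite-presentation and coherence argument it references). Without appealing to the Frobenius structure in this way, your argument does not go through; if you add that propagation step, your finiteness observation yields a genuinely shorter finish, but as written the key isomorphism is not justified.
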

\begin{proof}
First, if necessary, replace $X$ by an affinoid subdomain $x_0 \in U\ci X$ so that $Z$ is Zariski dense in $U$. Then, we find an $r_0$ so that $Q$ arises from a $(\varphi,\Gamma_K)$-module $Q^{r_0}$ over $X^{r_0}$. Since $Q_u/t_\tau$ is finite free over $\cal R_{L(u)}/t_\tau$ for each $u \in X$, the same is true for $Q_u^{r_0}/t_\tau$ over $\cal R^{r_0}_{L(u)}/t_\tau$. (Since the Frobenius $\cal R_{L(u)}^{r_0} \goto \cal R_{L(u)}^{r_0/p}$ is faithfully flat and $Q^{r_0}$ is a generalized $(\varphi,\Gamma_K)$-module, it is enough to check $Q^r_u/t_{\tau}$ is finite free over $\cal R^r_{L(u)}/t_\tau$ for some $0 < r \leq r_0$ (possibly depending on $u$); this follows from knowing $Q_u/t_\tau$ is finite free over $\cal R_{L(u)}/t_\tau$ and $Q^r_u/t$ is finitely presented over $\cal R_{L(u)}^r/t$ for each $r$; compare with  \cite[Lemma 2.1.16]{KedlayaPottharstXiao-Finiteness}.)

Fix any $0 < s < r_0$. Then 
\begin{equation}\label{eqn:ranks-equal}
\rank_{\cal R_{L(u)}^{r_0}/t_\tau} Q^{r_0}_u/t_\tau = \rank_{\cal R_{L(u)}^{[s,r_0]}/t_\tau} Q^{[s,r_0]}_u/t_\tau
\end{equation}
for each $u \in X$. The ring $\cal R^{[s,r_0]}/t$ is reduced since $t$ is well-known to be a uniformizer at the at the points of the form $\zeta-1 \in \Af^1[s,r_0]$ where $\zeta$ is a $p$-power root of unity. Thus so is the factor ring $\cal R^{[s,r_0]}/t_\tau$. In particular,  $\cal R^{[s,r_0]}_X/t_\tau = \cal R^{[s,r_0]}/t_\tau \hat\otimes_{\Q_p} A$ is the completed tensor product of reduced affinoid $\Q_p$-algebras. It follows from Proposition \ref{prop:nakayama-lemma}(b) that the right hand side of \eqref{eqn:ranks-equal} has a minimum achieved on the Zariski dense subset $Z\ci X$. Since that minimum is achieved also at $x_0$, by assumption, Proposition \ref{prop:nakayama-lemma}(a) allows us to find an affinoid subdomain $x_0 \in U \ci X$ on which $u \mapsto \rank_{\cal R_{L(u)}^{r_0}/t_\tau} Q^{r_0}_u/t_\tau$ is constant.
\end{proof}

\begin{lemma}\label{lemma:killing-torsion}
Let $\tau \in \Sigma_K$. Suppose that $Q$ is a nearly flat generalized $(\varphi,\Gamma_K)$-module, $x_0 \in X(\bar \Q_p)$ and $Z \ci X(\bar \Q_p)$ is a set of points accumulating at $x_0$ such that:
\begin{enumerate}
\item there exists non-negative integers $s,r$ such that for all $u \in X(\bar \Q_p)$, $Q_u$ is triangulated by a standard triangulation with non-zero $\tau$-torsion exponents $(m_{i,\tau}(u))_{1\leq i \leq s}$ and free parameter of length $r$,
\item $z\mapsto \min_{i=1}^s m_{i,\tau}(z)$ is a constant $m$ on $Z$, and
\item $m \leq \min_{i=1}^s m_{i,\tau}(x_0)$ as well.
\end{enumerate}
Then there exists an open affinoid $x_0 \in U \ci X$ such that $\restrict{\left(Q/t_\tau^{m}\right)}{U}$ is flat over $\cal R_U/t_\tau^m$.
\end{lemma}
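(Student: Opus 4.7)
The plan is to compute $\dim_{L(u)} Q_u^{[s',r_0]}/t_\tau^m$ explicitly from the standard triangulation, use Proposition \ref{prop:nakayama-lemma} to show this dimension is constant on an affinoid neighborhood of $x_0$, and then pass from the resulting base-flatness and fiberwise freeness to flatness over the relative ring $\cal R_U^{[s',r_0]}/t_\tau^m$ via the fiberwise flatness criterion. Lemma \ref{lemma:flat} will finish the descent from closed annuli back to $\cal R_U$.

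Concretely, fix $0 < s' \leq r_0$. Since the successive quotients of a standard triangulation are $\cal R_{L(u)}$-module direct summands, $Q_u$ splits as an $\cal R_{L(u)}$-module into the direct sum of those successive quotients. A pure torsion piece $\cal R_{L(u)}(\delta)/\prod_\sigma t_\sigma^{r_\sigma}$ with $\tau$-exponent $m_{i,\tau}(u)$ becomes $\cal R_{L(u)}(\delta)/t_\tau^{\min(m_{i,\tau}(u),m)}$ modulo $t_\tau^m$, using that $\gcd(\prod_\sigma t_\sigma^{r_\sigma}, t_\tau^m) = t_\tau^{\min(r_\tau,m)}$ by coprimality of the $t_\sigma$'s; each free piece contributes $\cal R_{L(u)}(\delta)/t_\tau^m$. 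Setting $D := \dim_{L(u)} \cal R_{L(u)}^{[s',r_0]}/t_\tau$ (independent of $u$), this gives
\begin{equation*}
\dim_{L(u)} Q_u^{[s',r_0]}/t_\tau^m = D\left(rm + \sum_{i=1}^s \min(m_{i,\tau}(u),m)\right),
\end{equation*}
with minimum value $Dm(s+r)$ attained exactly when $m_{i,\tau}(u) \geq m$ for every $i$. By (b) this holds on $Z$, and by (c) at $x_0$. Since $\cal R_A^{[s',r_0]}/t_\tau^m = (\cal R^{[s',r_0]}/t_\tau^m)\tensor_L A$ is finite over $A$, the module $Q^{[s',r_0]}/t_\tau^m$ is finite over $A$. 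Proposition \ref{prop:nakayama-lemma}(b) then says the minimum over the reduced $X$ is attained on the Zariski dense accumulation set $Z$, and part (a), using that the minimum is also attained at $x_0$, yields an affinoid $x_0 \in U \ci X$ on which this dimension is identically $Dm(s+r)$. Because the condition $m_{i,\tau}(u) \geq m$ does not depend on $s'$, the neighborhood $U$ may be chosen uniformly in $s'$.

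On $U$, constancy of the fiber dimension at the minimum forces $m_{i,\tau}(u) \geq m$ for every $i$, so each fiber $Q_u^{[s',r_0]}/t_\tau^m$ is free of rank $s+r$ over $\cal R_{L(u)}^{[s',r_0]}/t_\tau^m$. In addition, constant fiber dimension of the finitely presented module $Q^{[s',r_0]}/t_\tau^m$ implies, by Nakayama on the reduced affinoid $A_U$, that $Q^{[s',r_0]}/t_\tau^m$ is finite projective over $A_U$. Since $\cal R_U^{[s',r_0]}/t_\tau^m$ is finite and flat over $A_U$, the fiberwise flatness criterion (a finitely presented module over $B$ that is $A$-flat and fiberwise $B$-flat is $B$-flat, for a flat $A \to B$) then implies $Q^{[s',r_0]}/t_\tau^m$ is flat over $\cal R_U^{[s',r_0]}/t_\tau^m$, and Lemma \ref{lemma:flat} concludes. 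The main obstacle is this last step, which requires reconciling flatness over the reduced base $A_U$ with fiberwise freeness over the non-reduced thickening $\cal R_U^{[s',r_0]}/t_\tau^m$; it works because $\cal R^{[s',r_0]}/t_\tau^m$ is a finite-dimensional Artinian $L$-algebra, so that the fiberwise criterion is available.
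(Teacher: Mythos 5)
Your proof is correct, and it takes a genuinely different and noticeably slicker route than the paper's. The paper argues by induction on $j = 1, \dotsc, m$, showing $Q_u/t_\tau^j$ is free of rank $r+s$ at each stage; the inductive step passes to $Q' = Q/Q[t_\tau^j]$ (which is why near-flatness and Proposition \ref{prop:torsion-kunneth} are invoked, to get base change for the torsion) and then applies Lemma \ref{lemma:ranks-by-t} to $Q'/t_\tau$. That reduction to a $/t_\tau$ quotient is essential for the paper because Proposition \ref{prop:nakayama-lemma} requires a reduced second tensor factor: $\cal R^{[s',r_0]}/t_\tau$ is reduced, $\cal R^{[s',r_0]}/t_\tau^m$ is not for $m > 1$. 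You sidestep that constraint entirely by viewing $Q^{[s',r_0]}/t_\tau^m$ as a finite module over the reduced affinoid $A$ alone. Your computation shows the fiber $L(u)$-dimension attains its global maximum $Dm(r+s)$ at $x_0$ and on $Z$; since Proposition \ref{prop:nakayama-lemma}(b) forces the minimum to be attained on $Z$ as well, the minimum and maximum coincide and the dimension is constant over the whole affinoid — so part (a) is not even needed, and in particular $m_{i,\tau}(u) \geq m$ for every $u$. For the passage from constant fiber dimension to $\cal R_U^{[s',r_0]}/t_\tau^m$-flatness you use the fiberwise flatness criterion where the paper cites \cite[Lemma 2.1.8(2)]{KedlayaPottharstXiao-Finiteness}; both apply, since $\cal R_U^{[s',r_0]}/t_\tau^m$ is a finite flat noetherian $A_U$-algebra. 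Two small cleanup points: you should explicitly replace $X$ first by an affinoid neighborhood of $x_0$ inside which $Z$ is Zariski dense (accumulation at $x_0$ is not Zariski density in $X$), exactly as the paper does at the start of Lemma \ref{lemma:ranks-by-t}; and it is worth remarking that your argument never visibly uses the near-flatness hypothesis on $Q$, so it yields a marginally more general statement.
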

\begin{proof}
Let's begin by elucidating the first assumption. Since $t_\tau$ and $t_\sigma$ are maximally coprime if $\tau \neq \sigma$, the first assumption implies that
\begin{equation*}
Q_u \simeq \left(\bigdsum_{i=1}^s \cal R_{L(u)}/t_\tau^{m_{i,\tau}(u)}\right) \dsum \cal R_{L(u)}^{\dsum r} \dsum S_u
\end{equation*}
as an $\cal R_{L(u)}$-module, where $S_u[t_\tau] = S_u/t_\tau = (0)$ and each $m_{i,\tau}(u)$ is non-zero. 
\begin{claim}
For each $1 \leq j \leq m$ we may replace $X$ by an affinoid neighborhood $x_0 \in U \ci X$ such that $Q_u/t_\tau^{j}$ is free of rank $r+s$ over $\cal R_{L(u)}/t_\tau^j$.
\end{claim}
The proof will be given by induction in the next paragraph. Assuming the claim for the moment, let us finish the lemma. Fix $1 \leq j \leq m$. By the claim we may assume that $Q_u/t_{\tau}^{j}$ is free over $\cal R_{L(u)}/t_\tau^{j}$ with rank independent of $u$. We want to show that this implies $Q/t_\tau^j$ is flat over $\cal R_X/t_\tau^j$. This will follow from Lemma \ref{lemma:flat} and \cite[Lemma 2.1.8(2)]{KedlayaPottharstXiao-Finiteness}. Indeed, we may first spread out $Q$ to a finitely presented module $Q^{r_0}$ on a half-open annulus $X^{r_0}$. Having done that, Lemma \ref{lemma:flat} shows that it suffices to check that $Q^{[s,r_0]}/t_\tau^j$ is finite flat over $\cal R_X^{[s,r_0]}/t_\tau^j$ for each $0 < s < r_0$. To check that, we observe that $\cal R_X^{[s,r_0]}/t_\tau^j$ is the completed tensor product of an affinoid algebra $\cal R^{[s,r_0]}/t_\tau^j$ with a reduced affinoid algebra $A$ (where $X = \Sp(A)$). In particular, \cite[Lemma 2.1.8(2)]{KedlayaPottharstXiao-Finiteness} (which only requires {\em one} of the tensor-ands to be reduced) and the constancy $u\mapsto \rank_{\cal R_{L(u)}^{[s,r_0]}/t_\tau^j} Q_u^{[s,r_0]}/t_\tau^j$ (our assumption in the claim) implies that $Q^{[s,r_0]}/t_\tau^j$ is finite flat over $\cal R_X^{[s,r_0]}/t_\tau^j$.

\begin{proof}[Proof of claim] We now prove the claim by induction on $1\leq j\leq m$ starting with $j = 1$. Since $m_{i,\tau}(u) \geq 1$ for all $i$ and $u$, we see visibly that $Q_u/t_\tau$ is free over $\cal R_{L(u)}/t_\tau$ of rank $r + s$ for each $u \in X$. Let $1 \leq j < m$ and assume $Q_u/t_\tau^{j}$ is free over $\cal R_{L(u)}/t_\tau^{j}$ for all $u\in X$. If we show that we may shrink $X$ so that $j < m_{i,\tau}(u)$ for each $i$ and $u \in X$ then the same freeness will be true for $j+1$ and we'll be done.

First, $Q/t_\tau^{j}$ is flat over $\cal R_X/t_\tau^{j}$ by induction and the proof in the paragraph following the claim. Since $\cal R_X/t_\tau^{j}$ is flat over $X$, see \cite[Corollary 2.1.5]{KedlayaPottharstXiao-Finiteness}, we get that $Q/t_\tau^{j}$ is flat over $X$. Second, $Q$ is nearly flat over $X$ and thus Proposition \ref{prop:torsion-kunneth} implies that the $t_\tau^{j}$-torsion satisfies base change: for each $u \in X(\bar \Q_p)$,
\begin{equation*}
Q[t_\tau^{j}]_u = Q_u[t_\tau^{j}] \simeq \bigdsum_{i=1}^s  t_\tau^{\max(0,m_{i,\tau}(u)-j)}\cal R_{L(u)}/t^{m_{i,\tau}(u)}.
\end{equation*}
If we define $Q' := Q/Q[t_\tau^{j}]$ then $Q'$ is a generalized $(\varphi,\Gamma_K)$-module over $X$ and we see
\begin{equation}\label{eqn:Qprime_fiber}
Q'_u \simeq \left(\bigdsum_{i=1}^s \cal R_{L(u)}/t_\tau^{\max(0,m_i(u)-j)}\right) \dsum \cal R_{L(u)}^{\dsum r} \dsum S_u
\end{equation}
as a $\cal R_{L(u)}$-module. Note immediately that $Q'_u/t_\tau$ is free over $\cal R_{L(u)}/t_\tau$ for any $u$. Now specialize to $u = x_0$ or $u \in Z$. From the assumptions (b) and (c) we have that $Q'_u/t_\tau$ has rank $r+s$ at $u=x_0$ and at $u\in Z$. By Lemma \ref{lemma:ranks-by-t}, we may replace $X$ by an affinoid subdomain so that $Q'_u/t_\tau$ has precisely rank $r+s$ everywhere on $X$. The formula \eqref{eqn:Qprime_fiber} for the fiber $Q'_u$ easily implies that $m_{i}(u) - j > 0$, which is what we wanted to show.
\end{proof}
With the inductive step complete, the proof is finished.
\end{proof}

Recall we defined the $\tau$-torsion length $\ell_\tau(Q)$ of generalized $(\varphi,\Gamma_K)$-modules in Section \ref{subsect:generalized-triangulations}. If we know that $Q$ has a standard triangulation $Q_{\bullet}$ then its $\tau$-torsion length is the number terms in the associated graded with non-zero $\tau$-torsion exponent.

\begin{proposition}\label{prop:killing-tors}
Suppose that $Q$ is a nearly flat pointwise triangulated family with torsion centered at $x_0 \in X(\bar \Q_p)$ and non-critical points $X^{\nc}$. If $\tau \in \Sigma_K$ such that $\ell_\tau(Q_{x_0}) = \ell_\tau(Q_{y})$ at one (and hence all) $y \in X^{\nc}$ then there exists an affinoid neighborhood $x_0 \in U \ci X$ such that $Q[t_\tau^{\wt_\tau(\delta_{2,x_0})-\wt_\tau(\delta_{1,x_0})}]_x \simeq Q_x[t_\tau^{\wt_\tau(\delta_{2,x_0})-\wt_\tau(\delta_{1,x_0})}]$ for all $x \in U(\bar \Q_p)$ and $\restrict{Q}{U}[t_\tau^{\wt_\tau(\delta_{2,x_0})-\wt_\tau(\delta_{1,x_0})}]$ is a nearly flat generalized $(\varphi,\Gamma_K)$-module over $U$.
\end{proposition}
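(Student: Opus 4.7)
Let $N := \wt_\tau(\delta_{2,x_0}) - \wt_\tau(\delta_{1,x_0})$. The plan is to verify the hypotheses of Lemma~\ref{lemma:killing-torsion} with $m = N$ and $Z = X^{\nc}$, thereby producing an affinoid neighborhood $x_0 \in U \ci X$ on which $(\restrict{Q}{U})/t_\tau^N$ is flat, and then to transfer this flatness to the $t_\tau^N$-torsion via Proposition~\ref{prop:torsion-kunneth}.

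For each $u \in X(\bar\Q_p)$, axiom (TF\ref{axiom:tf-triangulation}) together with the coprimality of distinct $t_\sigma$ furnishes an $\cal R_{L(u)}$-module decomposition
\[
Q_u \simeq \left( \bigdsum_{i=1}^{\ell_\tau(Q_u)} \cal R_{L(u)}/t_\tau^{m_{i,\tau}(u)} \right) \dsum \cal R_{L(u)}^{\dsum (d - i(x_0))} \dsum S_u
\]
with each $m_{i,\tau}(u) > 0$ and $S_u[t_\tau] = S_u/t_\tau = 0$. Hence $Q_u/t_\tau$ is free over $\cal R_{L(u)}/t_\tau$ of rank $\ell_\tau(Q_u) + (d - i(x_0))$. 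By (TF\ref{axiom:tf-triy}) the non-zero $\tau$-torsion exponents at $y \in X^{\nc}$ are the values $\wt_\tau(\delta_{j,x_0}) - \wt_\tau(\delta_{1,x_0})$ for $2 \leq j \leq i(x_0)$, strictly positive by (TF\ref{axiom:tf-weight}), so $\ell_\tau(Q_y) = i(x_0) - 1$ is independent of $y$; by hypothesis this common value equals $\ell_\tau(Q_{x_0})$. Since $X^{\nc}$ accumulates at $x_0$ by (TF\ref{axiom:tf-accumulates}), Lemma~\ref{lemma:ranks-by-t} lets us shrink $X$ so that $\ell_\tau(Q_u) \equiv i(x_0) - 1$, verifying Lemma~\ref{lemma:killing-torsion}(a) with $s = i(x_0) - 1$ and $r = d - i(x_0)$. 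Condition (b) holds with $m = N$ because the minimum of the non-zero $\tau$-torsion exponents at any $y \in X^{\nc}$ is $\wt_\tau(\delta_{2,x_0}) - \wt_\tau(\delta_{1,x_0})$, by the strict monotonicity in (TF\ref{axiom:tf-weight}). For (c), (TF\ref{axiom:tf-trix}) identifies the non-zero $\tau$-torsion exponents at $x_0$ as values $s_{j,\tau} - \wt_\tau(\delta_{1,x_0})$ with $s_{j,\tau} \neq \wt_\tau(\delta_{1,x_0})$; since $\{s_{j,\tau}\}_j$ and $\{\wt_\tau(\delta_{j,x_0})\}_j$ coincide as sets by (TF\ref{axiom:tf-weight}) and $\wt_\tau(\delta_{1,x_0})$ is the unique minimum of the latter, each such $s_{j,\tau}$ is $\geq \wt_\tau(\delta_{2,x_0})$, whence every non-zero exponent at $x_0$ is $\geq N$.

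Lemma~\ref{lemma:killing-torsion} now produces an affinoid neighborhood $U = \Sp(B)$ of $x_0$ on which $(\restrict{Q}{U})/t_\tau^N$ is flat over $\cal R_U/t_\tau^N$. Because $\cal R_U/t_\tau^N$ is $U$-flat by \cite[Corollary~2.1.5]{KedlayaPottharstXiao-Finiteness}, $(\restrict{Q}{U})/t_\tau^N$ is $B$-flat, so $\Tor^B_i((\restrict{Q}{U})/t_\tau^N, L(x)) = 0$ for $i \geq 1$ and $x \in U(\bar\Q_p)$. Applying Proposition~\ref{prop:torsion-kunneth} to the nearly flat $\restrict{Q}{U}$ with $f = t_\tau^N$: part~(a) yields the base change isomorphism $\restrict{Q}{U}[t_\tau^N] \tensor_B L(x) \iso Q_x[t_\tau^N]$, and part~(b) gives $\Tor^B_i(\restrict{Q}{U}[t_\tau^N], L(x)) \simeq \Tor^B_{i+2}((\restrict{Q}{U})/t_\tau^N, L(x)) = 0$ for $i \geq 1$, establishing near-flatness. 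That $\restrict{Q}{U}[t_\tau^N]$ is itself a generalized $(\varphi,\Gamma_K)$-module over $U$ follows from Corollary~\ref{cor:cat-abelian} applied to the equivariant morphism $\restrict{Q}{U}(z_\tau^N) \to \restrict{Q}{U}$ given by multiplication by $t_\tau^N$.

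The main obstacle is the first step: the hypothesis on $\ell_\tau$ is only directly imposed at $x_0$ and at $X^{\nc}$, whereas Lemma~\ref{lemma:killing-torsion} requires structural control on all of $X$. The reduction via Lemma~\ref{lemma:ranks-by-t}---which relies on recognizing $\ell_\tau(Q_u)$ as the non-free contribution to the rank of $Q_u/t_\tau$ over $\cal R_{L(u)}/t_\tau$---provides the necessary upgrade.
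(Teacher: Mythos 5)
Your proof is correct and follows essentially the same route as the paper: reduce to Lemma~\ref{lemma:killing-torsion} by using Lemma~\ref{lemma:ranks-by-t} to obtain constancy of $\ell_\tau(Q_u)$ on a neighborhood (exploiting the rank formula $\rank_{\cal R_{L(u)}/t_\tau}Q_u/t_\tau = \ell_\tau(Q_u) + d - i(x_0)$), check hypotheses (b) and (c) using (TF\ref{axiom:tf-triy}), (TF\ref{axiom:tf-trix}), (TF\ref{axiom:tf-weight}), and then transfer flatness of $Q/t_\tau^N$ to base change and near-flatness of $Q[t_\tau^N]$ via Proposition~\ref{prop:torsion-kunneth}. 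Your expansion of the final step (spelling out parts (a) and (b) of Proposition~\ref{prop:torsion-kunneth} and noting why the $t_\tau^N$-torsion remains a generalized $(\varphi,\Gamma_K)$-module) is a correct elaboration of what the paper leaves implicit.
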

\begin{proof}
We're going to apply Lemma \ref{lemma:killing-torsion}. Note that $Q_x/t_\tau$ is free over $\cal R_{L(x)}/t_\tau$ for all $x \in X(\bar \Q_p)$. Moreover, the rank at $x = x_0$ agrees with the rank over the Zariski dense set $X^{\nc}$. By (TF\ref{axiom:tf-triangulation}), the free parameters of the family $Q$ have constant length $d-i(x_0)$ on all of $X$. Since $\rank_{\cal R_{L(x)}/t_\tau} Q_x/t_\tau = \ell_\tau(Q_x) + d-i(x_0)$, Lemma \ref{lemma:ranks-by-t} implies that we may shrink $X$ and assume that the $\tau$-torsion lengths are also constant on the entire family. In particular, the first hypothesis of Lemma \ref{lemma:killing-torsion} is verified. By (TF\ref{axiom:tf-triy}), the minimal torsion exponent at $y \in X_{\nc}$ is given by $\wt_\tau(\delta_{2,x_0})-\wt_\tau(\delta_{1,x_0})$, independent of $y$. Thus the second hypothesis of Lemma \ref{lemma:killing-torsion} is verified. On the other hand, by (TF\ref{axiom:tf-weight}), $\wt_\tau(\delta_{2,x_0})-\wt_\tau(\delta_{1,x_0})$ is also the smallest possible value for elements in the set $\set{s_{j,\tau}-\wt_\tau(\delta_{1,x_0}) \st 2 \leq j \leq d}$, which are the possible non-zero $\tau$-torsion exponents of the standard triangulation on $Q_{x_0}$. Thus the third hypothesis of Lemma \ref{lemma:killing-torsion} is satisfied.  We conclude by Lemma \ref{lemma:killing-torsion} that $\restrict{Q}{U}/t_\tau^{\wt_\tau(\delta_{2,x_0})-\wt_\tau(\delta_{1,x_0})}$ is flat over $\cal R_U/t_\tau^{\wt_\tau(\delta_{2,x_0})-\wt_\tau(\delta_{1,x_0})}$ for some affinoid neighborhood $U$ of $x_0$. Replacing $X$ by $U$ we're done by Proposition \ref{prop:torsion-kunneth}.
\end{proof}

\begin{corollary}\label{cor:tau-balanced}
Suppose $Q$ is a nearly flat pointwise triangulated family with torsion centered at $x_0 \in X(\bar \Q_p)$, non-critical points $X^{\nc}$ and for each $\tau \in \Sigma_K$, we have $\ell_\tau(Q_{x_0}) = \ell_\tau(Q_y)$ for one (and hence all) points $y \in X^{\nc}$. Then 
\begin{enumerate}
\item For each $\tau$ there exists a unique $1 \leq j_\tau \leq i(x_0)$ such that $s_{j_\tau,\tau} = k_{1,\tau}$.
\item There exists an affinoid neighborhood $x_0 \in U \ci X$ such that the quotient $Q':=\restrict{Q}{U}/\restrict{Q}{U}[\prod_\tau t_\tau^{\wt_\tau(\delta_{2,x_0})-\wt_\tau(\delta_{1,x_0})}]$ is a nearly flat pointwise and triangulated family of $(\varphi,\Gamma_K)$-modules over $U$ with torsion centered at $x_0$ and modified data
\begin{itemize}
\item torsion weights $(s_{j,\tau})_{j \neq j_\tau}$,
\item parameter $(\delta_2,\dotsc,\delta_{d})$,
\item $X^{\nc}$ replaced by $U \intersect X^{\nc}$.
\end{itemize}
In particular, the free length of $Q'$ is the same as the free length of $Q$.
\end{enumerate}
\end{corollary}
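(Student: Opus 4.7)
The plan has two parts. For (a), I would run a short counting argument straight from axioms (TF\ref{axiom:tf-weight}), (TF\ref{axiom:tf-trix}), and (TF\ref{axiom:tf-triy}). For (b), the idea is to apply Proposition \ref{prop:killing-tors} once per embedding $\tau \in \Sigma_K$ (a finite set), combine the resulting torsion submodules via the maximal coprimality of the $\{t_\tau\}_{\tau \in \Sigma_K}$, and then verify the pointwise triangulated family structure on $Q'$ fiber by fiber.

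For (a), fix $\tau \in \Sigma_K$ and write $m_\tau := \wt_\tau(\delta_{2,x_0}) - \wt_\tau(\delta_{1,x_0})$. For any $y \in X^{\nc}$, (TF\ref{axiom:tf-triy}) gives the $\tau$-torsion exponents of $Q_y$ as $(\wt_\tau(\delta_{j,x_0}) - \wt_\tau(\delta_{1,x_0}))_{1 \leq j \leq i(x_0)}$. These are strictly increasing in $j$ by (TF\ref{axiom:tf-weight}), so exactly one (at $j = 1$) vanishes, giving $\ell_\tau(Q_y) = i(x_0) - 1$. Axiom (TF\ref{axiom:tf-trix}) gives the $\tau$-torsion exponents at $x_0$ as $(s_{j,\tau} - \wt_\tau(\delta_{1,x_0}))_{1 \leq j \leq i(x_0)}$, and (TF\ref{axiom:tf-weight}) places each $s_{j,\tau}$ in $\{k_{1,\tau}, \dotsc, k_{d,\tau}\}$. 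The hypothesis $\ell_\tau(Q_{x_0}) = \ell_\tau(Q_y) = i(x_0) - 1$ therefore forces exactly one index $j_\tau \in \{1,\dotsc,i(x_0)\}$ with $s_{j_\tau,\tau} = k_{1,\tau}$, proving (a).

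For (b), I first apply Proposition \ref{prop:killing-tors} to each $\tau \in \Sigma_K$, shrinking $U$ finitely many times to reach an affinoid neighborhood $x_0 \in U \ci X$ on which each $\restrict{Q}{U}[t_\tau^{m_\tau}]$ is a nearly flat generalized $(\varphi,\Gamma_K)$-module whose formation commutes with base change. Since $t_\sigma \cal R_L + t_\tau \cal R_L = \cal R_L$ for $\sigma \neq \tau$ (the $t_\tau$ are pairwise maximally coprime, as recalled in Section \ref{subsec:rank-one}), a Chinese Remainder Theorem argument decomposes
\begin{equation*}
\restrict{Q}{U}\bigl[\textstyle\prod_\tau t_\tau^{m_\tau}\bigr] = \bigoplus_{\tau \in \Sigma_K} \restrict{Q}{U}[t_\tau^{m_\tau}],
\end{equation*}
which is therefore nearly flat over $U$. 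From the induced short exact sequence
\begin{equation*}
0 \to \restrict{Q}{U}\bigl[\textstyle\prod_\tau t_\tau^{m_\tau}\bigr] \to \restrict{Q}{U} \to Q' \to 0
\end{equation*}
and a $\Tor$ long exact sequence, $Q'$ is a nearly flat generalized $(\varphi,\Gamma_K)$-module over $U$ whose fibers satisfy $Q'_x \simeq Q_x/Q_x[\prod_\tau t_\tau^{m_\tau}]$ for all $x \in U$.

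The last task is to verify that $Q'$, equipped with the prescribed modified data, satisfies (TF\ref{axiom:tf-weight})--(TF\ref{axiom:tf-accumulates}). Fiberwise, the torsion part of the standard triangulation on $Q_x$ is a successive extension of pure torsion modules of character type, each of which further decomposes by CRT into $\tau$-isotypic components. Quotienting by $\prod_\tau t_\tau^{m_\tau}$-torsion kills, for each $\tau$, precisely those $\tau$-isotypic components of exponent at most $m_\tau$ and leaves the remaining pure torsion pieces and the free summands intact. At $y \in X^{\nc}$, (TF\ref{axiom:tf-triy}) identifies the killed piece as the $\tau$-component of the $j = 2$ step, so $Q'_y$ carries a standard triangulation matching (TF\ref{axiom:tf-triy}) with shifted parameter $(\delta_2,\dotsc,\delta_d)$. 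At $x_0$, (TF\ref{axiom:tf-trix}) and part (a) show that the $j_\tau$-th step is the zero step (so its removal is a no-op), while any step with $s_{j,\tau} = k_{2,\tau}$ is killed; after reindexing, the surviving torsion weights form the tuple $(s_{j,\tau})_{j \neq j_\tau}$. Axioms (TF\ref{axiom:tf-weight}), (TF\ref{axiom:tf-generic}), (TF\ref{axiom:tf-triangulation}), and (TF\ref{axiom:tf-accumulates}) for $Q'$ then follow from the same axioms for $Q$ after shifting indices by one, using the fact that the accumulation condition in (TF\ref{axiom:tf-accumulates}) concerns only consecutive weight gaps for $j \geq 3$ of the original family.

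The main obstacle is the combinatorial bookkeeping in this final verification. Because the index $j_\tau$ from (a) depends on $\tau$, the relabeling that writes the modified torsion weights as an ordered tuple is not uniform across $\Sigma_K$; and if $k_{2,\tau}$ happens to be the weight of a free step of $Q$ at $x_0$ rather than a torsion step, the argument must also track how that free step is reassigned as a torsion step in $Q'$ in order to keep the free length constant. None of this is conceptually deep: all the analytic content has already been absorbed into Proposition \ref{prop:killing-tors} and the cohomology-and-base-change framework of Section \ref{sec:finiteness}.
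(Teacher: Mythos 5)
Your proposal is correct and follows essentially the same route as the paper: part (a) is the same counting argument made explicit, and part (b) applies Proposition \ref{prop:killing-tors} once per $\tau$, combines the torsion submodules via the pairwise maximal coprimality of the $t_\tau$, and then reads off the fiberwise invariants exactly as the paper does via the cokernel observation. The only imprecision worth flagging is the phrase ``the $j_\tau$-th step is the zero step'': since $j_\tau$ depends on $\tau$, that step of $Q_{x_0,\bullet}$ has vanishing $\tau$-exponent but may retain nonzero $\sigma$-exponents for $\sigma\neq\tau$, so the deletion in ``$(s_{j,\tau})_{j\neq j_\tau}$'' is a per-$\tau$ reindexing rather than the removal of a single common filtration step; but this does not affect the conclusion and the paper itself leaves that bookkeeping implicit.
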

\begin{proof}
The fact that the $j_\tau$ exists is clear since the $\tau$-torsion lengths are constant on $X^{\nc}$ and concur with the lengths at $x_0$. Choose, by Proposition \ref{prop:killing-tors}, an affinoid neighborhood $x_0 \in U \ci X$ over which each torsion $\restrict{Q}{U}[t_\tau^{\wt_\tau(\delta_{2,x_0})-\wt_\tau(\delta_{1,x_0})}]$ satisfies base change and is a nearly flat generalized $(\varphi,\Gamma_K)$-module over $U$. Since the $t_\tau$ are maximally coprime within $\cal R$, the torsion $\restrict{Q}{U}[\prod_\tau t_\tau^{\wt_\tau(\delta_{2,x_0})-\wt_\tau(\delta_{1,x_0})}]$ is also a nearly flat generalized $(\varphi,\Gamma_K)$-module and satisfies base change over $U$.  Let $Q'$ be as in the statement of part (b). Then we see that $Q'$ is nearly flat over $U$, proving the first half of (b). If $x \in U$ then the fiber of $Q'$ at $x$ is computed as
\begin{equation*}
Q_x' = \coker(Q_x[\prod_\tau t_\tau^{\wt_\tau(\delta_{2,x_0})-\wt_\tau(\delta_{1,x_0})}] \inject Q_x).
\end{equation*}
The image, by definition, lands in the torsion part of $Q_x$ and hence can only effect the torsion exponents of the standard triangulation on $Q_x'$. Thus the rest of part (b) follows easily follows from the observation that
\begin{equation*}
\coker\left(t^{s-k_{2,\tau}}\cal R_L/t^{s-k_{1,\tau}}\cal R_L \goto \cal R_L/t^{s-k_{1,\tau}}\right)
\end{equation*}
has torsion exponent $s-k_{2,\tau}$ for any $s \geq k_{2,\tau} > k_{1,\tau}$.
\end{proof}

\section{$p$-adic variation in refined families}\label{sec:variation}

We are now ready to state and prove the main theorem of this article. The notion of refined families given below is inspired by \cite[Chapter 4]{BellaicheChenevier-Book} (see also \cite[Section 5]{Liu-Triangulations}). They arise naturally as arithmetic families of $(\varphi,\Gamma_K)$-modules over rigid spaces, for example eigenvarieties.

\subsection{Refined families}\label{subsec:refined-families}

Let $X = \Sp(A)$ be a reduced $L$-affinoid space.

\begin{definition}
A refined family of $(\varphi,\Gamma_K)$-modules of rank $d$ is
\begin{itemize}
\item[-] a $(\varphi,\Gamma_K)$-module $D$ of rank $d$ over $X$,
\item[-] an ordered tuple $(\delta_1,\dotsc,\delta_d):K^\x \goto A^\x$ of continuous characters, and
\item[-] a Zariski dense subset $X_{\cl} \ci X(\bar \Q_p)$ 
\end{itemize}
such that the following axioms hold:
\begin{enumerate}[{(RF1)}]
\item For each $x \in X(\bar \Q_p)$ and $\tau \in \Sigma_K$,
\begin{equation*}
\HT_\tau(D_x) = \set{\wt_\tau(\delta_{1,x}),\dotsc,\wt_\tau(\delta_{d,x})}.
\end{equation*}
We now label the Hodge--Tate--Sen weights by $\kappa_{i,\tau}(x) := \wt_\tau(\delta_{i,x})$. \label{axiom:RFwts}
\item \label{axiom:characters-crystalline} For each $x \in X_{\cl}$ and $i=1,\dotsc,d$, the character  $\delta_{i,x}$ is crystalline.
\item \label{axiom:crystalline} If $x \in X_{\cl}$ then $D_x$ is crystalline, the Hodge--Tate weights satisfy 
\begin{equation*}
\kappa_{1,\tau}(x) < \dotsb < \kappa_{d,\tau}(x),
\end{equation*}
for each $\tau \in \Sigma_K$, and the $\varphi^{f_K}$ eigenvalues $\set{\phi_1(x),\dotsc,\phi_d(x)}$ all live in $L(x)^\x$, are distinct, and given by
\begin{equation*}
\phi_i(x) = \delta_{i,x}(\varpi_K)\prod_{\tau \in \Sigma_K} \tau(\varpi_K)^{\kappa_{i,\tau}(x)}
\end{equation*}
for some (any) uniformizer $\varpi_K \in K^\x$.
\item By (RF\ref{axiom:crystalline}) and Proposition \ref{prop:refinements-triangulations}, every point $x\in X_{\cl}$ has a unique triangulation $P_{x,\bullet}$ corresponding to the ordering $(\phi_1(x),\dotsc,\phi_n(x))$ of (distinct) crystalline eigenvalues. Let
\begin{equation*}
X_{\cl}^{\nc} := \set{x \in X_{\cl} \st P_{x,\bullet} \text{ is a non-critical triangulation}}.
\end{equation*}
Then, for all $C > 0$ the set
\begin{equation*}
X_{\cl,C}^{\nc} := \set{x \in X_{\cl}^{\nc} \st C < \sum_{\tau \in \Sigma_K} \kappa_{i+1,\tau}(x) - \kappa_{i,\tau}(x)  \text{ for $i=1,\dotsc,n-1$}}
\end{equation*}
is Zariski dense in $X$ and accumulates at every point in $X_{\cl}$.\label{axiom:accumulate} \label{axiom:classicality}
\end{enumerate}
\end{definition}
We will often abuse language and call $D$ the refined family, with the ordered parameter $(\delta_i)_{i=1,\dotsc,d}$ and the subset $X_{\cl}$ understood. The subscript ``$\cl$'' is meant to mean classical, but note that there are slight restrictions. Indeed, a refined family for us has a distinctness hypothesis on not only the Hodge--Tate weights over $X_{\cl}$ but also the crystalline eigenvalues. In applications we will work with a slightly stronger condition.

\begin{definition}\label{defi:very-phi-regular}
Suppose that $D$ is a refined family of $(\varphi,\Gamma_K)$-modules over $X$. We say that $x \in X_{\cl}$ is very $\varphi$-regular if
\begin{enumerate}
\item $\phi_i(x) \neq p^{f_K} \phi_j(x)$ for each $1\leq i < j \leq d$, and
\item $\phi_1(x)\dotsb \phi_i(x)$ is a simple eigenvalue of $\varphi^{f_K}$ acting on $D_{\cris}(\wedge^i D_x)$ for each $1 \leq i \leq d$.
\end{enumerate}
The set of all very $\varphi$-regular points is denoted by $X_{\cl}^{\varphi_{\reg}}$.
\end{definition}

Just as axiom (RF\ref{axiom:accumulate}) says that classical points are well-approximated by points whose Hodge--Tate weights are extremely regular (i.e. far apart), the following proposition shows that we can, moreover, make such approximations by very $\varphi$-regular points as well. We let $X_{\cl,C}^{\nc,\varphi_{\reg}} = X_{\cl,C}^{\nc} \intersect X_{\cl}^{\varphi_{\reg}}$. 
\begin{proposition}
If $C > 0$ then $X_{\cl,C}^{\nc,\varphi_{\reg}}$ accumulates each every point in $X_{\cl}$.
\end{proposition}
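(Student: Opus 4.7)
The plan is to realize $X_{\cl,C}^{\nc,\varphi_{\reg}}$, in a neighborhood of any $x_0 \in X_{\cl}$, as the complement in $X_{\cl,C}^{\nc}$ of finitely many closed analytic subvarieties, and then to combine the Zariski density of $X_{\cl,C}^{\nc}$ supplied by (RF\ref{axiom:accumulate}) with a $p$-adic valuation argument ruling out any of these subvarieties covering an irreducible component of the neighborhood through $x_0$.

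First I would choose a sufficiently small affinoid neighborhood $U \ci X$ of $x_0$ on which the Sen weights $\kappa_{i,\tau} = \wt_\tau(\delta_i) \in \Gamma(U,\cal O)$ are bounded, so that for each pair $i<j$ and each size-$i$ subset $I = \set{j_1 < \dotsb < j_i}$ of $\set{1,\dotsc,d}$ distinct from $\set{1,\dotsc,i}$, the expressions
\begin{equation*}
F_{ij}(x) = \frac{\delta_{i,x}(\varpi_K)}{\delta_{j,x}(\varpi_K)}\prod_{\tau \in \Sigma_K}\tau(\varpi_K)^{\kappa_{i,\tau}(x) - \kappa_{j,\tau}(x)},
\end{equation*}
\begin{equation*}
G_I(x) = \prod_{k=1}^{i}\frac{\delta_{j_k,x}(\varpi_K)}{\delta_{k,x}(\varpi_K)}\prod_{\tau \in \Sigma_K}\tau(\varpi_K)^{\sum_{k=1}^{i}(\kappa_{j_k,\tau}(x) - \kappa_{k,\tau}(x))}
\end{equation*}
are well-defined analytic functions on $U$ (interpret $\tau(\varpi_K)^{(-)}$ via the $p$-adic exponential after shrinking $U$ or factoring off an integer shift of the weights at $x_0$). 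By axiom (RF\ref{axiom:crystalline}) and the formula for $\phi_i(x)$ given there, at any $x \in X_{\cl} \cap U$ one has $F_{ij}(x) = \phi_i(x)/\phi_j(x)$ and $G_I(x) = (\phi_{j_1}\dotsb\phi_{j_i})(x)/(\phi_1\dotsb\phi_i)(x)$, so $x$ is very $\varphi$-regular precisely when $F_{ij}(x) \neq p^{f_K}$ for all $i < j$ and $G_I(x) \neq 1$ for all admissible $I$.

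Arguing by contradiction, suppose $X_{\cl,C}^{\nc,\varphi_{\reg}} \cap U$ failed to be Zariski dense in $U$ for a $U$ as above. Since $X_{\cl,C}^{\nc}\cap U$ is Zariski dense by (RF\ref{axiom:accumulate}), a standard finite-union argument forces some closed analytic subvariety $\set{F_{ij} = p^{f_K}}$ or $\set{G_I = 1}$ to contain a Zariski dense subset of some irreducible component $U' \ci U$ through $x_0$, hence the corresponding analytic function is identically constant on $U'$. Evaluating this constancy at any point $y \in X_{\cl,C'}^{\nc} \cap U'$ for arbitrary $C'$ (which exists by accumulation at $x_0$ after further shrinking if necessary) and taking $p$-adic valuations, the identity $F_{ij}(y) = p^{f_K}$ becomes
\begin{equation*}
v_p\!\left(\frac{\delta_{i,y}(\varpi_K)}{\delta_{j,y}(\varpi_K)}\right) + \frac{1}{e_K}\sum_{\tau \in \Sigma_K}\left(\kappa_{i,\tau}(y) - \kappa_{j,\tau}(y)\right) = f_K;
\end{equation*}
the first term is bounded independently of $y$ since $\delta_i(\varpi_K)/\delta_j(\varpi_K) \in \Gamma(U',\cal O)^\x$ is an analytic unit on the quasi-compact space $U'$, whereas the second term has absolute value at least $(j-i)C'/e_K$ by telescoping the gap inequalities $\sum_\tau(\kappa_{k+1,\tau}(y) - \kappa_{k,\tau}(y)) > C'$ defining $X_{\cl,C'}^{\nc}$. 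Taking $C'$ large enough yields the contradiction.

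The main obstacle is the parallel treatment of $G_I \equiv 1$, which reduces similarly to showing $\sum_{\tau}\sum_{k=1}^i (\kappa_{j_k,\tau}(y) - \kappa_{k,\tau}(y))$ grows without bound as $C' \to \infty$. For this I would bijectively pair each index $k \in \set{1,\dotsc,i} \setminus I$ with some index $l \in I \setminus \set{1,\dotsc,i}$ (these two sets are nonempty and of equal cardinality $m \geq 1$ since $I \neq \set{1,\dotsc,i}$), and observe that the difference rewrites as a sum of $m$ terms of the form $\sum_\tau (\kappa_{l,\tau}(y) - \kappa_{k,\tau}(y)) > (l-k)C' \geq C'$, producing linear growth of absolute value at least $mC'/e_K$ after applying the $1/e_K$ factor from valuations. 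A minor technical subtlety is ensuring $F_{ij}$ and $G_I$ are genuinely analytic on $U$, which is handled by shrinking $U$ so the weight differences lie in the convergence disk of the $p$-adic exponential, or by absorbing a constant integer shift of the weights at $x_0$ into the $\delta$-factors.
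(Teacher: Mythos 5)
Your core instinct --- a $p$-adic valuation argument driven by the unbounded weight gaps defining $X_{\cl,C'}^{\nc}$ --- is exactly the paper's, but the analytic-function/Zariski-density scaffolding you build around it contains a real error. The functions $F_{ij}$ and $G_I$ cannot be simultaneously analytic on a neighborhood $U$ of $x_0$ \emph{and} interpolate $\phi_i/\phi_j$ (resp.\ the corresponding products) at classical points: any analytic function on a quasi-compact affinoid has bounded valuation, whereas $v_p(\phi_i(y)/\phi_j(y))$ grows without bound along $X_{\cl,C'}^{\nc}$ as $C'\to\infty$. This is precisely the familiar fact that the crystalline eigenvalues $\phi_i$ do \emph{not} vary analytically in a refined family --- only $\delta_i(\varpi_K)$ does. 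Your suggested fix (factoring off the integer weight shift at $x_0$ and using the $p$-adic exponential) does produce an analytic function, but that function then has locally bounded valuation near $x_0$, essentially $v_p(\delta_{i}(\varpi_K)/\delta_{j}(\varpi_K)) + e_K^{-1}\sum_\tau(\kappa_{i,\tau}(x_0)-\kappa_{j,\tau}(x_0))$ up to a bounded analytic perturbation. In particular the displayed identity you obtain by taking $v_p$ of $F_{ij}(y) = p^{f_K}$ --- the one with the full weight differences $\kappa_{i,\tau}(y)-\kappa_{j,\tau}(y)$ --- is not what that equation actually gives, the putative loci $\set{F_{ij}=p^{f_K}}$ do not even contain the points where $\phi_i = p^{f_K}\phi_j$, and the contradiction you seek evaporates.

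The repair is to drop the analytic functions and the by-contradiction structure entirely. Your valuation computation, applied pointwise, already shows that \emph{every} $u \in X_{\cl,C'}^{\nc} \cap U$ is very $\varphi$-regular once $C'$ is large enough. Concretely: since each $\delta_i$ is a continuous character into $A^\x$, the slope $\nu_i := v_p(\delta_{i,u}(\varpi_K))$ is locally constant near $x_0$; if $\phi_i(u) = p^{f_K}\phi_j(u)$ with $i < j$, taking $v_p$ gives $\sum_\tau(\kappa_{j,\tau}(u)-\kappa_{i,\tau}(u)) = e_K(\nu_i-\nu_j-f_K)$, which is $< C'$ once $C' > e_K(\nu_i-\nu_j)$, contradicting $u \in X_{\cl,C'}^{\nc}$; the $G_I$ case is handled by the same pairing trick you describe. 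Hence $X_{\cl,C'}^{\nc}\cap U \ci X_{\cl}^{\varphi_{\reg}}$, and since $X_{\cl,C'}^{\nc}$ accumulates at $x_0$ by (RF\ref{axiom:accumulate}) and $C' \geq C$, the proposition follows immediately. This is the paper's argument; it sidesteps both the analyticity issue and any appeal to irreducible components.
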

\begin{proof}
Let $x \in X_{\cl}$. Since each character $\delta_i:K^\x \goto A^\x$ is continuous we may choose a neighborhood $U$ of $x$ so that the slopes $v_p(\delta_{i,u}(\varpi_K)) =: \nu_i$ are constant on $U$ for some (and hence any) uniformizer $\varpi_K$. Let 
\begin{equation*}
C' = \max\set{C, 1 + e_K(\nu_i-\nu_j) \st 1 \leq i,j \leq d}.
\end{equation*}
Since $C \leq C'$, $X_{\cl,C}^{\nc,\varphi_{\reg}} \cg X_{\cl,C'}^{\nc,\varphi_{\reg}}$. Thus it suffices to show, by (RF\ref{axiom:classicality}), that $X_{\cl,C'}^{\nc} \intersect U \ci X_{\cl,C'}^{\nc,\varphi_{\reg}}$. Let $u \in X_{\cl,C'}^{\nc} \intersect U$ and we will show $u$ is very $\varphi$-regular.
\begin{itemize}
\item Suppose $i < j$ and $\phi_i(u) = p^{f_K}\phi_j(u)$. Since $\phi_i(u) = \delta_{i,u}(\varpi_K)\prod_{\tau}\tau(\varpi_K)^{\kappa_{i,\tau}(u)}$ we can take $p$-adic valuations and get, since $u \in U$, that
\begin{equation*}
\sum_{\tau} \kappa_{j,\tau}(u) - \kappa_{i,\tau}(u) = e_K(\nu_i - \nu_j - f_K) < C'.
\end{equation*}
Thus $u \nin X_{\cl,C'}^{\nc}$, a contradiction.
\item If $\phi_1(u)\dotsb \phi_i(u)$ is not a simple eigenvalue on $D_{\cris}(\wedge^i D_u)$ then one may construct a list of pairs of integers $i_1 < j_1, \dotsc, i_s < j_s$ such that $\phi_{i_1}(u)\dotsb \phi_{i_s}(u) = \phi_{j_1}(u)\dotsb \phi_{j_s}(u)$ for some $s \geq 1$. Once again, taking slopes we get
\begin{equation*}
sC' < \sum_{b=1}^s \sum_{\tau \in \Sigma_K} \kappa_{j_b,\tau}(u) - \kappa_{i_b,\tau}(u) =  \sum_{b=1}^s \left(e_K(\nu_{i_b} - \nu_{j_b})\right) \leq sC',
\end{equation*}
a contradiction.
\end{itemize}
This concludes the proof.
\end{proof}

In the remainder of this subsection we show that point-by-point, a refined family of $(\varphi,\Gamma_K)$-modules may be triangulated (in an essentially unique way depending on the ordered parameter $(\delta_1,\dotsc,\delta_n)$). In particular, we show that a refined family is naturally a pointwise triangulated family (with no torsion). We begin by dealing with the axiom (TF\ref{axiom:tf-generic}).

\begin{lemma}\label{lemma:generic-cohomology}
If $x \in X_{\cl}^{\varphi_{\reg}}$ then there exists an open affinoid neighborhood $x \in U \ci X$ such that $\delta_{i,u}^{-1}\delta_{j,u}$ is generic up to homothety for all $u \in U(\bar \Q_p)$ and $i < j$.
\end{lemma}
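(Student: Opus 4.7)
My plan is to reformulate ``generic up to homothety'' as two non-equalities and verify them in a neighborhood of $x$ by a continuity/analyticity argument. From the explicit descriptions of $\hat T(L(u))^{\pm}$ in Section~\ref{subsec:galois-cohomology}, a character $\eta : K^\times \to L(u)^\times$ is generic up to homothety if and only if $\eta$ is neither of the form $\prod_{\tau \in \Sigma_K} z_\tau^{s_\tau}$ nor of the form $|N_{K/\Q_p}| \prod_{\tau} z_\tau^{s_\tau}$ for any integer tuple $(s_\tau) \in \Z^{\Sigma_K}$.

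I would first verify both non-equalities for $\eta = \delta_{i,x}^{-1}\delta_{j,x}$ at the point $x$ itself. In any purported equality $\delta_{i,x}^{-1}\delta_{j,x} = c \prod_{\tau} z_\tau^{s_\tau}$ with $c \in \{1, |N_{K/\Q_p}|\}$, matching Sen weights forces $s_\tau = \kappa_{i,\tau}(x) - \kappa_{j,\tau}(x)$. Evaluating at a uniformizer $\varpi_K$ and using the explicit formula for $\phi_i(x)$ from (RF\ref{axiom:crystalline}), one computes that the equality with $c = 1$ reduces to $\phi_i(x) = \phi_j(x)$, while the equality with $c = |N_{K/\Q_p}|$ reduces to $\phi_i(x) = p^{f_K}\phi_j(x)$. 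The former is prohibited by the distinctness of crystalline eigenvalues in (RF\ref{axiom:crystalline}); the latter, using $i < j$, by Definition~\ref{defi:very-phi-regular}(a).

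Next I would spread both non-equalities to an affinoid neighborhood $U$ of $x$. The key point is that $\kappa_{i,\tau} := \wt_\tau(\delta_i) \in A$ is a rigid-analytic function on $X$. Matching weights in any equality $\delta_{i,u}^{-1}\delta_{j,u} = c \prod_\tau z_\tau^{s_\tau}$ at a point $u$ forces $s_\tau = \kappa_{i,\tau}(u) - \kappa_{j,\tau}(u) \in \Z$. Shrinking to an affinoid neighborhood of $x$ on which $|\kappa_{i,\tau} - \kappa_{j,\tau} - (\kappa_{i,\tau}(x) - \kappa_{j,\tau}(x))|_{\sup} < 1$ for every $\tau$ leaves only one possible integer tuple, namely the one at $x$. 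With $(s_\tau)$ now pinned down, the residual conditions become closed analytic equalities in the value function $u \mapsto \delta_{i,u}^{-1}\delta_{j,u}(\varpi_K)$, which by the first step are violated at $x$ and hence on a (possibly smaller) affinoid neighborhood of $x$. Intersecting over the finitely many pairs $i < j$ yields the asserted $U$.

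The \emph{a priori} infinite family of integer tuples $(s_\tau)$ is the main obstacle. It is defused by the analyticity of the weight functions together with the integrality of $\kappa_{i,\tau}(x) - \kappa_{j,\tau}(x)$ (from the crystallinity hypothesis (RF\ref{axiom:characters-crystalline})), which localizes the homothety test to a single integer tuple near $x$. After this reduction the argument reduces to a standard open/closed dichotomy for values of analytic functions on a reduced rigid-analytic space.
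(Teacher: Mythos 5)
Your proof is correct and takes a genuinely different route from the paper's. The paper argues cohomologically: it fixes an integer tuple $(r_\tau)$, shows $\eta_x = (\delta_i^{-1}\delta_j\prod_\tau z_\tau^{-r_\tau})_x$ is generic at $x$ by the $\varphi$-regularity of $x$, and then uses the finite cohomology and base change framework (upper semi-continuity of $u \mapsto \dim H^i(\eta_u)$ via Proposition~\ref{prop:base-change-framework}) to shrink $X$ so that $H^0(\eta) = H^2(\eta) = 0$. Your argument instead reformulates ``generic up to homothety'' in terms of the two explicit forbidden shapes of $\hat T^{\pm}$, matches weights to pin down the only possible exponent tuple, and then uses elementary analyticity (a nonvanishing function on a reduced rigid space stays nonvanishing on an affinoid neighborhood). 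The two approaches reach the same conclusion, but yours is more elementary and more transparent, since it avoids the cohomological machinery entirely. It also explicitly handles a point the paper's write-up elides: the paper's cohomological argument, as written, shrinks $X$ for each \emph{fixed} tuple $(r_\tau)$, whereas the definition of ``generic up to homothety'' quantifies over \emph{all} tuples, and one cannot intersect infinitely many shrinkings. Your observation --- that after shrinking so $|\kappa_{i,\tau} - \kappa_{j,\tau} - (\kappa_{i,\tau}(x) - \kappa_{j,\tau}(x))|_{\sup} < 1$ only the tuple determined by the weights at $x$ can witness a homothety, which is an integer tuple because $\delta_{i,x}, \delta_{j,x}$ are crystalline by (RF\ref{axiom:characters-crystalline}) --- is exactly the observation needed to make a single-tuple argument suffice, and it is worth highlighting even if one prefers to finish with the cohomological shrinking. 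One small caveat: after fixing $(s_\tau)$, the equality $\delta_{i,u}^{-1}\delta_{j,u} = c\prod_\tau z_\tau^{s_\tau}$ is a condition on the full character, not just on its value at $\varpi_K$; but the value at $\varpi_K$ is a necessary consequence, and since that necessary condition already fails at $x$ (as your eigenvalue computation shows), the open/closed dichotomy goes through.
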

\begin{proof}
Let $(r_\tau)_{\tau}$ be any tuple of integers and set $\eta = \delta_{i}^{-1}\delta_{j}\prod_{\tau} z_\tau^{-r_\tau}$. We have to show $\eta_u$ is generic for $u$ near $x$. Choose a uniformizer $\varpi_K$ of $K^\x$. Then by (RF\ref{axiom:crystalline}),
\begin{equation*}
\eta_x = \unr_{\varpi_K}(\phi_j(x)\phi_i(x)^{-1})\prod_{\tau \in \Sigma_K} z_\tau^{\kappa_{i,\tau}(x)-\kappa_{j,\tau}(x)-r_\tau}.
\end{equation*}
If $\eta_x$ is not generic then a comparison of weights shows that that
\begin{equation*}
\unr_{\varpi_K}(\phi_j(x)\phi_i(x)^{-1}) \in \set{1, \abs{N_{K/\Q_p}(\varpi_K)}}.
\end{equation*}
However, since $x$ is very $\varphi$-regular and $i < j$ this is explicitly ruled out. Thus $\eta_x$ is generic.

To conclude over an affinoid neighborhood we make use of cohomology and base change arguments. Consider the functions $d_\eta^i(u) := \dim_{L(u)} H^i(\eta_u)$. Since $H^2(\eta)$ satisfies base change, $u \mapsto d^2_\eta(u)$ is upper semi-continuous on $X$. Since $d_\eta^2(u)$ vanishes at $u = x$, as we showed in the previous paragraph, we may shrink $X$ and assume that $H^2(\eta) = (0)$. By Proposition \ref{prop:base-change-framework}, $H^1(\eta)$ satisfies base change and $u\mapsto d^1_\eta(u)$ is upper semi-continuous. But $d^0_\eta(u)$ has a local minimum at $u = x$ and thus so does $d^1_\eta(u) = 1 + d^0_\eta(u)$. Thus after shrinking $X$ further (so that $d^1_\eta(u) \congruent 1$) we may assume that $H^1(\eta)$ is flat, and $H^0(\eta)$ satisfies base change. Finally, this implies that $H^0(\eta) = (0)$.
\end{proof}

As noted in (RF\ref{axiom:classicality}), axiom (RF\ref{axiom:crystalline}) and Proposition \ref{prop:refinements-triangulations} imply that for each $x_0 \in X_{\cl}$ there exists a triangulation $P_{x_0,\bullet}$ of $D_{x_0}$ whose parameter $(\twid \delta_{1,x_0},\dotsc,\twid \delta_{d,x_0})$ is homothetic to $(\delta_{1,x_0},\dotsc,\delta_{d,x_0})$. Moreover, if $x_0 \in X_{\cl}^{\varphi_{\reg}}$ then Lemma \ref{lemma:generic-cohomology} implies that $P_{x_0,\bullet}$ is the unique such triangulation of $D_{x_0}$. Thus we may unambiguously refer to the parameter $(\twid \delta_{1,x_0},\dotsc,\twid \delta_{d,x_0})$ for $x_0 \in X_{\cl}^{\varphi_{\reg}}$.

\begin{proposition}\label{prop:pointwise-existence}
If $x_0 \in X_{\cl}^{\varphi_{\reg}}$ then there exists an open affinoid neighborhood $x_0 \in U \ci X$ such that $\restrict{D}{U}$ is a pointwise triangulated family with torsion center $x_0$ (but without actual torsion) and given data
\begin{itemize}
\item torsion weights $(\wt_{\tau}(\twid \delta_{1,x_0}),\dotsc,\wt_{\tau}(\twid \delta_{d,x_0}))_{\tau}$,
\item parameter $(\delta_1,\dotsc,\delta_d)$,
\item non-critical points $X_{\cl}^{\nc,\varphi_{\reg}}$, and
\item center $x_0$.
\end{itemize}
\end{proposition}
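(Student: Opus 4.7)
The plan is to verify axioms (TF\ref{axiom:tf-weight})--(TF\ref{axiom:tf-accumulates}) with torsion length $i(x_0) = 0$, so that the ``standard triangulations'' demanded of every fiber reduce to ordinary triangulations of length $d$. The essential analytic input is the pointwise triangulation theorem of Kedlaya--Pottharst--Xiao and Liu; the remainder is bookkeeping against the definitions.

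First I would set up the densely pointwise strictly trianguline datum on $X$ itself. By (RF\ref{axiom:crystalline}) combined with Proposition \ref{prop:refinements-triangulations}, each $x \in X_{\cl}^{\nc}$ carries a triangulation of $D_x$ whose ordered parameter is \emph{literally} $(\delta_{1,x},\dotsc,\delta_{d,x})$ --- no homothety adjustment is needed, by Lemma \ref{lem:equivalent-noncritical} combined with the non-criticality. Since $X_{\cl}^{\nc}$ is Zariski dense by (RF\ref{axiom:classicality}), the triple $(D,(\delta_i), X_{\cl}^{\nc})$ is a densely pointwise strictly trianguline family on $X$. Applying \cite[Theorem 6.3.13]{KedlayaPottharstXiao-Finiteness} (equivalently \cite[Theorem 5.45]{Liu-Triangulations}) supplies an affinoid neighborhood $x_0 \in U' \ci X$ on which $\restrict{D}{U'}$ is pointwise strictly trianguline: for every $x \in U'(\bar \Q_p)$, there is a triangulation $P_{x,\bullet}$ of $D_x$ whose ordered parameter is term-by-term homothetic to $(\delta_{1,x},\dotsc,\delta_{d,x})$. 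This establishes (TF\ref{axiom:tf-triangulation}) with $i(x_0)=0$. I would then shrink $U'$ to $U$ via Lemma \ref{lemma:generic-cohomology} to obtain (TF\ref{axiom:tf-generic}); the pointwise genericity up to homothety on $U$ also guarantees that the triangulation at each $x \in U(\bar \Q_p)$ is essentially unique, and at $x_0$ its parameter is precisely $(\twid \delta_{1,x_0},\dotsc,\twid \delta_{d,x_0})$.

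The remaining axioms then fall into place. Axiom (TF\ref{axiom:tf-weight}) at $y \in X_{\cl}^{\nc,\varphi_{\reg}}$ is immediate from (RF\ref{axiom:RFwts}) and (RF\ref{axiom:crystalline}); at $x_0$ one combines this with the equality of multisets $\set{\wt_\tau(\twid\delta_{j,x_0})}_j = \set{\wt_\tau(\delta_{j,x_0})}_j$, both being the Hodge--Tate weights of $D_{x_0}$. For (TF\ref{axiom:tf-trix}), with $i(x_0)=0$ the torsion exponents and weights are vacuous, and the free parameter reads $(\delta_{j,x_0}\prod_\tau z_\tau^{\wt_\tau(\delta_{j,x_0})-\wt_\tau(\twid\delta_{j,x_0})})_j$; writing the homothety $\twid\delta_{j,x_0} = \delta_{j,x_0}\prod_\tau z_\tau^{r_{j,\tau}}$ and reading off weights gives $r_{j,\tau} = \wt_\tau(\delta_{j,x_0}) - \wt_\tau(\twid\delta_{j,x_0})$, so the $j$-th entry equals $\twid\delta_{j,x_0}$ as required. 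For (TF\ref{axiom:tf-triy}), the non-criticality of $y \in X_{\cl}^{\nc,\varphi_{\reg}}$ forces the triangulation parameter at $y$ to be exactly $(\delta_{1,y},\dotsc,\delta_{d,y})$ (again via Lemma \ref{lem:equivalent-noncritical}), and with $i(x_0)=0$ this is the entire free parameter. Finally, (TF\ref{axiom:tf-accumulates}) is exactly the accumulation statement proved for $X_{\cl,C}^{\nc,\varphi_{\reg}}$ in the proposition immediately preceding this one.

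I expect the only real obstacle to be the appeal to \cite[Theorem 6.3.13]{KedlayaPottharstXiao-Finiteness}; everything else amounts to unpacking (RF\ref{axiom:RFwts})--(RF\ref{axiom:accumulate}) against the definition of a pointwise triangulated family, using Proposition \ref{prop:refinements-triangulations} to translate between refinements and triangulations and Lemma \ref{lem:equivalent-noncritical} to read off parameters at non-critical points.
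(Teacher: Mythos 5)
Your proof takes essentially the same approach as the paper: reduce the statement to verifying axioms (TF1)--(TF6) with $i(x_0)=0$, using Proposition~\ref{prop:refinements-triangulations} and Lemma~\ref{lem:equivalent-noncritical} to confirm that the literal parameter is realized at non-critical points, Lemma~\ref{lemma:generic-cohomology} for (TF2), and \cite[Theorem 6.3.13]{KedlayaPottharstXiao-Finiteness} for (TF3). The only deviation from the paper is the order of operations: the paper shrinks $X$ via Lemma~\ref{lemma:generic-cohomology} to establish genericity up to homothety \emph{before} invoking KPX Theorem 6.3.13, and explicitly uses the resulting uniqueness of the pointwise triangulation when asserting the densely pointwise strictly trianguline property; you invoke KPX 6.3.13 first and then shrink. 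This is fine provided the ``densely pointwise strictly trianguline'' hypothesis of KPX 6.3.13 requires only that the literal parameter be realized at a Zariski-dense set of points (which non-criticality gives) rather than additionally the pointwise genericity of $\delta_{i}^{-1}\delta_j$ there; if you want to be defensive against that fine print in KPX's Definition 6.3.2, simply run Lemma~\ref{lemma:generic-cohomology} first, exactly as the paper does --- or replace $X_{\cl}^{\nc}$ by $X_{\cl}^{\nc,\varphi_{\reg}}$ in the first paragraph, where genericity up to homothety is built into the first part of Lemma~\ref{lemma:generic-cohomology}'s proof. Either fix is immediate and does not change the substance of your argument.
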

\begin{proof}
We've been given the data in the statement of the proposition and so our task is to verify the axioms (TF\ref{axiom:tf-weight}) -- (TF\ref{axiom:tf-accumulates}). The axiom (TF\ref{axiom:tf-weight}) is clear by (RF\ref{axiom:crystalline}) and the definition of the parameter $(\twid \delta_{1,x_0},\dotsc,\twid \delta_{d,x_0})$. Next, we may shrink $X$ so that if $i < j$ then $\delta_i^{-1}\delta_j$ is everywhere generic up to homothety by Lemma \ref{lemma:generic-cohomology}, giving (TF\ref{axiom:tf-generic}).

The axioms (TF\ref{axiom:tf-trix}) -- (TF\ref{axiom:tf-accumulates}) are easily verified by the remarks preceding the theorem, and (RF\ref{axiom:classicality}). Thus it remains to check (TF\ref{axiom:tf-triangulation}), i.e. that each point $x$ is triangulated by a triangulation whose parameter is homothetic to $(\delta_{1,x},\dotsc,\delta_{d,x})$. But if $u \in X_{\cl}^{\nc}$ then this is true at $u$ and, moreover, by definition of non-critical we have that $D_u$ has ordered parameter $(\delta_{1,u},\dotsc,\delta_{d,u})$ on the nose. As we've already verified, there is a unique triangulation of $D_u$ whose parameter is homothetic to $(\delta_{1,u},\dotsc,\delta_{d,u})$ up to homothety. Thus the $(\varphi,\Gamma_K)$-module $D$ over $X$ is densely pointwise strictly trianguline with ordered parameter $(\delta_1,\dotsc,\delta_d)$ in the sense of \cite[Definition 6.3.2]{KedlayaPottharstXiao-Finiteness}. The existence of the triangulation demanded by (TF\ref{axiom:tf-triangulation}) is deduced from  \cite[Theorem 6.3.13]{KedlayaPottharstXiao-Finiteness}.
\end{proof}

Continue to let $D$ be a refined family of $(\varphi,\Gamma_K)$-modules and let $x \in X_{\cl}^{\varphi_{\reg}}$. Using Proposition \ref{prop:pointwise-existence} we assume, by shrinking $X$, that for all $u \in X$, the $(\varphi,\Gamma_K)$-module $D_u$ has a unique triangulation whose parameter $(\twid \delta_{1,u},\dotsc,\twid \delta_{d,u})$ is homothetic to $(\delta_{1,u},\dotsc,\delta_{d,u})$.

\begin{definition}
Let $u \in X(\bar \Q_p)$. The canonical (with respect to the refined family $D$) triangulation is the unique triangulation of $D_u$ whose parameter  is homothetic to $(\delta_{1,u},\dotsc,\delta_{d,u})$.
\end{definition}

Note that $u \mapsto \twid \delta_{i,u}$ does not, in cases of interest, glue to define a continuous character $\twid \delta_i : K^\x \goto A^\x$. In fact that will essentially only happen at points $u$ where $\twid \delta_{i,u} = \delta_{i,u}$. The best one can hope for is that a certain subparabolization of the canonical triangulation does analytically vary over $X$.

\begin{definition}\label{defn:non-critical-two}
Let $x \in X(\bar \Q_p)$ and $P_{x,\bullet} = (P_{x,i})$ be the canonical triangulation. We say that $P_{x,i}$ is non-critical if $\delta_{1,x}\dotsb \delta_{i,x} = \twid \delta_{1,x}\dotsb \twid \delta_{i,x}$. If
\begin{equation*}
I^{\nc}_x := \set{i \st P_{x,i} \text{ is non-critical}} = \set{0 = i_0 < i_1 < i_2 < \dotsb < i_s = d}
\end{equation*}
is the set of non-critical indices then we define the maximal non-critical parabolization $P_{x,\bullet}^{\nc}$ of $D_x$ by 
\begin{equation*}
P_{x,j}^{\nc}: 0 \sci P_{x,i_0} \sci P_{x,i_1} \sci \dotsb \sci P_{x,i_s} = D_x.
\end{equation*}
\end{definition}
If $x \in X_{\cl}$ then a comparison of Hodge--Tate weights implies that the previous definition agrees with the one(s) given in Section \ref{subsect:critical-noncritical}.

Suppose that $x \in X_{\cl}$. If $x$ is non-critical then one knows that the $\tau$-Hodge--Tate weights of $P_{x,i}$ are $\set{\kappa_{1,\tau}(x),\dotsc,\kappa_{i,\tau}(x)}$ by Lemma \ref{lem:equivalent-noncritical} and the definition of non-critical. Thus, for general $x \in X_{\cl}$, Sen's theory of Hodge--Tate weights in families \cite{Sen-VariationHodgeStructure} implies that one can only hope for $P_{x,i}$ to extend to an affinoid neighborhood provided $\HT_\tau(P_{x,i}) = \set{\kappa_{1,\tau}(x),\dotsc,\kappa_{i,\tau}(x)}$ for each $\tau$. That is, if we hope to spread $P_{x,i}$ out over a neighborhood then we need to know {\em a priori} that $i \in I_{x}^{\nc}$. Our main theorem is that the converse is true.

\begin{theorem}\label{thm:main-theorem}
If $D$ is a refined family of $(\varphi,\Gamma_K)$-modules over $X$ and $x_0 \in X_{\cl}^{\varphi_{\reg}}$ then there exists an affinoid neighborhood $x_0 \in U \ci X$ and a parabolization $P^{\nc}$ of $\restrict{D}{U}$ such that for each $u \in U = \Sp(B)$, the parabolization $P^{\nc}_{\bullet}\tensor_{B} L(u)$ of $D_u$ is a subparabolization of $P^{\nc}_{u,\bullet}$, with equality if $u = x_0$.
\end{theorem}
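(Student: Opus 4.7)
The proof proceeds by induction on $s$, the length of $P^{\nc}_{x_0,\bullet}$. First I would invoke Proposition \ref{prop:pointwise-existence} to reduce to a small affinoid $U \ni x_0$ on which $\restrict{D}{U}$ is a nearly flat pointwise triangulated family of genuine $(\varphi,\Gamma_K)$-modules with torsion center $x_0$, parameter $(\delta_1,\dotsc,\delta_d)$, non-critical locus $X_{\cl}^{\nc,\varphi_{\reg}}$, and $i(x_0) = 0$. The inductive invariant to maintain is: at the $k$-th stage there exists a $(\varphi,\Gamma_K)$-submodule $P^{\nc}_k \ci \restrict{D}{U}$, which is an $\cal R_U$-direct summand, satisfying $P^{\nc}_{k,x_0} = P^{\nc}_{x_0,k}$, such that the quotient $Q_k := \restrict{D}{U}/P^{\nc}_k$ is itself a nearly flat pointwise triangulated family of genuine $(\varphi,\Gamma_K)$-modules with $i(x_0) = 0$, free length $d-i_k$, and parameter $(\delta_{i_k+1},\dotsc,\delta_d)$.

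For the inductive step $k \to k+1$, I would first iterate Theorem \ref{thm:make-torsion} exactly $i_{k+1}-i_k$ times, applied successively to $Q_k$, to produce a pointwise triangulated family $\twid Q_k$ of generalized $(\varphi,\Gamma_K)$-modules with $i(x_0) = i_{k+1}-i_k$, free length $d-i_{k+1}$, and parameter $(\delta_{i_k+1},\dotsc,\delta_d)$, now carrying torsion supported at $x_0$ (and at other critical points). Next, I would iterate Corollary \ref{cor:tau-balanced} the same number of times to successively kill the torsion, arriving at a nearly flat pointwise triangulated family $Q_{k+1}$ of genuine $(\varphi,\Gamma_K)$-modules satisfying the invariant for $k+1$. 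Setting $P^{\nc}_{k+1} := \ker(\restrict{D}{U} \surject Q_{k+1})$ gives the next parabolization step, and $P^{\nc}_{k+1,x_0} = P^{\nc}_{x_0,k+1}$ follows from uniqueness of the canonical triangulation of $D_{x_0}$ (Lemma \ref{lemma:generic-cohomology}) combined with Lemma \ref{lem:equivalent-noncritical}.

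The critical ingredient is verifying the $\tau$-balance hypothesis of Corollary \ref{cor:tau-balanced} at each of the $i_{k+1}-i_k$ applications. By (TF\ref{axiom:tf-trix}) and (TF\ref{axiom:tf-triy}), the $\tau$-torsion lengths of the relevant intermediate quotient at $x_0$ and at $y \in X_{\cl}^{\nc,\varphi_{\reg}}$ are computed from the multisets of $\tau$-weights of $(\twid\delta_{j,x_0})_{i_k<j\leq i_{k+1}}$ and $(\delta_{j,x_0})_{i_k<j\leq i_{k+1}}$ respectively, compared against an evolving reference weight. Non-criticality of both $i_k$ and $i_{k+1}$, applied via the equivalence (c) $\Leftrightarrow$ (d) of Lemma \ref{lem:equivalent-noncritical}, forces these two multisets of $\tau$-weights to coincide for every $\tau \in \Sigma_K$, and this agreement persists under the reindexing imposed by each application of Corollary \ref{cor:tau-balanced}.

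At the end of the induction, $P^{\nc}_s = \restrict{D}{U}$, and the filtration $0 = P^{\nc}_0 \sci P^{\nc}_1 \sci \dotsb \sci P^{\nc}_s = \restrict{D}{U}$ is the desired parabolization. For the subparabolization claim at arbitrary $u \in U$, the fiber $P^{\nc}_{k,u}$ is a saturated $(\varphi,\Gamma_K)$-submodule of $D_u$ whose parameter is homothetic to $(\delta_{1,u},\dotsc,\delta_{i_k,u})$, and uniqueness of the canonical triangulation (Lemma \ref{lemma:generic-cohomology}) identifies it with the step $P_{u,i_k}$ of $P_{u,\bullet}$; Definition \ref{defn:non-critical-two} then places $P_{u,i_k}$ inside $P^{\nc}_{u,\bullet}$. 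The hardest part will be the combinatorial bookkeeping in the iteration of Corollary \ref{cor:tau-balanced}: each application reindexes the parameter and removes $\tau$-dependent slots, so ensuring the $\tau$-balance hypothesis persists through all $i_{k+1}-i_k$ successive applications is the main technical obstacle.
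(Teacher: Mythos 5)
Your proposal follows essentially the same strategy as the paper's proof: reduce to a pointwise triangulated family via Proposition \ref{prop:pointwise-existence}, iterate Theorem \ref{thm:make-torsion} to build up torsion down to the first non-critical index, iterate Corollary \ref{cor:tau-balanced} to kill that torsion, take the kernel of the resulting surjection to define $P^{\nc}_1$, and recurse on the quotient. Your identification of the $\tau$-balance bookkeeping as the crux is also exactly where the paper expends its effort (the computation of $\ell_\tau(Q_{x_0,n})=n-1$ using that the weights of the non-critical step $P_{x_0,n}$ are the lowest $n$ weights for every $\tau$, which is equivalence (a)$\Leftrightarrow$(c) rather than (c)$\Leftrightarrow$(d) of Lemma \ref{lem:equivalent-noncritical}, though the distinction is immaterial).
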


See the introduction for a history of this result.

\begin{proof}[Proof of theorem]
First, assume that $X$ is sufficiently small so that the conclusion of Proposition \ref{prop:pointwise-existence} holds. In particular, there is a canonical triangulation $P_{x,\bullet}$ at each point $x \in X$.

The proof will happen in three steps. By Proposition \ref{prop:pointwise-existence}, $D$ is a pointwise triangulated family with center $x_0$ but without torsion. Fix the unique $1 \leq n \leq d$ such that $P_{x_0,1}^{\nc} = P_{x_0,n}$. We assume that $n < d$, or else the theorem is proven already. It suffices to construct $P^{\nc}_1$ over an affinoid subdomain $U$ as in the statement of the theorem. Indeed, granting its existence, $P^{\nc}_1$ is a $(\varphi,\Gamma_K)$-module of rank $n$ (since it has rank $n$ at $x_0$) and thus after replacing $X$ by $U$ we have $P^{\nc}_1 \tensor_A L(x) = P_{x,n}$ for all $x \in X$. But then the quotient $D/P^{\nc}_1$ is a refined family of $(\varphi,\Gamma_K)$-modules over $X$ whose global parameter is $(\delta_{n+1},\dotsc,\delta_d)$. And so inductively we can apply the construction of $P_1^{\nc}$ we are about to give, if necessary.

Now we focus on constructing $P^{\nc}_1$. For each $\tau$, fix the unique integer $n_\tau$ such that $n_\tau$ such that $\kappa_{1,\tau}(x_0) = \wt_\tau(\twid \delta_{n_\tau,\tau})$. Note $n_\tau \leq n$ because $P_{x_0,1}^{\nc} = P_{x_0,n}$ is non-critical. Let $Q_0 = D$ and $X_0 = X$. 

\begin{claim}[Step 1]
There exists a sequence of affiniod subdomains $X = X_0 \cg X_1 \cg \dotsb \cg X_n$ and nearly flat pointwise triangulated families $Q_i$ over $X_i$ with finite cohomology and torsion center $x_0$ such that there is an exact sequence
\begin{equation*}
0 \goto \prod_\tau t_\tau^{\kappa_{i+1,\tau}(x_0) - \kappa_{1,\tau}(x_0)} \cal R_{X_{i+1}}(\delta_{i+1}) \goto \restrict{Q_i}{X_{i+1}} \goto Q_{i+1} \goto 0,
\end{equation*}
of generalized $(\varphi,\Gamma_K)$-modules over $X_{i+1}$ and the invariants of $Q_i$ are given by
\begin{itemize}
\item torsion weights $\left((\wt_\tau(\twid \delta_{j,x_0}))_\tau\right)_{1 \leq j \leq d}$
\item parameter $(\delta_1,\dotsc,\delta_d)$ and
\item non-critical points $X_{i}^{\nc} = X_i \intersect X_{\cl}^{\nc}$.
\end{itemize}
Moreover, $Q_{i,x}$ has free length $d-i \geq d-n > 0$.
\end{claim}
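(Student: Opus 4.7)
The plan is to prove the claim by induction on $0 \leq i \leq n$, leveraging Theorem \ref{thm:make-torsion} at each step as the inductive engine. In effect, that theorem was engineered so that its conclusion reproduces its own hypothesis (with the torsion length shifted up by one), so iterating it $n$ times delivers the desired sequence $Q_0, Q_1, \dotsc, Q_n$.

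For the base case $i = 0$, I would take $Q_0 = D$ and $X_0 = X$. Because $D$ is a bona fide $(\varphi,\Gamma_K)$-module, it is automatically flat (hence nearly flat) and has finite cohomology by Theorem \ref{theorem:finiteness}(a). After shrinking $X$ as in Proposition \ref{prop:pointwise-existence}, one obtains exactly the data required: $Q_0$ is a pointwise triangulated family with torsion centered at $x_0$, torsion length $0$, free length $d$, parameter $(\delta_1,\dotsc,\delta_d)$, torsion weights $((\wt_\tau(\twid\delta_{j,x_0}))_\tau)_{1 \leq j \leq d}$, and non-critical points $X_{\cl}^{\nc,\varphi_{\reg}}$. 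Since $X_{\cl}^{\nc,\varphi_{\reg}} \ci X_0 \intersect X_{\cl}^{\nc}$, axiom (TF\ref{axiom:tf-accumulates}) continues to hold if one enlarges the set of non-critical points to $X_0 \intersect X_{\cl}^{\nc}$.

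For the inductive step, assuming $Q_i$ has been built over $X_i$ with torsion length $i$, free length $d - i$, and the stated invariants, the hypotheses of Theorem \ref{thm:make-torsion} are satisfied: nearly flat, pointwise triangulated centered at $x_0$, finite cohomology, and free length $d - i > 0$, which is valid because $i \leq n - 1 \leq d - 1$. I would then invoke the theorem to obtain an affinoid $X_{i+1} \ci X_i$ and the short exact sequence
\begin{equation*}
0 \goto \prod_\tau t_\tau^{\wt_\tau(\delta_{i+1,x_0}) - \wt_\tau(\delta_{1,x_0})}\cal R_{X_{i+1}}(\delta_{i+1}) \goto \restrict{Q_i}{X_{i+1}} \goto Q_{i+1} \goto 0,
\end{equation*}
whose exponent equals $\kappa_{i+1,\tau}(x_0) - \kappa_{1,\tau}(x_0)$ by the definition of the Hodge--Tate--Sen weights. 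The conclusion of Theorem \ref{thm:make-torsion} directly delivers the inductive hypothesis for $Q_{i+1}$: same torsion weights and parameter $(\delta_1,\dotsc,\delta_d)$, non-critical points $X_{i+1} \intersect X_{\cl}^{\nc}$, torsion length $i + 1$, and hence free length $d - (i+1) \geq d - n > 0$.

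There is essentially no genuine obstacle beyond verifying that the hypotheses of Theorem \ref{thm:make-torsion} are inherited from one stage to the next, and this is automatic since the theorem's output is tailored to be a valid input for its next invocation. All the substantive technical work --- the cohomology and base change framework of Section \ref{sec:finiteness}, construction of the candidate inclusion via Proposition \ref{lemm:stupid-lemma}, and the spreading out of injectivity via Lemma \ref{lemma:injective} --- has been absorbed into the proof of that theorem. The only bookkeeping to respect is that the parameter $(\delta_1,\dotsc,\delta_d)$ remains indexed over the original range $1$ through $d$ and does not shift with $i$, so that the character appearing at step $i$ is $\delta_{i+1}$ with weight difference measured against $\delta_1$ at $x_0$, matching the claim verbatim.
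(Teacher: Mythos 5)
Your proposal is correct and takes the same route as the paper, whose entire proof of Step 1 is the single sentence ``one easily argues by induction on $i$ using Theorem \ref{thm:make-torsion}.'' You have simply spelled out the base case via Proposition \ref{prop:pointwise-existence} and the inductive step via Theorem \ref{thm:make-torsion}, exactly as the paper intends.
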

\begin{proof}[Proof of Step 1]To prove the claim, one easily argues by induction on $i$ using Theorem \ref{thm:make-torsion}. 
\end{proof}

Since $n_\tau \leq n$ for each $\tau$ we see from the choice of torsion weights and the definition of $n_\tau$ that $Q_{x_0,n}$ has $\tau$-torsion length $\ell_\tau(Q_{x_0,n}) = n-1$. On the other hand, if $u \in X_{n}^{\nc}$ then $\ell_\tau(Q_{u,n}) = n-1$ as well. Thus the $\tau$-torsion lengths at $x_0$ agree with the $\tau$-torsion lengths on a set of accumulating at $x_0$. We will now kill the torsion. Let $Q_0' = Q_n$ and $X_0' = X_n$ as in Step 1.

\begin{claim}[Step 2] There exists a nested sequence of affinoid subdomains $X_n = X_1' \cg \dotsb \cg X_n'$ and nearly flat pointwise triangulated families $Q_i'$ over $X_i'$ with torsion center $x_0$ such that, for $2\leq i < n$, there is a short exact sequence
\begin{equation*}
0 \goto \restrict{Q_i'}{X_{i+1}'}\left[\prod_\tau t_\tau^{\kappa_{i+1,\tau}(x_0) - \kappa_{i,\tau}(x_0)}\right] \goto \restrict{Q_i'}{X_{i+1}'} \goto Q_{i+1}' \goto 0
\end{equation*}
of generalized $(\varphi,\Gamma_K)$-modules over $X_{i+1}'$ and:
\begin{itemize}
\item The invariants of $Q_{i}'$ are:
\begin{itemize}
\item torsion weights $\left( (\wt_\tau(\twid \delta_{j,x_0}))_\tau\right)_{1 \leq j \leq d \st\wt_\tau(\twid \delta_{j,x_0}) \geq \kappa_{i,\tau}(x_0)}$
\item parameter $(\delta_{i},\dotsc,\delta_d)$ and
\item non-critical points $(X_{i}')^{\nc} = X_{n}^{\nc} \intersect X_{i}'$.
\end{itemize}
\item $Q_i'$ has free length $d-n$, independent of $i$ and
\item For each $\tau$ and $u \in (X_{i}')_{\cl}^{\nc,\varphi_{\reg}}$, the $\tau$-torsion lengths $\ell_\tau(x_0)$ and $\ell_\tau(u)$ are equal (both) to $n-i$.
\end{itemize}
\end{claim}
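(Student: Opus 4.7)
The plan is to construct $Q_i'$ by iteratively applying Corollary \ref{cor:tau-balanced} to peel off one piece of torsion at a time, with Step 1 supplying the base case. The mechanics of each step are already packaged in the corollary; the work is entirely in checking that its hypotheses are met and that its output satisfies the invariants advertised in the claim.

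For the base case I would take $X_1' := X_n$ and $Q_1' := Q_n$, and verify that this has the invariants listed. Nearly flatness, finite cohomology, the parameter $(\delta_1,\dotsc,\delta_d)$, the non-critical points $X_n \cap X_{\cl}^{\nc}$, and the free length $d-n$ are all inherited from the last iteration of Step 1. The only genuinely new data to check is the equality of $\tau$-torsion lengths: using (TF\ref{axiom:tf-trix}), the $\tau$-torsion exponents of $Q_{n,x_0}$ are $\wt_\tau(\twid\delta_{j,x_0})-\kappa_{1,\tau}(x_0)$ for $1\leq j\leq n$, which vanishes at exactly one index (namely $j=n_\tau$), giving $\ell_\tau(Q_{n,x_0})=n-1$; using (TF\ref{axiom:tf-triy}), the $\tau$-torsion exponents of $Q_{n,u}$ for $u\in X_n^{\nc}$ are $\kappa_{j,\tau}(x_0)-\kappa_{1,\tau}(x_0)$ for $1\leq j\leq n$, again vanishing at exactly one index ($j=1$), giving $\ell_\tau(Q_{n,u})=n-1$ as well.

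For the inductive step I would apply Corollary \ref{cor:tau-balanced} to $Q_i'$. Its hypothesis is precisely the equality $\ell_\tau(Q_{i,x_0}')=\ell_\tau(Q_{i,u}')$ maintained by the induction, and part (a) of the corollary then identifies the unique index $j_\tau$ (within $Q_i'$'s numbering) whose torsion weight coincides with $\kappa_{i,\tau}(x_0)$, the least weight in the current $\tau$-list. Part (b) produces an affinoid neighborhood $X_{i+1}'\subseteq X_i'$ of $x_0$ together with the short exact sequence in the claim, and delivers $Q_{i+1}'$ as a nearly flat pointwise triangulated family of $(\varphi,\Gamma_K)$-modules with torsion centered at $x_0$. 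The corollary also describes the new invariants: the parameter loses its leading entry $\delta_i$, the torsion weights lose the $\tau$-weight $\kappa_{i,\tau}(x_0)$ (leaving exactly the weights $\geq\kappa_{i+1,\tau}(x_0)$), and the free length is preserved at $d-n$. A final application of (TF\ref{axiom:tf-trix}) and (TF\ref{axiom:tf-triy}) to $Q_{i+1}'$ confirms $\ell_\tau(Q_{i+1,x_0}')=\ell_\tau(Q_{i+1,u}')=n-i-1$ for any $u\in(X_{i+1}')_{\cl}^{\nc,\varphi_{\reg}}$, closing the induction.

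The main obstacle is not conceptual but bookkeeping: one must keep track of the shifting indexation of the parameter, torsion weights, and torsion positions as $i$ increases, and confirm that the arithmetic of the torsion exponents (governed in one case by the ordered HT weights of $D_{x_0}$ and in the other by the permuted weights of the canonical triangulation parameter $\twid\delta_{\bullet,x_0}$) at each iteration matches the requirements of Corollary \ref{cor:tau-balanced}. Because only finitely many ($n-1$) shrinkings of the base occur, the terminal $X_n'$ remains an affinoid neighborhood of $x_0$, so the induction yields the full claim.
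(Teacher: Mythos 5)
Your proposal matches the paper's argument essentially line for line: base case $Q_1' = Q_n$ with the $\tau$-torsion length count via $n_\tau\leq n$, then an induction driven by Corollary \ref{cor:tau-balanced}, closing each step by re-verifying the equality of $\tau$-torsion lengths at $x_0$ and on $(X_{i}')^{\nc,\varphi_{\reg}}_{\cl}$. The only thing the paper makes explicit that you leave implicit in the inductive step is the reason the count at $x_0$ comes out to $n-i$, namely that $P_{x_0,n}$ being a non-critical step forces $\{\wt_\tau(\twid\delta_{1,x_0}),\dotsc,\wt_\tau(\twid\delta_{n,x_0})\}$ to be exactly the lowest $n$ weights $\{\kappa_{1,\tau}(x_0),\dotsc,\kappa_{n,\tau}(x_0)\}$; you do invoke this correctly in your base case (via $n_\tau\leq n$), so the same observation would finish the induction.
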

\begin{proof}[Proof of Step 2]
The claimed properties for $Q_1' = Q_n$ follow from the conclusion of Step 1. If $1 < i \leq n$ then the existence of $Q_i'$ over $X_i'$, with the given invariants, is proved by Corollary \ref{cor:tau-balanced}. The $\tau$-torsion lengths at $u \in (X_i')^{\nc,\varphi_{\reg}}_{\cl}$ are easily seen to be $n-i$, so to finish this step we just need to compute the $\tau$-torsion length at $x_0$. To do that we look at the non-zero $\tau$-torsion exponents in the standard triangulation of $Q_{i}'$. The torsion exponents, are by definition, given by
\begin{equation}\label{eqn:step2-torsion-wts}
\set{\wt_\tau(\twid\delta_{j,x_0}) - \wt_\tau(\delta_{i,x_0}) \st 1 \leq j \leq n \text{ and } \wt_\tau(\twid\delta_{j,x_0}) \geq \wt_\tau(\delta_{i,x_0}) }.
\end{equation}
Since $P_{x_0,n}$ is a non-critical step in the canonical triangulation at $x_0$, the set of weights $\set{\wt_\tau(\twid\delta_{1,x_0}),\dotsc,\wt_\tau(\twid\delta_{n,x_0})}$ are the lowest $n$ weights and so we see immediately that there are $\ell_\tau(x_0) = n-i$ non-zero elements in the set \eqref{eqn:step2-torsion-wts}.
\end{proof}

\begin{claim}[Step 3]
Finally we set $C_n = Q_n'$ and $U = X_n' = \Sp(B)$ as in Step 2. We claim that we can shrink $U$ so that $P^{\nc}_1 := \ker(D \surject C_n)$ is a $(\varphi,\Gamma_K)$-module over $U$ and for all $u \in U$,  $P^{\nc}_1\tensor_B L(u) = P_{u,n}$.
\end{claim}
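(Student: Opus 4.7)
The plan is first to show, after shrinking $U$ if necessary, that $C_n = Q_n'$ is a bona fide $(\varphi,\Gamma_K)$-module of rank $d-n$ over $U$. Granting this, the kernel $P^{\nc}_1 := \ker(\restrict{D}{U} \surject C_n)$ is automatically a saturated $(\varphi,\Gamma_K)$-submodule of rank $n$, locally a direct summand of $\restrict{D}{U}$, and the remaining task is to identify its fibers with $P_{u,n}$.

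For fiberwise torsion-freeness of $C_n$, I would use the conclusion of Step 2 applied with $i = n$: the $\tau$-torsion length $\ell_\tau(C_{n,u})$ vanishes both at $u = x_0$ and at every $u \in (X_n')^{\nc,\varphi_{\reg}}_{\cl}$, for each $\tau \in \Sigma_K$. Since $C_{n,u}/t_\tau$ is free over $\cal R_{L(u)}/t_\tau$ of rank $(d-n) + \ell_\tau(C_{n,u})$, Lemma \ref{lemma:ranks-by-t} lets me shrink $U$ so that this rank is constantly $d-n$; running the argument once for each $\tau$ forces $\ell_\tau(C_{n,u}) = 0$ for every $u$ and every $\tau$, and hence each fiber $C_{n,u}$ is torsion-free of rank $d-n$ by Lemma \ref{lemma:torsion-free}. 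To upgrade this fiberwise torsion-freeness to a $(\varphi,\Gamma_K)$-module structure on $C_n$ over $U$, I would adapt the closed-annulus argument from the proof of Lemma \ref{lemma:killing-torsion}: spread $C_n$ to $C_n^{r_0}$ over $X^{r_0}$, and combine the constancy of $\rank_{\cal R^{[s,r_0]}_{L(u)}/t_\tau} C_{n,u}^{[s,r_0]}/t_\tau$ with \cite[Lemma 2.1.8(2)]{KedlayaPottharstXiao-Finiteness} (applied to the completed tensor product $\cal R^{[s,r_0]}/t_\tau \hat\tensor A$ of reduced affinoids) and Lemma \ref{lemma:flat} to deduce that $C_n$ is finitely presented and projective over $\cal R_U$.

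With $C_n$ projective of rank $d - n$, the short exact sequence $0 \to P_1^{\nc} \to \restrict{D}{U} \to C_n \to 0$ locally splits, exhibiting $P_1^{\nc}$ as a rank-$n$ $(\varphi,\Gamma_K)$-module. For the fiber identification, observe that the pointwise triangulated family structure on $C_n$ has free parameter $(\delta_{n+1},\dotsc,\delta_d)$, so each fiber $C_{n,u}$ admits a triangulation with parameter homothetic to $(\delta_{n+1,u},\dotsc,\delta_{d,u})$; in particular, $\HT_\tau(C_{n,u}) = \set{\kappa_{n+1,\tau}(u),\dotsc,\kappa_{d,\tau}(u)}$, and taking complements in $\HT_\tau(D_u)$ yields $\HT_\tau(P_{1,u}^{\nc}) = \set{\kappa_{1,\tau}(u),\dotsc,\kappa_{n,\tau}(u)}$. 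By Lemma \ref{lem:equivalent-noncritical}, $P_{1,u}^{\nc}$ is then non-critical of rank $n$, and any triangulation of it has parameter homothetic to $(\delta_{1,u},\dotsc,\delta_{n,u})$ by a weight count. Concatenating such a triangulation with the triangulation of $C_{n,u}$ produces a triangulation of $D_u$ with parameter homothetic to $(\delta_{1,u},\dotsc,\delta_{d,u})$, which must coincide with the canonical triangulation $P_{u,\bullet}$ by the uniqueness statement in Proposition \ref{prop:pointwise-existence}; hence $P_{1,u}^{\nc} = P_{u,n}$ for every $u \in U$. The main obstacle I expect is the projectivity of $C_n$ over $\cal R_U$ in the second paragraph, since fiberwise torsion-freeness at the points of $U$ does not {\em a priori} imply freeness at the points of the relative annulus $X^{[s,r_0]}$ outside of $U$, and requires the careful closed-annulus reduction and flatness criteria from \cite{KedlayaPottharstXiao-Finiteness}.
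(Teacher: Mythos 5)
Your first paragraph (establishing that $C_n$ is a bona fide $(\varphi,\Gamma_K)$-module of rank $d-n$ after shrinking $U$) is in line with the paper. Both proofs use Lemma \ref{lemma:ranks-by-t} to make the ranks constant; the paper then quotes \cite[Corollary 2.1.7]{KedlayaPottharstXiao-Finiteness} directly while you route through Lemma \ref{lemma:flat} and \cite[Lemma 2.1.8(2)]{KedlayaPottharstXiao-Finiteness} via the closed-annulus reduction. That is a legitimate alternative chaining of the same KPX technology, and the obstacle you flag (fiberwise freeness over $\cal R_{L(u)}$ for $u \in U$ does not immediately give freeness over the fibers of the whole relative annulus) is exactly what the uniformly-finitely-presented machinery handles.

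The gap is in the fiber identification $P_1^{\nc}\tensor_B L(u) = P_{u,n}$. You assert that any triangulation of $P_{1,u}^{\nc}$ ``has parameter homothetic to $(\delta_{1,u},\dotsc,\delta_{n,u})$ by a weight count,'' and then concatenate with the triangulation of $C_{n,u}$ and invoke the uniqueness from Proposition \ref{prop:pointwise-existence}. This does not work as stated. A weight count controls only the Hodge--Tate weights of the parameter, not the characters themselves: two characters with identical $\tau$-weights can differ by an arbitrary smooth part, so the homothety class of $(\mu_1,\dotsc,\mu_n)$ is not pinned down by $\HT_\tau(P_{1,u}^{\nc})$. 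Moreover, the invocation of Lemma \ref{lem:equivalent-noncritical} is restricted to crystalline $D_u$, i.e. to $u \in X_{\cl}$, whereas the claim concerns every $u \in U$; at a non-classical $u$ it is not even immediate that the saturated submodule $P_{1,u}^{\nc}$ is trianguline. The correct bridge, which the paper leaves implicit, uses axiom (TF\ref{axiom:tf-generic}) in the opposite direction: once $C_{n,u}$ is triangulated with parameter homothetic to $(\delta_{n+1,u},\dotsc,\delta_{d,u})$, genericity up to homothety of $\delta_{j,u}^{-1}\delta_{k,u}$ for $j \leq n < k$ forces $\Hom(P_{u,n},C_{n,u}) = 0$ by dévissage (the parameter of $P_{u,n}$ is homothetic term-by-term to $(\delta_{1,u},\dotsc,\delta_{n,u})$). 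Hence the composite $P_{u,n} \inject D_u \surject C_{n,u}$ vanishes, so $P_{u,n} \ci P_{1,u}^{\nc}$, and since both are saturated of the same rank they are equal. This avoids any claim about triangulating $P_{1,u}^{\nc}$ and works at all $u \in U$.
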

\begin{proof}[Proof of Step 3]
Consider the pointwise triangulated family $C_n=Q_n'$ over $U = X_n'$. By Step 2, $C_n$ is torsion free at $x = x_0$ or $x = u \in U^{\nc}$. Thus after shrinking $U$, applying Lemma \ref{lemma:ranks-by-t}, we can assume that each fiber $C_{n,u}$ is finite free over $\cal R_{L(u)}$ of rank $d-n$, independent of $u$. Thus $C_n$ is a $(\varphi,\Gamma_K)$-module over $U$ (spread $C_n$ out to an open annulus and use \cite[Corollary 2.1.7]{KedlayaPottharstXiao-Finiteness}). Defining $P_1^{\nc}$ as the kernel of the natural surjection $D \surject C_n$ (note that all the constructions in Steps 1 and 2 were quotients), we get a $(\varphi,\Gamma_K)$-module and thus our candidate $P_1^{\nc}$. 

It remains to compute $P_1^{\nc}\tensor_B L(u)$ as a $(\varphi,\Gamma_K)$-submodule of $D_u$ for each $u$. But we've assumed throughout that $X$ was sufficiently small so that for all $x \in X$ there was a unique triangulation with parameter $(\delta_{1,x},\dotsc,\delta_{d,x})$ up to homothety. So, in order to check $P_1^{\nc}\tensor_B L(u) = P_{u,n}$ it is enough to show that $C_{n,u}$ can be triangulated by a parameter homothetic to $(\delta_{n+1,u},\dotsc,\delta_{d,u})$ for all $u \in U$. But  that latter claim follows from applying the information from Step 2 to $C_n = Q_n'$ and using axiom (TF\ref{axiom:tf-generic}).
\end{proof}
This completes the proof.
\end{proof}

\section{Ramification of weights}\label{sec:ramification}
Let $D$ be a refined family of $(\varphi,\Gamma_K)$-modules over a reduced $L$-affinoid space $X=\Sp(A)$ with parameter $(\delta_1,\dotsc,\delta_d)$ and classical points $X_{\cl}$. For each $i=1,\dotsc,d$ and $\tau \in \Sigma_K$, we consider the analytic functions $\kappa_{i,\tau}(x) := \wt_\tau(\delta_{i,x}) \in \Gamma(X,\cal O)$. The goal of this section is to study the infinitesimal differences $\kappa_{i,\tau} - \kappa_{j,\tau}$ at classical points.

Suppose that $x_0 \in X_{\cl}$, write $P_{x_0,\bullet}$ for its triangulation defined by axiom (RF\ref{axiom:crystalline}) and $(\twid \delta_{1,x_0},\dotsc,\twid \delta_{d,x_0})$ for the corresponding parameter. For each $\tau$, the list of Hodge--Tate weights $\set{\wt_\tau(\twid \delta_{1,x_0}),\dotsc,\wt_\tau(\twid \delta_{d,x_0})}$ must be the same as the list of integers $\set{\kappa_{1,\tau}(x_0),\dotsc,\kappa_{d,\tau}(x_0)}$. In particular, for each $\tau$ there is a permutation $\pi_{x_0,\tau}$ on $d$ letters such that $\wt_\tau(\twid \delta_{\pi_{x_0,\tau}(i),x_0}) = \kappa_{i,\tau}(x_0)$. To connect this with the non-critical jumps, the $\tau$-non-critical indices of $P_{x_0,\bullet}$ are exactly the integers $i$ such that $\pi_{x_0,\tau}$ restricts to a permutation on the set $\set{1,\dotsc,i}$. In particular, $\pi_{x_0,\tau}$ induces a permutation on the set of weights appearing in each non-critical step $P_{x_0,j}^{\nc}$ and thus permutes the set of weights appearing in each quotient $P_{x_0,j}^{\nc}/P_{x_0,j-1}^{\nc}$ as well.

If $B$ is a ring, let $B[\varepsilon] = B[T]/(T^2)$ be the ring of dual numbers. If $x_0 \in X$ then $\cal O^{\rig}_{X,x_0}$ denotes its local ring (in the rigid topology). Since $\cal O^{\rig}_{X,x_0}$ is Henselian, it contains a section of its residue field. We write $T_{x_0}X = \Hom_{L(x_0)}(\cal O^{\rig}_{X,x_0},L(x_0)[\varepsilon])$ for the Zariski tangent space at $x_0$. If $f \in A$ is a function on $X$ and $v \in T_{x_0}X$ is a tangent vector we write $\nabla_{v}(f) \in L(x_0)$ for the directional derivative of $f$ with respect to $v$. Explicitly it is given by $v(f) = f(x_0) + \nabla_v(f)\varepsilon$.

\begin{theorem}\label{theorem:ramification}
If $x_0 \in X_{\cl}^{\varphi_{\reg}}$ and $v \in T_{x_0}X$ then $\nabla_{v}( \kappa_{\pi_{x_0,\tau}(i),\tau} - \kappa_{i,\tau}) = 0$ for each $1 \leq i \leq d$ and $\tau \in \Sigma_K$.
\end{theorem}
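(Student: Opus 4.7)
The strategy follows Breuil's approach in Théorème 9.7 of \cite{Breuil-LocallyAnalyticSocle2}: interpolate crystalline periods in the wedge powers $\wedge^i D$ using Liu, then extract an identity on infinitesimal weight variation along each tangent direction at $x_0$. After shrinking $U$, I would apply Liu's Theorem 5.45 (which applies because $\phi_1(x_0)\cdots\phi_i(x_0)$ is a simple $\varphi^{f_K}$-eigenvalue on $D_{\cris}(\wedge^i D_{x_0})$ by very $\varphi$-regularity) to the weakly refined families $\wedge^i D$ for $1\leq i\leq d$, producing $(\varphi,\Gamma_K)$-equivariant morphisms $\iota_i:\cal R_U(\delta_1\cdots\delta_i)\to \wedge^i D$ whose specializations at $x_0$ embed into the unique saturated rank-one sub $\wedge^i P_{x_0,i}\simeq\cal R_{L(x_0)}(\twid\delta_{1,x_0}\cdots\twid\delta_{i,x_0})$ as multiplication by $\prod_\tau t_\tau^{n_{i,\tau}}$ up to a unit, where
\begin{equation*}
n_{i,\tau}:=\sum_{k=1}^i\wt_\tau(\twid\delta_{k,x_0})-\sum_{k=1}^i\kappa_{k,\tau}(x_0)=\sum_{k\in\pi_{x_0,\tau}^{-1}(\{1,\ldots,i\})}\kappa_{k,\tau}(x_0)-\sum_{k=1}^i\kappa_{k,\tau}(x_0)\geq 0.
\end{equation*}

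Given $v\in T_{x_0}X$, I would pass to the deformation $v^*\wedge^i D$ over $L(x_0)[\varepsilon]$. Simplicity of $\phi_1(x_0)\cdots\phi_i(x_0)$ lifts canonically to a simple $\varphi^{f_K}$-eigenvalue in the deformation, producing a rank-one $(\varphi,\Gamma_K)$-submodule $N_{i,v}\subset v^*\wedge^i D$ lifting $\wedge^i P_{x_0,i}$ and containing the image of $v^*\iota_i$. A $(\varphi,\Gamma_K)$-equivariance analysis in the spirit of Proposition \ref{prop:rank-one-subs}, applied to the rank-one inclusion $\mathrm{im}(v^*\iota_i)\subset N_{i,v}$, pins this inclusion down to multiplication by $\prod_\tau t_\tau^{n_{i,\tau}}$ up to an $\cal R_{L(x_0)[\varepsilon]}$-unit. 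Computing the $\tau$-weight of $N_{i,v}$ two ways---as the weight of the image shifted by the saturation exponents $n_{i,\tau}$, and as a root of the Sen polynomial of $v^*\wedge^i D$ over $L(x_0)[\varepsilon]$ (whose roots are $v^*(\sum_{k\in S}\kappa_{k,\tau})$ by \cite{Sen-VariationHodgeStructure})---yields the identity
\begin{equation*}
v^*\!\left(\sum_{k=1}^i\kappa_{k,\tau}\right)+n_{i,\tau}\equiv v^*\!\left(\sum_{k\in\pi_{x_0,\tau}^{-1}(\{1,\ldots,i\})}\kappa_{k,\tau}\right)\pmod{\varepsilon^2}.
\end{equation*}
Applying $\nabla_v$ gives $\sum_{k=1}^i\nabla_v\kappa_{k,\tau}=\sum_{k\in\pi_{x_0,\tau}^{-1}(\{1,\ldots,i\})}\nabla_v\kappa_{k,\tau}$, and taking successive differences for $i$ versus $i-1$ yields $\nabla_v\kappa_{i,\tau}=\nabla_v\kappa_{\pi_{x_0,\tau}^{-1}(i),\tau}$, which upon substituting $i\mapsto\pi_{x_0,\tau}(i)$ is the desired $\nabla_v(\kappa_{\pi_{x_0,\tau}(i),\tau}-\kappa_{i,\tau})=0$.

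The main obstacle is the infinitesimal rigidity in the second paragraph: justifying that the inclusion $\mathrm{im}(v^*\iota_i)\subset N_{i,v}$ over $L(x_0)[\varepsilon]$ is genuinely given by $\prod_\tau t_\tau^{n_{i,\tau}}$ up to a unit, so that the $\tau$-weight of $N_{i,v}$ is the naive lift $v^*(\sum_{k=1}^i\kappa_{k,\tau})+n_{i,\tau}$ rather than an exotic deformation altering it at first order. This requires an analog of Proposition \ref{prop:rank-one-subs} over the Artinian base $L(x_0)[\varepsilon]$ together with deformation-theoretic control of the $(\varphi,\Gamma_K)$-eigenstructure, modeled on the computations of \cite[Proposition 2.4]{Bergdall-CompanionPoints}; potential coincidences among the Sen weights $\sum_{k\in S}\kappa_{k,\tau}(x_0)$ at $x_0$ are resolved by the canonical origin of $N_{i,v}$ in the simple $\varphi^{f_K}$-eigenvalue, which pins down the correct Hensel lift.
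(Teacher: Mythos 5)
Your high-level plan (Liu's crystalline-period interpolation in $\wedge^i D$, pass to the tangent direction $v$, read the relation off the Sen operator) is the same strategy as the paper's, but the crucial technical step is different, and the step you flag as "the main obstacle" is a genuine gap that is not plausibly filled by an analog of Proposition \ref{prop:rank-one-subs} over $L(x_0)[\varepsilon]$.

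Your plan requires a rank-one $(\varphi,\Gamma_K)$-submodule $N_{i,v}\subset v^{*}\wedge^{i}D$ over $\cal R_{L(x_0)[\varepsilon]}$ that lifts $\wedge^{i}P_{x_0,i}$, together with the assertion that $\mathrm{im}(v^{*}\iota_i)\subset N_{i,v}$ is multiplication by $\prod_\tau t_\tau^{n_{i,\tau}}$ up to a unit. Two problems. First, existence: a lift of the crystalline period $\nu\in D_{\cris}(v^{*}\wedge^i D)^{\varphi^{f_K}=\widetilde\Phi}$ is not the same as a lift of the sub-object $\wedge^iP_{x_0,i}$. The period only gives a $(\varphi,\Gamma_K)$-equivariant map $\cal R_{L(x_0)[\varepsilon]}(\cdot)\to (v^{*}\wedge^i D)[1/t]$; the obstruction to lifting the saturated submodule lives in $\Ext^1(\wedge^iP_{x_0,i}, \wedge^iD_{x_0}/\wedge^iP_{x_0,i})$ and need not vanish for a general $v$. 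Indeed, the possibility that $\wedge^iP_{x_0,i}$ does \emph{not} vary rigid-analytically in $v$ is exactly the phenomenon the theorem is about, so assuming the lift exists is close to circular. Second, even granting $N_{i,v}$, "saturation" and divisibility by $\prod t_\tau^{n_{i,\tau}}$ are not controlled over $\cal R_{L(x_0)[\varepsilon]}$: the latter is not a domain, so Proposition \ref{prop:rank-one-subs} has no direct analogue, and the amount of $t_\tau$-divisibility in the $\varepsilon$-part can a priori drift from that in the reduction.

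The paper sidesteps both issues by never attempting to lift the rank-one sub-object. Instead it sets $D_i=(\wedge^iD)(\eta_i^{-1})$, views the deformation $\widetilde D_{i,v}$ as a $(\varphi,\Gamma_K)$-module over $\cal R_{L(x_0)}$ of rank $2\binom{d}{i}$, and forms the kernel $E=\ker\bigl(\widetilde D_{i,v}\to D_{i,x_0}/\eta_{i,x_0}^{-1}\det P_{x_0,i}\bigr)$, a $(\varphi,\Gamma_K)$-module over $\cal R_{L(x_0)}$ of rank $\binom{d}{i}+1$. The substitute for your $N_{i,v}$-rigidity is Lemma \ref{lemma:ramification-lemma}: by counting crystalline periods (using simplicity of $\Phi$ and Liu's base-change for $D_{\cris}(-)^{\varphi^{f_K}=\Phi}$), the author shows $E$ is crystalline. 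Crystalline implies Hodge--Tate, so the Sen operator $\Theta_{\sen}$ acts semisimply on $D_{\sen}(E)$, and the surviving $2\times 2$ Jordan block corresponding to $J=\{\pi_{x_0,\tau}(1),\dotsc,\pi_{x_0,\tau}(i)\}$ must therefore have vanishing off-diagonal term $\sum_{j\leq i}\nabla_v(\kappa_{\pi_{x_0,\tau}(j),\tau}-\kappa_{j,\tau})$. The final step of taking successive differences in $i$ is the same as yours and is fine.
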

As mentioned in the introduction, the theorem was discovered independently by the author and Breuil. A proof, similar to the one we are about to give, is given in \cite[Lemme 9.6 and Th\'eor\`eme 9.7]{Breuil-LocallyAnalyticSocle2}. We've taken an extra effort to state a more precise result, dealing with all the weights and over a finite extension $K/\Q_p$.
\begin{proof}[Proof of Theorem \ref{theorem:ramification}]

Choose a uniformizer $\varpi_K \in K^\x$ and write, for $i = 1 ,\dotsc,d$, the character $\delta_i = \delta_i^{\wt}\delta_{i}^{\nr}$ as in Section \ref{subsec:rank-one}. Furthermore, write $\eta_i = \delta_1^{\wt}\dotsb \delta_i^{\wt}$ and $D_i := (\wedge^i D)(\eta_i^{-1})$. Then $D_i$ is a $(\varphi,\Gamma_K)$-module over $X$ and it has distinct lowest Hodge--Tate weight $0$ over $X_{\cl}$. Let $\Phi = \delta_1^{\nr}(\varpi_K)\dotsb \delta_i^{\nr}(\varpi_K) \in \Gamma(X, \cal O_X)^\x$. Then by \cite[Theorem 4.13]{Liu-Triangulations} the $(\cal O_X\tensor_{\Q_p} F)$-module $D_{\cris}(D_i)^{\varphi^{f_K} = \Phi}$ is locally free of rank one and satisfies base change.

Now let $v \in T_{x_0}X$. Write $\twid D_{i,v} := \nabla_v(D_i)$ for the deformation $D_i\tensor_{\cal O_{X,x_0},v} L(x_0)[\varepsilon]$ of $D_{i,x_0}$ in the tangent direction of $v$. By the result just mentioned, $D_{\cris}(\twid D_{i,v})^{\varphi^{f_K} = \Phi}$ is free of rank one over $(L(x_0)\tensor_{\Q_p} F)[\varepsilon]$. Let $E = \ker(\twid D_{i,v} \goto D_{i,x_0} \goto D_{i,x_0}/\eta_{i,x_0}^{-1}\det P_{x_0,i})$. By the lemma below we have that $E$ is a crystalline $(\varphi,\Gamma_K)$-module. In particular, it is Hodge--Tate.

But what are the Hodge--Tate weights of $E$? The $\tau$-Hodge--Tate weights of $D_{i,x_0}$ are given by
\begin{equation*}
\HT_\tau(D_{i,x_0}) = \set{ \sum_{j \in J} \kappa_{j,\tau}(x_0) - \sum_{j=1}^i \kappa_{j,\tau}(x_0) \st J \ci \set{1,\dotsc,d} \text{ and $\sizeof J = i$}}.
\end{equation*}
And, the $\tau$-Hodge--Tate--Sen weights of $\twid D_{i,v} = \wedge^i (\twid D_{1,v})$ (which are elements of $L(x)[\varepsilon]$) then are given by
\begin{equation*}
\set{ \left(\sum_{j \in J} \kappa_{j,\tau}(x_0)+\nabla_v(\kappa_{j,\tau})\varepsilon\right)  - \left(\sum_{j=1}^i \kappa_{j,\tau}(x_0)+\nabla_v(\kappa_{j,\tau})\varepsilon\right) \st J \ci \set{1,\dotsc,d} \text{ and $\sizeof J = i$}}.
\end{equation*}
We can reinterpret this by viewing $\twid D_{i,v}$ as a $(\varphi,\Gamma_K)$-module over $\cal R_{L(x_0)}$ of twice the rank. We see that the Sen operator $\Theta_{\sen}$ acting on $D_{\sen}(\twid D_{i,v})$ has a matrix built out of the blocks of the form
\begin{equation*}
M_J := \begin{pmatrix}
\sum_{j \in J} \kappa_{j,\tau}(x_0) - \sum_{j=1}^i \kappa_{j,\tau}(x_0) & \sum_{j \in J} \nabla_v(\kappa_{j,\tau}) - \sum_{j=1}^i \nabla_v(\kappa_{j,\tau})\\ 0 & \sum_{j \in J} \kappa_{j,\tau}(x_0) - \sum_{j=1}^i \kappa_{j,\tau}(x_0)
\end{pmatrix} \in \oper{Mat}_{2\x 2}(L(x_0)).
\end{equation*}
Now note that $\det P_{x_0,i}$ has Hodge--Tate weight $\kappa_{\pi_{x_0,\tau}(1)}(x_0) + \dotsb + \kappa_{\pi_{x_0,\tau}(i)}(x_0)$ and thus $\eta_{i,x_0}^{-1}\det P_{x_0,i}$ has Hodge--Tate weight $\sum_{j=1}^i \kappa_{\pi_{{x_0},\tau}(j),\tau}(x_0) - \kappa_{j,\tau}(x_0)$. By the short exact sequence
\begin{equation*}
0 \goto E \goto \twid D_{i,v} \goto D_{i,v}/\eta_{i,x_0}^{-1} \det P_{x_0,i} \goto 0
\end{equation*}
of $(\varphi,\Gamma_K)$-modules over $\cal R_{L(x_0)}$, the only $2\times 2$ block of $\restrict{\Theta_{\sen}}{D_{\sen}(\twid D_{i,v})}$ which appears in $\restrict{\Theta_{\sen}}{D_{\sen}(E)}$ is the block
\begin{equation*}
\begin{pmatrix}
\sum_{j=1}^i \kappa_{\pi_{{x_0},\tau}(j),\tau}(x_0) - \kappa_{j,\tau}(x_0) & \sum_{j=1}^i \nabla_v(\kappa_{\pi_{{x_0},\tau}(j),\tau} - \kappa_{j,\tau})\\ 0 & \sum_{j=1}^i \kappa_{\pi_{{x_0},\tau}(j),\tau}(x_0) - \kappa_{j,\tau}(x_0)
\end{pmatrix}.
\end{equation*}
corresponding to $J = \set{\pi_{x_0,\tau}(1),\dotsc,\pi_{x_0,\tau}(i)}$. Finally since $E$ is crystalline, it is Hodge--Tate and thus $\Theta_{\sen}$ acts semi-simply. In particular, we conclude that
\begin{equation*}
\sum_{j=1}^i \nabla_v(\kappa_{\pi_{x_0,\tau}(j),\tau} - \kappa_{j,\tau}) = 0.
\end{equation*}
Since this is for any $i$, we conclude the theorem by induction on $i$.
\end{proof}
It remains to give the computation left unresolved in the previous proof.
\begin{lemma}\label{lemma:ramification-lemma}
Suppose that $D$ is a crystalline $(\varphi,\Gamma_K)$-module over $\cal R_L$, $\phi \in L^\x$ and $P_1 \ci D$ is a rank one saturated submodule such that $D_{\cris}(D)^{\varphi^{f_K} = \phi} = D_{\cris}(P_1)^{\varphi^{f_K} = \phi}$ is free of rank one over $L\tensor_{\Q_p} F$. If $\twid D \in \Ext^1(D,D)$ is an extension such that $D_{\cris}(\twid D)^{\varphi^{f_K} = \twid \phi}$ is free of rank one over $(L\tensor_{\Q_p} F)[\varepsilon]$ for some $\twid \phi \congruent \phi \mod \varepsilon$ then the image of $\twid D$ under the natural map $\Ext^1(D,D) \goto \Ext^1(P_1,D)$ is a crystalline $(\varphi,\Gamma_K)$-module over $\cal R_L$.
\end{lemma}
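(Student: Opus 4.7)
\medskip
\noindent\textbf{Proof proposal.} The plan is to exhibit a rank-two crystalline sub-$(\varphi,\Gamma_K)$-module $P' \ci E$ whose intersection with $D$ is exactly $P_1$; a dimension count on $D_{\cris}$ will then force $E$ itself to be crystalline.

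First, set $W := D_{\cris}(\twid D)^{\varphi^{f_K} = \twid\phi}$, which by hypothesis is free of rank one over $(L \tensor_{\Q_p} F)[\varepsilon]$, hence of rank two over $L \tensor_{\Q_p} F$. Equipped with its induced Hodge filtration from $D_{\cris}(\twid D)_K$, $W$ is a filtered $\varphi$-module, and via Berger's equivalence it corresponds to a rank-two crystalline $(\varphi,\Gamma_K)$-module $P'$. The inclusion $W \inject D_{\cris}(\twid D)$ of filtered $\varphi$-modules then lifts to a $(\varphi,\Gamma_K)$-equivariant embedding $P' \inject \twid D$ via the full faithfulness of $D_{\cris}$ on crystalline objects.

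Next, one locates $P'$ inside $E$. The image of multiplication by $\varepsilon$ on $D_{\cris}(\twid D)$ lies in $D_{\cris}(D)$, so $\varepsilon W \ci D_{\cris}(D)$; since $\twid\phi \congruent \phi \pmod{\varepsilon}$, every element of $\varepsilon W$ has $\varphi^{f_K}$-eigenvalue $\phi$, giving $\varepsilon W \ci D_{\cris}(D)^{\varphi^{f_K} = \phi} = D_{\cris}(P_1)$. Comparing $L \tensor_{\Q_p} F$-ranks (both equal to one) forces $\varepsilon W = D_{\cris}(P_1)$, and then $W \intersect D_{\cris}(D) = \varepsilon W = D_{\cris}(P_1)$ (the alternative $W \ci D_{\cris}(D)$ contradicts $W$ not being $\varepsilon$-torsion). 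By Berger, $P' \intersect D = P_1$ inside $\twid D$, and the induced map $P' \to \twid D/D \simeq D$ has image $P_1$ (its $D_{\cris}$ is $W/\varepsilon W = D_{\cris}(P_1)$ sitting inside $D_{\cris}(D)^{\varphi^{f_K}=\phi}$). Hence $P' \ci E = \twid D \times_D P_1$.

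Finally, inside $E$ we have two crystalline sub-$(\varphi,\Gamma_K)$-modules, namely $D$ of $\cal R_L$-rank $d$ and $P'$ of rank $2$, meeting in $P_1$ of rank $1$; their sum $D + P'$ therefore has rank $d+1 = \rank_{\cal R_L} E$. Left-exactness of $D_{\cris}$ applied to $0 \to D \intersect P' \to D \dsum P' \to D + P' \to 0$ yields
\begin{equation*}
\dim_{L \tensor_{\Q_p} F} D_{\cris}(D + P') \geq d + 2 - 1 = d + 1 = \rank_{\cal R_L} E.
\end{equation*}
Combined with $D_{\cris}(E) \cg D_{\cris}(D + P')$ and the standard bound $\dim_{L \tensor_{\Q_p} F} D_{\cris}(E) \leq \rank_{\cal R_L} E$, equality must hold throughout, so $E$ is crystalline. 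The delicate step is the first one: promoting $W$ from an abstract $\varphi$-eigenspace to a crystalline sub-$(\varphi,\Gamma_K)$-module of $\twid D$ via Berger, and confirming that the embedding lands where expected. Once $P'$ is placed inside $E$, the final count is essentially automatic.
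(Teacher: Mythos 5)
There is a genuine gap at the pivotal step. You assert that the inclusion $W \hookrightarrow D_{\cris}(\twid D)$ of filtered $\varphi$-modules ``lifts to a $(\varphi,\Gamma_K)$-equivariant embedding $P' \hookrightarrow \twid D$ via the full faithfulness of $D_{\cris}$ on crystalline objects.'' Full faithfulness of Berger's functor applies only when \emph{both} source and target are crystalline: it gives $\Hom_{(\varphi,\Gamma_K)}(X,Y) \simeq \Hom_{\text{filt-}\varphi}(D_{\cris}(X), D_{\cris}(Y))$ for crystalline $X,Y$. Here $P'$ is crystalline by construction, but $\twid D$ is not --- and indeed cannot be assumed to be, since the whole point of the lemma is to prove that only the sub-object $E$, not $\twid D$ itself, is crystalline. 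For non-crystalline $Y$ the natural map $\Hom_{(\varphi,\Gamma_K)}(X,Y) \to \Hom_{\text{filt-}\varphi}(D_{\cris}(X),D_{\cris}(Y))$ is still defined, but there is no reason for it to be surjective, so the inclusion of $W$ into $D_{\cris}(\twid D)$ need not come from a map of $(\varphi,\Gamma_K)$-modules. This leaves $P'$ unconstructed inside $\twid D$, and hence inside $E$, so the final dimension count has nothing to stand on.

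The good news is that you do not actually need $P'$ at all, and eliminating it both closes the gap and reduces your argument to the paper's. Everything you want from $P'$ is already visible on the $D_{\cris}$ side. Your own computation shows that the composite $W \ci D_{\cris}(\twid D) \to D_{\cris}(D) \to D_{\cris}(D/P_1)$ has image landing in the image of $D_{\cris}(P_1)$, which is zero; hence $W \ci \ker\bigl(D_{\cris}(\twid D) \to D_{\cris}(D/P_1)\bigr) = D_{\cris}(E)$, by left-exactness applied to $0 \to E \to \twid D \to D/P_1 \to 0$. Together with $D_{\cris}(D) \ci D_{\cris}(E)$ and your observation $W \cap D_{\cris}(D) = \varepsilon W$ (rank one), this gives
\begin{equation*}
\dim_{L\otimes_{\Q_p} F} D_{\cris}(E) \;\geq\; \dim D_{\cris}(D) + \dim W - \dim \varepsilon W \;=\; d + 2 - 1 \;=\; d+1 \;=\; \rank_{\cal R_L} E,
\end{equation*}
and $E$ is crystalline. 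This is in substance what the paper does: it packages the same computation via the exact functor $M \mapsto M^{(\varphi^{f_K}=\phi)}$ on generalized $\phi$-eigenspaces, obtaining $D_{\cris}(E)^{(\varphi^{f_K}=\phi)} = D_{\cris}(\twid D)^{(\varphi^{f_K}=\phi)} \supseteq W$ and deducing surjectivity of $D_{\cris}(E) \to D_{\cris}(P_1)$. Either packaging works, and neither requires manufacturing a $(\varphi,\Gamma_K)$-module $P'$ inside $\twid D$. (One further minor point: even if $P'$ were constructed, full faithfulness would only give $D \cap P'$ commensurable with $P_1$ --- equal up to multiplication by products of $t_\tau$ --- rather than equal on the nose; but since $D_{\cris}$ is insensitive to such twists, your dimension count would survive this.)
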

Note that the image of $\twid D$ is given by the $(\varphi,\Gamma_K)$-module $E = \ker(\twid D \goto D/P_1)$. Thus the lemma actually fills the gap left in the previous proof.
\begin{proof}
 If $T$ is a $B$-linear operator on a $B$-module $M$ and $b \in B$ then we let $M^{(T=b)}$ denote the submodule of elements $m \in M$ such that $(T-b)^n m = 0$ for for some $n\geq 0$. The functor $M \mapsto M^{(T=b)}$ is exact.

Since $\varphi^{f_K}$ is linear, $\phi$ is a simple eigenvalue for $\varphi^{f_K}$ and $D_{\cris}(-)$ is left exact, we see that $D_{\cris}(E)^{(\varphi^{f_K} = \phi)} = D_{\cris}(\twid D)^{(\varphi^{f_K} = \phi)}$. In particular, since $D_{\cris}(\twid D)^{\varphi^{f_K} = \twid \phi} = D_{\cris}(\twid D)^{(\varphi^{f_K} = \phi)}$ we see that $\dim_L D_{\cris}(E)^{(\varphi^{f_K} = \phi)} \geq 2\dim_{\Q_p} F$. Now consider the exact sequence 
\begin{equation*}
0 \goto D_{\cris}(D)^{(\varphi^{f_K} = \phi)} \goto D_{\cris}(E)^{(\varphi^{f_K} = \phi)} \goto D_{\cris}(P_1)^{(\varphi^{f_K} = \phi)}.
\end{equation*}
By counting dimensions we see the final map is surjective as well. Since $P_1$ is rank one, $D_{\cris}(P_1)^{(\varphi^{f_K} = \phi)} = D_{\cris}(P_1)$. In particular, $D_{\cris}(E) \goto D_{\cris}(P_1) \goto 0$ is exact as well, meaning that $E$ is crystalline (again by dimension counts).
\end{proof}

\begin{remark}
One can interpret Theorem \ref{theorem:ramification}, and Lemma \ref{lemma:ramification-lemma}, as making a statement about a certain deformation ring of $(\varphi,\Gamma_K)$-modules. Indeed, let $\frak X_{D_{x_0}}$ denote the functor of formal deformations of $D_{x_0}$. Let $R_{x_0,\bullet} = (\phi_1,\dotsc,\phi_d)$ be the refinement corresponding to the triangulation $P_{x_0,\bullet}$ by Proposition \ref{prop:refinements-triangulations}. One may define a relatively representable subfunctor $\frak X_{D_{x_0}} \cg \frak X^{h}_{D_{x_0}, R_{x_0,\bullet}}$ consisting of deformations $\twid D$ whose successive exterior powers $\wedge^i \twid D$ contain a free rank one submodule on which $\varphi^{f_K}$ acts by $\phi_1\dotsc \phi_i$. Neither functor $\fr X_{?}$ is in general representable but both satisfy the natural Mayer--Vietoris condition on $L(x_0)[\varepsilon]$-points in order to have reasonable Zariski tangent spaces $\fr X_{?}(L(x_0)[\varepsilon])$ (see \cite{Kisin-OverconvergentModularForms, Tan-Thesis}). In that case $\fr X_{D_{x_0}}(L(x_0)[\varepsilon]) \simeq \Ext^1(D_{x_0},D_{x_0})$. What we just showed is that the obvious differences of Hodge--Tate weights are constant over the tangent space to $\fr X_{D_{x_0},R_{x_0,\bullet}}^{h}$.
\end{remark}

\begin{remark}
One might also ask if Theorem \ref{theorem:ramification} provides a tight bound for the rank of the weight map in a refined family. The answer is no. For example, let $K = \Q_p$ and consider any refined family $D$ of rank two with global parameter $(\delta_1,\delta_2)$. Then one can takes its symmetric square $\Sym^2 D$ equipped with the structure of a refined family of rank three, naturally having global parameter $(\delta_1^2, \delta_1\delta_2,\delta_2^2)$. If $x_0$ defines a critically triangulated classical point for $D$ then its critical-type is the permutation $(12)$, and $x_0$ is also critically triangulated in the family $\Sym^2 D$ with critical-type $(13)$. Theorem \ref{theorem:ramification} only implies that the difference of the first weight (i.e. $2\kappa_1$) and the {\em third} weight (i.e. $2\kappa_2$) ramifies. However, the middle weight is $\kappa_1 + \kappa_2$ and so the difference between the first two weights is $2\kappa_1 - (\kappa_1 + \kappa_2) = \kappa_1 - \kappa_2$. This also ramifies in the family by Theorem \ref{theorem:ramification} applied to $D$ itself, but is not detected by Theorem  \ref{theorem:ramification} applied to just $\Sym^2 D$.
\end{remark}

\begin{appendices}

\section{Nakayama's lemma}
This brief appendix is to create a reference for a relative form of Nakayama's lemma we used in the main text. Throughout we let $K$ be a non-Archimedean field of characteristic zero which is complete with respect to a non-trivial absolute value. If $X$ and $Y$ are $K$-rigid spaces we denote by $\pr_X : X\times_K Y \goto X$ the projection map, which we note is obtained by base-changing the structure morphism $Y \goto \Sp(K)$. We begin with two lemmas on products of $K$-affinoid spaces.

\begin{lemma}\label{lemma:prod-facts}
Suppose that $X$ and $Y$ are $K$-affinoid spaces.
\begin{enumerate}
\item If $U \ci X\times_K Y$ is an affinoid open subdomain then the image $\pr_X(U) \ci X$ is a finite union of affinoid subdomains of $X$.
\item If $U \ci X\times_K Y$ is an admissible open then the image $\pr_X(U)\ci X$ is a union of affinoid open subdomains of $X$.
\end{enumerate}
\end{lemma}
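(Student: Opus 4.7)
For part (b), once part (a) is known the argument is formal. Any admissible open $U \subset X \times_K Y$ admits an admissible covering by affinoid subdomains $\{U_\alpha\}_{\alpha \in A}$, and
\begin{equation*}
\pr_X(U) = \bigcup_{\alpha \in A} \pr_X(U_\alpha).
\end{equation*}
Applying (a) to each $U_\alpha$ then exhibits $\pr_X(U)$ as a union of affinoid subdomains of $X$, as required.

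The substantive content is therefore (a). My plan proceeds via two reductions followed by an invocation of deeper rigid-analytic machinery. First, by the Gerritzen--Grauert theorem, any affinoid subdomain $U$ of the affinoid space $X \times_K Y$ is a finite union of rational subdomains of $X \times_K Y$; since the image under $\pr_X$ of a finite union is the finite union of the images, it suffices to treat the case of a rational subdomain. Second, a rational subdomain has the explicit form $U = \{z : |f_i(z)| \le |f_0(z)|,\ i = 1,\ldots,n\}$ for functions $f_0,\ldots,f_n \in \mathcal{O}(X \times_K Y)$ that generate the unit ideal, and this is the class of subsets to which I would apply the main input.

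The key ingredient for the remaining step is the flatness of the projection $\pr_X$, inherited from the faithful flatness of the structure morphism $Y \to \Sp K$ by base change. Combined with the theory of admissible formal models, this flatness implies that the image of any affinoid subdomain under a quasi-compact flat morphism of affinoid spaces is a finite union of affinoid subdomains of the target: one passes to an integral formal model of $X \times_K Y$, uses Raynaud's flattening theorem to realize $U$ as a formal open in an admissible blow-up, and observes that the image of a formal open under a flat quasi-compact formal morphism is again a formal open, which corresponds in the generic fibre to a finite union of affinoid subdomains of $X$.

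The main obstacle is precisely this last step---translating the rational-subdomain presentation of $U$ into an affinoid-subdomain presentation of $\pr_X(U)$. Either the detour through admissible formal models sketched above, or an alternative direct argument using Weierstrass preparation on $Y$ to eliminate the fibre coordinates one at a time and cut the image out by explicit inequalities on $X$, is a nontrivial piece of rigid analytic technology. This technical core is what justifies placing the lemma in an appendix rather than in the body of the paper.
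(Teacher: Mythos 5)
Your part (b) is exactly the paper's argument: pass to an admissible affinoid covering of $U$ and apply part (a) to each piece.

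For part (a), your key insight matches the paper's---flatness of $\pr_X$ (by base change of the flat structure morphism $Y \to \Sp K$) composed with flatness of the inclusion $U \hookrightarrow X\times_K Y$ makes $U \to X$ a flat morphism of quasi-compact quasi-separated rigid spaces, and one then appeals to the general principle that the image of a flat, quasi-compact morphism of rigid spaces is a finite union of affinoid subdomains of the target. The difference is that the paper simply cites this principle as \cite[Corollary 5.11]{Bosch-RigidGeometry2}, whereas you attempt to re-derive it. Two comments on that re-derivation. First, the Gerritzen--Grauert reduction to rational subdomains is an unnecessary detour: once you have the cited openness result for flat quasi-compact morphisms, it applies directly to $U \to X$ with $U$ affinoid, no decomposition needed. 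Second, the formal-model sketch misallocates its ingredients. Realizing $U$ as the generic fibre of a quasi-compact open of an admissible blow-up is the theory of admissible formal models (Raynaud's equivalence), not flattening; Raynaud's flattening theorem is what you need \emph{after} that, to further blow up so that the resulting morphism of formal models $\mathcal U \to \mathcal X$ is actually flat (flatness of the rigid morphism $U\to X$ does not by itself give flatness of a chosen formal model). Only then does openness of flat finitely presented morphisms of formal schemes give an open image. The Weierstrass-preparation alternative you mention is too vague to assess and I would not rely on it. The upshot: your plan is morally the paper's proof, and would be complete if you replaced your sketch of the formal-model argument with the direct citation the paper uses; as written, the sketch leaves the flattening step incorrectly attributed and the Weierstrass alternative unsubstantiated.
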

\begin{proof}
We begin with part (a). The structure morphism $Y \goto \Sp(K)$ is flat and thus $\pr_X$ is flat also. Let $U \ci X\times_K Y$ be an affinoid open subdomain. The inclusion $U \ci X\times_K Y$ is also flat and thus the composition $U \goto X\times_K Y \goto X$ is a flat map between $K$-affinoid spaces. Part (a) then follows from \cite[Corollary 5.11]{Bosch-RigidGeometry2} because $K$-affinoids are quasi-compact and quasi-separated rigid spaces. To prove (b) we write $U = \bigunion U_i$ where each $U_i$ is an affinoid open subdomain. By (a) the image $\pr_X(U_i)$ is covered by affinoid subdomains and thus so is $\pr_X(U) = \bigunion \pr_X(U_i)$.
\end{proof}
\begin{remark}
We are unable to determine whether $\pr_X$ should be open in the sense that $\pr_X(U)$ is admissible open for each admissible open $U \ci X\times_K Y$. This contrasts with the algebraic analog of Lemma \ref{lemma:prod-facts}, where part (b) really becomes that $f$ is ``open" because ``covered by opens'' is synonymous with ``open'' in the Zariski topology (see \cite[Tag 037G]{Stacks}).
\end{remark}

If $X=\Sp(A)$ is a $K$-affinoid space and $x \in X$ we write $\ideal m_x$ for the corresponding maximal ideal of $A$. We write $\kappa(x)$ for the residue field $A/\ideal m_x$. If $F \in A$ and $x \in X$ then we use the standard notation $F(x)$ to denote the image of $F$ in $\kappa(x)$. If $A$ and $B$ are two $K$-affinoid algebras, $F \in A\hat\tensor_K B$ and $y \in \Sp(B)$ then we use $F_y$ to denote the element $F_y \in A\tensor_K \kappa(y)$ which is the image of $F$ under the canonical map $A\hat\tensor_K B \goto (A\hat\tensor_K B)/\ideal m_y(A\hat\tensor_K B) = A\tensor_K \kappa(y)$. 

\begin{lemma}\label{lemma:reduced}
Suppose that $A$ and $B$ are reduced affinoid $K$-algebras. Then:
\begin{enumerate}
\item The completed tensor product $A\hat\tensor_K B$ is reduced.
\item If $F \in A\hat\tensor_K B$ then $F = 0$ if and only if $F_y = 0$ for all $y \in \Sp(B)$.
\end{enumerate}
\end{lemma}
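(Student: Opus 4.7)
The plan is to reduce part (a) to a standard input from rigid geometry and then derive part (b) formally from (a) together with the Jacobson property of affinoid algebras.

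For (a), I would invoke the classical fact that, over a non-Archimedean field $K$ of characteristic zero, the completed tensor product of two reduced $K$-affinoid algebras is again reduced. This is the two-sided strengthening of the result already cited in the main text from \cite[Lemma 2.1.8(2)]{KedlayaPottharstXiao-Finiteness}, and it can be extracted, for instance, from Conrad's notes on irreducible components of rigid spaces (or from the material in \cite{BGR} supplemented by a characteristic-zero base-change argument). The characteristic-zero hypothesis on $K$ is used genuinely here: it guarantees that reducedness survives algebraic base change to the residue fields $\kappa(y)$, which can fail in positive characteristic over imperfect fields.

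For (b), the forward direction is trivial. For the converse, suppose that $F \in A\hat\tensor_K B$ satisfies $F_y = 0$ for every $y \in \Sp(B)$. By (a), the ring $A\hat\tensor_K B$ is reduced; since affinoid algebras are Jacobson \cite[Proposition 6.1.1/3]{BGR}, its nilradical coincides with the intersection of all maximal ideals, so it suffices to show $F(z) = 0$ for every $z \in \Sp(A\hat\tensor_K B)$. Given such a $z$, set $y := \pr_Y(z) \in \Sp(B)$. Since $\kappa(y)/K$ is finite, the completed tensor product $A\hat\tensor_K \kappa(y)$ coincides with the algebraic one $A\tensor_K \kappa(y)$, and the evaluation map at $z$ factors as
\begin{equation*}
A\hat\tensor_K B \goto A\tensor_K \kappa(y) \goto \kappa(z),
\end{equation*}
where the first arrow is reduction modulo $\ideal m_y(A\hat\tensor_K B)$. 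By construction this arrow carries $F$ to $F_y$, which vanishes by hypothesis, so $F(z) = 0$.

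The only genuine obstacle is (a): that is where a nontrivial input from rigid geometry (and the characteristic-zero hypothesis on $K$) enters. Once (a) is available, part (b) is essentially formal, relying only on the Jacobsonness of affinoid algebras and the finiteness of residue extensions at rigid points.
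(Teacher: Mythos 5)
Your proof is correct and follows essentially the same route as the paper: part~(a) is delegated to the known reducedness of completed tensor products over a characteristic-zero non-Archimedean base (the paper cites Ducros \cite[Th\'eor\`eme 8.1]{Ducros-BerkovichSpaces} after reducing to the $(R_0)+(S_1)$ criterion and geometric reducedness over perfect fields), and part~(b) is exactly the paper's argument, which factors evaluation at a point $z \in \Sp(A\hat\tensor_K B)$ through $A\tensor_K\kappa(y)$ with $y = \pr_Y(z)$ and invokes reducedness plus the Jacobson property. The only inaccuracy is your characterization of (a) as a ``two-sided strengthening'' of \cite[Lemma 2.1.8(2)]{KedlayaPottharstXiao-Finiteness}: that lemma is a flatness statement used elsewhere in the paper, not a reducedness result, so the correct reference for (a) is the Ducros/Conrad material rather than KPX.
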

\begin{proof}
A noetherian ring $R$ is reduced if and only if satisfies the two properties $(R_0)$ and $(S_1)$ \cite[Tag 031R]{Stacks}, and being geometrically reduced is equivalent to being reduced for affinoid algebras over perfect fields \cite[Lemma 3.3.1]{Conrad-IrredComponents}. In particular, since $K$ has characteristic zero we deduce part (a) from \cite[Th\'eor\`eme 8.1]{Ducros-BerkovichSpaces}.\footnote{The reference \cite{Ducros-BerkovichSpaces} is written for Berkovich spaces, but the reducedness of $A\hat\tensor_K B$ can be checked with either the rigid analytic space or its associated Berkovich space. Indeed, the completions of the analytic local rings are the completions of the algebraic local rings in either case, and reducedness can be checked after completion by the excellence of $K$-affinoid algebras (see \cite[Section 2.2]{Bekovich-EtaleCohomology} for example).}

To prove part (b) we let $y \in \Sp(B)$ and note that if $w \in X\times_K Y$ lies above $y$ then we have a natural map $A\tensor_K \kappa(y) \goto (A\hat\tensor_K B)/\ideal m_{w}$. Ranging over all $y$ we get a commutative diagram
\begin{equation}\label{eqn:reduced-diagram}
\xymatrix{
A\hat\tensor_K B \ar[r] \ar[d]& \prod_{y \in Y} A \tensor_K \kappa(y) \ar[dl]\\
\prod_{w \in X\times_K Y} (A\hat\tensor_K B)/\ideal m_w
}
\end{equation}
By part (a) the vertical arrow of \eqref{eqn:reduced-diagram} is injective. Thus we deduce the horizontal arrow of \eqref{eqn:reduced-diagram} is injective as well, proving part (b).
\end{proof}

\begin{lemma}\label{lemma:Zdense}
If $X=\Sp(A)$ and $Y=\Sp(B)$ are reduced affinoid $K$-spaces and $Z \ci X$ is a Zariski dense subset of points then $\pr_X^{-1}(Z) \ci X\times_{K} Y$ is also Zariski dense.
\end{lemma}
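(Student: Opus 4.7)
The plan is to use the function-theoretic characterization of Zariski density: since $A\hat\tensor_K B$ is reduced by Lemma \ref{lemma:reduced}(a), a subset $W \ci X\times_K Y$ is Zariski dense if and only if every $F \in A\hat\tensor_K B$ that vanishes at every point of $W$ is zero. So I will take an $F \in A\hat\tensor_K B$ vanishing on $\pr_X^{-1}(Z)$ and aim to show $F = 0$.

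By Lemma \ref{lemma:reduced}(b), it suffices to show that the fiber $F_y \in A\tensor_K \kappa(y)$ vanishes for every $y \in Y$. Fix such a $y$, set $L = \kappa(y)$ (a finite extension of $K$, so $A\hat\tensor_K L = A\tensor_K L$), and write $\pi\colon X_L := \Sp(A\tensor_K L) \goto X$ for the resulting finite projection. The first step is to observe that $F_y$ vanishes on $\pi^{-1}(Z)$: given $z \in Z$ and $w' \in \pi^{-1}(z)$, the $K$-algebra map $A\tensor_K L \goto \kappa(w')$ classifying $w'$ precomposes with the canonical surjection $A\hat\tensor_K B \surject A\tensor_K L$ (coming from $B \goto \kappa(y)$) to yield a $K$-algebra map $A\hat\tensor_K B \goto \kappa(w')$; by the universal property this defines a point $w \in X\times_K Y$ with $\pr_X(w) = z$, so $w \in \pr_X^{-1}(Z)$ and consequently $F_y(w') = F(w) = 0$.

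The second step deduces $F_y = 0$ from the Zariski density of $Z$ in $X$. Choose a $K$-basis $\alpha_1,\dotsc,\alpha_n$ of $L$ and write $F_y = \sum_i g_i\tensor \alpha_i$ uniquely, with $g_i \in A$. For each $z \in Z$ the fiber $\pi^{-1}(z) = \Sp(\kappa(z)\tensor_K L)$ is non-empty, and $\kappa(z)\tensor_K L$ is reduced because $L/K$ is finite separable (characteristic zero); hence its Jacobson radical is zero. The first step then forces the element $\sum_i g_i(z)\tensor \alpha_i \in \kappa(z)\tensor_K L$ to lie in every maximal ideal, so it is zero. Because the $\alpha_i$ form a $\kappa(z)$-basis of $\kappa(z)\tensor_K L$, this gives $g_i(z) = 0$ for every $i$ and every $z \in Z$. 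Since $A$ is reduced and $Z$ is Zariski dense in $X$, each $g_i = 0$, and therefore $F_y = 0$.

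The only step that I expect to require any care is the lifting in the second paragraph, namely producing an actual point $w \in X\times_K Y$ above a prescribed $w' \in \pi^{-1}(Z)$; everything else reduces to Lemma \ref{lemma:reduced} together with the well-known fact that finite separable scalar extensions preserve reducedness.
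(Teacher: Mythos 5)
Your proof is correct and is essentially the same as the paper's: both reduce via Lemma \ref{lemma:reduced}(b) to showing $F_y=0$ for each $y\in Y$, then expand $F_y$ in a $K$-basis of $\kappa(y)$ and use Zariski density of $Z$ in $X$ together with vanishing of $F$ above $Z$. Your version is a bit more explicit about the lifting of a point $w'\in\pi^{-1}(Z)$ to a point $w\in X\times_K Y$ and about the reducedness of $\kappa(z)\tensor_K\kappa(y)$; the paper compresses this into the remark that the vanishing of the image of $F_y$ in $\kappa(z)\tensor_K\kappa(y)$ ``follows easily from the assumption.''
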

\begin{proof}
Suppose that $Z \ci X$ is Zariski dense. Let $F \in A\hat{\tensor}_K B$. We need to show that if $F(z') = 0$ for all $z' \in \pr_X^{-1}(Z)$ then $F = 0$. It suffices by Lemma \ref{lemma:reduced}(b) to show that $F_y = 0$ for each $y \in Y$.

Let $y \in Y$. The field $\kappa(y)$ is a finite extension of $K$. In particular, if we fix a basis of $\kappa(y)$ over $K$ then for any $K$-vector space $C$ we get an identification of $C\tensor_K \kappa(y)$ with $C^{\dsum n}$ (where $n = \dim_K \kappa(y)$) which is functorial in $C$ (but depending on the choice of basis). Applying this to $C = A$ and $C = \kappa(z)$ with $z \in Z$ we get a commuting diagram whose vertical arrows are isomorphisms
\begin{equation*}
\xymatrix{
A\tensor_K \kappa(y) \ar[d]_-{\iso} \ar[r] &  \prod_{z \in Z} \kappa(z)\tensor_K \kappa(y) \ar[d]^-{\iso}\\
A^{\dsum n} \ar[r] & \prod_{z \in Z} \kappa(z)^{\dsum n}}
\end{equation*}
Since $Z$ is Zariski dense in $X$, the bottom arrow is injective. Thus to show $F_y = 0$ it suffices to check that its image in $\kappa(z)\tensor_K \kappa(y)$ is zero for each $z \in Z$. But that follows easily from the assumption that $F(z') = 0$ for all $z' \in \pr_X^{-1}(Z)$. Since $y$ was arbitrary, we are done.
\end{proof}

If $B$ is a ring and $f \in B$ is non-nilpotent then we write $B[1/f]$ for the localization of $B$ at $f$. If $M$ is a $B$-module we write $M[1/f] = B[1/f]\tensor_B M$ for the localization of $M$. If $A$ is a $K$-affinoid algebra, $M$ is a module over $A$ and $x \in X$ corresponds to the maximal ideal $\ideal m_x$ then we write $M_x$ for $M\tensor_A A/\ideal m_x$. Note that these two operations commute in the sense that $M_x[1/f] \simeq M[1/f]_x$. 

Now suppose that $A$ and $B$ are affinoid $K$-algebras and $R = A\hat\tensor_K B$. If $M$ is a module over $R$ then $M_x[1/f]$ is a module over $\kappa(x)\tensor_K B[1/f]$ for each $x \in X$ and $f \in B$. Indeed, one just has to check that 
\begin{align}
R_x[1/f] &\simeq \left((B\hat\tensor_K A)\tensor_A A/\ideal m_x\right)\tensor_B B[1/f] \label{eqn:tensor-manip}\\
&\simeq \left(B\tensor_K \kappa(x)\right)\tensor_B B[1/f]\nonumber\\
&\simeq \kappa(x)\tensor_K B[1/f]\nonumber
\end{align}
We now arrive at the subject of this appendix.
\begin{proposition}[Nakayama's lemma]\label{prop:nakayama-lemma}
Suppose that $X = \Sp(A)$ and $Y = \Sp(B)$ are reduced affinoid $K$-spaces and that $M$ is a finite module over $A\hat\tensor_K B$. Let $f \in B$ be non-zero and assume that for each $x \in X$ the $\kappa(x)\tensor_K B[1/f]$-module $M_x[1/f]$ is (finite) free. Then:
\begin{enumerate}
\item For each $m \geq 0$ the subset
\begin{equation*}
X_m = \set{x \in X \st \rank_{\kappa(x)\tensor_K B[1/f]} M_x[1/f] \leq m} \ci X
\end{equation*}
is a union of affinoid subdomains of $X$.
\item If $Z \ci X$ is Zariski dense then $X_m \neq \es \implies Z \intersect X_m \neq \es$. In particular, the minimum rank of $M_x[1/f]$ is achieved on $Z$.
\end{enumerate}
\end{proposition}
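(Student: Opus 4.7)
Proof plan: The key observation is that $X_m$ is in fact Zariski open in $X$, from which both parts follow immediately.

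Fix $\pi \in K^\x$ with $|\pi|<1$, set $Y_n = \{y \in Y : |f(y)| \geq |\pi|^n\}$, and write $B_n = \cal O(Y_n)$, $R_n = A\hat\tensor_K B_n$, $W_n = X \times_K Y_n = \Sp(R_n)$. Each $Y_n$ is an affinoid subdomain of $Y$ on which $f$ is invertible, and $\{f\neq 0\} = \bigunion_n Y_n$ is an admissible cover. Setting $M_n := M\tensor_R R_n$, the manipulations around \eqref{eqn:tensor-manip} identify, for each $x\in X$, the fiber $M_n\tensor_{R_n}(\kappa(x)\tensor_K B_n)$ with the base change to $\kappa(x)\tensor_K B_n$ of the free $S_x$-module $M_x[1/f]$. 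Hence this fiber is itself free of rank $r_x := \rank_{S_x} M_x[1/f]$, and the fiber dimension $w \mapsto \dim_{\kappa(w)} M_n\tensor_{R_n}\kappa(w)$ on $W_n$ equals $r_{\pr_X(w)}$, depending only on $\pr_X(w)$.

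By standard upper semi-continuity (via Fitting ideals of a finite presentation of $M_n$ over the Noetherian ring $R_n$), the set $F_n := \{w\in W_n : \dim_{\kappa(w)} M_n\tensor_{R_n}\kappa(w) > m\}$ is Zariski closed in $W_n$. By the previous paragraph $F_n = \pr_X^{-1}(X\setminus X_m)$, a cylinder over $X\setminus X_m$. Pick any closed point $y_0 \in Y_n$ and consider the Zariski closed slice $X\times_K\{y_0\} = \Sp(A\tensor_K \kappa(y_0))$ of $W_n$. The intersection $F_n \intersect (X\times_K\{y_0\})$ is Zariski closed in $\Sp(A\tensor_K\kappa(y_0))$ and, under the projection $\Sp(A\tensor_K\kappa(y_0)) \goto X$, surjects onto $X\setminus X_m$. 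Since $\kappa(y_0)/K$ is a finite extension, this projection is a finite morphism of affinoids and hence closed, so $X\setminus X_m$ is Zariski closed in $X$; equivalently $X_m$ is Zariski open.

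Part (a) now follows because a Zariski open subset of a $K$-affinoid is admissibly covered by affinoid subdomains of the form $\{|g_i|\geq |\pi|^n\}$, where $g_1,\ldots,g_k$ generate the ideal defining the complement. For (b), if $X_m$ is nonempty then this ideal is proper (as $X$ is reduced) and therefore contains a nonzero element $g\in A$; by Zariski density of $Z$ there exists $z\in Z$ with $g(z)\neq 0$, and any such $z$ lies in $X_m$.

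The main conceptual step is the cylinder observation---that fiber dimensions of $M_n$ depend only on $\pr_X(w)$---which is forced by the hypothesis that $M_x[1/f]$ is free at every $x$; without this, the projection of the Zariski closed set $F_n$ would not be well-behaved. Everything else reduces to standard properties of Zariski closed subsets of affinoid spaces and the fact that finite morphisms of affinoids are closed; interestingly, Lemmas \ref{lemma:prod-facts} and \ref{lemma:Zdense} of the appendix are not directly needed in this argument.
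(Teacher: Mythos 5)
Your proof is correct and proves a genuinely stronger conclusion than the paper does: you show that $X_m$ is actually Zariski open in $X$, whereas the paper only establishes that $X_m$ is a union of affinoid subdomains. The strategy differs in an interesting way. The paper obtains the Zariski open $U_m \ci X\times_K Y$ in the same manner you do (upper semicontinuity of fiber dimension), observes that $U_m$ is a cylinder by the freeness hypothesis, and then deduces that $\pr_X(U_m)$ is a union of affinoid subdomains via Lemma \ref{lemma:prod-facts}(b), which is a flatness/quasi-compactness argument quoted from Bosch; part (b) is then handled using Lemma \ref{lemma:Zdense} to transport Zariski density up through the projection. You instead exploit the cylinder structure to \emph{slice}: since the fiber dimension at $w$ depends only on $\pr_X(w)$, a single closed point $y_0$ with $f(y_0)\neq 0$ already sees the whole locus, and the projection $\Sp(A\tensor_K\kappa(y_0))\goto X$ is finite, hence Zariski closed. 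This replaces the completed tensor product $A\hat\tensor_K B$ (and the two appendix lemmas) by the algebraic object $A\tensor_K\kappa(y_0)$, where the closedness of images is elementary. As a bonus, your argument does not use that $B$ is reduced (only that $A$ is, for part (b)), whereas Lemma \ref{lemma:Zdense} requires both. A couple of cosmetic remarks: the scaffolding of the $Y_n$'s is dispensable --- a single $y_0$ with $f(y_0)\neq 0$ suffices, with the Fitting ideal argument applied directly to the finite $A\tensor_K\kappa(y_0)$-module $M\tensor_R(A\tensor_K\kappa(y_0))$ --- and in part (b) you say the ideal of $X\setminus X_m$ is ``proper'' when you mean ``nonzero''; the intended argument ($V(J)\subsetneq X$ plus $A$ reduced forces $J\neq 0$, then Zariski density of $Z$ gives a point off $V(J)$) is fine.
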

\begin{proof}
Let $R = A\hat\tensor_K B$ and consider the (non-empty, because $f$ is not nilpotent) affine scheme $\Spec(R[1/f]) \ci \Spec(R)$. If $\ideal q \in \Spec(R)$ write $\kappa(\ideal q)$ for its residue field (generalizing our previous notation). Since $M$ is finite over $R$, $M[1/f]$ is finite over $R[1/f]$ and so by the usual Nakayama's lemma (see \cite[Theorem 4.10]{Matsumura-CommutativeRingTheory}) the set
\begin{equation*}
V_m = \set{\ideal q \in \Spec(R[1/f]) \st \dim_{\kappa(\ideal q)} M[1/f] \tensor_{R[1/f]} \kappa(\ideal q) \leq m}
\end{equation*}
is Zariski open in $\Spec(R[1/f])$ for each $m\geq 0$. In particular, $U_m := V_m \intersect \Sp(R)$ is Zariski open in $\Sp(R) = X\times_K Y$. Since Zariski opens are admissible opens, $U_m$ is admissible open in $\Sp(R)$. By Lemma \ref{lemma:prod-facts}(b), $\pr_X(U_m)$ is a union of affinoid subdomains and thus it suffices to show that $\pr_X(U_m) = X_m$.

Write $\Sp(R)_f$ for $\Sp(R) \intersect \Spec(R[1/f])$. Since $f \in B$, the projection map $\Sp(R)_f \goto X$ is still surjective. Thus if $x \in X$ we may choose $u \in \Sp(R)_f$ lying above $x$. Then \eqref{eqn:tensor-manip} shows that $R_x[1/f]\tensor_{\kappa(x)\tensor_K B[1/f]} \kappa(u) = \kappa(u)$. We're assuming $M_x[1/f] \simeq (R_x[1/f])^{\dsum n_x}$ for an integer $n_x$ depending on $x$. Reducing the residue field at $u$, we see that
\begin{equation*}
M_x[1/f]\tensor_{\kappa(x)\tensor_K B[1/f]} \kappa(u) \simeq \kappa(u)^{\dsum n_x}.
\end{equation*}
On the other hand, since
\begin{equation*}
M_x[1/f]\tensor_{\kappa(x)\tensor_K B[1/f]} \kappa(u) \simeq M[1/f]\tensor_{R[1/f]} \kappa(u)
\end{equation*}
this shows that
\begin{equation*}
\dim_{\kappa(u)} M[1/f] \tensor_{R[1/f]} \kappa(u) = \rank_{\kappa(x)\tensor_K B[1/f]} M_x[1/f]
\end{equation*}
depends only on $x$. Now it is clear that $\pr(U_m) \ci X_m$. The reverse inclusion follows from the surjectivity of $\Sp(R)_f \goto X$. This shows $\pr_X(U_m) = X_m$ and we have finished the proof of (a).

The proof of (b) is nearly complete as well. The only point is that if $Z$ is Zariski dense then Lemma \ref{lemma:Zdense} guarantees that the pre-image $\pr_X^{-1}(Z) \ci \Sp(R)$ is Zariski dense as well. In particular, since $U_m$ is {\em Zariski} open in $\Sp(R)$, $U_m$ is non-empty if and only if $\pr_X^{-1}(Z) \intersect U_m$ is non-empty. Part (b) follows immediately now.
\end{proof}

\end{appendices}

\bibliography{parab_bib}

\begin{thebibliography}{10}

\bibitem{Bellaiche-CriticalpadicLfunctions}
J.~Bella{\"{\i}}che.
\newblock Critical $p$-adic {$L$}-functions.
\newblock {\em Invent. Math.}, 189(1):1--60, 2012.

\bibitem{BellaicheChenevier-Book}
J.~Bella{\"{\i}}che and G.~Chenevier.
\newblock Families of {G}alois representations and {S}elmer groups.
\newblock {\em Ast\'erisque}, 324:xii+314, 2009.

\bibitem{Bellovin-padic}
R.~Bellovin.
\newblock {$p$}-adic {H}odge theory in rigid analytic families.
\newblock {\em Algebra Number Theory}, 9(2):371--433, 2015.

\bibitem{Bergdall-CompanionPoints}
J.~Bergdall.
\newblock Ordinary modular forms and companion points on the eigencurve.
\newblock {\em J. Number Theory}, 134(1):226--239, 2014.

\bibitem{Berger-Representationp-adique}
L.~Berger.
\newblock Repr\'esentations {$p$}-adiques et \'equations diff\'erentielles.
\newblock {\em Invent. Math.}, 148(2):219--284, 2002.

\bibitem{Berger-EquationsDifferentielles}
L.~Berger.
\newblock \'{E}quations diff\'erentielles {$p$}-adiques et {$(\phi,N)$}-modules
  filtr\'es.
\newblock {\em Ast\'erisque}, (319):13--38, 2008.
\newblock Repr{\'e}sentations $p$-adiques de groupes $p$-adiques. I.
  Repr{\'e}sentations galoisiennes et $(\phi,\Gamma)$-modules.

\bibitem{Bekovich-EtaleCohomology}
V.~G. Berkovich.
\newblock \'{E}tale cohomology for non-{A}rchimedean analytic spaces.
\newblock {\em Inst. Hautes \'Etudes Sci. Publ. Math.}, (78):5--161 (1994),
  1993.

\bibitem{BGR}
S.~Bosch, U.~G{\"u}ntzer, and R.~Remmert.
\newblock {\em Non-{A}rchimedean analysis}, volume 261 of {\em Grundlehren der
  Mathematischen Wissenschaften [Fundamental Principles of Mathematical
  Sciences]}.
\newblock Springer-Verlag, Berlin, 1984.
\newblock A systematic approach to rigid analytic geometry.

\bibitem{Bosch-RigidGeometry2}
S.~Bosch and W.~L{\"u}tkebohmert.
\newblock Formal and rigid geometry. {II}. {F}lattening techniques.
\newblock {\em Math. Ann.}, 296(3):403--429, 1993.

\bibitem{Breuil-LocallyAnalyticSocle2}
C.~Breuil.
\newblock Vers le socle localement analytique pour $\mathrm{GL}_n$ {II}.
\newblock {\em Math. Ann.}, 361(3-4):741--785, 2015.

\bibitem{Chenevier-InfiniteFern}
G.~Chenevier.
\newblock On the infinite fern of {G}alois representations of unitary type.
\newblock {\em Ann. Sci. \'Ec. Norm. Sup\'er. (4)}, 44(6):963--1019, 2011.

\bibitem{CherbonnierColmez}
F.~Cherbonnier and P.~Colmez.
\newblock Repr\'esentations {$p$}-adiques surconvergentes.
\newblock {\em Invent. Math.}, 133(3):581--611, 1998.

\bibitem{ColemanMazur-Eigencurve}
R.~F. Coleman and B.~Mazur.
\newblock The eigencurve.
\newblock In {\em Galois representations in arithmetic algebraic geometry
  ({D}urham, 1996)}, volume 254 of {\em London Math. Soc. Lecture Note Ser.},
  pages 1--113. Cambridge Univ. Press, Cambridge, 1998.

\bibitem{Colmez-Trianguline}
P.~Colmez.
\newblock Repr\'esentations triangulines de dimension 2.
\newblock {\em Ast\'erisque}, (319):213--258, 2008.
\newblock Repr{\'e}sentations $p$-adiques de groupes $p$-adiques. I.
  Repr{\'e}sentations galoisiennes et $(\phi,\Gamma)$-modules.

\bibitem{Conrad-IrredComponents}
B.~Conrad.
\newblock Irreducible components of rigid spaces.
\newblock {\em Ann. Inst. Fourier (Grenoble)}, 49(2):473--541, 1999.

\bibitem{Ducros-BerkovichSpaces}
A.~Ducros.
\newblock Les espaces de {B}erkovich sont excellents.
\newblock {\em Ann. Inst. Fourier (Grenoble)}, 59(4):1443--1552, 2009.

\bibitem{Emerton-LocalglobalCompat}
M.~Emerton.
\newblock Local-global compatibility in the $p$-adic {L}anglands programme for
  $\mathrm{GL}_2/\mathbf{Q}$.
\newblock {\em Preprint}.

\bibitem{Fontaine-GFestschriftPaper}
J.-M. Fontaine.
\newblock Repr\'esentations {$p$}-adiques des corps locaux. {I}.
\newblock In {\em The {G}rothendieck {F}estschrift, {V}ol.\ {II}}, volume~87 of
  {\em Progr. Math.}, pages 249--309. Birkh\"auser Boston, Boston, MA, 1990.

\bibitem{FontaineMazur}
J.-M. Fontaine and B.~Mazur.
\newblock Geometric {G}alois representations.
\newblock In {\em Elliptic curves, modular forms, \& {F}ermat's last theorem
  ({H}ong {K}ong, 1993)}, Ser. Number Theory, I, pages 41--78. Int. Press,
  Cambridge, MA, 1995.

\bibitem{Hansen-Overconvergent}
D.~Hansen.
\newblock Universal eigenvarieties, trianguline {G}alois representations, and
  $p$-adic {L}anglands functoriality.
\newblock {\em To appear in Crelle}, 2014.

\bibitem{Herr-Cohomology}
L.~Herr.
\newblock Sur la cohomologie galoisienne des corps {$p$}-adiques.
\newblock {\em Bull. Soc. Math. France}, 126(4):563--600, 1998.

\bibitem{Kedlaya-p-adicMonodromy}
K.~S. Kedlaya.
\newblock A {$p$}-adic local monodromy theorem.
\newblock {\em Ann. of Math. (2)}, 160(1):93--184, 2004.

\bibitem{KedlayaLiu-FamiliesofPhiGammaModules}
K.~S. Kedlaya and R.~Liu.
\newblock On families of {$\varphi$}, {$\Gamma$}-modules.
\newblock {\em Algebra Number Theory}, 4(7):943--967, 2010.

\bibitem{KedlayaPottharstXiao-Finiteness}
K.~S. Kedlaya, J.~Pottharst, and L.~Xiao.
\newblock Cohomology of arithmetic families of {$(\varphi,\Gamma)$}-modules.
\newblock {\em J. Amer. Math. Soc.}, 27(4):1043--1115, 2014.

\bibitem{Kisin-OverconvergentModularForms}
M.~Kisin.
\newblock Overconvergent modular forms and the {F}ontaine-{M}azur conjecture.
\newblock {\em Invent. Math.}, 153(2):373--454, 2003.

\bibitem{Liu-CohomologyDuality}
R.~Liu.
\newblock Cohomology and duality for {$(\phi,\Gamma)$}-modules over the {R}obba
  ring.
\newblock {\em Int. Math. Res. Not. IMRN}, (3):Art. ID rnm150, 32, 2008.

\bibitem{Liu-Triangulations}
R.~Liu.
\newblock Triangulation of refined families.
\newblock {\em Comment. Math. Helv.}, 90(4):831--904, 2015.

\bibitem{Matsumura-CommutativeRingTheory}
H.~Matsumura.
\newblock {\em Commutative ring theory}, volume~8 of {\em Cambridge Studies in
  Advanced Mathematics}.
\newblock Cambridge University Press, Cambridge, second edition, 1989.
\newblock Translated from the Japanese by M. Reid.

\bibitem{MazurWiles-padicAnalyticFamilies}
B.~Mazur and A.~Wiles.
\newblock On {$p$}-adic analytic families of {G}alois representations.
\newblock {\em Compositio Math.}, 59(2):231--264, 1986.

\bibitem{Nakamura-BPairs}
K.~Nakamura.
\newblock Classification of two-dimensional split trianguline representations
  of {$p$}-adic fields.
\newblock {\em Compositio Math.}, 145(4):865--914, 2009.

\bibitem{SchneiderTeitelbaum-padicDistributions}
P.~Schneider and J.~Teitelbaum.
\newblock Algebras of {$p$}-adic distributions and admissible representations.
\newblock {\em Invent. Math.}, 153(1):145--196, 2003.

\bibitem{Sen-VariationHodgeStructure}
S.~Sen.
\newblock The analytic variation of {$p$}-adic {H}odge structure.
\newblock {\em Ann. of Math. (2)}, 127(3):647--661, 1988.

\bibitem{Tan-Thesis}
F.~Tan.
\newblock {\em Families of p-adic {G}alois {R}epresentations}.
\newblock ProQuest LLC, Ann Arbor, MI, 2011.
\newblock Thesis (Ph.D.)--Massachusetts Institute of Technology.

\bibitem{Stacks}
{The Stacks Project Authors}.
\newblock Stacks project.
\newblock http://stacks.math.columbia.edu, 2015.

\bibitem{Weibel}
C.~A. Weibel.
\newblock {\em An introduction to homological algebra}, volume~38 of {\em
  Cambridge Studies in Advanced Mathematics}.
\newblock Cambridge University Press, Cambridge, 1994.

\end{thebibliography}
\bibliographystyle{abbrv}

\end{document}